\newtheorem{definition}{Definition}
\newtheorem{thm}{Theorem}
\newtheorem{prop}{Proposition}
\newtheorem{lem}{Lemma}
\newtheorem{cor}{Corollary}
\newcommand{\argmin}{\mathop{\mathrm{argmin}}}
\newcommand{\argmax}{\mathop{\mathrm{argmax}}}
\newcommand{\E}{\operatorname{\mathbb{E}}}
\renewcommand{\P}{\operatorname{\mathbb{P}}}
\newcommand{\tr}{\mathrm{Tr}}
\newcommand{\pa}[1]{\left(#1\right)}
\newcommand{\ac}[1]{\left\{#1\right\}}
\newcommand{\cro}[1]{\left[#1\right]}
\newcommand{\<}{\langle}
\renewcommand{\>}{\rangle}
\newcommand{\eps}{\varepsilon}
\newcommand{\R}{\mathbb{R}}
\newcommand{\1}{\mathbf{1}}
\newcommand{\N}{\mathbb{N}}
\title{Computation-information gap in high-dimensional clustering}
\author[1]{Bertrand Even}
\author[1]{Christophe Giraud}
\author[2]{Nicolas Verzelen}
\affil[1]{\textit{\scriptsize{Université Paris-Saclay, Laboratoire de mathématiques d’Orsay, Orsay, France}}}
\affil[2]{\textit{\scriptsize{INRAE, MISTEA, Univ. Montpellier, Montpellier, France}} }
\begin{document}

\maketitle

\begin{abstract}%
We investigate the existence of a fundamental computation-information gap for the problem of clustering a mixture of isotropic Gaussian in the high-dimensional regime, where the ambient dimension $p$ is larger than the number $n$ of points. The existence of a computation-information gap in a specific Bayesian high-dimensional asymptotic regime has been conjectured by \cite{lesieur2016phase}  based on the replica heuristic from statistical physics. 
We provide  evidence of the existence of such a gap generically in the high-dimensional regime $p\geq n$, by (i)  proving a non-asymptotic low-degree polynomials computational barrier for clustering in high-dimension, matching the performance of the best known polynomial time algorithms, and by (ii) establishing that the information barrier for clustering is smaller than the computational barrier, when the number $K$ of clusters is large enough.  These results are in contrast with the (moderately) low-dimensional regime $n\geq \text{poly}(p,K)$, where there is no computation-information gap for clustering a mixture of isotropic Gaussian. 
In order to prove our low-degree computational barrier, we develop sophisticated combinatorial arguments to upper-bound the mixed moments of the signal under a Bernoulli Bayesian model.

\end{abstract}

\section{Introduction}
We investigate the problem of clustering a mixture of isotropic Gaussian in a high-dimensional set-up. 
The problem of clustering a mixture of Gaussian is a classical problem, which has lead to a large literature both in statistics and in machine learning \cite{Dasgupta99,VEMPALA2004,lesieur2016phase,LuZhou2016,DBLP:journals/corr/abs-1711-07211,Regev2017,giraud2019partial,fei2018hidden,chen2021hanson,Kwon20,SegolNadler2021,romanov2022,LiuLi2022,diakonikolasCOLT23b}.

\paragraph{Set-up.}
We observe a set of $n$ points $Y_1,\ldots,Y_n\in \R^p$, which have been generated as follows. For some unknown vectors $\mu_1,\ldots,\mu_K\in\R^p$, some unknown $\sigma>0$, and an unknown partition $G^*=\ac{G^*_1,\ldots,G^*_K}$ of $\ac{1,\ldots,n}$, the points $Y_1,\ldots,Y_n$ are sampled independently with distribution
\[Y_i \sim \mathcal{N}(\mu_k,\sigma^2I_p),\quad \text{for}\ i\in G^*_k.\]
 We focus in this paper on the high-dimensional setting $p\geq n$, 
  with balanced clusters
\begin{equation}\label{eq:balanced}
    {\max_k |G^*_k|\over \min_k |G^*_k|}\leq \alpha,\quad \text{for some $\alpha\geq 1$.}
\end{equation}

\paragraph{Curse of dimensionality.}
For $K=2$ clusters, in low dimension $p\ll n$, it is well known that the probability of misclassifying a new data point given the label of all the others decays like $\exp(-c \Delta^2)$ with the separation
\begin{equation}\label{eq:Delta}
    \Delta^2=\min_{l\neq r\in[1,K]}{\|\mu_{r}-\mu_{l}\|^2\over 2\sigma^2}\ .
\end{equation}
In the high-dimensional regime $p\gg n$, the variance of the estimation of the high-dimensional means $\mu_k$ leads to the slower rate
  $\exp\pa{-c'{n\over p} \Delta^4}$ when 
 $\Delta^2 \leq p/n$, see \cite{giraud2019partial}.

 This curse of dimensionality for the classification problem has some repercussion on the clustering problem. 
When $K=2$, and $p\geq n$ are large, \cite{ndaoud2022sharp} proved that a separation
\[\Delta^2 > 2\sqrt{2p\log(n)\over n}\]
is necessary  in order to perfectly recover the clusters, and also sufficient to recover them in polynomial time. 
For larger $K$, \cite{giraud2019partial} proved that an SDP relaxation of Kmeans \cite{PengWei07} provides a non-trivial clustering for
%\begin{equation}\label{eq:separation}
 $\Delta^2 \gtrsim \sqrt{pK^2/ n}$,
%\end{equation}
when $p\geq n$,
where $\gtrsim$ hides a multiplicative constant depending only on $\alpha$. Perfect clustering is also possible with single-linkage hierarchical clustering when $\Delta^2 \gtrsim  \sqrt{p\log(n)}+\log(n)$ --see Appendix \ref{sec:hierarchical} for details on hierarchical clustering--,
so, when $p\geq n$, non-trivial clustering is possible in polynomial time  for
\begin{equation}\label{eq:separation}
    \Delta^2 \gtrsim \sqrt{pK^2\over n} \wedge \sqrt{p\log(n)}.
\end{equation}
Some non-rigorous arguments from statistical physics suggest that this minimal separation for non-trivial clustering in polynomial time may be optimal, up to a possible $\sqrt{\log(n)}$ factor for the second term. Indeed, building on the replica heuristic from statistical physics, \cite{lesieur2016phase} conjectures that, when the means $\mu_k$ are drawn i.i.d. with Gaussian $\mathcal{N}(0,{p^{-1}\bar\Delta^2}I_p)$ distribution in $\R^p$, in the asymptotic regime where $n,p$ go to infinity with $p/n\to \gamma \in [(K/2-2)^{-2},+\infty)$,
%$K\geq 4+2\sqrt{\gamma}$,  
non-trivial clustering is possible in polynomial time only for $\bar\Delta^2> \sqrt{\gamma K^2}$, while it is possible without computational constraints for $\bar\Delta^2> 2\sqrt{\gamma K\log(K)}$, see also \cite{banks2018information} for the problem of cluster detection.

These results are in contrast with the moderately low-dimensional setting, where it follows from \cite{LiuLi2022} that for $n\geq \text{poly}(p,K)$, non-trivial clustering is possible in polynomial time when 
$\Delta^2 \gtrsim (\log(K))^{1+c}$, with $c>0$, almost matching the information minimal separation $\Delta^2 \gtrsim\log(K)$ from \cite{Regev2017,Kwon20,romanov2022}, up to a small power of $\log(K)$.
 This set of results leaves open two fundamental questions:
\begin{enumerate}
   \item Can we design a polynomial-time algorithm achieving non-trivial clustering  for a separation smaller than (\ref{eq:separation}) in the high-dimensional setting $p\geq n$?
    \item What is the minimal separation $\Delta^2$ necessary for non-trivial clustering in high-dimension, and is there a computation-information gap as conjectured in \cite{lesieur2016phase}?
\end{enumerate}

\paragraph{Our contribution.} We provide an answer to these two  fundamental questions. 
\begin{enumerate}
    \item Our first contribution is to prove a non-asymptotic low-degree polynomial lower bound suggesting that the separation 
(\ref{eq:separation}) is minimal, up to a possible polylog$(n)$ factor, for clustering in polynomial time in the high-dimensional setting $p\geq n$. 
    \item Our second contribution is to prove that the information barrier for non-trivial clustering  is
\begin{equation}\label{eq:stat-separation}
    \Delta^2 \gtrsim \log(K) \vee \sqrt{pK\log(K)\over n},
\end{equation}
    with the exact Kmeans algorithm beeing (without surprise) information rate-optimal.
\end{enumerate}
These two results provide evidence for the existence of a  computation-information gap for the problem of clustering a mixture of isotropic Gaussians in high-dimension $p\geq n$, when the number $K$ of clusters is larger than some constant $K_0$; confirming and generalizing the gap conjectured in \cite{lesieur2016phase}. 

\begin{center}
    \begin{tabular}{c|c|c}
%\hline
   $\vphantom{\Bigg|}\displaystyle{\Delta^2\lesssim \sqrt{pK\log(K)\over n}}$ & $\displaystyle{\sqrt{pK\log(K)\over n}\ \lesssim \Delta^2 \widetilde{\ll}  \sqrt{pK^2\over n}\wedge \sqrt{p}}$ &  $\displaystyle{\Delta^2 \gtrsim  \sqrt{pK^2\over n}\wedge \sqrt{p\log(n)}}$  \\\hline
      \vphantom{\bigg|}Impossible & Hard & Easy\\
%      \hline
\end{tabular}\\ \bigskip
\textit{Clustering hardness in high-dimension $p\geq n$. Here, $\ \widetilde{\ll}\ $ hides polylog$(n)$ factors.}
\end{center}

The main difficulty of the proof of the low-degree {computational barrier}
%lower bound 
is to bound mixed moments of the high-dimensional signal, drawn under a Bernoulli Bayesian model defined in Section \ref{sec:low-degree}. {To derive these pivotal bounds, we develop sophisticated combinatorial arguments.}

\paragraph{Literature review.}
The problem of clustering in high-dimension has been investigated in \cite{LuZhou2016,fei2018hidden,giraud2019partial}. The latter provide some state-of-the-art controls on the (partial or perfect) recovery of the clusters in polynomial time, based on an SDP relaxation of Kmeans \cite{PengWei07}. \cite{ndaoud2022sharp} considers the problem of perfect recovery when there are $K=2$ clusters, identifying a sharp threshold for information-possible perfect recovery,   and proving that perfect clustering is possible in polynomial time above this threshold with a simple Lloyd algorithm. In particular, there is no computation-information gap for a mixture of $K=2$ isotropic Gaussian, whatever the ambient dimension $p$. In a Bayesian setting with a Gaussian prior on the $\mu_k$,  \cite{lesieur2016phase} conjectures a computation-information gap for clustering in the asymptotic limit where $p/n\to \gamma \in [(K/2-2)^{-2},+\infty)$. Similarly, \cite{banks2018information} proves that the information threshold for detecting the existence of clusters is smaller than the spectral detection threshold, when $K$ is large enough. Interestingly, this information barrier for cluster detection in this Bayesian setting matches, up to a possible constant, the information barrier (\ref{eq:stat-separation}) for clustering, so that there is no test-estimation gap at the information level. 
On a different perspective, the estimation of the parameters of a Gaussian mixture distribution in high-dimension has been addressed in \cite{doss2020optimal}. 

Contrary to our high-dimensional setting $p\geq n$,  there is no computation-information gap for learning mixture of isotropic Gaussian \cite{Regev2017,Kwon20,romanov2022,LiuLi2022} in a moderately low-dimensional setting $n\geq \text{poly}(p,K)$.
Some computation-information gaps have yet been shown in moderately low-dimension for learning mixture of non-isotropic Gaussian with unknown covariance. In such a setting, \cite{DiakonikolasFOCS17} and  \cite{diakonikolasCOLT23b} establish some  lower-bounds for the running time of any Statistical-Query
algorithm (SQ-algorithm), enforcing a computation-information gap between SQ-algorithms and
information optimal algorithms.
We emphasize that in our high-dimensional setting, contrary to the moderately low-dimensional case,  the computation-information gap is not induced by some non-isotropic effects. Indeed, when $p\geq n$, the computation-information gap shows up for isotropic Gaussian mixture. In addition, the performances of polynomial time estimators are similar for isotropic Gaussian and anisotropic subGaussian mixtures \cite{giraud2019partial}. We refer to  Section~\ref{sec:discussion} for (i) a detailed discussion on the differences between the high and moderately low-dimensional settings, and (ii) a discussion highlighting that optimal clustering rates cannot be simply derived from estimation rates.

The low-degree polynomial model of computation requires the output of the algorithm to be computed by a low-degree polynomial of the entries of the input data. 
Many state-of-the-art algorithms, including spectral methods and approximate message passing algorithms, can be approximated by low-degree polynomials, and the class of low-degree polynomials is as powerful as the
best known polynomial-time algorithms for many canonical problems, including planted clique \cite{Barak19}, community detection \cite{Hopkins17}, sparse PCA \cite{Ding23}, and tensor PCA \cite{HopkinsFOCS17}. A low-degree polynomial lower bound is then a compelling evidence for computational hardness of a learning problem. 
Low-degree lower bounds have been first introduced in \cite{Barak19} --see also~\cite{hopkins2018statistical}--, and then extended in many settings. 
\cite{KuniskyWeinBandeira} and \cite{WeinSchramm} provide some generic techniques for proving low-degree lower bounds in a wide range of situations. For example, building on these results, \cite{luo2023computational} provides evidence for the computation-information gap conjectured in \cite{Decelle2011}  for clustering in the Stochastic Block Model. 

In our setting, the random partition and the high number of dimensions cause a high dependence between the signal vectors. Thus, we have to appeal to delicate combinatorial arguments in order to upper-bound the mixed moments of the signal. We discuss this more precisely in the sketch of the proof of Theorem \ref{thm:lowdegreeclustering}, in Section \ref{sec:low-degree}, and in the proof of the theorem in Appendix \ref{prf:lowdegreeclustering}.

\paragraph{Outline and notation.}
We state our computational lower bound in Section~\ref{sec:low-degree}, we analyse the information-barrier for partial and perfect recovery in Section~\ref{sec:information}, 
and we discuss these results and their connections with the literature in Section~\ref{sec:discussion}. All the proofs are deferred to the appendices, though a sketch of the proof of the computational lower bound is provided in Section~\ref{sec:low-degree}.

Throughout this documents, we use $\|\cdot\|_{q}$ for the $L_{q}$ norm of a vector or of the entries of a matrix.
For $q=2$, we simply write $\|\cdot\|$ for the Euclidean norm of a vector, and $\|\cdot\|_F$ for the Frobenius norm of a matrix.
The notations $\|\cdot\|_{op}$ and $\|\cdot\|_{*}$ respectively stand for the operator norm  and the nuclear norm of a matrix.

We denote by $\mathcal{P}_{\alpha}$ the set of partitions of $[1,n]$ fulfilling (\ref{eq:balanced}).
For a partition $G=\ac{G_1,\ldots,G_K}$, we define $k^G_i$ as the integer such that $Y_i\in G_{k_i}$, and
the partnership matrix $M^G_{ij}=\1_{k^G_i=k^G_j}$. For $G=G^*$, we simply write $k^*_i=k^{G^*}_i$ and $M^*=M^{G^*}$.
We also define the proportion of misclassified points as

\begin{equation}\label{eq:error}
err({G},G^{*})=\frac{1}{2n}\min_{\pi\in\mathcal{S}_{K}}\sum_{k=1}^{K}|G_{k}^{*}\Delta{G}_{\pi(k)}|\enspace , 
\end{equation}

where $A\Delta B$ stands the symmetric difference between the sets $A$ and $B$, and $\mathcal{S}_{K}$ denotes the set of all permutations of $[1,K]$.

\section{Low degree polynomial lower bound}\label{sec:low-degree}

Low degree polynomials are not well suited for directly outputting a partition $\widehat{G}$, which is combinatorial in nature. Instead, we focus on the problem of estimating the partnership matrix $M^*_{ij}=\1_{k^*_i=k^*_j}$ with low-degree polynomials. 
It turns out that estimating $M^*$ and (partially) recovering the partition $G^*$ are closely related.
On the one hand, for any partition $G$, we have

\begin{align*}
    {1\over n(n-1)} \|M^G-M^*\|_F^2&\leq {1\over n(n-1)} \ \min_{\pi\in\mathcal{S}_{K}}\sum_{i\neq j=1}^n \pa{\1_{\pi(k^G_i)\neq k^*_i} \vee \1_{\pi(k^G_j)\neq k^*_j}}\\
    & \leq {2\over n} \ \min_{\pi\in\mathcal{S}_{K}}\sum_{i=1}^n \1_{\pi(k^G_i)\neq k^*_i} \ \leq 2\ err(G,G^*),
\end{align*}

so, if it is possible to cluster (in polynomial time) with error $err(\hat G,G^*)\leq \rho$, then we can estimate $M^*$ (in polynomial time) with error
$n^{-2}\|M^{\hat G}-M^*\|_F^2\leq 2\rho$.
On the other hand, for any estimator $\widehat M$, we have
\[\sum_{i,j=1}^n \1_{|\widehat M_{ij}-M^*_{ij}|\geq 1/2}\leq 4 \|\widehat M-M^*\|_F^2.\]
So, when $\|\widehat M-M^*\|_F^2< 1/4$, pairing together points $i,j$ fulfilling $\widehat M_{i,j}\geq 1/2$ provides a valid partition, equal to $G^*$.

To provide evidence of a computational barrier for the clustering problem, we build on this connection by proving a low-degree polynomial lower bound for the estimation of $M^*$. In this section, we consider the following generative prior for the partition $G^*$ and the means $\mu_1,\ldots,\mu_K$. 

\begin{definition}\label{def:prior}
    We draw $k_{1},\ldots, k_{n}$ i.i.d. uniformly on $[1,K]$. For a given $\bar{\Delta}>0$, independently from $(k_{i})_{i\in[1,n]}$,  we draw $\mu_{1},\ldots,\mu_{K}\in\R^{p}$ i.i.d. uniformly distributed on the hypercube $\mathcal{E}=\{+\eps,-\eps\}^{p}$, with $\eps^{2}=\frac{1}{p}\bar{\Delta}^{2}\sigma^2$. Then, conditionally on $(k_i)_{i=1,\ldots,n}$ and $(\mu_k)_{k=1,\ldots,K}$, the $Y_i$ are independent with $\mathcal{N}(\mu_{k_i}, \sigma^2I_p)$ distribution. The partition $G^{*}$ is obtained from the $k_{i}$'s with the canonical partitioning $G^{*}_{k}=\{i\in[1,n],\enspace k_{i}=k\}$, and $M^*_{ij}=\1_{k_i=k_j}$.
\end{definition}
We observe that in this model, for any $k\neq \ell$ the normalized square distance $\|\mu_k-\mu_\ell\|^2/(2\sigma^2)$ is equal to $\bar \Delta^2\pa{1+O_{\P}(p^{-1/2})}$.

Let $\R_{D}[Y]$ be the set of all polynomials in the observations $(Y_{ij})_{i,j\in[1,n]\times [1,p]}$ of degree at most $D$. We consider the degree-$D$ minimum mean squared error defined similarly as in \cite{WeinSchramm} by 
\begin{equation}\label{def:MMSE}
    MMSE_{\leq D}:=\inf_{f_{ij}\in\R_{D}[Y]}\ \ {1\over n(n-1)}\sum_{i\neq j=1}^n\E\cro{(f_{ij}(Y)-M^*_{ij})^{2}}\enspace.
\end{equation}
We observe that the trivial estimator $\widehat M_{ii}=1$ and $\widehat M_{ij}=1/K$ for $i\neq j$ has a mean square error 
${1\over n(n-1)} \E\cro{\|\widehat M-M^*\|_F^2}=\frac{1}{K}-\frac{1}{K^{2}}$. Our main result is the next theorem, which identifies a regime where low-degree polynomials cannot perform significantly better than the trivial estimator in the high-dimensional setting $p\geq n$. We refer to Appendix \ref{prf:lowdegreeclustering} for a proof of this theorem.

\begin{thm}\label{thm:lowdegreeclustering}
    Let $D\in \N$. If $p\geq n$ and $\zeta_{n}:=\frac{\bar\Delta^{4}D^{8}(1+D)^{4}}{p}\max\pa{\frac{n}{K^{2}},1}<1$, then under the prior of Definition \ref{def:prior}, we have
    \begin{equation}
        MMSE_{\leq D}\geq {1\over K} - \frac{1}{K^{2}}\pa{1 +  \frac{\zeta_n}{(1-\sqrt{\zeta_n})^3}} .
    \end{equation}
    In particular, if $\bar \Delta^2 \ll D^{-6}\pa{\sqrt{pK^2\over n}\wedge\sqrt{p}}$, then $MMSE_{\leq D}=\frac{1}{K}-\frac{1+o(1)}{K^{2}}$.
\end{thm}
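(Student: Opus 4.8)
The goal is a lower bound on $MMSE_{\leq D}$, so the natural route is the standard low-degree machinery: for each pair $(i,j)$, the best degree-$D$ polynomial predictor of $M^*_{ij}$ is the $L^2$-projection of $M^*_{ij}$ onto $\R_D[Y]$, and since $\E[M^*_{ij}]=1/K$ for $i\neq j$, we get
\[
MMSE_{\leq D} = \frac1{n(n-1)}\sum_{i\neq j}\left(\operatorname{Var}(M^*_{ij}) - \|P_{\leq D}(M^*_{ij}-1/K)\|_{L^2}^2\right),
\]
where $P_{\leq D}$ is the projection onto mean-zero degree-$\leq D$ polynomials in $Y$. Since $\operatorname{Var}(M^*_{ij}) = \frac1K-\frac1{K^2}$, it suffices to show $\frac1{n(n-1)}\sum_{i\neq j}\|P_{\leq D}(M^*_{ij})\|_{L^2}^2 \leq \frac1{K^2}\cdot\frac{\zeta_n}{(1-\sqrt{\zeta_n})^3}$. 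I would expand this projection in the Hermite basis for the Gaussian $Y$ (conditionally on the means and labels, $Y$ is Gaussian with known covariance $\sigma^2 I_p$; but marginally it's a mixture — so the right move is to follow \cite{WeinSchramm} and write $\|P_{\leq D} M^*_{ij}\|^2 = \sum_{|\alpha|\leq D} \widehat{(M^*_{ij})}(\alpha)^2$ where the coefficients are obtained by integrating against Hermite polynomials under the \emph{null} Gaussian $\mathcal N(0,\sigma^2 I_{np})$, and $\widehat{(M^*_{ij})}(\alpha) = \E[M^*_{ij}\,h_\alpha(Y)]$ where the expectation is over the full planted model).

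**Reduction to mixed moments.** Each Hermite coefficient, after conditioning on the labels $(k_i)$ and means $(\mu_k)$, becomes a product of monomials in the coordinates of the relevant $\mu_{k_i}$'s scaled by $1/\sigma$. Collecting terms, $\|P_{\leq D} M^*_{ij}\|^2$ reduces to a sum, over multi-indices $\alpha$ of total degree $\leq D$ supported on rows indexed by a subset $S\ni i,j$ of data points, of expressions of the form $\E[M^*_{ij}\prod_{t\in S}\langle \mu_{k_t}, \text{stuff}\rangle^{\alpha}]$ — i.e. mixed moments of the signal vectors $\mu_{k_t}/\sigma = (\bar\Delta/\sqrt p)\,\xi_{k_t}$ with $\xi_k$ uniform on $\{\pm1\}^p$, under the random label assignment. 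Because the $\mu_k$ take only $K$ distinct values and the labels are random, many of these $\mu_{k_t}$ coincide, and the hypercube prior makes the per-coordinate moments $\E[\xi_{k,a}^m]$ equal to $0$ or $1$ according to parity. The upshot: each surviving term contributes a power of $\bar\Delta^2/p$ times a combinatorial count of ways to pair up / match the coordinate-indices across the $\leq D$ Hermite "slots," weighted by the probability that the label pattern realizes a given coincidence structure (controlled by powers of $1/K$, with $M^*_{ij}$ forcing $k_i=k_j$). This is exactly the "sophisticated combinatorial argument" the introduction advertises.

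**Assembling the bound and the main obstacle.** Once the mixed-moment bound is in hand — schematically, the total contribution of degree-$2\ell$ terms is at most $\frac1{K^2}(C\,\zeta_n^{1/2})^{\ell}$ up to the polynomial-in-$D$ factors absorbed into the definition of $\zeta_n$ — I would sum the geometric-type series $\sum_{\ell\geq 1}(\text{poly})\,\zeta_n^{\ell/2}$, whose tail is $\frac{\zeta_n}{(1-\sqrt{\zeta_n})^3}$ after accounting for the $\ell^{O(1)}$ combinatorial prefactors (the cubic power in the denominator being the signature of absorbing a quadratic polynomial prefactor into the geometric sum). The condition $\zeta_n<1$ is exactly what makes this series converge. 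The main obstacle is unquestionably Step 2: controlling the mixed moments $\E[M^*_{ij}\prod_t \langle\mu_{k_t},\cdot\rangle^{\alpha_t}]$ uniformly over all degree-$\leq D$ multi-indices. The difficulty is the dependence structure — the $\mu_{k_t}$ are not independent across $t$ because the label map $t\mapsto k_t$ collapses $n$ indices onto $K$ values, so one must simultaneously track (a) the partition of the Hermite-slots induced by coordinate coincidences, (b) the partition induced by label coincidences, and (c) the constraint from $M^*_{ij}$, and show that the worst case over all these combinatorial configurations is still geometrically small in the total degree. Getting the powers of $n$, $p$, $K$, $D$ to line up precisely into the single quantity $\zeta_n\max(n/K^2,1)/p \cdot \bar\Delta^4 D^8(1+D)^4$ is the delicate bookkeeping that the full proof in Appendix~\ref{prf:lowdegreeclustering} must carry out.
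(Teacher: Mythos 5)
Your outline (reduce to a single entry, expand in a polynomial basis, reduce everything to mixed moments of the signal under the random partition, then fight the combinatorics) matches the paper's strategy, and you correctly locate the hardest part. But the pivotal reduction step is wrong. You assert $\|P_{\leq D}(M^*_{ij})\|_{L^2}^2=\sum_{|\alpha|\leq D}\E[M^*_{ij}h_\alpha(Y)]^2$ with $h_\alpha$ the Hermite polynomials orthonormal under the \emph{null} Gaussian. The MMSE, however, involves the orthogonal projection in $L^2$ of the \emph{planted} measure, under which $Y$ is a Gaussian mixture and the $h_\alpha$ are not orthonormal; the identity fails, and it does not even hold as an inequality in the useful direction. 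What \cite{WeinSchramm} actually supply for estimation problems — and what the paper uses as Proposition \ref{thm:schrammwein} — is the bound $corr_{\leq D}^2\leq \sum_{|\alpha|\leq D}\kappa_\alpha^2/\alpha!$, where $\kappa_\alpha$ is the joint \emph{cumulant} of $(x,X_{i,j},\dots)$, not the moment $\E[xX^\alpha]$ that your Hermite coefficients compute (one has $\E[xh_\alpha(Y)]=\E[xX^\alpha]/\sqrt{\alpha!}$ in the additive Gaussian model).

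This distinction is not bookkeeping: it is what makes the theorem true at the claimed threshold. Cumulants vanish whenever the bipartite multigraph $\mathcal{G}_\alpha^-$ is disconnected, omits $u_1$ or $u_2$, or has a variable node of degree one (Lemma \ref{lem:reductioncumulant}); moments do not. Concretely, take $\alpha$ supported on $\ell$ disjoint cells with $\alpha_{i_tj_t}=2$: then $X^\alpha=\eps^{2\ell}$ deterministically, $\E[xX^\alpha]=\eps^{2\ell}/K$, $\alpha!=2^\ell$, and there are of order $(np)^\ell/\ell!$ such $\alpha$, so the moment-based sum contains a contribution $\frac{1}{K^2}\sum_{\ell}\frac{1}{\ell!}\left(\frac{n\bar\Delta^4}{2p}\right)^{\ell}$, which stays near $1/K^2$ only when $\bar\Delta^2\lesssim\sqrt{p/n}$ — missing the factor $K$ in the target threshold $\sqrt{pK^2/n}$ and with it the entire computation--information gap. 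All of these configurations have $\kappa_\alpha=0$, which is exactly why the cumulant route survives. Even granting the correct starting point, your sketch still leaves out the two technical pillars: the induction converting moment bounds into cumulant bounds (Lemma \ref{lem:boundcumulants}), and the parity/partition-counting argument showing that an even $\gamma$ forces the induced partition to have few groups (Lemmas \ref{lem:esp} and \ref{lem:numbergroups}); but the absent cumulant structure is the gap that would sink the proof.
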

{\bf Remark:} The same result holds (with a different power of $D$), when,  in  Definition \ref{def:prior}, the prior  on the $\mu_k$ is i.i.d.\  $\mathcal{N}(0,\eps I_p)$, instead of i.i.d.\ uniform on $\mathcal{E}=\{+\eps,-\eps\}^{p}$.
\medskip

The second part of Theorem \ref{thm:lowdegreeclustering} ensures that, when $p\geq n$,  low-degree polynomials with degree $D\leq (\log(n))^{1+\eta}$ do not perform better than the trivial estimator when 
\[\bar \Delta^2 \ll (\log n)^{-6(1+\eta)}\pa{\sqrt{pK^2\over n}\wedge\sqrt{p}}.\]
Since lower-bounds for low-degree polynomials with degree $D\leq (\log(n))^{1+\eta}$ are considered as evidence of the computational hardness of the problem, Theorem \ref{thm:lowdegreeclustering} suggests computational hardness of estimating $M^*$ when  $\bar \Delta^2 \ll (\log n)^{-6(1+\eta)}\pa{\sqrt{pK^2\over n}\wedge\sqrt{p}}$ and $p\geq n$. Since, as made explicit above, estimation of $M^*$ is possible in polynomial time  when clustering is possible in polynomial time,  this provides compelling evidence for the computational hardness of the clustering problem in this regime.
Conversely, we explained in the introduction, that non-trivial clustering is possible in polynomial-time under the almost matching condition $\Delta^2\gtrsim \sqrt{\frac{pK^2}{n}}\wedge \sqrt{p\log(n)}$.

\paragraph{Sketch of the proof} By linearity of the loss function, we only  consider, without loss of generality, the problem of estimating $x= M^*_{12}$ when $\sigma^2=1$. We need to prove that $$\inf_{f\in\R_{D}[Y]}\ \ \E\cro{(f(Y)-x)^{2}}\geq \frac{1}{K}- \frac{1}{K^{2}}\pa{1 +  \frac{\zeta_n}{(1-\sqrt{\zeta_n})^3}}.$$ Since $\E[x^2]= 1/K$, the problem boils down --see~\cite{WeinSchramm}--  to proving that, the so-called low degree correlation $corr_{\leq D}$ satisfies the following
\[
  corr_{\leq D}:=\underset{\E[f^{2}(Y)]=1}{\sup_{f\in\R_{D}[Y]}}\E(f(Y)x)\leq \frac{1}{K}\sqrt{1 +  \frac{\zeta_n}{(1-\sqrt{\zeta_n})^3}}\enspace . 
\]  

Interestingly, we can rewrite the observed matrix $Y$ as a Gaussian additive model $Y=X+E$, where $E\in \mathbb{R}^{n\times p}$ is made of independent standard normal entries, and $X= A \underline{\mu}$ where the matrix $A\in \{0,1\}^{n\times K}$ contains exactly one non-zero entry on each row and its position is sampled uniformly at random, and where the matrix $\epsilon^{-1}\underline{\mu}\in \mathbb{R}^{p\times K}$ is made of independent Rademacher random variables.

This allows us to apply the general results of~\cite{WeinSchramm}, which bound the low degree correlation in terms of a sum of cumulants
\begin{equation}\label{eq:corr_cumulant}
    corr^{2}_{\leq D}\leq \underset{|\alpha|\leq D}{\sum_{\alpha\in\N^{n\times p}}}\frac{\kappa_{\alpha}^{2}}{\alpha!}\enspace,
\end{equation}
where the $\alpha$'s run over all integer valued matrices whose sum is at most $D$, and where $\kappa_{\alpha}$ is the cumulants of the random variables $(x, \underset{\alpha_{1,1}}{\underbrace{X_{1,1},\ldots, X_{1,1}} },\ldots,  \underset{\alpha_{i,j}}{\underbrace{X_{i,j},\ldots, X_{i,j}}},\ldots)$. The bound~\eqref{eq:corr_cumulant}
turned out to be instrumental for establishing low degree polynomials lower bounds for submatrix estimation~\cite{WeinSchramm}, and for Stochastic Block model (SBM) estimation~\cite{luo2023computational}. In these two works, the authors follow a two-steps approach: first, they prune the sum in~\eqref{eq:corr_cumulant} by characterizing all the cumulants that are equal to zero. Second, they bound the cumulants as a polynomial sum of mixed moments. 

In comparison to the above works, we use the same general strategy, but the structure of the signal matrix $X$ is more involved. Indeed, in submatrix problem, the matrix $X$ only contains a single non-zero blocks whereas, for SBM, $X$ is, up to a permutation, a block-diagonal matrix. Here, we need to leverage on the fact that the rectangular matrix $X=A\underline{\mu}$ jointly involves a random partition matrix $A$ and an high-dimensional random matrix $\underline{\mu}$. %On the one hand, we consider a random partition of the $n$ points so that the matrix $X$
%because, on the one hand, we consider a random partition of the points and not only a single as in submatrix problems and, on the other hand, because of the high-dimensional structure of the observations in contrast to the stochastical block models.
As a consequence, we need to rely on more subtle arguments both for the pruning step, that is for characterizing null cumulants, and for bounding mixed moments with respect to the entries of $X$. 
%and for the bounds on the mixed moments. 

For that purpose, we represent $\alpha\in \mathbb{N}^{n\times p}$ as a bi-partite multigraph $\mathcal{G}_{\alpha}$ between the set $[n]$ of points, and the set $[p]$ of variables and we write $\mathcal{G}^{-}_{\alpha}$ for its restriction to non-isolated nodes. In Lemma~\ref{lem:nullcumulant}, we first establish that the cumulant $\kappa_{\alpha}$ is null unless the graph $\mathcal{G}^{-}_{\alpha}$ satisfies the three following properties: (i) $\mathcal{G}^{-}_{\alpha}$ is connected, (ii)
Both the first and the second points belong to the connected component, and (iii)
Each variable in $\mathcal{G}^{-}_{\alpha}$ is connected to at least two distinct nodes.
Indeed, if at least one of these properties is not satisfied, it is possible to partition $(x, \ldots,  \underset{\alpha_{i,j}}{\underbrace{X_{i,j},\ldots, X_{i,j}}},\ldots)$ into two set of independent random variable, which implies the nullity of the cumulant.  

Now that we have pruned the sum in~\eqref{eq:corr_cumulant} by restricting ourselves to such matrices $\alpha$, we need to control the non-zero cumulants $\kappa_{\alpha}$. Since cumulants express as linear combination of moments, we bound mixed moments of the form $\mathbb{E}[X^{\gamma}]= \mathbb{E}[\prod_{i=1}^n\prod_{j=1}^p X_{i,j}^{\gamma_{ij}}]$ and $\mathbb{E}[xX^{\gamma}]$, for matrices $\gamma\in \mathbb{N}^{n\times p}$. We establish in Lemma~\ref{lem:moments} that 
\begin{equation}\label{eq:upper_moments}
\E[X^{\gamma}]\leq \eps^{|\gamma|}\min\pa{1,|\gamma|^{|\gamma|}\pa{\frac{1}{K}}^{l_{\gamma}-\frac{|\gamma|}{2}-CC_{\gamma}}}\enspace , 
\end{equation}
where $|\gamma|=\sum_{ij}\gamma_{ij}$ is the number of edges of $\mathcal{G}_{\gamma}^{-}$, $CC_{\gamma}$ is the number of connected components of $\mathcal{G}_{\gamma}^{-}$, and $l_{\gamma}$ is the number of nodes. 

For establishing~\eqref{eq:upper_moments}, we first rely on the fact that the entries of $\underline{\mu}$ are independent and follow a symmetric distributions. Since $X=A\underline{\mu}$,
where we recall that $A_{ik}=\1_{k_i=k}$ encodes the partition of the $n$ points, we have
\[
X^{\gamma}= \prod_{k=1}^{K}\prod_{j=1}^p \underline{\mu}_{kj}^{\sum_{i=1}^n A_{i,k}\gamma_{ij}}\ .
\]
The conditional expectation of $X^{\gamma}$ given $A$  is therefore non-zero (and is equal to $\epsilon^{|\gamma|}$), if and only if, $\sum_{i=1}^n A_{i,k}\gamma_{ij}$ is even for all $(k,j)$. We call the latter a $(A, \gamma)$ parity property. Since $\epsilon^{-1}\underline{\mu}_{kj}$ is a Rademacher random variable, it follows from the above that 
$$\mathbb{E}[X^{\gamma}]= \epsilon^{\gamma}\P[(A,\gamma) \text{ satisfies the parity property}]\ .$$
Next, we characterize in Lemma~\ref{lem:numbergroups} the partition matrices $A$ (or equivalently the partition $G^*$) that satisfy the parity property. In particular, we show that the partition induced by $G^*$ on the set of non-zero rows of $\gamma$ only contains a small number of groups. More precisely, we bound this number of groups in terms of $|\gamma|$, the number of non-zero rows of $\gamma$, the number of non-zero columns of $\gamma$, and $CC_{\gamma}$, the number of connected components of $\mathcal{G}_{\gamma}^-$. In turn, this condition on the number of groups enforces that $\P[(A,\gamma) \text{ satisfies the parity property}]$ is small. This combinatorial argument for establishing Lemma~\ref{lem:numbergroups}   is the main technical result in our proof. 

Finally, we build upon the mixed moment bounds~\eqref{eq:upper_moments} to control the cumulants $\kappa_{\alpha}$. Coming back to~\eqref{eq:corr_cumulant}, this allows us to conclude.

\section{Information barrier}\label{sec:information}
\subsection{Clustering below the computational barrier with exact Kmeans}
For a mixture of isotropic Gaussian, the partition $\hat{G}$ maximizing the likelihood is the exact $K$-means partitioning, which minimizes the criterion \begin{equation}\label{def:k-means}
    \hat{G}\in \argmin_{G\in\mathcal{G}_{K}} \mathrm{Crit(G)}\enspace ,\quad
    \text{where}\quad \mathrm{Crit(G)}=\sum_{k=1}^{K}\sum_{a\in G_{k}}\bigg\|Y_{a}-\frac{1}{|G_{k}|}\sum_{b\in G_{k}}Y_{b}\bigg\|^{2},
\end{equation}
with $\mathcal{G}_{K}$ the set of partitions of $[1,n]$ in $K$ groups. Minimizing $\mathrm{Crit}(G)$ is  NP-hard in general, and even hard to approximate \cite{awasthi2015hardness}.

Next theorem proves that exact $K$-means succeeds to produce non-trivial clustering for a separation smaller than the computational barrier (\ref{eq:separation}) for $p\geq n$, and for $K$ larger than some constant $K_0$.
\begin{thm}\label{thm:error_K_means:simple}
Assume that $G^*$ belongs to the set of balanced partitions $\mathcal{P}_\alpha$. 
    Then, there exist some constants $c$, $c'$, $c''$ depending only on $\alpha$, such that the following holds. If 
    \begin{equation}\label{eq:condition_snr:K}
    \Delta^{2}\geq c\pa{\log(K)\vee \sqrt{pK\log(K)\over n}} \enspace , 
    \end{equation}
    then, we have  with probability at least $1-c'/n^2$
    \begin{equation}\label{eq:exponentialdecrease}
        err(\hat{G},G^{*})\leq e^{-c''s^{2}},\quad \text{where} \quad s^2= \Delta^2 \wedge {n \Delta^4\over pK}.
    \end{equation}
\end{thm}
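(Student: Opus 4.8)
The plan is to analyze the exact $K$-means estimator $\hat G$ by comparing $\mathrm{Crit}(\hat G)$ with $\mathrm{Crit}(G^*)$, exploiting the optimality $\mathrm{Crit}(\hat G)\le \mathrm{Crit}(G^*)$. First I would record the standard bias-variance decomposition of the $K$-means criterion: for any partition $G$, $\mathrm{Crit}(G)=\sum_k\sum_{a\in G_k}\|Y_a-\bar Y_{G_k}\|^2$, and writing $Y_a=\mu_{k^*_a}+\sigma E_a$ with $E_a\sim\mathcal N(0,I_p)$, one gets $\mathrm{Crit}(G)=\mathrm{Crit}(G^*)+\text{(signal terms measuring how far $G$ is from $G^*$)}-\text{(cross terms)}-\text{(noise fluctuation terms)}$. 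The goal is to show that whenever $err(G,G^*)$ is large, the deterministic signal gain forces $\mathrm{Crit}(G)>\mathrm{Crit}(G^*)$ with high probability, so the minimizer $\hat G$ must have small error; then bootstrap to get the exponential bound \eqref{eq:exponentialdecrease}.

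The key steps, in order, would be: (1) Reduce to a ``local'' statement about individual misclassified points. Following the now-standard Lloyd/$K$-means analysis (Lu--Zhou, Ndaoud, Giraud--Verzelen), show that $err(\hat G,G^*)\le \frac1n\sum_{a=1}^n \1\{a \text{ is misassigned}\}$, and a point $a$ originally in $G^*_k$ can only be misassigned to group $\ell$ if the noise $E_a$ together with the empirical-center estimation errors makes $Y_a$ closer to $\hat\mu_\ell$ than to $\hat\mu_k$; this event has probability controlled by a Gaussian tail of the form $\exp(-c\,s^2)$ with $s^2$ as in the statement. (2) Control the estimated centers: $\hat\mu_k=\bar Y_{\hat G_k}$ differs from the true $\mu_k$ by a term of order $\sigma\sqrt{p/|G^*_k|}\asymp\sigma\sqrt{pK/n}$ in norm (this is where the high-dimensional variance enters, and where the $\sqrt{pK\log(K)/n}$ threshold comes from), plus a contamination term proportional to the current misclassification rate — this necessitates a self-bounding / iterative argument. (3) Union bound over the $n$ points and over the (at most $K^2$) pairs of groups for the ``direction'' of misassignment: this is where the $\log(K)$ term in \eqref{eq:condition_snr:K} is consumed, needing the separation $\Delta^2\gtrsim\log K$ to beat the $K^2$ union-bound factor, and $\Delta^2\gtrsim\sqrt{pK\log K/n}$ to dominate the center-estimation noise. (4) Assemble: combine the per-point probability $\exp(-cs^2)$ with Markov's inequality on $\sum_a\1\{a\text{ misassigned}\}$, or more carefully a concentration argument, to get $err(\hat G,G^*)\le e^{-c''s^2}$ on an event of probability $1-c'/n^2$; the condition \eqref{eq:condition_snr:K} guarantees $s^2\gtrsim\log n$ so the failure probability is genuinely $O(n^{-2})$.

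The main obstacle I expect is the circular dependence between the quality of the estimated centers $\hat\mu_k$ and the misclassification rate: a large misclassification rate degrades the centers, which in turn can cause more misclassification. Resolving this requires either (a) a fixed-point/contraction argument showing that the map ``current error $\mapsto$ implied error'' has a small stable value, initialized from a crude a priori bound (e.g. the trivial bound $err\le 1$, or a bound coming from the fact that one can always restrict attention to the sublevel set $\{G:\mathrm{Crit}(G)\le\mathrm{Crit}(G^*)\}$), or (b) a clever one-shot argument that directly bounds, uniformly over all partitions $G$ in the relevant sublevel set, the discrepancy between $\hat\mu^G_k$ and $\mu_k$ in terms of $err(G,G^*)$ and then closes the loop. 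A secondary technical point is obtaining uniform (over all balanced partitions, hence over up to $\binom{n}{n/K}^K$ or so configurations) control of the noise cross-terms $\langle E_a,\bar E_{G_k}\rangle$ and $\|\bar E_{G_k}\|^2$; this is handled by chaining / $\varepsilon$-net arguments on the Grassmannian of group-indicator vectors, or by peeling over the value of $err(G,G^*)$, and is responsible for the precise shape $s^2=\Delta^2\wedge n\Delta^4/(pK)$ (the first regime when the separation itself is the bottleneck, the second when the dimension-induced center variance is). I would lean on the machinery already developed in \cite{giraud2019partial} and \cite{ndaoud2022sharp} for these uniform deviation bounds rather than redo them from scratch.
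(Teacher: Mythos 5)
Your plan follows the Lloyd-style, per-point route (misassignment events, center estimation, fixed-point argument), whereas the paper instead lifts the problem to the Peng--Wei matrix formulation: exact $K$-means is $\hat B\in\argmax_{B\in\mathcal B}\langle YY^T,B\rangle$ over the finite set $\mathcal B$ of normalized partnership matrices, the optimality condition $\langle YY^T,\hat B-B^*\rangle\ge 0$ is split into a signal term, a cross term and a quadratic noise term, and the clustering error is recovered from $\|B^*-B^*\hat B\|_1$. That difference of route is legitimate in principle, but your proposal has a concrete gap at exactly the point you label ``secondary'': the uniform control of the quadratic noise terms $\|\bar E_{G_k}\|^2$, $\langle E_a,\bar E_{G_k}\rangle$ over all candidate partitions. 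You propose to ``lean on the machinery already developed in \cite{giraud2019partial} and \cite{ndaoud2022sharp} rather than redo it from scratch.'' The machinery of \cite{giraud2019partial} is developed for the convex relaxation class $\mathcal C\supset\mathcal B$, and over $\mathcal C$ the quadratic noise term can only be bounded by a quantity of order $\delta_B\cdot(n/m)\asymp \delta_B\cdot K$ (plus the $\sqrt{p/m}$ part). Feeding that into the optimality condition yields the requirement $\Delta^2\wedge n\Delta^4/(pK)\gtrsim K$, i.e.\ essentially the computational barrier $\Delta^2\gtrsim\sqrt{pK^2/n}$ --- not the information barrier \eqref{eq:condition_snr:K} with its $\log(K)$ dependence that the theorem asserts. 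The entire point of this theorem is that exact $K$-means beats the rate achievable by the relaxed estimator, so the existing uniform deviation bounds cannot be borrowed; they are provably too weak for the claim.

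The new ingredient the paper supplies, and which your plan is missing, is Lemma~\ref{lem:noise}: an improved bound $\lesssim\delta_B(\log(n/m)\vee m^+/m+\sqrt{(p/m)(\log(n/m)\vee m^+/m)})$ on the quadratic noise, valid only over the finite set $\mathcal B$ of genuine partition matrices. Its proof is not a routine chaining or $\varepsilon$-net argument: it combines Hanson--Wright with a peeling over the slices $\mathcal B_j=\{B:\delta_B\in[j-1,j]\}$ and a delicate counting lemma showing $|\mathcal B_j|\le\binom{n}{j\wedge n}K^{3j}2^{jm^+/m}$, which itself rests on a combinatorial matching between groups of $G$ and $G^*$ (Lemma~\ref{lem:denomb2}). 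Without an ingredient of this type --- some quantitative exploitation of the discreteness of the set of partitions at error level $j$ --- neither your per-point route nor the matrix route closes with the claimed $\log(K)$ separation. Your identification of the circular center-quality/error dependence as the ``main obstacle'' is therefore slightly misplaced: that part is handled by standard arguments (Lemmas~\ref{lem:signal} and~\ref{lem:cross} carry over from \cite{giraud2019partial} unchanged); the real hurdle is the entropy count behind the uniform quadratic-noise bound.
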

This result follows from the more precise Theorem \ref{thm:error_K_means} stated and proved in Appendix \ref{prf:thm_error_K_means:simple}.
The rate $e^{-c''s^{2}}$ for the proportion of misclassified points, matches the optimal probability of wrongly classifying a data point given the label of all the others \cite{giraud2019partial}. The term $\Delta^2$ in $s^2$ corresponds to the rate in low-dimension, while the term ${n \Delta^4\over pK}$ is induced by the minimal error $\sigma\sqrt{pK/n}$ for estimating the means $\mu_k$ in dimension $p$ with $n/K$ observations. 
We underline yet in Section~\ref{sec:discussion}, that the minimal separation   (\ref{eq:condition_snr:K}) for clustering cannot be readily derived from the minimal estimation rate for the means. 

In the high-dimensional setting $p\geq n$, Theorem \ref{thm:error_K_means:simple} ensures that for 
\[\Delta^2 \gtrsim \sqrt{pK\log(K)\over n} \]
the exponential exponent $c''s^2$ is larger than $(1+\eta)\log(K)$, so 
the proportion of misclustered points by exact Kmeans  is smaller than $1/K^{1+\eta}$. Exact Kmeans then performs a non-trivial clustering in this regime, breaking the computational barrier (\ref{eq:separation}) established in the previous section, when $K$ is larger than some constant $K_0$.

\subsection{Information lower bound}

For $\bar\Delta>0$, let $\Theta_{\bar\Delta}$ denote the set of $K$-tuples $\mu_{1},\ldots,\mu_{K}\in(\R^{p})^{K}$ that satisfy
$\Delta \geq \bar\Delta$, with $\Delta$ defined in (\ref{eq:Delta}).
Given $\mu_{1},\ldots, \mu_{K}\in(\R^{p})^{K}$ and $G$ a partition of $[1,n]$, we denote by $\P_{\mu,G}$ the probability distribution of the random variables $(Y_{1},\ldots ,Y_n)\in(\R^{p})^{n}$ generated as follows: $Y_{1},\ldots ,Y_n$ are independent, and $Y_{i}\sim\mathcal{N}(\mu_{k},\sigma^{2}I_{p})$ when $i\in G_{k}$. The next result, proved in Appendix \ref{prf:lowerboundpartial}, provides a minimax lower bound on the partial recovery of a partition.

\begin{thm}\label{thm:lowerboundpartial}
    There exist $c$,$c'$, $C$ and $K_{0}$ numerical constants such that the following holds. Assume that $p\geq c\log(K)$, $K\geq K_{0}$, $n\geq 2K$, and $\alpha\geq \frac{3}{2}$. Then, for any estimator $\hat{G}$, we have $$\sup_{G\in\mathcal{P}_{\alpha}}\sup_{\mu\in\Theta_{\bar \Delta}}\E_{\mu,G}[err(\hat{G},G)]\geq C\,,\quad\text{when}\quad \bar\Delta^{2}\leq c'\pa{\log(K)\vee \sqrt{\frac{pK\log(K)}{n}}}.$$
\end{thm}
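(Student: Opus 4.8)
The plan is to prove this minimax lower bound via a reduction to a hypothesis testing / Fano-type argument, following the standard recipe for partial-recovery lower bounds but carefully splitting into the two regimes dictated by the two terms in $\bar\Delta^2 \lesssim \log K \vee \sqrt{pK\log K/n}$. First I would observe that it suffices to exhibit, for a well-chosen subfamily of parameters, a prior over $(\mu, G)$ under which no estimator can achieve expected error $\le C$; by the usual averaging argument this lower-bounds the supremum. The natural construction is to fix a "generic" configuration of means $\mu_1,\dots,\mu_K$ with pairwise separation exactly (a constant times) $\bar\Delta$, fix a balanced partition $G$, and then perturb: move a small fraction of points between two nearby clusters. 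Detecting whether a given point $i$ was assigned to cluster $k$ or to a competing cluster $k'$ is exactly a hypothesis test between $\mathcal N(\mu_k,\sigma^2 I_p)$ and $\mathcal N(\mu_{k'},\sigma^2 I_p)$ — but crucially the learner does not know the $\mu_k$'s, so the test must be done with the means replaced by their empirical estimates from the $n/K$ points per cluster. This is where the dimension enters.

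The heart of the argument is the per-point misclassification probability. If the means were known, the Bayes error for classifying one point between two clusters at squared distance $2\sigma^2\bar\Delta^2$ is $\exp(-\Theta(\bar\Delta^2))$, which is bounded below by a constant precisely when $\bar\Delta^2 \lesssim 1 \lesssim \log K$ — this handles the first term. For the second term I would argue that even the optimal procedure must estimate the means, and in dimension $p$ with only $\sim n/(K\alpha)$ samples per cluster the means are known only up to Euclidean error $\sim \sigma\sqrt{pK/n}$; plugging an estimate with this error into the likelihood-ratio test degrades the effective separation, inflating the per-point error to $\exp(-\Theta(n\bar\Delta^4/(pK)))$, which is $\Omega(1)$ when $\bar\Delta^2 \lesssim \sqrt{pK\log K/n}$ (the $\log K$ slack absorbing the union-bound cost of there being $K$ candidate clusters to confuse a point with, and ensuring a constant fraction — not just one — of points are misassigned in expectation). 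Concretely I would set this up as a Bayesian model matching Definition~\ref{def:prior} but tuned to the threshold, write the posterior distribution of $k_i^*$ given $Y$ and given the other labels, and lower-bound its entropy / the probability it disagrees with the MAP; an assouad-style decomposition over the $n$ points, using that the problem "decouples" approximately across points conditionally on the means, converts this per-point bound into the claimed $err(\hat G, G) \gtrsim C$ bound. The conditions $p \ge c\log K$, $K \ge K_0$, $n \ge 2K$, $\alpha \ge 3/2$ are exactly the slack needed to (i) make the Gaussian concentration estimates valid, (ii) make $\log K$ large enough to dominate lower-order terms, and (iii) allow the perturbation to respect balancedness.

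The main obstacle I anticipate is handling the \emph{unknown means} rigorously rather than heuristically: one cannot simply "plug in the empirical mean" because the learner could in principle use global structure. The clean way around this is to make the construction Bayesian and information-theoretic — put a prior on $\mu$ (e.g. Gaussian or Rademacher-on-a-hypercube as in Definition~\ref{def:prior}), and then the true posterior over $k_i^*$ genuinely integrates out $\mu$, so the $\sigma\sqrt{pK/n}$ estimation error appears automatically as posterior variance of $\mu_k$ and no "plug-in" step is needed. The technical work is then to show the mutual information $I(k_i^*; Y \mid k_{-i}^*)$ is bounded away from its maximum $\log K$ by a constant, uniformly over $i$, in the stated regime — this is a second-moment / $\chi^2$-divergence computation between the mixture components after marginalizing the means, and it is essentially the "single-point detection" computation whose clustering analogue is the detection threshold of \cite{banks2018information}, so I would expect the proof to borrow that style of bound (controlling $\chi^2$ between $\mathbb P_{k=k_0}$ and $\mathbb P_{k=k_1}$ via $\E[\exp(\langle \mu_{k_0}-\mu_{k_1}, \cdot\rangle/\sigma^2)]$-type moment generating functions). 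A secondary nuisance is the permutation-invariance in the definition~\eqref{eq:error} of $err$, handled as usual by noting that a random $O(1)$-fraction perturbation cannot be "undone" by relabelling since relabelling is global while the perturbation is spread across $\Omega(n)$ points in a balanced way.
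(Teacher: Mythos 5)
Your overall strategy is sound and shares the paper's two key structural choices -- splitting into the regimes $\bar\Delta^2\lesssim\log K$ (fixed, maximally spread means) and $\log K\lesssim\bar\Delta^2\lesssim\sqrt{pK\log K/n}$ (a Rademacher-hypercube prior on the means, marginalized out so that the $\sigma\sqrt{pK/n}$ estimation error of the centers enters automatically), together with a reduction lemma to pass from the prior back to the worst case over $\Theta_{\bar\Delta}$. Where you diverge is in the information-theoretic engine: you propose a per-point, genie-aided Assouad/Fano argument bounding $I(k_i^*;Y\mid k_{-i}^*)$ via a $\chi^2$/second-moment computation, whereas the paper runs a single \emph{global} Fano argument (Lemma~\ref{lem:Fano2}) over a packing set $S$ of balanced partitions with $\log|S|\gtrsim n\log K$ and pairwise $err(G,G')\geq a$ (Lemma~\ref{lem:numberpartitions}), and controls $KL(\P_{\rho,G},\P_{\rho,\overline G})$ for the marginalized laws by an explicit $\log\cosh$ expansion (Lemma~\ref{lem:ineqKLpartial}), getting $KL\lesssim \frac{n^2}{pK}\bar\Delta^4\lesssim c_2^2\, n\log K$. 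The global route buys two things your route must work harder for: the permutation ambiguity in $err$ is absorbed once and for all into the construction of $S$ (rather than argued pointwise), and one only ever compares two fixed mixture laws, avoiding the data-dependent posterior on $\mu_k$ given the remaining observations that your conditional mutual information bound would have to control.

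There is one quantitative step in your sketch that, taken literally, does not deliver the stated threshold. You assert that the per-point error $\exp(-\Theta(n\bar\Delta^4/(pK)))$ is $\Omega(1)$ when $\bar\Delta^2\lesssim\sqrt{pK\log K/n}$; at that separation the exponent is $\Theta(\log K)$, so this binary misclassification probability is $K^{-\Theta(1)}\to 0$, and requiring it to be $\Omega(1)$ would only yield the weaker threshold $\bar\Delta^2\lesssim\sqrt{pK/n}$. The $\sqrt{\log K}$ gain cannot come from the binary test; it must come from Fano applied to the $K$-ary (or, in the paper, the $e^{\Theta(n\log K)}$-ary) hypothesis class, i.e.\ from the $\log K$ sitting in the \emph{denominator} of Fano's bound, so that an information budget of order $\log K$ per point still leaves a constant error probability. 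You gesture at this (``the $\log K$ slack absorbing the union-bound cost of there being $K$ candidate clusters''), but the related statement that $I(k_i^*;Y\mid k_{-i}^*)$ should be ``bounded away from its maximum $\log K$ by a constant'' is also too weak: Fano needs $I\leq(1-C)\log K$, a constant \emph{fraction} below the maximum, not an additive constant. With those two points corrected -- and with an explicit handling of the prior's mass outside $\Theta_{\bar\Delta}$ as in Lemmas~\ref{lem:reduction2} and~\ref{lem:lowerboundseparation} -- your plan would go through.
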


 For exact Kmeans, according to 
(\ref{eq:exponentialdecrease}) from Theorem \ref{thm:lowerboundpartial},
non-trivial clustering is achieved for balanced clusters when
\[s^2= \Delta^2 \wedge {n \Delta^4\over pK} \gtrsim  \log(K),\]
or equivalently $\Delta^{2}\gtrsim {\log(K)\vee \sqrt{\frac{pK\log(K)}{n}}}$.
 Exact Kmeans is then information optimal for non-trivial clustering, and the information threshold for non-trivial clustering is 
\[\Delta^2\gtrsim {\log(K)\vee \sqrt{\frac{pK\log(K)}{n}}}.\]
We observe that this information barrier for clustering matches, up to a possible constant,  the information barrier $\bar\Delta^2\geq 2\sqrt{\gamma K\log(K)}+2\log(K)$  established in \cite{banks2018information} for detecting clusters in a Bayesian setting with a Gaussian prior  $\mathcal{N}(0,{p^{-1}\bar\Delta^2}I_p)$  on the $\mu_k$. Hence, there is no (significant) test-estimation gap at the information level. We refer to Section~\ref{sec:discussion} for a comparison of the information rates for clustering and estimation. 

We complement this result with a lower bound for perfect recovery of the planted partition, proved in Appendix \ref{prf:lowerboundexact}.

\begin{thm}\label{thm:lowerboundexact}
    There exist numerical constants $c$, $C$ and $n_{0}$ such that the following holds. Assume that  $n\geq 9K/2$, $\alpha\geq \frac{3}{2}$ and $n\geq n_{0}$. Then, for any estimator $\hat{G}$, $$\sup_{G\in\mathcal{P}_{\alpha}}\sup_{\mu\in\Theta_{\bar\Delta}}\P_{\mu,G}\cro{\hat{G}\neq G}>C\,, \quad\text{when}\quad \bar\Delta^{2}\leq c\pa{\log(n)\vee \sqrt{\frac{pK\log(n)}{n}}}.$$
\end{thm}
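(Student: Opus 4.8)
The plan is to reduce the perfect recovery problem to a hypothesis testing problem via a standard Fano/Le Cam-type argument, but now tuned to the stronger $\log(n)$ scale coming from the union bound over $n$ points. First I would fix a reference balanced partition $G$ and a reference mean configuration $\mu\in\Theta_{\bar\Delta}$, and construct a family of competitor configurations by moving a single point: for each $i\in[1,n]$ and each admissible target group $k$, let $G^{(i\to k)}$ be the partition obtained from $G$ by reassigning point $i$ to group $k$ (keeping balancedness, which is possible since $\alpha\geq 3/2$ and $n\geq 9K/2$ give enough slack). The key observation is that perfect recovery requires distinguishing $G$ from \emph{every} such neighbor $G^{(i\to k)}$ simultaneously; if the pairwise testing error between $\P_{\mu,G}$ and $\P_{\mu,G^{(i\to k)}}$ does not go to zero, then no estimator can have $\P[\hat G = G]$ bounded away from a constant, because an adversary can pick the hardest neighbor. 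More precisely I would lower bound $\sup_{G'}\P_{\mu,G'}[\hat G\neq G']$ by the Bayes risk of a uniform prior over $\{G\}\cup\{G^{(i\to k)}\}$, and control that risk from below using the second-moment / $\chi^2$ method or simply by noting that the log-likelihood ratio between $\P_{\mu,G}$ and $\P_{\mu,G^{(i\to k)}}$ depends only on the single coordinate $Y_i$.

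The heart of the matter is the single-point computation. Under $\P_{\mu,G}$ we have $Y_i\sim\mathcal N(\mu_{k^*_i},\sigma^2 I_p)$, while under $\P_{\mu,G^{(i\to k)}}$ we have $Y_i\sim\mathcal N(\mu_k,\sigma^2 I_p)$; the likelihood ratio test between these two Gaussians has error probability $\Phi(-\|\mu_{k^*_i}-\mu_k\|/(2\sigma))\geq \Phi(-\Delta\cdot\!\sqrt{?})$, but the relevant regime is when $\Delta$ is such that this error, multiplied by the number $\gtrsim n$ of ways to corrupt, is still $\Omega(1)$. This forces the threshold $\bar\Delta^2\lesssim \log(n)$ on the first branch of the stated bound. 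For the second branch $\sqrt{pK\log(n)/n}$, I would instead use a \emph{global} perturbation: rather than moving one point, randomly relabel a $\Theta(1/K)$ fraction, or equivalently build a large packing of partitions and invoke Fano — but with the twist that the mean vectors $\mu_k$ are themselves only constrained by $\Delta\geq\bar\Delta$, so one can choose them adversarially close together (at separation exactly $\bar\Delta$) and then the effective per-point signal-to-noise ratio, after accounting for the $n/K$ samples per cluster being spread over $p$ dimensions, scales like $n\bar\Delta^4/(pK)$; requiring this to be $\lesssim\log(n)$ gives the second term. Concretely I expect to reuse the machinery behind Theorem \ref{thm:lowerboundpartial} (the appendix \ref{prf:lowerboundpartial} argument), upgrading $\log(K)$ to $\log(n)$ by summing failure probabilities over all $n$ points rather than over a single "typical" point.

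The main obstacle, I anticipate, is handling the fact that $\mu$ is universally quantified inside the supremum, so the adversary gets to \emph{choose} the mean configuration to make recovery hardest while still respecting $\Delta\geq\bar\Delta$; this is what lets the second-moment computations go through with the clean separation-$=\bar\Delta$ choice, but it requires care to ensure the chosen $\mu$ and the packing of partitions are jointly admissible (balancedness of every partition in the packing, and $\Delta\geq\bar\Delta$ for the fixed $\mu$). A secondary technical point is verifying that the constants can be made numerical (independent of $\alpha$, unlike Theorem \ref{thm:error_K_means:simple}) — this should follow because the lower bound construction only needs $\alpha\geq 3/2$ to guarantee room for the perturbations, and the testing-error estimates involve only absolute Gaussian tail bounds. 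I would organize the write-up as: (1) reduction of perfect-recovery error to a neighbor-testing Bayes risk; (2) the one-point perturbation bound yielding the $\log(n)$ term; (3) the packing/Fano argument with adversarial $\mu$ yielding the $\sqrt{pK\log(n)/n}$ term; (4) combine and collect constants.
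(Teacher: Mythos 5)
Your first branch (the $\log(n)$ term) is essentially the paper's argument: the paper fixes a collinear configuration $\mu_k=\sqrt{2}k\bar\Delta e$, builds the family $Sh(G^{(0)})$ of partitions obtained by \emph{swapping} a pair of points between adjacent groups (a swap rather than a single move, so that group sizes are preserved), notes that each such neighbor has $KL(\P_{\mu,G^{(i,j)}},\P_{\mu,G^{(0)}})=2\bar\Delta^2$ while $\log|Sh(G^{(0)})|\gtrsim\log n$, and applies Fano. Your one-point/likelihood-ratio version of this is a cosmetic variant and would go through.

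The second branch is where there is a genuine gap. You propose to keep a \emph{fixed} adversarial $\mu$ at separation exactly $\bar\Delta$ and to use a larger packing (relabelling a $\Theta(1/K)$ fraction of points). Neither ingredient can produce the threshold $\sqrt{pK\log(n)/n}$. For any fixed $\mu\in\Theta_{\bar\Delta}$, the KL between $\P_{\mu,G}$ and $\P_{\mu,G'}$ is $\sum_i\|\mu_{k_i^G}-\mu_{k_i^{G'}}\|^2/(2\sigma^2)\geq \bar\Delta^2\times(\text{number of reassigned points})$, so a Fano bound with $\log M\lesssim n\log K$ hypotheses can never certify hardness beyond $\bar\Delta^2\lesssim\log(n)$-type scales; the dimension $p$ never enters. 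The heuristic you invoke (``$n/K$ samples spread over $p$ dimensions'') is the right answer but requires making the means \emph{unknown}: the paper draws $\mu$ from a Rademacher prior $\rho$ on $\{-\eps,+\eps\}^{p\times K}$ with $\eps^2=2\bar\Delta^2/p$, keeps the \emph{same} small family of two-point swaps, and shows via a delicate $\log\cosh$/Taylor computation (Lemma \ref{lem:ineqKLexact}) that the KL between the \emph{mixtures} $\P_{\rho,G^{(i,j)}}$ and $\P_{\rho,G^{(0)}}$ collapses from $\bar\Delta^2$ down to order $\bar\Delta^4 n/(pK)$; comparing this with $\log|Sh(G^{(0)})|\asymp\log n$ is exactly what yields $\bar\Delta^2\lesssim\sqrt{pK\log(n)/n}$. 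One then needs a reduction lemma (Lemma \ref{lem:reduction}) plus a Hoeffding bound on $\rho(\R^{p\times K}\setminus\Theta_{\bar\Delta})$ to convert the Bayesian statement back into the minimax one over $\Theta_{\bar\Delta}$. Also note that relabelling a constant fraction of points is the construction for the \emph{partial}-recovery bound (Theorem \ref{thm:lowerboundpartial}) and would only give $\log(K)$ in place of $\log(n)$; for exact recovery the correct family is the minimal two-point perturbations. Your write-up should therefore replace step (3) by: prior on $\mu$, mixture-KL computation for a single swap, Fano over $Sh(G^{(0)})$, and the de-randomization step.
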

For $K\leq \log(n)$, we recover the optimal separation from \cite{ndaoud2022sharp} ($K=2$) and \cite{chen2021cutoff}, up to a multiplicative constant.
Perfect recovery corresponds to a proportion of misclustered points $err(\hat G,G^*)$ smaller than $1/n$. For exact Kmeans, according to 
(\ref{eq:exponentialdecrease}), perfect recovery is achieved for 
\[s^2= \Delta^2 \wedge {n \Delta^4\over pK} \gtrsim  \log(n),\]
or equivalently $\Delta^{2}\gtrsim {\log(n)\vee \sqrt{\frac{pK\log(n)}{n}}}$.
 Exact Kmeans is then also optimal for exact recovery, and the 
information threshold for perfect clustering is then
\begin{equation}\label{eq:perfect:separation}
    \Delta^2\gtrsim {\log(n)\vee \sqrt{\frac{pK\log(n)}{n}}}.
\end{equation}
When $K\lesssim \log(n)$, \cite{giraud2019partial} shows that an SDP relaxation of Kmeans \cite{PengWei07} also succeeds to perfectly recover the clusters when (\ref{eq:perfect:separation}) is met, so there is no separation in this regime --see also \cite{chen2021cutoff}. Yet, in the high-dimensional setting $p\geq n$, we observe that the threshold (\ref{eq:perfect:separation}) is smaller than the computational barrier (\ref{eq:separation}) when $K\gtrsim \log(n)$, so there is also a computation-information gap for perfect recovery  in this regime, thereby confirming the conjecture of~\cite{chen2021cutoff}.

\section{Discussion}\label{sec:discussion}
\subsection{Comparison to the moderately low-dimensional setting}
\paragraph{No non-isotropic effect.}
We emphasize that compared to the moderately low-dimensional setting $n\geq \text{poly}(p,K)$,  the computational hardness of clustering in the high-dimensional regime $p\geq n$ is not driven by any non-isotropic effect. Indeed, contrary to the low-dimensional setting where there is no computation-information gap for learning mixture of isotropic Gaussian \cite{Regev2017,LiuLi2022}, we prove the computation-information gap for a mixture of Gaussians with  covariances known to be all equal to the identity. Furthermore, there is no difference between the Gaussian and the sub-Gaussian setting, in the sense that for a mixture of possibly anisotropic sub-Gaussian distribution, clustering is also possible in polynomial time above the computational barrier (\ref{eq:separation}) established for Gaussian mixture, for example with an SDP relaxation of Kmeans \cite{PengWei07, giraud2019partial}
for $K\leq \sqrt{n}$, or with single linkage hierarchical clustering \cite{HDS2} for $K>\sqrt{n}$.  

\paragraph{Comparison to moderately low dimension.}
Contrary to the high-dimensional setting, where the computational hardness seems tightly related to the BBP transition  for the largest eigenvalue of the Gram matrix of the observations \cite{BBP05,Paul07}, the computational hardness in moderately low-dimensional settings $n\geq \text{poly}(p,K)$ is completely driven by the unknown non-isotropy  of the components of the mixture. A first example of non-isotropic mixture giving rise to a computation-information gap is the so-called example of "parallel pancakes" \cite{DiakonikolasFOCS17}. In this example, the $K$ unknown centers of the Gaussian distribution are aligned along an unknown direction $v$, and the unknown covariances are all equal to the identity, except in the direction $v$, where they are very thin. The key feature of this construction, is that the $2K-1$ first moments of the mixture distribution match those of a standard Gaussian, so that it is impossible to figure out the direction $v$ from the $2K-1$ first moments. As a consequence, \cite{DiakonikolasFOCS17} proves a lower-bound for the running time of any Statistical-Query algorithm (SQ-algorithm), enforcing a computation-information gap between SQ-algorithms and information-optimal algorithms in this moderately  low-dimensional setting.  

This approach has been extended by \cite{diakonikolasCOLT23b}, who has adapted this construction for centers with separation $\Delta^2\geq k^{\eta}$ much larger than the information-minimal separation $\Delta^2\gtrsim\log(K)$ \cite{Regev2017} in moderately  low dimension.
For such a large separation, the centers are drawn according to a standard Gaussian on a (unknown) random subspace of dimension $d\approx \Delta^2$. 
\cite{diakonikolasCOLT23b} then proves again  a lower-bound for the running time of any Statistical-Query algorithm (SQ-algorithm), enforcing a computation-information gap between SQ-algorithms and information optimal algorithms in this (moderately low-dimensional) setting.  

\subsection{Comparing estimation and clustering rates}
Assume with no loss of generality that $\sigma^2=1$.
Theorems \ref{thm:error_K_means:simple}-\ref{thm:lowerboundpartial} show that the information-minimal separation for clustering is 
$\Delta^2\gtrsim \log(K)\vee \sqrt{Kp\log(K)/n}$. When the separation $\Delta^2$ is larger than $\log(K)$, it is known that the information-minimal rate for estimating the means $\mu_k$ is at least $\sqrt{Kp/n}$. This rate stems from the fact that we estimate $p$-dimensional vectors with about $n/K$ observations for each of them. In the moderately low-dimensional regime $n\geq pK^3$,
estimation at this rate (up to possible log factors) is actually information-possible  \cite{Kwon20}.  We then underline that for $\log(K)\leq Kp/n$, at the information-minimal separation for clustering $\Delta^2\asymp \sqrt{Kp\log(K)/n}\geq \log(K)$, we cannot estimate the means $\mu_k$ better than with a precision $\sqrt{Kp/n}\asymp \Delta^2$, up to log factors. This precision  is much larger than the minimum distance $\sqrt{2}\Delta$ between the means. In particular, a natural {\it estimate-then-cluster} strategy that would consist in (i) estimating the means with precision at least $\Delta$, and then (ii) apply Linear Discriminant Analysis with the estimated means $\hat \mu_k$, would require a separation at least $\Delta^2\gtrsim \log(K)\vee \pa{Kp/n}$ (up to possible log factors), which is much larger than the   information-minimal separation (\ref{eq:condition_snr:K}) for clustering. 
%$\Delta^2\gtrsim \log(K)\vee \sqrt{Kp\log(K)/n}$.
The message is then that optimal rates for the estimation problem do not directly provide useful information for the clustering problem.

\subsection{Computation-information gap for partnership matrix estimation}
While our primary interest is on clustering, we point out below that our results provide evidence for the existence of a computation-information gap for the estimation of the partnership matrix $M^*$ in Frobenius norm, in the high-dimensional regime $p\geq n$. 

As discussed in Section \ref{sec:low-degree}, starting from a partition $\hat G$, we can estimate the partnership matrix $M^*_{ij}=\1_{k^*_i=k^*_j}$ with $M^{\hat G}=\1_{k^{\hat G}_i=k^{\hat{G}}_j}$.  The mean squared error ${1\over n(n-1)}\|M^{\hat G}-M^*\|^2_F$ is then upper bounded by twice the clustering error $err(\hat G, G^*)$. 
Relying on this connection, we prove below that, when the dimension is high $p\geq n$, and when there is a large number of points $n\gtrsim K^{2}\log(n)$, the exact Kmeans provides an estimation of $M^*$  below the computational barrier in the generative model of Definition \ref{def:prior}.

\begin{cor}\label{cor: squareKmeans}
Let us consider the generative model of Definition \ref{def:prior}. Assume that $n\geq cK^{2}\log(n)$, with $c>0$ a numerical constant, and $p\geq n$. There exists $c'$ and $C$, two numerical constants, such that  the partnership matrix estimation induced by the exact Kmeans partition $\hat G$ fulfills
\[ {1\over n(n-1)}\E\cro{\|M^{\hat G}-M^*\|_F^2}\leq {C\over n^2}\ ,\quad \text{when}\quad \Delta^{2}\geq c'\sqrt{\frac{pK\log(n)}{n}}\,.\]
\end{cor}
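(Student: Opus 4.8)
The plan is to combine the deterministic clustering guarantee of Theorem \ref{thm:error_K_means:simple} (via its sharper Appendix version, Theorem \ref{thm:error_K_means}) with the elementary inequality $n^{-2}\|M^{\hat G}-M^*\|_F^2 \le 2\,err(\hat G,G^*)$ recalled at the start of Section \ref{sec:low-degree}, and then to control the (small-probability) complement event where the clustering guarantee fails by a crude worst-case bound on the loss, using that $\|M^{\hat G}-M^*\|_F^2 \le n^2$ always. The one subtlety is that Theorem \ref{thm:error_K_means:simple} is stated for a \emph{fixed} balanced partition $G^*\in\mathcal{P}_\alpha$, whereas here $G^*$ is random under the prior of Definition \ref{def:prior}; I will handle this by conditioning on $(k_i)_{i\le n}$ and checking that the realized partition lies in $\mathcal{P}_\alpha$ with overwhelming probability.

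First I would verify the balancedness: under Definition \ref{def:prior} the $k_i$ are i.i.d.\ uniform on $[K]$, so each $|G^*_k|$ is $\mathrm{Binomial}(n,1/K)$, and since $n\ge cK^2\log n$ we have $n/K \ge cK\log n \gg \log n$; a Chernoff bound plus a union bound over the $K\le \sqrt{n/(c\log n)}$ groups gives that $\max_k|G^*_k|/\min_k|G^*_k|\le \alpha$ (say $\alpha = 3/2$, or whatever constant Theorem \ref{thm:error_K_means} requires) on an event $\mathcal{B}$ with $\P[\mathcal{B}^c]\le 1/n^2$. On $\mathcal{B}$, conditionally on $(k_i)$, the data $(Y_i)$ follow exactly the Gaussian mixture model of Theorem \ref{thm:error_K_means:simple} with this balanced $G^*$ and with separation $\Delta^2 = \bar\Delta^2(1+O_\P(p^{-1/2}))$ as noted after Definition \ref{def:prior}; choosing the constant $c'$ in the statement large enough relative to the constant $c$ of Theorem \ref{thm:error_K_means:simple}, the hypothesis $\Delta^2 \ge c'\sqrt{pK\log(n)/n}$ absorbs both the $\log K$ term (since $K\le \sqrt{n}$ forces $\sqrt{pK\log(n)/n}\gtrsim$ this, using $p\ge n$) and the $O_\P(p^{-1/2})$ slack, so condition \eqref{eq:condition_snr:K} holds.

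Next, apply Theorem \ref{thm:error_K_means:simple} conditionally on $(k_i)\in\mathcal{B}$: with conditional probability at least $1-c''/n^2$ one has $err(\hat G,G^*)\le e^{-c'''s^2}$ with $s^2 = \Delta^2 \wedge n\Delta^4/(pK)$. Here the chosen separation makes $n\Delta^4/(pK)\gtrsim \log n$ and $\Delta^2 \gtrsim \log n$ (again using $p\ge n$ and $n\gtrsim K^2\log n$ so that $\sqrt{pK\log n/n}\ge \sqrt{\log n}\cdot\sqrt{pK/n}\gtrsim \log n$ is not automatic — one should instead note $s^2\gtrsim \log n$ follows directly from $\Delta^2\gtrsim\sqrt{pK\log n/n}$ since then $n\Delta^4/(pK)\gtrsim \log n$, and separately $\Delta^2\ge c'\sqrt{pK\log n/n}\ge c'\sqrt{\log n}\ge \log n$ fails for large $n$, so the binding term is $n\Delta^4/(pK)$). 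In any case $s^2\ge C_1\log n$ for a constant $C_1$ that can be taken as large as we like by enlarging $c'$, hence $e^{-c'''s^2}\le n^{-2}$ after adjusting constants. Thus on an event of total probability $\ge 1 - (c''+1)/n^2$ we get $n^{-2}\|M^{\hat G}-M^*\|_F^2 \le 2\,err(\hat G,G^*)\le 2/n^2$. Finally, bound the expectation: $n^{-2}\E[\|M^{\hat G}-M^*\|_F^2] \le 2/n^2 + \P[\text{bad event}]\cdot \sup\|M^{\hat G}-M^*\|_F^2/n^2 \le 2/n^2 + \big((c''+1)/n^2\big)\cdot 1 \le C/n^2$, since $\|M^{\hat G}-M^*\|_F^2\le \|M^{\hat G}\|_F^2 + \|M^*\|_F^2 + 2\langle\cdot,\cdot\rangle \le n^2$ (both matrices are $\{0,1\}$-valued with at most $n^2$ entries; more simply each is at most $n^2$ in squared Frobenius norm and their difference is too). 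Dividing by $n(n-1)$ instead of $n^2$ changes the constant by a factor $\le 2$.

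The main obstacle is bookkeeping rather than conceptual: making sure the single hypothesis $\Delta^2\ge c'\sqrt{pK\log(n)/n}$ simultaneously (i) implies the two-term condition \eqref{eq:condition_snr:K} of Theorem \ref{thm:error_K_means:simple} — which uses $p\ge n$ and $K^2\log n\le n$ to dominate the $\log K$ term — and (ii) forces the exponent $s^2$ to exceed $2\log n$ so that the conditional error bound $e^{-c'''s^2}$ is $\le n^{-2}$, all while (iii) the $O_\P(p^{-1/2})$ fluctuation of the realized separation around $\bar\Delta$ is harmless because $p\ge n\to\infty$. None of these is deep, but the chain of constants ($c\to\alpha\to$ the constant in Theorem \ref{thm:error_K_means:simple} $\to c'$) must be threaded in the right order; I would state it as "choose $c$ large, then $\alpha$, then demand $c'$ large enough depending on these" to avoid circularity.
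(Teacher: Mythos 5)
Your overall architecture is the same as the paper's: show that under the prior of Definition \ref{def:prior} the realized partition is balanced and the realized separation is of order $\bar\Delta^2$, each with probability $1-O(n^{-2})$ (these are exactly the paper's Lemmas \ref{lem:sizegroups} and \ref{lem:separation}, which you re-derive by the same Hoeffding-plus-union-bound arguments); apply the exact Kmeans guarantee conditionally on these events; and absorb the complement event with the crude bound $\|M^{\hat G}-M^*\|_F^2\le n^2$. The only real difference is cosmetic: the paper concludes exact recovery (since $err(\hat G,G^*)<1/n$ forces $\hat G=G^*$ and hence $M^{\hat G}=M^*$), whereas you pass through $err(\hat G,G^*)\le n^{-2}$ and the inequality $\tfrac{1}{n(n-1)}\|M^{\hat G}-M^*\|_F^2\le 2\,err(\hat G,G^*)$; these are equivalent here. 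Your treatment of the balancedness, of condition \eqref{eq:condition_snr:K} (via $\log K\le\sqrt{K}\le\sqrt{pK\log(n)/n}$ for $p\ge n$), and of the final expectation bookkeeping are all fine.

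The one step that does not hold up as written is the claim that $s^2=\Delta^2\wedge\frac{n\Delta^4}{pK}\ge C_1\log n$. You correctly observe that $\Delta^2\ge c'\sqrt{pK\log(n)/n}$ only gives $\Delta^2\gtrsim\sqrt{K\log n}$ when $p\asymp n$, which is $\ll\log n$ when $K\ll\log n$; but you then assert that ``the binding term is $n\Delta^4/(pK)$,'' which is backwards: the binding term of a minimum is the \emph{smaller} one. In the regime $p\asymp n$, $K=O(1)$, one has $\frac{n\Delta^4}{pK}\asymp\log n$ but $\Delta^2\asymp\sqrt{\log n}$, so $s^2\asymp\sqrt{\log n}$ and $e^{-c''s^2}$ is nowhere near $O(n^{-2})$; the chain of inequalities then breaks. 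To be fair, the paper's own proof glosses over the very same point (it asserts that $\Delta^4\gtrsim\frac{pK}{n}\log n$ alone yields exact recovery via Theorem \ref{thm:error_K_means}, whose exponent $\tilde s^2=\Delta^2\wedge\frac{\Delta^4 m}{p}$ likewise needs $\Delta^2\gtrsim\log n$ as a second condition), and the issue disappears in the regime of interest $K\gtrsim\log n$, where $\sqrt{pK\log(n)/n}\ge\sqrt{K\log n}\ge\log n$. But as a self-contained argument for the corollary as stated, this step needs either the additional hypothesis $\Delta^2\gtrsim\log n$ (or $K\gtrsim\log n$), or a separate argument for small $K$; your explicit attempt to justify it is the one place where the proposal is actually incorrect rather than merely terse.
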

We refer to Section \ref{prf:squareKmeans} for a proof of this corollary of Theorem \ref{thm:error_K_means} stated in Appendix \ref{prf:thm_error_K_means:simple},  which slightly generalizes Theorem \ref{thm:error_K_means:simple}. 
Thus, when $ \sqrt{\frac{pK}{n}\log(n)}\lesssim\Delta^{2}\ll \sqrt{\frac{pK^{2}}{n}}$, the error obtained with the exact $K$-means estimator decays much  faster than the trivial error $\frac{1}{K}$ obtained with the trivial estimator $\rho$, and with the best polynomial of degree at most $D(n)=\log(n)^{1+\eta}$, when $\Delta^{2}\ll {(\log n)^{-6(1+\eta)}}\sqrt{{pK^{2}}/{n}}$.

\subsection{Limitations}
Inspired from \cite{WeinSchramm}, our analysis for establishing a low-degree polynomial lower bound has the nice feature to  provide a rigorous and non-asymptotic computational lower-bound, but it has the drawback 
to be a bit rough, and a spurious polylog$(n)$ factor shows up  in the computational lower bound.  
Removing this polylog$(n)$ factor in the proof would probably require a different strategy for bounding the correlation $corr^2_D$, by precisely keeping track of all terms. 
The complexity of such an analysis would go well beyond the complexity of our proof of Theorem~\ref{thm:lowdegreeclustering}. Given the already high complexity of our proof, we do not intend to pursue in this direction.

Another drawback of our analysis is that it is limited to the dimension range $p\geq n$, while a computation-information gap may also exist for smaller values of the ambient dimension $p$. Indeed, when the means $\mu_k$ are drawn i.i.d. with  Gaussian $\mathcal{N}(0,{p^{-1}\bar\Delta^2}I_p)$  distribution in $\R^p$, and in the asymptotic regime where $n$, $p$ go to infinity with $p/n\to \gamma \in [(K/2-2)^{-2},+\infty)$, \cite{lesieur2016phase} conjectures that 
non-trivial clustering is possible  in polynomial time only  for $\bar\Delta^2> \sqrt{\gamma K^2}$, while it is possible without computational constraints for $\bar\Delta^2\gtrsim \sqrt{\gamma K\log(K)}\vee \log(K)$, which is smaller for $K\gtrsim 1\vee (\gamma^{-1/2}\log K)$.
Similarly, for the problem of detecting the existence of clusters in the same specific setting, \cite{banks2018information} shows that spectral detection is not possible at the information threshold  for $\gamma\gtrsim (\log(K)/K)^2$. 
Indeed, from BBQ transition, the largest eigenvalue of the Gram matrix of the data points singles out of the bulk of the spectrum only for $\bar\Delta^2\geq \sqrt{\gamma K^2}$, while detection is information possible for $\bar\Delta^2\geq 2\sqrt{\gamma K\log(K)}+2\log(K)$.
These two results suggest the existence of a computation-information gap not only for $p\geq n$, as considered in this paper, but more generally for $p\gtrsim n(\log(K)/K)^2$ and $K\geq K_0$.
In Appendix~\ref{sec:p-smaller-than-n}, we adapt the proof of Theorem~\ref{thm:lowdegreeclustering} in order to provide a computational lower-bound when $p\leq n$. We believe that our computational barrier is not tight in this regime, yet it already provides evidence for the existence of a computation-information gap  when
$${n\over K}\vee K\ \widetilde{\ll}\ p\leq n\,,$$
where $\ \widetilde{\ll}\ $ hides polylog$(n)$ factors. 
Proving a more tight  computational barrier for the range $n(\log(K)/K)^2\lesssim p\leq n$ is left for future investigation.

\newpage
\printbibliography
\appendix
\newpage

\section{Proof of Theorem \ref{thm:lowdegreeclustering}}\label{prf:lowdegreeclustering}

With no loss of generality, we assume in all the proof that $\sigma^2=1$.
Let $D\in \N$. We recall the assumption that $p\geq n$, and $$\zeta_{n}=\frac{\Bar{\Delta}^{4}D^{8}(1+D)^4}{p}\max\pa{\frac{n}{K^2},1}<1.$$ 
Since the minimization problem defining $MMSE_{\leq D}$ in Equation (\ref{def:MMSE}) is separable, and since the random variables $M^*_{ij}$ are exchangeable, the $MMSE_{\leq D}$ can be reduced to
\begin{align*}
    MMSE_{\leq D}&=\frac{1}{n(n-1)}\sum_{i\neq j=1}^{n}\inf_{f_{ij}\in \R_{D}(Y)}\E\cro{\pa{f_{ij}(Y)-M^*_{ij}}^2}\\
&=\inf_{f\in \R_{D}(Y)}\E\cro{\pa{f(Y)-M^*_{12}}^2}\enspace.
\end{align*}
In the remaining of the proof, we write $x=M^*_{12}=\1_{k_1=k_2}$. Then, our goal is to upper-bound 
$$MMSE_{\leq D}=\inf_{f\in \R_{D}(Y)}\E\cro{\pa{f(Y)-x}^2}\enspace.$$
As noticed by \cite{WeinSchramm}, the  $MMSE_{\leq D}$ can be further decomposed as 
\begin{equation*}
    MMSE_{\leq D}=\E[x^{2}]-corr^2_{\leq D}= {1\over K}-corr^2_{\leq D} \enspace,
\end{equation*}
where $corr^2_{\leq D}$ is the  
 degree-D maximum correlation 
\begin{equation}\label{def:corr}
    corr_{\leq D}:=\underset{\E[f^{2}(Y)]=1}{\sup_{f\in\R_{D}[Y]}}\E(f(Y)x)=\underset{\E\pa{f^{2}(Y)}\neq 0}{\sup_{f\in\R_{D}[Y]}}\frac{\E[f(Y)x]}{\sqrt{\E(f^{2}(Y))}}\enspace.
\end{equation}
Hence, in order to prove Theorem \ref{thm:lowdegreeclustering}, it is enough to prove that 
\begin{equation*}
    corr^2_{D}\leq \frac{1}{K^2}\pa{1+\frac{\zeta_{n}}{(1-\sqrt{\zeta_n})^3}}\enspace.
\end{equation*}
The model of Definition \ref{def:prior} is a particular instance of the Additive Gaussian Noise Model considered in \cite{WeinSchramm}. 
Hence, we can use Theorem 2.2 from \cite{WeinSchramm} that we recall here. For a matrix $\alpha\in\N^{n\times p}$, we define $|\alpha|=\sum_{i=1}^{n}\sum_{j=1}^{p}\alpha_{ij}$ and $\alpha!=\prod_{i=1}^{n}\prod_{j=1}^{p}\alpha_{ij}!$. Given another matrix $\beta\in\N^{n\times p}$, we write $\binom{\alpha}{\beta}=\prod_{i=1}^{n}\prod_{j=1}^{p}\binom{\alpha_{ij}}{\beta_{ij}}$. Finally, given a matrix $Q\in\R^{n\times p}$, we write $Q^{\alpha}=\prod_{i=1}^{n}\prod_{j=1}^{p}Q_{ij}^{\alpha_{ij}}$. We define $X\in\R^{n\times p}$ the signal matrix whose $i$-th row is the vector $\mu_{k_{i}}$, so that, conditionally on $X$, the $Y_{ij}$ are independent with $\mathcal{N}(X_{ij},1)$ distribution. Throughout this proof, we write  $X_i=\mu_{k_{i}}$.
\begin{prop}\label{thm:schrammwein}\cite{WeinSchramm}
    The degree $D$ maximum correlation satisfies the upper-bound 
    \begin{equation}\label{eq:corr_D}
    corr^{2}_{\leq D}\leq \underset{|\alpha|\leq D}{\sum_{\alpha\in\N^{n\times p}}}\frac{\kappa_{\alpha}^{2}}{\alpha!}\enspace,
    \end{equation}
    where $\kappa_{\alpha}$ for $\alpha\in \N^{n\times p}$ is defined recursively by 
\begin{equation}\label{eq:def:kappa}
    \kappa_{\alpha}=\E[xX^{\alpha}]-\sum_{\beta \lneq \alpha}\E[X^{\alpha-\beta}]\binom{\alpha}{\beta}\kappa_{\beta}\enspace.
\end{equation}
\end{prop}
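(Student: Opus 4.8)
This is Theorem~2.2 of~\cite{WeinSchramm}, and the plan is to reproduce that argument in the present notation. Recall $Y=X+E$ with the entries of $E$ i.i.d.\ $\mathcal{N}(0,1)$ and independent of $(x,X)$. I would expand the competitor polynomial in the multivariate Hermite basis $\ac{H_\alpha}_{\alpha\in\N^{n\times p}}$, orthonormal for the standard Gaussian on $\R^{n\times p}$ (so that $\ac{H_\alpha:|\alpha|\leq D}$ is a basis of $\R_D[Y]$, each $H_\alpha$ having degree $|\alpha|$), use the recursion \eqref{eq:def:kappa} to rewrite the numerator $\E[f(Y)x]$ as a bilinear form in the cumulants $\kappa_\beta$, apply Cauchy--Schwarz, and thereby reduce the whole statement to a single positive-semidefiniteness estimate. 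Two elementary identities do the work. First, from $\sum_\alpha t^\alpha H_\alpha(y)/\sqrt{\alpha!}=\exp\pa{\<t,y\>-\|t\|^2/2}$ and $\E[\exp\pa{\<t,E\>}]=\exp\pa{\|t\|^2/2}$, taking conditional expectations and matching coefficients gives $\E[H_\alpha(Y)\mid X]=X^\alpha/\sqrt{\alpha!}$. Second, setting $B_{\alpha\beta}:=\binom{\alpha}{\beta}\sqrt{\beta!/\alpha!}\,\E[X^{\alpha-\beta}]$ (which vanishes unless $\beta\leq\alpha$ and equals $1$ when $\beta=\alpha$, since $\E[X^{0}]=1$), a short generating-function computation shows $(BB^{\top})_{\alpha\beta}=\E[H_\alpha(Y)H_\beta(Y')]$, where $Y=X+E$ and $Y'=X'+E$ are coupled through the same noise $E$ but carry i.i.d.\ signals $X,X'$.

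\emph{Numerator.} Writing $f=\sum_{|\alpha|\leq D}\hat f_\alpha H_\alpha$, using $\E[H_\alpha(Y)\mid X]=X^\alpha/\sqrt{\alpha!}$ and the fact that, conditionally on $X$, $Y$ is independent of $x$ (because $E\perp(x,X)$), the tower rule gives $\E[f(Y)x]=\sum_{|\alpha|\leq D}\hat f_\alpha\,\E[xX^\alpha]/\sqrt{\alpha!}$. Rearranging \eqref{eq:def:kappa} into $\E[xX^\alpha]=\sum_{\beta\leq\alpha}\binom{\alpha}{\beta}\E[X^{\alpha-\beta}]\,\kappa_\beta$ and substituting (legitimate since $\beta\leq\alpha$ and $|\alpha|\leq D$ force $|\beta|\leq D$),
\[
\E[f(Y)x]=\sum_{|\beta|\leq D}\frac{\kappa_\beta}{\sqrt{\beta!}}\,g_\beta\,,\qquad g_\beta:=\sum_{|\alpha|\leq D}\hat f_\alpha\,B_{\alpha\beta}\,.
\]
Cauchy--Schwarz then yields $\E[f(Y)x]^2\leq\pa{\sum_{|\beta|\leq D}\kappa_\beta^2/\beta!}\pa{\sum_{|\beta|\leq D}g_\beta^2}$, so the theorem follows once we check $\sum_{|\beta|\leq D}g_\beta^2\leq\E[f(Y)^2]$ for every such $f$, and then take the supremum over $f$ with $\E[f(Y)^2]=1$.

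\emph{The positive-semidefiniteness estimate.} We have $\E[f(Y)^2]=\sum_{\alpha,\beta}\hat f_\alpha\hat f_\beta\E[H_\alpha(Y)H_\beta(Y)]$ and, by the second identity above, $\sum_{|\beta|\leq D}g_\beta^2=\sum_{\alpha,\beta}\hat f_\alpha\hat f_\beta (BB^{\top})_{\alpha\beta}=\E[f(Y)f(Y')]$ with $Y,Y'$ as above. Hence
\[
\E[f(Y)^2]-\sum_{|\beta|\leq D}g_\beta^2=\E[f(Y)^2]-\E[f(Y)f(Y')]=\E_E\cro{\,\mathrm{Var}_X\!\pa{f(X+E)}\,}\geq 0\,,
\]
the middle equality obtained by conditioning on the shared noise $E$ and using that $X,X'$ are independent of $E$ and of each other. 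Combining with the Cauchy--Schwarz bound gives $\E[f(Y)x]^2\leq\sum_{|\beta|\leq D}\kappa_\beta^2/\beta!$, which is \eqref{eq:corr_D}.

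\emph{Where the difficulty sits.} The substantive step is the identity $(BB^{\top})_{\alpha\beta}=\E[H_\alpha(Y)H_\beta(Y')]$ for the ``resampled-signal, shared-noise'' coupling, together with the conditional-variance decomposition it feeds into: this is the one place where Gaussianity of $E$ is genuinely used, via Hermite orthogonality and $\E[H_\alpha(Y)\mid X]=X^\alpha/\sqrt{\alpha!}$. Everything else is bookkeeping, and the only technical point to watch is the validity of the generating-function manipulations; this is harmless here since, for each fixed $(\alpha,\beta)$, $(BB^{\top})_{\alpha\beta}$ is a finite combination of moments $\E[X^\gamma]$, and moreover the entries of $X$ are bounded so that $t\mapsto\E[\exp\pa{\<t,X\>}]$ is entire.
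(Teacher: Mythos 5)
Your proof is correct. Note that the paper does not prove this proposition at all: it is imported verbatim as Theorem~2.2 of \cite{WeinSchramm}, so there is no in-paper argument to compare against. Your reconstruction is a faithful rendition of the Schramm--Wein proof — the Hermite expansion with $\E[H_\alpha(Y)\mid X]=X^\alpha/\sqrt{\alpha!}$, the rewriting of the correlation vector as $B$ applied to $(\kappa_\beta/\sqrt{\beta!})_\beta$, and the key positive-semidefiniteness step $\E[f(Y)^2]-\E[f(Y)f(Y')]=\E_E[\mathrm{Var}_X(f(X+E))]\geq 0$ for the shared-noise, resampled-signal coupling — and all the identities you invoke check out.
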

\cite{WeinSchramm} observe that, for $\alpha\in\N^{n\times p}$, the quantity $\kappa_{\alpha}$ corresponds to the cumulant $\kappa(x,X_{a_{1}},\ldots ,X_{a_{m}})$, where $\{a_{1},\ldots ,a_{m}\}$ is the multiset that contains $\alpha_{ij}$ copies of $(i,j)$, for $i,j\in[1,n]\times [1,p]$. We refer e.g. to the lecture notes~\cite{novak2014three} for more details on cumulants.
In the remainder of the proof, we first characterize the matrices $\alpha$ for which $\kappa_{\alpha}\neq 0$, and we provide an upper bound on the corresponding cumulants.

Let us first provide sufficient conditions on $\alpha$, so that the corresponding cumulant $\kappa_{\alpha}$ is zero.  For that purpose, it is convenient to represent $\alpha\in \N^{n\times p}$ as a bipartite multi-graph. 
More precisely, we define $\mathcal{G}_{\alpha}$ as the bipartite multi-graph on two disjoint sets of nodes $U=\{u_{i},i\in[1,n]\}$ and $V=\{v_{j}, j\in[1,p]\}$, with $\alpha_{ij}$ edges between $u_i$ and $v_j$, for $i,j\in [1,n]\times [1,p]$. 
 For a given $\alpha \in \N^{n\times p}$, the $\ell^1$-norm  $|\alpha|= \sum_{i=1}^n\sum_{j=1}^{p}\alpha_{ij}$ corresponds to the number of multi-edges of $\mathcal{G}_{\alpha}$. Besides, we also denote by $\mathcal{G}^-_{\alpha}$ the graph $\mathcal{G}_{\alpha}$ from which we have removed isolated nodes, and we write $l_{\alpha}$  for the number of nodes of  $\mathcal{G}^{-}_{\alpha}$. We define $m_{\alpha}$ (resp. $r_{\alpha})$ as the number of nodes of $\mathcal{G}^{-}_{\alpha}\cap U$  (resp. $\mathcal{G}^{-}_{\alpha}\cap V$), so that $l_{\alpha}= m_{\alpha}+ r_{\alpha}$. The following lemma is proved in Section~\ref{prf:boundcumulants}.

\begin{lem}\label{lem:reductioncumulant}
    Let $\alpha\in\N^{n\times p}$ be non-zero such that $\kappa_{\alpha}\neq 0$. Then, $\mathcal{G}^{-}_{\alpha}$ is connected and contains both $u_{1}$ and $u_{2}$. Moreover, all the nodes $v_{j}$ of  $\mathcal{G}^{-}_{\alpha}$
    are connected to at least two distinct nodes of  $\mathcal{G}^{-}_{\alpha}$.
\end{lem}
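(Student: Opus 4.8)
The plan is to exploit the defining recursion \eqref{eq:def:kappa} together with the fact that cumulants of a collection of random variables vanish as soon as the collection can be split into two mutually independent sub-collections. Concretely, recall that $\kappa_\alpha = \kappa(x, X_{a_1},\ldots,X_{a_m})$ where the multiset $\{a_1,\ldots,a_m\}$ lists each index $(i,j)$ with multiplicity $\alpha_{ij}$. If this collection of random variables partitions into two independent blocks, the cumulant is zero; this is the one structural fact I would use repeatedly. So the three claims — connectedness of $\mathcal{G}^-_\alpha$, membership of $u_1$ and $u_2$, and the ``each $v_j$ touches $\geq 2$ nodes'' property — will each be proved by exhibiting such a split whenever the property fails.

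First, for connectedness: suppose $\mathcal{G}^-_\alpha$ has at least two connected components (ignoring for the moment the special role of $x$). The entries $X_{ij}=\mu_{k_i,j}$ indexed by one component involve only certain point-rows and variable-columns; I would argue that two sets of $X$-entries whose bipartite graphs lie in distinct components are independent. This uses the structure $X = A\underline\mu$: an $X$-entry $X_{ij}$ is a function of $k_i$ (which row of $\underline\mu$ is selected) and of the column $\underline\mu_{\cdot,j}$. Two components share no variable-node $v_j$, so they depend on disjoint columns of $\underline\mu$; and since the $k_i$ are i.i.d. and the columns of $\underline\mu$ are independent, the two blocks of $X$-entries are independent — unless the same point-row $u_i$ appears in both components, but by definition a node cannot be split across components, so that cannot happen. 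Hence the collection $(X_{a_\ell})$ splits, and adjoining $x$ to whichever block contains $x$'s dependencies still leaves an independent two-block partition, forcing $\kappa_\alpha=0$. The same reasoning shows that $x=\1_{k_1=k_2}$ — which depends only on $k_1,k_2$ — must be ``attached'' to the component(s) containing $u_1$ or $u_2$; if neither $u_1$ nor $u_2$ lies in $\mathcal{G}^-_\alpha$, then $x$ is independent of all the $X_{a_\ell}$'s, again killing the cumulant; and if $u_1$ and $u_2$ lie in different components, then $x$ together with one component is independent of the other, so again $\kappa_\alpha=0$. This forces $\mathcal{G}^-_\alpha$ connected and containing both $u_1$ and $u_2$.

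For the last property, suppose some variable-node $v_j\in\mathcal{G}^-_\alpha$ is connected to exactly one point-node $u_i$ (it is in $\mathcal{G}^-_\alpha$ so it has degree $\geq 1$). Then every edge at $v_j$ goes to $u_i$, i.e. $\alpha_{i'j}=0$ for $i'\neq i$, and $X_{ij}^{\alpha_{ij}} = \mu_{k_i,j}^{\alpha_{ij}}$ is the only place where the column $\underline\mu_{\cdot,j}$ enters the whole collection. Since the columns of $\underline\mu$ are independent and each entry $\mu_{k,j}=\pm\eps$ is symmetric with zero mean, the factor $\mu_{k_i,j}^{\alpha_{ij}}$ is independent of everything else and — here I would be slightly careful — if $\alpha_{ij}$ is odd its conditional mean given $k_i$ is $0$, while if even it equals $\eps^{\alpha_{ij}}$, a constant; in the odd case the cumulant vanishes by independence-plus-mean-zero, and in the even case $v_j$ would actually be removable (it contributes only a deterministic constant), which contradicts $v_j\in\mathcal{G}^-_\alpha$ being ``essential'' — or more cleanly, one observes that a degree-concentrated $v_j$ again yields an independent split of the augmented collection. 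I would phrase this via the same partitioning-into-independent-blocks principle to keep the three arguments uniform. The main obstacle, and the step deserving the most care, is making the independence claims fully rigorous given that $X=A\underline\mu$ couples the random partition $A$ with $\underline\mu$: one must check that conditioning on $A$ (equivalently on $(k_i)$) makes the $X$-entries functions of disjoint independent column-blocks, and then argue that the unconditional collection still splits — this is where one invokes that the relevant $X$-entries in one block depend on $A$ only through $(k_i)_{i\in I_1}$ for a set $I_1$ of rows disjoint from the rows $I_2$ governing the other block, so that independence survives the integration over $A$. Once that lemma is in place, all three assertions follow by the recursion on \eqref{eq:def:kappa}, noting that if $\kappa_\beta=0$ for all $\beta\lneq\alpha$ violating the properties then the offending term structure propagates — more simply, one proves the vanishing of $\kappa_\alpha$ directly from its cumulant interpretation rather than from the recursion, avoiding any induction.
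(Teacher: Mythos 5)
Your overall strategy is the paper's: represent $\kappa_\alpha$ as the joint cumulant of $x$ and the multiset of $X_{ij}$'s, and kill it via the fact that a cumulant vanishes whenever the collection splits into two independent blocks. Parts of your argument go through: the case where $u_1$ or $u_2$ is isolated is correct, and the case of a variable node $v_{j_0}$ attached to a single $u_{i_0}$ is essentially right — though the parity digression is a detour; the clean statement is that, conditionally on $(k_i)_i$ and on $(\mu_{k,j})_{j\neq j_0}$, the single entry $X_{i_0,j_0}=\mu_{k_{i_0},j_0}$ is uniform on $\{-\eps,\eps\}$, hence independent of everything else, so all its copies form one independent block regardless of whether $\alpha_{i_0 j_0}$ is odd or even (your ``even case is a contradiction'' sub-argument is not valid, since $\mathcal{G}^{-}_{\alpha}$ removes only isolated nodes, not ``inessential'' ones).

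The genuine gap is in the connectedness step, precisely in the case you flag as the main obstacle but then resolve incorrectly. Suppose $\mathcal{G}^{-}_{\alpha}$ has a component $C_1$ containing $u_1$ but not $u_2$ (this case is unavoidable, e.g.\ when there are exactly two components and $u_1,u_2$ sit in different ones). Your proposed mechanism — that the two blocks depend on $A$ through disjoint row sets $I_1$ and $I_2$, so independence survives integration over $A$ — fails here: the block $\{x\}\cup\{X_{ij}:(i,j)\in E_\alpha\setminus E_1\}$ depends on $k_1$ through $x=\1_{k_1=k_2}$, and the block $\{X_{ij}:(i,j)\in E_1\}$ also depends on $k_1$ since $u_1\in C_1$. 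The row sets are not disjoint, and independence of the two blocks is not a consequence of disjointness of primitives. The paper closes this by a two-step argument: (a) the $C_1$-block is measurable with respect to the \emph{partition induced by the labels on} $C_1\cap U$ together with the group-representative coordinates $\mu_{k_A,j}$ for $v_j\in C_1\cap V$; (b) for i.i.d.\ uniform labels, that induced partition is independent of $\bigl((k_i)_{u_i\notin C_1},(\mu_{k,j})_{v_j\notin C_1},\1_{k_1=k_2}\bigr)$, and conditionally on everything the $\mu_{k_A,j}$'s are still i.i.d.\ uniform on $\{-\eps,+\eps\}$ (using injectivity of $A\mapsto k_A$). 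Without some version of this induced-partition argument your proof of connectedness does not go through.
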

This lemma is proved in Section \ref{prf:recuctioncumulant}. The proof  mostly relies on the  property that a cumulant of two sets of independents random variable is zero. 
As a corollary, we deduce the following properties for matrices $\alpha$, such that $\kappa_{\alpha}$ is non-zero.

\begin{lem}\label{lem:condition_topology}
Let $\alpha\in \N^{n\times p}$ be non-zero.   If $\kappa_{\alpha}\neq 0$, then $m_{\alpha}\geq 2$, $|\alpha|\geq 2r_{\alpha}$ and $|\alpha|\geq r_{\alpha}+m_{\alpha}-1$.
\end{lem}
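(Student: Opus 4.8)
The goal is to deduce Lemma \ref{lem:condition_topology} from Lemma \ref{lem:reductioncumulant}, so the plan is essentially a short graph-theoretic computation on the bipartite multigraph $\mathcal{G}^-_\alpha$, using only the three structural properties guaranteed when $\kappa_\alpha \neq 0$: connectedness, the presence of both $u_1$ and $u_2$, and the fact that every $v_j$-node has degree (counted in terms of \emph{distinct} neighbours) at least $2$.

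\textbf{Proof.} Assume $\kappa_\alpha \neq 0$, so that by Lemma \ref{lem:reductioncumulant} the graph $\mathcal{G}^-_\alpha$ is connected, contains $u_1$ and $u_2$, and each of its $V$-nodes has at least two distinct neighbours in $U$. First, $m_\alpha \geq 2$ is immediate: $u_1$ and $u_2$ are two distinct nodes of $\mathcal{G}^-_\alpha \cap U$. Second, to get $|\alpha| \geq 2 r_\alpha$, note that $|\alpha|$ is the total number of multi-edges of $\mathcal{G}^-_\alpha$, i.e.\ the sum over $v_j$-nodes of their multi-edge degree $d(v_j) = \sum_i \alpha_{ij}$. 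Since each $v_j$ in $\mathcal{G}^-_\alpha$ is connected to at least two \emph{distinct} $u$-nodes, it carries at least two multi-edges, so $d(v_j) \geq 2$ for every one of the $r_\alpha$ nodes of $\mathcal{G}^-_\alpha \cap V$; summing gives $|\alpha| = \sum_{j} d(v_j) \geq 2 r_\alpha$.

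\textbf{Third bound.} For $|\alpha| \geq r_\alpha + m_\alpha - 1$, I would use that a connected (multi)graph on $l_\alpha = m_\alpha + r_\alpha$ nodes must contain a spanning tree, hence has at least $l_\alpha - 1 = m_\alpha + r_\alpha - 1$ edges even when we collapse parallel edges to single edges; since the number of multi-edges $|\alpha|$ is at least the number of simple edges, we get $|\alpha| \geq m_\alpha + r_\alpha - 1$. This is the only place where connectedness of $\mathcal{G}^-_\alpha$ is used, and it is the cleanest way to see the inequality; alternatively one can induct on $r_\alpha$ by peeling off $V$-nodes, but the spanning-tree argument is shorter.

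\textbf{Main obstacle.} There is no real obstacle here — the lemma is a routine consequence of Lemma \ref{lem:reductioncumulant}, and the only subtlety is bookkeeping the distinction between multi-edges (counted by $|\alpha|$) and simple edges (relevant for the spanning-tree / connectivity bound), and the distinction between "degree at least two" as a multi-edge count versus as a distinct-neighbour count. Since "at least two distinct neighbours" implies "at least two multi-edges", both the second and third inequalities go through with the weaker multi-edge reading, so no care beyond this remark is needed. \qed
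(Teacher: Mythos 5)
Your proof is correct and follows essentially the same route as the paper: $m_\alpha\geq 2$ from the presence of $u_1,u_2$, the bound $|\alpha|\geq 2r_\alpha$ by summing the degrees of the $V$-nodes (each at least $2$ since the graph is bipartite and each $v_j$ has two distinct neighbours), and $|\alpha|\geq m_\alpha+r_\alpha-1$ from connectedness via a spanning tree. The extra bookkeeping you do on multi-edges versus simple edges is sound and matches the paper's (terser) argument.
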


\begin{proof}[Proof of Lemma~\ref{lem:condition_topology}]
The first property ($m_{\alpha}\geq 2$) holds because $u_{1}$ and $u_{2}$ are spanned by $\mathcal{G}^{-}_{\alpha}$. The last property ($|\alpha|\geq r_{\alpha}+m_{\alpha}-1$) holds because the multigraph $\mathcal{G}^{-}_{\alpha}$ is connected. Finally, we know that each node of $\mathcal{G}^{-}_{\alpha}$ that is also in $V$ has degree at least $2$. Since the graph $\mathcal{G}^{-}_{\alpha}$ is bipartite, all these edges are distinct and we deduce that $|\alpha|\geq 2r_{\alpha}$.
\end{proof}

In order to upper-bound $corr_{\leq D}^{2}$, we need to upper-bound the cumulants $\kappa_{\alpha}$ for all $\alpha\in\N^{n\times p}$ that satisfy the conditions of Lemma \ref{lem:reductioncumulant}. 
%To do so, we will begin by upper-bounding the moments $\E[X^{\gamma}]$, for $\gamma\in \N^{n\times p}$. 
%This lemma allows us to control the cumulants $\kappa_{\alpha}$ for $\alpha\in\N^{n\times p}$. W

\begin{lem}\label{lem:boundcumulants}
    Let $\alpha\in\N^{n\times p}$ be such that $\kappa_{\alpha}\neq 0$. We have 
\begin{equation}\label{eq:borne:cumulants}
     |\kappa_{\alpha}|\leq \eps^{|\alpha|}\pa{1+|\alpha|}^{|\alpha|}\min\pa{\frac{1}{K},|\alpha|^{|\alpha|}\pa{\frac{1}{K}}^{l_{\alpha}-\frac{|\alpha|}{2}-1}}\enspace.
\end{equation}
\end{lem}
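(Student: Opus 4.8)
The plan is to bound $|\kappa_\alpha|$ by combining the recursive definition \eqref{eq:def:kappa} with the moment bound~\eqref{eq:upper_moments} (Lemma~\ref{lem:moments}), and then running an induction on $|\alpha|$. First I would record the two moment estimates we need: for any $\gamma$ with $\mathcal{G}^-_\gamma$ having $l_\gamma$ nodes and $CC_\gamma$ connected components, $\E[X^\gamma]\leq \eps^{|\gamma|}\min(1,|\gamma|^{|\gamma|}K^{-(l_\gamma-|\gamma|/2-CC_\gamma)})$, and similarly for $\E[xX^\gamma]$, where attaching $x=\1_{k_1=k_2}$ behaves like adding an extra ``edge'' forcing $u_1,u_2$ into the same group, which costs an extra factor $1/K$ relative to the connected case (this is where the single $1/K$ in the min of~\eqref{eq:borne:cumulants} comes from). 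The $\eps^{|\alpha|}$ prefactor is immediate by homogeneity: every occurrence of an entry of $X$ carries one factor $\eps$, so it suffices to prove the bound for the normalized cumulants $\bar\kappa_\alpha := \eps^{-|\alpha|}\kappa_\alpha$, which are cumulants of $\pm1$-valued (and $\{0,1\}$-valued) variables.

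Next I would expand $\bar\kappa_\alpha$ via~\eqref{eq:def:kappa}: $\bar\kappa_\alpha = \eps^{-|\alpha|}\E[xX^\alpha] - \sum_{\beta\lneq\alpha}\eps^{-|\alpha-\beta|}\E[X^{\alpha-\beta}]\binom{\alpha}{\beta}\bar\kappa_\beta$. By Lemma~\ref{lem:reductioncumulant}, only $\beta$ with $\kappa_\beta\neq0$ contribute, and such $\beta$ have $\mathcal{G}^-_\beta$ connected and spanning $u_1,u_2$. The combinatorial heart is to control $\sum_\beta \binom{\alpha}{\beta}$, which is at most $2^{|\alpha|}\le (1+|\alpha|)^{|\alpha|}$ (actually $\prod_{ij}2^{\alpha_{ij}}$), and to track how the topological parameters $l$, $CC$ behave under $\alpha\mapsto(\beta,\alpha-\beta)$. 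Since $\mathcal{G}^-_\beta$ is connected and $\mathcal{G}^-_{\alpha-\beta}$ sits on a subset of the same node set, one gets $l_{\alpha-\beta} - |\alpha-\beta|/2 - CC_{\alpha-\beta} + (l_\beta - |\beta|/2 - 1) \geq l_\alpha - |\alpha|/2 - 1$ or the analogous inequality with an extra $+1$ when $u_1,u_2$ end up in the same component of $\mathcal{G}^-_{\alpha-\beta}$ — this additivity of the exponent of $1/K$ is what makes the induction close, and it uses Lemma~\ref{lem:condition_topology} to ensure no parameter is wasted. Feeding the two moment bounds in and using the induction hypothesis $|\bar\kappa_\beta|\leq (1+|\beta|)^{|\beta|}\min(K^{-1}, |\beta|^{|\beta|}K^{-(l_\beta-|\beta|/2-1)})$ for all $\beta\lneq\alpha$, each term of the sum is bounded by the desired right-hand side times a combinatorial factor, and summing over the at most $\prod_{ij}(\alpha_{ij}+1)\leq(1+|\alpha|)^{|\alpha|}$ choices of $\beta$ gives the $(1+|\alpha|)^{|\alpha|}$ and the $|\alpha|^{|\alpha|}$ powers in~\eqref{eq:borne:cumulants}.

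I expect the main obstacle to be the bookkeeping in the inductive step: one must simultaneously propagate both branches of the $\min$ (the crude $1/K$ bound, which handles the case when $\mathcal{G}^-_{\alpha-\beta}$ already forces a lot of collisions, and the refined $|\alpha|^{|\alpha|}K^{-(l_\alpha-|\alpha|/2-1)}$ bound), and verify that in every decomposition $\alpha=\beta+(\alpha-\beta)$ the product of a ``$\min$'' bound for $\bar\kappa_\beta$ and a ``$\min$'' bound for $\E[X^{\alpha-\beta}]$ (or for $\E[xX^\alpha]$ in the leading term) again dominates a ``$\min$'' bound for $\bar\kappa_\alpha$. The subtle point is the role of $CC_{\alpha-\beta}$: although $\alpha-\beta$ need not have connected $\mathcal{G}^-$, each of its connected components either meets $\mathcal{G}^-_\beta$ or is isolated from $u_1,u_2$, and in the latter case Lemma~\ref{lem:reductioncumulant} (every $v_j$ has degree $\geq 2$) forces $l - |\cdot|/2 \leq 0$ on that component, so such components only help. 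Making this dichotomy precise, and checking the exponent inequality $ (l_\beta - \tfrac{|\beta|}2 - 1) + (l_{\alpha-\beta} - \tfrac{|\alpha-\beta|}2 - CC_{\alpha-\beta}) \ge l_\alpha - \tfrac{|\alpha|}2 - 1$ in all cases (with the extra $+1$ tracked separately), is the delicate part; once it is in place the rest is routine summation. I would carry out the induction with base case $|\alpha|$ minimal subject to $\kappa_\alpha\neq0$ (forcing $m_\alpha=2$, $r_\alpha=0$, i.e. a direct edge structure between $u_1$ and $u_2$), where the bound reduces to the moment estimate itself.
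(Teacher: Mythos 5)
Your proposal follows essentially the same route as the paper: induction on $\alpha$ via the recursion \eqref{eq:def:kappa}, the moment bounds of Lemma~\ref{lem:moments}, and the key additivity inequality $l_{\beta}+l_{\alpha-\beta}-CC_{\alpha-\beta}\geq l_{\alpha}$ (your exponent inequality is exactly this, and the paper proves it by the same connectivity argument — every component of $\mathcal{G}^{-}_{\alpha-\beta}$ must meet $\mathcal{G}^{-}_{\beta}$, so your ``isolated from $u_1,u_2$'' case never actually arises). The only slip is the description of the base case: since $\mathcal{G}_{\alpha}$ is bipartite between $U$ and $V$, a nonzero $\alpha$ with $\kappa_{\alpha}\neq 0$ has $r_{\alpha}\geq 1$ (there is no ``direct edge'' between $u_1$ and $u_2$); the induction simply starts at $\alpha=0$ with $\kappa_{0}=\E[x]=1/K$, which is harmless.
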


Now we gather the three above lemmas to control the sum $\sum_{\alpha:\  |\alpha|\leq D}\kappa_{\alpha}^{2}/\alpha!$ in \eqref{eq:corr_D}. We reorganize the sum over $\alpha$ by organizing it according to the values of $m_{\alpha}$, $r_{\alpha}$, and $|\alpha|$, which respectively correspond to the number of $u$-nodes, $v$-nodes and edges in $\mathcal{G}^-_{\alpha}$.

\begin{lem}\label{lem:combinatorics_kappa}
Given $m\geq 2$, $r\geq 1$, $d\geq \max(r+m-1,2r)$, there exists at most $p^{r}n^{m-2}d^{2d}$ matrices $\alpha\in \N^{n\times p}$ such $\kappa_{\alpha}\neq 0$,  $m_{\alpha}=m$, $r_{\alpha}=r$ and $|\alpha|=d$.
\end{lem}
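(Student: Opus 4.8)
\textbf{Proof proposal for Lemma~\ref{lem:combinatorics_kappa}.}

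The plan is to count the matrices $\alpha$ by specifying, in turn, which $u$-nodes and $v$-nodes are active in $\mathcal{G}^-_\alpha$, and then how the $d$ multi-edges are distributed among the pairs of active nodes. First I would choose the set of active $u$-nodes. By Lemma~\ref{lem:reductioncumulant}, $u_1$ and $u_2$ are always in $\mathcal{G}^-_\alpha$, so only $m-2$ of the remaining $n-2$ points need to be selected; this contributes at most $\binom{n-2}{m-2}\leq n^{m-2}$ choices. Similarly, choosing which $r$ of the $p$ variables are active contributes at most $\binom{p}{r}\leq p^{r}$ choices. This already produces the $p^{r}n^{m-2}$ factor in the claimed bound, so it remains to show that, once the $m$ active rows and $r$ active columns are fixed, there are at most $d^{2d}$ admissible matrices $\alpha$ supported on the corresponding $m\times r$ submatrix with $|\alpha|=d$.

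For the last count I would think of building the multigraph $\mathcal{G}^-_\alpha$ edge by edge: an ordered sequence of $d$ edges, where each edge is an (active $u$-node, active $v$-node) pair, determines a matrix $\alpha$ (by recording multiplicities), and every admissible $\alpha$ arises this way. The number of ordered edge sequences is $(mr)^{d}$. Now I use the topological constraints from Lemma~\ref{lem:condition_topology}: since $d\geq 2r$ we have $r\leq d/2$, and since $d\geq r+m-1$ we have $m\leq d-r+1\leq d+1$; hence $mr\leq (d+1)\cdot(d/2)\leq d^2$ once $d\geq 2$ (which holds since $d\geq 2r\geq 2$). Therefore the number of such $\alpha$ is at most $(mr)^d\leq d^{2d}$, and multiplying by the earlier factors gives the bound $p^{r}n^{m-2}d^{2d}$.

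The only mildly delicate point — and the step I expect to require the most care — is making sure the elementary inequalities $mr\leq d^2$ are genuinely forced by the hypotheses $d\geq\max(r+m-1,2r)$ and $m\geq 2$, $r\geq 1$, rather than being assumed; in particular one should check the boundary cases (e.g.\ $r=1$, or $m=2$) so that the crude bound $d^{2d}$ is never violated. A cleaner variant of the argument, if one wants to avoid case analysis, is to bound $mr$ by $\big(\tfrac{m+r}{2}\big)^2$ via AM--GM and then use $m+r\leq d+1+\tfrac{d}{2}$... but the direct bound $m\leq d+1$, $r\leq d/2$ already suffices and is simplest. Everything else is bookkeeping: the selection of active nodes is a product of binomial coefficients bounded trivially, and the edge-sequence overcount only makes the bound weaker, which is harmless here since we only need an upper bound of the stated (already lossy) form.
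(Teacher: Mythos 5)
Your proposal is correct and follows essentially the same route as the paper: fix the active nodes (using Lemma~\ref{lem:reductioncumulant} to force $u_1,u_2$, giving the $p^{r}n^{m-2}$ factor), then bound the number of edge assignments by $(mr)^{d}\leq d^{2d}$ via $m\leq d$ and $r\leq d$, which follow from $d\geq r+m-1$, $r\geq1$ and $d\geq 2r$. The only cosmetic difference is that you phrase the edge count as ordered sequences and verify $mr\leq d^2$ with slightly more arithmetic than needed; the paper's one-line observation $d\geq\max(m,r)$ suffices.
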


\begin{proof}[Proof of Lemma~\ref{lem:combinatorics_kappa}]
Since $\kappa_{\alpha}\neq 0$, Lemma~\ref{lem:reductioncumulant} ensures that both $u_1$ and $u_2$ are nodes of $\mathcal{G}^{-}_{\alpha}$. Hence, there are less than $n^{m-2}p^{r}$ possibilities for choosing the remaining nodes. By assumption $d\geq \max(m,r)$.
For each edge, there are at most $mr\leq d^2$ possibilities. Since $\mathcal{G}^-_{\alpha}$ has $d$ edges, we have less than $d^{2d}$ possibilities for choosing these edges. Since $\mathcal{G}^{-}_{\alpha}$ is one to one with $\alpha$, we conclude that there are less than $p^{r}n^{m-2}d^{2d}$ matrices $\alpha$ satisfying the given constraints. 
\end{proof}

Combining the bounds on the cumulants of Lemma \ref{lem:boundcumulants} and Lemma \ref{lem:combinatorics_kappa}, we are in position to control  $    corr_{D}^{2}$. We slice the sum of the cumulants according to $r_{\alpha},m_{\alpha}$ and $d_{\alpha}$, and we use below the notation $\mathcal{D}_d=\ac{(r,m)\in [1,d]\times [2,d]: \max(m+r-1,2r)\leq d}$. We recall that  $X_{ij}$  takes value in $\ac{-\eps,+\eps}$, with $\eps=\bar \Delta/\sqrt{p}$, so that
\begin{align}\nonumber
    corr_{D}^{2}\leq&  \underset{|\alpha|\leq D}{\sum_{\alpha\in\N^{n\times p}}}\kappa_{\alpha}^{2}\\ \nonumber
    \leq &\kappa_{0}^{2}+\sum_{d=1}^{D} \sum_{(r,m)\in\mathcal{D}_d}
    p^{r}n^{m-2}d^{2d}\eps^{2d}(1+d)^{2d}\min\pa{\frac{1}{K^{2}},d^{2d}\pa{\frac{1}{K}}^{2m+2r-d-2}} \\
    \leq & \frac{1}{K^2}+\sum_{d=1}^{D}\sum_{(r,m)\in\mathcal{D}_d}
    p^{r}n^{m-2}(\eps^{2}D^4(1+D)^{2})^d\min\pa{\frac{1}{K^{2}},\pa{\frac{1}{K}}^{2m+2r-d-2}}. \label{eq:upper_corr_D:first} 
\end{align}    
Since $\zeta_{n}=\frac{\bar\Delta^4D^{8}(1+D)^{4}}{p}\max\pa{\frac{n}{K^{2}},1}$, with $\bar\Delta^2 = p \epsilon^2$, we get
\begin{align} 
    \nonumber
\lefteqn{corr_{D}^{2}-\frac{1}{K^2}}  \\ \nonumber
    \leq & \sum_{d=1}^{D}\sum_{(r,m)\in\mathcal{D}_d}\zeta_n^{d/2}p^{-(d/2-r)}n^{m-2}\pa{\frac{1}{\max(1,n/K^2)}}^{d/2}\min\pa{\frac{1}{K^{2}},\pa{\frac{1}{K}}^{2m+2r-d-2}}  \\ 
    \leq &  \frac{1}{K^2}\sum_{d=1}^{D}\sum_{(r,m)\in\mathcal{D}_d}\zeta_n^{d/2}n^{r+m-d/2- 2}\pa{\frac{1}{\max(1,n/K^2)}}^{d/2}\min\pa{1,\pa{\frac{1}{K}}^{2m+2r-d-4}}, \label{eq:upper_corrD}
\end{align}
where we used in the last line that $n\leq p$ and $r\leq d/2$.
Let us check that each term in the sum is upper-bounded by $\zeta_n^{d/2}$, by considering apart the cases $d/2\geq m+r-2$ and $d/2< m+r-2$.  

 When $d/2\geq m+r-2$, the exponent of $n$ is non-positive, so that
\[
n^{r+m-d/2- 2}\pa{\frac{1}{\max(1,n/K^2)}}^{d/2}\min\pa{1,\pa{\frac{1}{K}}^{2m+2r-d-4}} \leq  1\ . 
\]
When $d/2 < m+r-2$, we can upper bound the minimum by $K^{-(2m+2r-d-4)}$, so that
\begin{align*}
    n^{r+m-d/2- 2}\pa{\frac{1}{\max(1,n/K^2)}}^{d/2}&\min\pa{1,\pa{\frac{1}{K}}^{2m+2r-d-4}}\\ &\leq \pa{\frac{n}{K^2}}^{m+r-d/2-2}\pa{\frac{1}{\max(1,n/K^2)}}^{d/2}. 
\end{align*}
If $n\leq  K^2$, the latter expression is smaller or equal to one. If $n\geq K^2$, this last expression, is equal to $(n/K^2)^{m+r-d-2}$ and is also smaller or equal to one since $d\geq m+r-1$ for $(r,m)\in\mathcal{D}_d$. Back to~\eqref{eq:upper_corrD}, and relying on the assumption $\zeta_n<1$, we conclude that 
\begin{align} \nonumber
    corr_{D}^{2}&\leq\frac{1}{K^2}+  \frac{1}{K^2}\sum_{d=1}^{D} \sum_{(r,m)\in\mathcal{D}_d}
    \zeta_n^{d/2} \\ \nonumber
    &\leq \frac{1}{K^2}\left[1 + \sum_{d=2}^{D}\frac{d(d-1)}{2} \zeta_n^{d/2} \right]\\
    &\leq \frac{1}{K^2}\left[1 +  \frac{\zeta_n}{(1-\sqrt{\zeta_n})^3} \right]
    \enspace \  . \label{eq:upper_corrD_bis}
\end{align}

\subsection{Proof of Lemma \ref{lem:reductioncumulant}}\label{prf:recuctioncumulant}

In order to prove Lemma \ref{lem:reductioncumulant}, we will use a classical property of cumulants, that we recall here. 
\begin{lem}\label{lem:nullcumulant}[e.g.~\cite{novak2014three}]
    Let $X_{1},\ldots ,X_{r}$ be random variables on the same space $\Omega$. If there exists a partition $A, B$ of $[1,r]$ such that $(X_{i})_{i\in A}$ is independant from $(X_{i})_{i\in B}$, then the cumulant $\kappa(X_{1},\ldots ,X_{r})$ is zero. 
\end{lem}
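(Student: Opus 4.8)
The plan is to reduce everything to the combinatorial moment–cumulant formula and then run a short induction on the size of a ``straddling'' index set. Recall from e.g.\ \cite{novak2014three} that, for a sub-collection $(X_i)_{i\in S}$ with $S\subseteq[1,r]$, the joint cumulant is given by Möbius inversion on the partition lattice,
\begin{equation*}
\kappa\pa{(X_i)_{i\in S}}=\sum_{\pi\in\Pi_S}(-1)^{|\pi|-1}(|\pi|-1)!\prod_{V\in\pi}\E\cro{\prod_{i\in V}X_i}\enspace,
\end{equation*}
where $\Pi_S$ denotes the set of set-partitions of $S$; equivalently, the numbers $\kappa((X_i)_{i\in V})$ are characterised by the relations $\E[\prod_{i\in S}X_i]=\sum_{\pi\in\Pi_S}\prod_{V\in\pi}\kappa((X_i)_{i\in V})$, valid for every $S\subseteq[1,r]$. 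Call a set $S$ \emph{straddling} if both $S\cap A$ and $S\cap B$ are non-empty. I would prove the stronger claim that $\kappa((X_i)_{i\in S})=0$ for every straddling $S$; the lemma is then the case $S=[1,r]$, which is straddling since $A$ and $B$ are non-empty.

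I would argue by strong induction on $|S|$. Fix a straddling $S$, write $S_A=S\cap A$ and $S_B=S\cap B$ (both non-empty), and use the independence of $(X_i)_{i\in A}$ and $(X_i)_{i\in B}$ to factor every moment over $S$:
\begin{equation*}
\E\cro{\prod_{i\in S}X_i}=\E\cro{\prod_{i\in S_A}X_i}\,\E\cro{\prod_{i\in S_B}X_i}=\sum_{\sigma\in\Pi_{S_A}}\sum_{\tau\in\Pi_{S_B}}\ \prod_{V\in\sigma}\kappa\pa{(X_i)_{i\in V}}\prod_{W\in\tau}\kappa\pa{(X_i)_{i\in W}}\enspace,
\end{equation*}
where I expanded each of the two factors by the moment–cumulant relation. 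The map $(\sigma,\tau)\mapsto\sigma\cup\tau$ is a bijection from $\Pi_{S_A}\times\Pi_{S_B}$ onto the collection $\Pi^{\mathrm{sp}}_S$ of partitions of $S$ each of whose blocks is contained in $S_A$ or in $S_B$, so the right-hand side equals $\sum_{\pi\in\Pi^{\mathrm{sp}}_S}\prod_{V\in\pi}\kappa((X_i)_{i\in V})$. Comparing with $\E[\prod_{i\in S}X_i]=\sum_{\pi\in\Pi_S}\prod_{V\in\pi}\kappa((X_i)_{i\in V})$ and subtracting leaves
\begin{equation*}
0=\sum_{\pi\in\Pi_S\setminus\Pi^{\mathrm{sp}}_S}\ \prod_{V\in\pi}\kappa\pa{(X_i)_{i\in V}}\enspace.
\end{equation*}

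To finish, I would observe that every $\pi\in\Pi_S\setminus\Pi^{\mathrm{sp}}_S$ has, by definition, at least one straddling block $V$. If $\pi\neq\{S\}$, then $|\pi|\geq 2$, so this $V$ is a straddling \emph{proper} subset of $S$, and the induction hypothesis gives $\kappa((X_i)_{i\in V})=0$, which kills that term. Since $\{S\}$ itself lies in $\Pi_S\setminus\Pi^{\mathrm{sp}}_S$ (because $S$ is straddling), the displayed identity collapses to $0=\kappa((X_i)_{i\in S})$, closing the induction; the base case $|S|=2$ is the same computation and amounts to $0=\E[X_iX_j]-\E[X_i]\E[X_j]$ for $i\in A$, $j\in B$.

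The argument is entirely elementary, so I do not anticipate a genuine obstacle; the only points needing care are the bijection $\Pi_{S_A}\times\Pi_{S_B}\cong\Pi^{\mathrm{sp}}_S$ and stating the induction hypothesis for \emph{all} straddling subsets so that it applies to the blocks produced in the step. As a shortcut valid in the only regime we actually use (bounded $X_i$, where the moment generating function is entire), one may instead take $\kappa(X_1,\dots,X_r)=\partial_{t_1}\cdots\partial_{t_r}\log\E[\exp(\sum_i t_iX_i)]\big|_{t=0}$: independence factors the expectation, so the logarithm splits as $g_A((t_i)_{i\in A})+g_B((t_i)_{i\in B})$, and a mixed derivative that differentiates in at least one $A$-variable and at least one $B$-variable annihilates both summands.
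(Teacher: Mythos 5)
Your proof is correct. Note, however, that the paper does not prove this lemma at all: it is stated as a classical property of joint cumulants and simply cited (to the lecture notes \cite{novak2014three}), so there is no in-paper argument to compare against. Your induction over straddling subsets, based on the moment--cumulant relation $\E\cro{\prod_{i\in S}X_i}=\sum_{\pi\in\Pi_S}\prod_{V\in\pi}\kappa\pa{(X_i)_{i\in V}}$ and the factorization of moments over $S_A$ and $S_B$, is exactly the standard textbook derivation; the one point that genuinely needs care --- strengthening the induction hypothesis to all straddling subsets so that it applies to the straddling blocks appearing in partitions $\pi\neq\{S\}$ --- is handled correctly, and the bijection $\Pi_{S_A}\times\Pi_{S_B}\cong\Pi^{\mathrm{sp}}_S$ is immediate since any block meeting $S_A$ and contained in $S_A\cup S_B$ without straddling must lie in $S_A$. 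Your log-moment-generating-function shortcut is also valid in the setting where the lemma is actually invoked in this paper, since there the variables are the bounded quantities $x=\1_{k_1=k_2}$ and $X_{ij}\in\{-\eps,+\eps\}$, so the mixed partial derivative of $g_A+g_B$ argument applies without any integrability caveat. Either version would serve as a self-contained replacement for the citation.
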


We  prove below that  $\kappa_{\alpha}=0$ if one of the three following properties is satisfied:
\begin{itemize}
    \item[(i)] $u_1$ or $u_2$ are not spanned by $\mathcal{G}^{-}_{\alpha}$,
    \item[(ii)] a node of $\mathcal{G}^{-}_{\alpha}$ which also belong to $V$ is connected to at most one node in $U$,
    \item[(iii)] $\mathcal{G}^{-}_{\alpha}$ is not connected.
\end{itemize}

\medskip

We denote by $U_{\alpha}\subset U$ (resp. $V_{\alpha}\subset V$) the set of nodes of $\mathcal{G}^{-}_{\alpha}$ that also belong to $U$ (resp. $V$). We denote by $E_{\alpha}$ the set of edges of $\mathcal{G}_{\alpha}^{-}$.

\medskip 

Let us first show that (i) is a sufficient condition for $\kappa_{\alpha}=0$. 
By symmetry, we suppose that $u_{1}$ is not spanned by $\mathcal{G}^{-}_{\alpha}$. Then, $k_{1}$ (the group corresponding to $u_1$) is independent from the family of random variables $(X_{ij})_{u_{i},v_{j}\in E_{\alpha}}$. Hence, the random variable $x=\1_{k_{1}=k_{2}}$ is also independent from $(X_{ij})_{u_{i},v_{j}\in E_{\alpha}}$. Together with Lemma \ref{lem:nullcumulant}, this implies the nullity of the cumulant $\kappa_{\alpha}$.

\medskip

Then, let us show that (ii) is also a sufficient condition for $\kappa_{\alpha}=0$.
We suppose that there exists $j_{0}\in[1,p]$ such that $v_{j_{0}}$ is connected with only one node $u_{i_{0}}\in U$. Conditionally on $(k_{i})_{i\in [1,K]}$ and on $(\mu_{k,j})_{k,j\in [1,k]\times ([1,p]\setminus \{j_{0})\}}$, the variable $X_{i_{0},j_{0}}$ is uniformly distributed on $\{-\eps,\eps\}$. This implies the independence of $X_{i_{0},j_{0}}$ with $((X_{ij})_{u_{i},v_{j}\in E_{\alpha}\setminus (i_{0},j_{0})}, x)$. Lemma \ref{lem:nullcumulant} then leads to the nullity of the cumulant $\kappa_{\alpha}$.

\medskip

Finally, let us show that (iii) is a sufficient condition for $\kappa_{\alpha}=0$.
Let $\alpha\in\N^{n\times p}$ such that  $\mathcal{G}^{-}_{\alpha}$ has at  least two connected components.  Let us denote $C_1$ and $C_2$ two of these connected components. At least one of them does not contain both $u_1$ and $u_2$. We suppose by symmetry that $C_1$ does not contain both these nodes (we suppose that it does not contain $u_{2}$ for example). We denote $E_{1}=E_{\alpha}\cap \pa{(U\cap C_{1})\times (V\cap C_{1})}$ which corresponds to the edges of $\mathcal{G}_{\alpha}^{-}$ which connect points from $U_{\alpha}\cap C_{1}$ to points from $V_{\alpha}\cap C_{1}$. We will show that the families of random variables $(X_{ij})_{u_{i},v_{j}\in E_{1}}$ and $(x,(X_{ij})_{u_{i},v_{j}\in E_{\alpha}\setminus E_{1}})$ are independent, which will lead to the nullity of $\kappa_{\alpha}$, using Lemma \ref{lem:nullcumulant}. 

For sake of clarity, we begin by dealing with the simple case where $C_1$ also does not contain $u_{1}$. So, the intersection of $C_{1}$ with $\{u_{1},u_{2}\}$ is empty. For $u_{i},v_{j}\in E_{\alpha}$, $X_{ij}=\mu_{k_{i},j}$. By definition of our model, the family $\pa{(k_{i})_{u_{i}\in C_{1}\cap U},(\mu_{k,j})_{k\in[1,K], v_{j}\in C_{1}\cap V}}$ is independent from the family $\pa{\pa{k_{i}}_{u_{i}\in U\setminus C_{1}}, (\mu_{k,j})_{k\in[1,K], v_{j}\in V\setminus C_{1}}}$. 
On the one hand, the random variables $(X_{ij})_{u_{i},v_{j}\in E_{1}}$ are measurable with respect to $(k_{i})_{u_{i}\in C_{1}\cap U}$ and $(\mu_{k,j})_{k\in[1,K], v_{j}\in C_{1}\cap V}$. 
On the other hand, the random variables $(x=\1_{k_{1}=k_{2}},(X_{ij})_{u_{i},v_{j}\in E_{\alpha}\setminus E_{1}})$ are measurable with respect to $(k_{i})_{u_{i}\in U\setminus C_{1}}$ and  $(\mu_{k,j})_{k\in[1,K], v_{j}\in V\setminus C_{1}}$. 
This leads to the independence of $(X_{ij})_{u_{i},v_{j}\in E_{1}}$ with $(x,(X_{ij})_{u_{i},v_{j}\in E_{\alpha}\setminus E_{1}})$. Lemma \ref{lem:nullcumulant} implies the nullity of $\kappa_{\alpha}$.

\medskip

Now, let us deal with the more complex case where $C_1$ contains $u_{1}$. Again, the random variables $x$ and $(X_{ij})_{u_{i},v_{j}\in E_{\alpha}\setminus E_{1}}$ are measurable with respect to $(k_{i})_{u_{i}\in U\setminus C_{1}}$ and  $(\mu_{k,j})_{k\in[1,K], v_{j}\in V\setminus C_{1}}$. The difference with the previous case lies in the fact that, since $u_{1}\in C_{1}$, we lose the independence of $(k_{i})_{i\in C_{1}}$ with 
$(x=\1_{k_{1}=k_{2}},(X_{ij})_{u_{i},v_{j}\in E_{\alpha}\setminus E_{1}})$. Instead, we will show the independence of the partition induced by the $k_{i}$'s on $C_{1}\cap U$ with 
$(x=\1_{k_{1}=k_{2}},(X_{ij})_{u_{i},v_{j}\in E_{\alpha}\setminus E_{1}})$. We denote $\hat{G}$ this partition. Two nodes $u_{i}$ and $u_{i'}$  of $C_{1}\cap U$ are in the same group of $\hat{G}$ if and only if $k_{i}=k_{i'}$. 

Let $(u_{i},v_{j})\in E_{1}$. We denote $A\in \hat{G}$ the group of $\hat{G}$ containing $u_{i}$. Then, $X_{ij}=\frac{1}{|A|}\sum_{i'\in A }\mu_{k_{i'},j}$. So, the family $(X_{ij})_{u_{i},v_{j}\in E_{1}}$ is entirely defined by $\hat{G}$ and by the centers of this partition for coordinates $j\in[1,p]\cap C_{1}$. For $A\in \hat{G}$, we denote $\mu_{A,j}=\frac{1}{|A|}\sum_{u_{i}\in A}\mu_{k_{i},j}$, which is the $j$-th coordinate of the center of the group $A$.

The family $(x,(X_{ij})_{u_{i},v_{j}\in E_{\alpha}\setminus E_{1}})$ is measurable with respect to the family
$$\mathcal{X}_{1}:=\pa{(k_{i})_{u_{i}\in U\setminus C_{1}}, (\mu_{k,j})_{k,u_{j}\in [1,K]\times (V\setminus C_{1})}, \1_{k_{1}=k_{2}}}\enspace.$$
Since the intersection of $U\cap C_{1}$ with $(U\setminus C_{1})\cup \{u_{1},u_{2}\}$ contains only  $u_{1}$, it is clear that the family $\pa{\1_{k_{i}=k_{i'}}}_{(u_{i},u_{i'})\in (U\cap C_{1})^{2}}$ is independent from $\mathcal{X}_{1}$. Then, $\hat{G}$ is independent from $\mathcal{X}_{1}$.

We now condition to the $k_{i}$'s for $i\in[1,n]$, and to $\mathcal{X}_{1}$. Then, $\hat{G}$ is fixed. Since the $\mu_{k,j}$'s are drawn independently, we deduce that, with our conditioning, the $\mu_{k,j}$'s, for $j\in V\cap C_{1}$, are still drawn independently and uniformly on $\{-\eps,\eps\}$. For $A\in \hat{G}$, there exists $k_{A}\in[1,K]$ which satisfies; for all $j\in V\cap C_{1}$ $\mu_{A,j}=\mu_{k_{A},j}$. The application $A\to k_{A}$ being an injection, we deduce that the $\mu_{k_{A},j}$'s, for $A\in\hat{G}$ and $j\in V\cap C_{1}$ are independent and uniformly drawn from $\{-\eps,+\eps\}$.

Let us summarize this; the partition $\hat{G}$ is independent from $\mathcal{X}_{1}$, and conditionally on $\hat{G}$ and $\mathcal{X}_{1}$, the $\mu_{A,j}$'s, for $A\in\hat{G}$ and $v_{j}\in V\cap C_{1}$, are independently and uniformly drawn from $\{-\eps,+\eps\}$. Together with the fact that the family 
$(X_{ij})_{u_{i},v_{j}\in E_{1}}$ is measurable with respect to $\hat{G}$ and the $\mu_{A,j}$'s, for $A\in\hat{G}$ and $v_{j}\in V\cap C_{1}$, this leads to the independence of $(X_{ij})_{u_{i},v_{j}\in E_{1}}$ with $\mathcal{X}_{1}$.

Since $(x, \pa{X_{ij}}_{u_{i},v_{j}\in E_{\alpha}\setminus E_{1}})$ is measurable with respect to $\mathcal{X}_{1}$, we deduce the independence of the families $(X_{ij})_{u_{i},v_{j}\in E_{1}}$ and $(x,(X_{ij})_{u_{i},v_{j}\in E_{\alpha}\setminus E_{1}})$. Hence, Lemma \ref{lem:nullcumulant} leads to the nullity of the cumulant $\kappa_{\alpha}$. This concludes our proof.

\subsection{Proof of Lemma \ref{lem:boundcumulants}}\label{prf:boundcumulants}

First, we provide an upper-bound on the moments of the form $\E[X^{\gamma}]$ and $\E[xX^{\gamma}]$.

\begin{lem}\label{lem:moments}
    Let $\gamma\in \N^{n\times p}$. We denote by $CC_{\gamma}$ the number of connected components of $\mathcal{G}_{\gamma}$ and by $l_{\gamma}$ the number of nodes of $U\cup V$ spanned by $\mathcal{G}_{\gamma}$. Then, we both have $$\E[X^{\gamma}]\leq \eps^{|\gamma|}\min\pa{1,|\gamma|^{|\gamma|}\pa{\frac{1}{K}}^{l_{\gamma}-\frac{|\gamma|}{2}-CC_{\gamma}}}\enspace,$$
    and $$\E[xX^{\gamma}]\leq \eps^{|\gamma|}\min\pa{\frac{1}{K},|\gamma|^{|\gamma|}\pa{\frac{1}{K}}^{l_{\gamma}-\frac{|\gamma|}{2}-CC_{\gamma}}}\enspace.$$
\end{lem}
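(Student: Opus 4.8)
I want to bound the mixed moments $\E[X^{\gamma}]$ and $\E[xX^{\gamma}]$ for $\gamma \in \N^{n\times p}$. The starting observation is the factorization of $X^\gamma$ through the partition matrix $A$: since $X_{ij} = \mu_{k_i,j}$, grouping the rows by their label gives
\[
X^{\gamma} = \prod_{i=1}^n\prod_{j=1}^p \mu_{k_i,j}^{\gamma_{ij}} = \prod_{k=1}^K \prod_{j=1}^p \mu_{k,j}^{\sum_{i:k_i=k}\gamma_{ij}}\ .
\]
Since the $\mu_{k,j}/\eps$ are i.i.d.\ Rademacher, conditioning on the labels $(k_i)$ gives $\E[X^\gamma \mid (k_i)] = \eps^{|\gamma|}$ if $\sum_{i:k_i=k}\gamma_{ij}$ is even for every $(k,j)$ — call this the \emph{parity event} $\mathcal{A}_\gamma$ — and $0$ otherwise. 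Hence $\E[X^\gamma] = \eps^{|\gamma|}\,\P[\mathcal{A}_\gamma]$ and likewise $\E[xX^\gamma] = \eps^{|\gamma|}\,\P[\mathcal{A}_\gamma \cap \{k_1=k_2\}] \le \eps^{|\gamma|}\P[\mathcal{A}_\gamma]$; the extra factor $1/K$ in the second bound comes from noticing that on the event $\{k_1=k_2\}$ one label among the first two rows of $\mathcal{G}_\gamma^-$ is redundant, so the parity constraints leave one fewer free label class (this is exactly the $\le \tfrac1K$ clause, and it also subsumes the trivial bound $\E[X^\gamma]\le\eps^{|\gamma|}$ that handles the $\min(1,\cdot)$).

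**The combinatorial core.**
The real work is bounding $\P[\mathcal{A}_\gamma]$. First reduce to $\mathcal{G}_\gamma^-$: isolated rows of $\gamma$ impose no constraint and isolated columns are irrelevant, and the components of $\mathcal{G}_\gamma^-$ impose parity constraints on disjoint sets of labels, so $\P[\mathcal A_\gamma]$ factorizes over the $CC_\gamma$ connected components — this is where the exponent $-CC_\gamma$ in the bound comes from (each component contributes one "free" label class, i.e.\ one factor that is $\le 1$ rather than $\le 1/K$). Then I want Lemma~\ref{lem:numbergroups}: on the parity event, the partition that $(k_i)$ induces on the row-nodes of a given component $C$ has few blocks. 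The heuristic is a rank/counting argument over $\mathbb{F}_2$: view $\gamma$ restricted to $C$ as a linear map, and the parity constraints (one per (label-class, column) pair that is actually hit) must all be satisfied; each distinct block the partition creates on $C$ must, for each incident column, satisfy an even-degree condition, and a connectivity/dimension count forces the number of blocks $g_C$ to satisfy something like $g_C \le |\gamma_C| - (\text{nodes of }C) + (\text{something}) $, i.e.\ it is controlled by $|\gamma|$, the number of row- and column-nodes, and the cyclomatic-type slack of the component. Summing over components and bookkeeping, the number of blocks of the induced partition on all row-nodes is at most $\tfrac{|\gamma|}{2} + CC_\gamma - (\text{number of column nodes})$ or the like; I will choose the precise inequality so that the probability that $n$ i.i.d.\ uniform labels realize a partition into a prescribed set of $\le g$ blocks on $m$ marked rows is at most $(\text{number of such partitions})\cdot K^{-(m-g)} \le m^{m} K^{-(m-g)}$, which after plugging in the block bound collapses to $|\gamma|^{|\gamma|} K^{-(l_\gamma - |\gamma|/2 - CC_\gamma)}$. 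The factor $|\gamma|^{|\gamma|}$ is the crude count of set partitions (or of how the constraints can be arranged); I will not try to optimize it since the theorem already absorbs a power of $D$.

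**Assembling and the main obstacle.**
With $\P[\mathcal A_\gamma] \le \min(1, |\gamma|^{|\gamma|} K^{-(l_\gamma - |\gamma|/2 - CC_\gamma)})$ in hand, multiply by $\eps^{|\gamma|}$ to get the bound for $\E[X^\gamma]$, and tack on the $1/K$ from $\{k_1=k_2\}$ for $\E[xX^\gamma]$; the $\min$ with $1/K$ there follows because $\{k_1=k_2\}$ already has probability $1/K$ and is independent of nothing we need — more carefully, on $\mathcal{G}_\gamma^-$ connected the event $\mathcal A_\gamma\cap\{k_1=k_2\}$ is contained in an event cutting the free label classes by one. The step I expect to be genuinely delicate is Lemma~\ref{lem:numbergroups}: getting a clean, tight count of how many label-blocks the parity event permits on a connected bipartite multigraph component, correctly tracking the interplay between the number of row-nodes $m_C$, column-nodes $r_C$, edges $|\gamma_C|$, and the extra edges beyond a spanning tree. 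This is precisely the "sophisticated combinatorial argument" the introduction flags, and everything downstream (the final geometric series in $\zeta_n$) is routine once the exponent $l_\gamma - |\gamma|/2 - CC_\gamma$ is pinned down. I would prove Lemma~\ref{lem:numbergroups} first, then deduce Lemma~\ref{lem:moments} as above, essentially in the two displayed lines.
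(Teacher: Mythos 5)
Your proposal follows the paper's proof of Lemma \ref{lem:moments} essentially step for step: the Rademacher/parity characterization $\E[X^{\gamma}]=\eps^{|\gamma|}\,\P[\mathcal{A}_{\gamma}]$ (the paper's Lemma \ref{lem:esp}), the union bound $\P[\mathcal{A}_{\gamma}]\leq m_{\gamma}^{m_{\gamma}}\max_{G}K^{-(m_{\gamma}-|G|)}$ over the at most $m_{\gamma}^{m_{\gamma}}$ partitions realizing the parity event, the block-count bound of Lemma \ref{lem:numbergroups} to produce the exponent $l_{\gamma}-|\gamma|/2-CC_{\gamma}$, and $\P[\mathcal{A}_{\gamma}\cap\{k_{1}=k_{2}\}]\leq\min(\P[\mathcal{A}_{\gamma}],1/K)$ for the second bound. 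Like the paper, you treat the genuinely hard combinatorial input (Lemma \ref{lem:numbergroups}) as a black box to be proved separately (your guessed form of the per-component block bound is not quite the paper's $|G|\leq |\gamma|/2-r_{\gamma}+CC_{\gamma}$, but you correctly reverse-engineer the final exponent), so as a proof of Lemma \ref{lem:moments} itself this is correct and takes the same route.
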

This lemma, which is the core of our arguments, is shown in the next subsection. Here, we deduce the upper-bound (\ref{eq:borne:cumulants}) on the cumulant from this lemma. 

We proceed by induction on $\alpha\in\N^{n\times p}$. For the initialization , we have $\kappa_{0}=\E[x]=\frac{1}{K}$.
Then, we take $\alpha\in\N^{n\times p}$, and we suppose that, for all $\beta\lneq \alpha$, we have $$|\kappa_{\beta}|\leq \eps^{|\beta|}(1+|\beta|)^{|\beta|}\min\pa{\frac{1}{K},|\beta|^{|\beta|}\pa{\frac{1}{K}}^{l_{\beta}-\frac{|\beta|}{2}-1}}\enspace.$$
From Lemma \ref{lem:reductioncumulant}, we can suppose that $\mathcal{G}_{\alpha}$ only has one connected component,  otherwise $\kappa_\alpha=0$. We recall that, given $\gamma\in\N^{n\times p}$, $l_{\gamma}$ and $CC_{\gamma}$ respectively stand  for the number of nodes spanned by $\mathcal{G}_{\gamma}$, and the number of connected components of $\mathcal{G}_{\gamma}$.  The Definition (\ref{eq:def:kappa}) of $\kappa_{\alpha}$, Lemma \ref{lem:moments}, and the induction hypothesis imply that 
\begin{align}\nonumber
    |\kappa_{\alpha}|\leq &\E[xX^{\alpha}]+\sum_{0<\beta\lneq \alpha}\binom{\alpha}{\beta}\E[X^{\alpha-\beta}]|\kappa_{\beta}|+|\kappa_{0}|\E[X^{\alpha}]\\ 
    \nonumber
    \leq&\eps^{|\alpha|}|\alpha|^{\alpha}\pa{\frac{1}{K}}^{l_{\alpha}-\frac{|\alpha|}{2}-1}\\ \nonumber
    &+\sum_{0<\beta\lneq \alpha}\binom{\alpha}{\beta}|\eps|^{|\beta|+|\alpha-\beta|}(1+|\beta|)^{|\beta|}|\beta|^{|\beta|}|\alpha-\beta|^{|\alpha-\beta|}\pa{\frac{1}{K}}^{l_{\beta}+l_{\alpha-\beta}-\frac{|\beta|}{2}-\frac{|\alpha-\beta|}{2}-1-CC_{\alpha-\beta}}\\ \nonumber
    &+ \frac{1}{K}\eps^{|\alpha|}|\alpha|^{\alpha}\pa{\frac{1}{K}}^{l_{\alpha}-\frac{|\alpha|}{2}-1}\\
    \leq&2\eps^{|\alpha|}|\alpha|^{\alpha}\pa{\frac{1}{K}}^{l_{\alpha}-\frac{|\alpha|}{2}-1} \nonumber \\ \label{eq:upper:kappa}
    &+\eps^{|\alpha|}|\alpha|^{\alpha}\pa{\frac{1}{K}}^{-\frac{|\alpha|}{2}}\sum_{0<\beta\lneq \alpha}\binom{\alpha}{\beta}(1+|\beta|)^{|\beta|}\pa{\frac{1}{K}}^{l_{\beta}+l_{\alpha-\beta}-1-CC_{\alpha-\beta}}\enspace.
\end{align}
\textbf{Claim}: For any $0<\beta\lneq\alpha$, we have  $l_{\beta}+l_{\alpha-\beta}-C_{\alpha-\beta}\geq l_{\alpha}$. 

We first show this claim.
We have supposed that $\mathcal{G}_{\alpha}$ only has one connected component. We denote $C_{1},\ldots, C_{CC_{\alpha-\beta}}\subset U\cup V$ the connected components of $\mathcal{G}_{\alpha-\beta}$. For all $s\in [1,CC_{\alpha-\beta}]$, there exists $x\in C_{s}$ which is spanned by $\mathcal{G_{\beta}}$. Indeed, otherwise, since the set of edges of $\mathcal{G}_{\alpha}$ is the union of the edges of $\mathcal{G}_{\beta}$ and $\mathcal{G_{\alpha-\beta}}$, $C_{s}$ would also be a connected component of $\mathcal{G}_{\alpha}$ which does not span the nodes spanned by $\mathcal{G}_{\beta}$. This contradicts the connectivity of $\mathcal{G}_{\alpha}$. So, there exist at least $CC_{\alpha-\beta}$ distinct points of $U\cup V$ which are spanned both by $\mathcal{G}_{\alpha-\beta}$ and $\mathcal{G}_{\beta}$. Since the nodes spanned by $\mathcal{G}_{\alpha}$ are spanned by $\mathcal{G}_{\alpha-\beta}$ or $\mathcal{G}_{\beta}$, this leads to $l_{\beta}+l_{\alpha-\beta}-CC_{\alpha-\beta}\geq l_{\alpha}$. 

\medskip

Now that we proved the claim, we plug it in~\eqref{eq:upper:kappa}. This leads to 
 \begin{align*}
    |\kappa_{\alpha}|\leq&\eps^{|\alpha|}|\alpha|^{|\alpha|}\pa{\frac{1}{K}}^{l_{\alpha}-\frac{|\alpha|}{2}-1}\pa{2+\sum_{0<\beta\lneq \alpha}\binom{\alpha}{\beta}(1+|\beta|)^{|\beta|}}\\
    \leq& \eps^{|\alpha|}|\alpha|^{|\alpha|}\pa{\frac{1}{K}}^{l_{\alpha}-\frac{|\alpha|}{2}-1}\pa{2+\sum_{0<\beta\lneq \alpha}\binom{\alpha}{\beta}|\alpha|^{|\beta|}}\\
    \leq& \eps^{|\alpha|}|\alpha|^{|\alpha|}\pa{\frac{1}{K}}^{l_{\alpha}-\frac{|\alpha|}{2}-1}\pa{2+\sum_{w=1}^{|\alpha|-1}\binom{|\alpha|}{w}|\alpha|^{w}}\\
    \leq& \eps^{|\alpha|}|\alpha|^{|\alpha|}(1+|\alpha|)^{|\alpha|}\pa{\frac{1}{K}}^{l_{\alpha}-\frac{|\alpha|}{2}-1}\enspace.
\end{align*}
This inequality proves the first part of the sought upper-bound of Lemma \ref{lem:boundcumulants}.

\medskip
It remains to prove that $|\kappa_{\alpha}|\leq \eps^{|\alpha|}(1+|\alpha|)^{|\alpha|}\frac{1}{K}$. For all $\beta\lneq \alpha$, we know from Lemma \ref{lem:moments} that $\E[X^{\beta}]\leq \eps^{|\beta|}$ and $\E[xX^{\alpha}]\leq \frac{1}{K}\eps^{|\alpha|}$. Together with the induction hypothesis, this leads to \begin{align*}
    |\kappa_{\alpha}|\leq& \eps^{|\alpha|}\frac{1}{K}+\sum_{\beta\lneq \alpha}\binom{\alpha}{\beta}(1+|\beta|)^{|\beta|}\frac{1}{K}\eps^{|\beta|}\eps^{|\alpha-\beta|}\\
    \leq&\eps^{|\alpha|}\frac{1}{K}\pa{1+\sum_{\beta\lneq \alpha}\binom{\alpha}{\beta}|\alpha|^{|\beta|}}\\
    \leq &\eps^{|\alpha|}(1+|\alpha|)^{|\alpha|}\frac{1}{K}\enspace.
\end{align*}
This concludes the induction; for all $\alpha\in\N^{n\times p}$, we have $$|\kappa_{\alpha}|\leq \eps^{|\alpha|}(1+|\alpha|)^{|\alpha|}\min\pa{\frac{1}{K},|\alpha|^{|\alpha|}\pa{\frac{1}{K}}^{l_{\alpha}-\frac{|\alpha|}{2}-1}}\enspace.$$

\subsection{Proof of Lemma \ref{lem:moments}}\label{prf:moments}

Let $\gamma\in\N^{n\times p}$ such that $\mathcal{G}_{\gamma}$ has $CC_{\gamma}$ connected components and spans $l_{\gamma}$ nodes. Let us first upper bound $\E[X^{\gamma}]$. We denote $m_{\gamma}$ the number of nodes of $U$ spanned by $\mathcal{G}_{\gamma}$ and $r_{\gamma}$ the  number of nodes of $V$ spanned by $\mathcal{G}_{\gamma}$. We suppose by symmetry that $\gamma$ is supported on $[1,m_{\gamma}]\times [1,r_{\gamma}]$. We denote $\hat{G}^{\gamma}$ the partition induced on $[1,m_{\gamma}]$ by $k_{1},\ldots ,k_{m_{\gamma}}$.

\begin{lem}\label{lem:esp}
    If, for all $j\in [1,r_{\gamma}]$ and for all groups $A\in \hat{G}^{\gamma}\subset [1,m_{\gamma}]$, we have $\sum_{i\in A}\gamma_{ij}\equiv 0\enspace [2]$, then, $\E[X^{\gamma}|\hat{G}^{\gamma}]=\eps^{|\gamma|}$. Otherwise, we have $\E[X^{\gamma}|\hat{G}^{\gamma}]=0$.
\end{lem}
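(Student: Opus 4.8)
The plan is to exploit the product structure of $X^{\gamma}$ together with the fact that each entry of $\underline{\mu}$ is a symmetric random variable supported on $\{-\eps,+\eps\}$, and that $\underline{\mu}$ is independent of the labels $(k_i)$.

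First I would rewrite $X^{\gamma}$ in terms of $\underline{\mu}$. Since $X_{ij}=\mu_{k_i,j}$ and $\gamma$ is supported on $[1,m_{\gamma}]\times[1,r_{\gamma}]$, only the entries $\mu_{k_i,j}$ with $i\le m_{\gamma}$ and $j\le r_{\gamma}$ occur, and grouping factors according to the blocks of the partition $\hat{G}^{\gamma}$ of $[1,m_{\gamma}]$ — writing $k_A$ for the common value of $k_i$ over $i\in A$ — gives
\[
X^{\gamma}=\prod_{A\in\hat{G}^{\gamma}}\prod_{j=1}^{r_{\gamma}}\mu_{k_A,j}^{\,e_{A,j}},\qquad e_{A,j}:=\sum_{i\in A}\gamma_{ij}\enspace.
\]
The crucial structural point is that the map $A\mapsto k_A$ is injective, so the pairs $(k_A,j)$ appearing above are pairwise distinct; hence the variables $(\mu_{k_A,j})_{A\in\hat{G}^{\gamma},\,j\le r_{\gamma}}$ form a sub-collection of the independent entries of $\underline{\mu}$.

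Next I would condition on the full label vector $(k_i)_{i\in[1,n]}$. Since $\underline{\mu}$ is independent of $(k_i)$, these selected entries remain i.i.d.\ uniform on $\{-\eps,+\eps\}$ after conditioning, so
\[
\E\cro{X^{\gamma}\,\big|\,(k_i)_i}=\prod_{A\in\hat{G}^{\gamma}}\prod_{j=1}^{r_{\gamma}}\E\cro{\mu_{k_A,j}^{\,e_{A,j}}}\enspace.
\]
The only elementary computation is that for $\xi$ uniform on $\{-\eps,+\eps\}$ and an integer $e\ge0$, one has $\E[\xi^{e}]=\eps^{e}$ if $e$ is even and $\E[\xi^{e}]=0$ if $e$ is odd. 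Plugging this in, the conditional expectation equals $\eps^{\sum_{A,j}e_{A,j}}=\eps^{|\gamma|}$ when every $e_{A,j}$ is even, and it vanishes as soon as one $e_{A,j}$ is odd. Finally, since the resulting expression depends on $(k_i)_i$ only through the partition $\hat{G}^{\gamma}$, taking a further conditional expectation given $\hat{G}^{\gamma}$ leaves it unchanged, which yields the claimed value of $\E[X^{\gamma}\mid\hat{G}^{\gamma}]$; it remains only to note that ``$e_{A,j}$ even for all $A,j$'' is exactly the stated parity condition $\sum_{i\in A}\gamma_{ij}\equiv0\ [2]$ for all $A\in\hat{G}^{\gamma}$ and $j\in[1,r_{\gamma}]$.

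I do not expect a genuine obstacle: the lemma is essentially bookkeeping. The only points that deserve care are the injectivity of $A\mapsto k_A$ (which is what makes the factorisation over distinct entries of $\underline{\mu}$ legitimate) and the observation that $\E[X^{\gamma}\mid(k_i)_i]$ already depends on the labels only through $\hat{G}^{\gamma}$, so that passing to the coarser conditioning costs nothing.
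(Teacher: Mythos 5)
Your proof is correct and follows essentially the same route as the paper's: both exploit the factorisation of $X^{\gamma}$ over the distinct entries $\mu_{k_A,j}$ (using injectivity of $A\mapsto k_A$), the independence of these entries from the labels, and the computation $\E[\xi^{e}]=\eps^{e}\1_{e\text{ even}}$ for $\xi$ uniform on $\{-\eps,+\eps\}$. The only cosmetic difference is that you treat both parity cases uniformly through this one moment computation, whereas the paper handles the all-even case by observing that $X^{\gamma}$ is then almost surely equal to $\eps^{|\gamma|}$ and invokes the symmetry of $\mu_{k',j'}$ only for the odd case.
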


This lemma, proved in Section \ref{prf:esp}, implies
\begin{equation}\label{eq:esper}
    \E[X^{\gamma}]=\eps^{|\gamma|}\P\cro{\forall j\in [1,r_{\gamma}],\enspace \forall A\in \hat{G}^{\gamma},\enspace \sum_{i\in A}\gamma_{ij}\equiv 0\enspace [2]}\enspace.
\end{equation}
Let $G$ be a partition of $[1,m_{\gamma}]$ with $|G|\leq K$, and let us upper-bound $\P[\hat{G}^{\gamma}=G]$. We write $G=\ac{G_{1},\ldots, G_{|G|}}$. We take, for $k\in[1,|G|]$, $i_{k}\in G_{k}$. Then, $\hat{G}^{\gamma}=G$ implies that, for all $k\in[1,|G|]$, for all $i\in G_{k}$, the equality $k_{i}=k_{i_{k}}$ holds. And, \begin{align*}
    \P\cro{\forall k\in[1,|G|],\enspace \forall i\in G_{k}\setminus \ac{i_k},\enspace k_{i}=k_{i_{k}}}=& \prod_{k\in[1,|G|]}\P\cro{\forall i\in G_{k}\setminus\ac{i_k}, \enspace k_{i}=k_{i_{k}}}\\
    =&\prod_{k\in [1,|G|]}\pa{\frac{1}{K}}^{|G_{k}|-1}\\
    =&\pa{\frac{1}{K}}^{m_{\gamma}-|G|}\enspace.
\end{align*}
This equality leads to 
$$\P[\hat{G}^{\gamma}=G]\leq \pa{\frac{1}{K}}^{m_{\gamma}-|G|}\enspace.$$
The next lemma upper-bounds the number of groups of a partition $G$ of $[1,m_{\gamma}]$ such that, for all $j\in [1,r_{\gamma}]$ and all groups $A\in G$, we have $\sum_{i\in A}\gamma_{ij}\equiv 0\enspace [2]$. Its proof, given in Section \ref{prf:numbergroups}, relies on delicate combinatorial arguments. 

\begin{lem}\label{lem:numbergroups}
    Let $G$ be a partition of $[1,m_{\gamma}]$ satisfying, for all $j\in [1,r_{\gamma}]$, all groups $A\in G$, the equality $\sum_{i\in A}\gamma_{ij}\equiv 0\enspace [2]$. Then, the number of groups satisfies the following inequality $$|G|\leq \frac{|\gamma|}{2}-r_{\gamma}+CC_{\gamma}\enspace.$$
\end{lem}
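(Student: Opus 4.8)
The plan is to think of the partition $G$ as a linear-algebraic object over the field $\mathbb{F}_2$. For each group $A \in G$, the parity constraint says that the row-sum $\sum_{i \in A} \gamma_{ij} \equiv 0 \pmod 2$ for every column $j \in [1,r_\gamma]$. Equivalently, if we let $\bar\gamma \in \mathbb{F}_2^{m_\gamma \times r_\gamma}$ be the entrywise reduction of $\gamma$ mod $2$, then for every group $A$ the vector $\sum_{i \in A} \bar\gamma_i = 0$ in $\mathbb{F}_2^{r_\gamma}$, where $\bar\gamma_i$ is the $i$-th row. So each group is a subset of rows summing to zero. The first observation is that the sum over all of $[1,m_\gamma]$ of the $\bar\gamma_i$ need not be zero, but the constraint is on each group separately, so I would look at the \emph{partition restricted to each connected component of} $\mathcal{G}_\gamma^-$ separately and then recombine, since a group $A$ that meets two connected components would split into pieces that each carry no edges to the other component — but actually a group could genuinely straddle components if some of its rows are isolated in $\mathcal{G}_\gamma$ (zero rows of $\bar\gamma$). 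Those zero rows contribute nothing to any parity constraint, so they can be lumped arbitrarily; I would handle them by a separate bookkeeping step at the end.

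**Main combinatorial step.** The heart of the argument is: within a single connected component $C$ of $\mathcal{G}_\gamma^-$ with $m_C$ row-nodes, $r_C$ column-nodes, and $d_C = |\gamma|_C$ edges, the partition $G$ restricted to $C$ has few parts. The key inequality I would aim for is $|G|_C| \le d_C/2 - r_C + 1$. To prove this, I would argue that each part $A$ of $G|_C$ of size $\ge 2$ must "use up" enough edges: since the rows in $A$ sum to zero mod $2$ but (by connectedness of $C$ and the fact that each column node has degree $\ge 2$ — wait, that is only guaranteed when $\kappa_\alpha \ne 0$, so I should be careful whether Lemma~\ref{lem:numbergroups} is being applied only to such $\gamma$). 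The cleanest route: consider the submatrix of $\bar\gamma$ with rows indexed by one representative per part — no, better — I would set up an injective "charging" of parts to edges. A part $A$ with rows summing to zero in $\mathbb{F}_2^{r_C}$ and containing at least one row with a nonzero entry (which holds in a nontrivial component) forces at least two rows of $A$ to share a nonzero column, costing at least two edges in that column, and distinct parts using the same column must still have their own pairs of edges there. Summing, $\sum_{A : |A| \ge 2} (\text{edges internal to } A) \le d_C$, and one shows this lower-bounds the number of parts by something like $d_C/2$. Combining with $r_C \le d_C/2$ (each column node has degree $\ge 2$) and the spanning-tree bound $d_C \ge m_C + r_C - 1$ gives the claimed per-component estimate; summing over the $CC_\gamma$ components and adding back isolated rows yields $|G| \le |\gamma|/2 - r_\gamma + CC_\gamma$.

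**Execution plan and expected obstacle.** Concretely, I would: (1) reduce to $\mathbb{F}_2$ and state the parity constraint as a linear condition on row subsets; (2) split $[1,m_\gamma]$ into the non-isolated rows within each connected component plus the isolated rows; (3) prove the per-component bound $|G|_C| \le d_C/2 - r_C + 1$ via an edge-charging/rank argument; (4) sum over components, noting $\sum_C d_C = |\gamma|$, $\sum_C r_C = r_\gamma$, $\sum_C 1 = CC_\gamma$, and that isolated rows only increase $|G|$ by at most the number of such rows — but these must be reabsorbed into the $CC_\gamma$ count correctly (here one has to be slightly careful: an isolated row forms its own trivial connected component in some conventions, in others not, so I would match the convention used for $CC_\gamma$ in Lemma~\ref{lem:moments}); (5) conclude. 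I expect the main obstacle to be step (3): making the charging argument genuinely injective and tight enough to get the factor $d_C/2$ rather than merely $d_C$. The subtlety is that a single column of $\bar\gamma$ can serve many parts, and one must ensure the two-edges-per-part-per-column accounting does not double count. A linear-algebraic reformulation — bounding the number of parts by the number of rows minus the rank of a suitable "difference" matrix, then bounding that rank below by $r_\gamma - CC_\gamma + (\text{something involving } |\gamma|)$ — may be cleaner than raw combinatorics, and I would try both and keep whichever closes cleanly.
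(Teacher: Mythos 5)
Your framing is on the right track: reducing the parity constraints to $\mathbb{F}_2$, working per connected component, and aiming for the per-component bound $|G|_C|\leq d_C/2 - r_C + 1$ is exactly the right target (and matches the paper's reduction at the start of Appendix~\ref{prf:numbergroups}). But the central step is missing. The charging argument you sketch --- every part $A$ must account for at least two edge-multiplicities, either because two of its rows share a column or because a lone odd entry is impossible --- only yields $|G|\leq |\gamma|/2$. That is the \emph{easy} half of the bound. The content of Lemma~\ref{lem:numbergroups} is the additional $-r_\gamma + CC_\gamma$ correction: each column beyond the first in a component must consume at least two edge-multiplicities \emph{over and above} the two-per-group budget, and your accounting gives no mechanism for extracting these extra $2(r_C-1)$ edges. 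The auxiliary inequalities you invoke to close the gap ($r_C\leq d_C/2$ and $d_C\geq m_C+r_C-1$) go in the wrong direction --- they cannot turn an upper bound of $d_C/2$ into the strictly smaller $d_C/2-r_C+1$. You also misplace the expected obstacle: getting $d_C/2$ rather than $d_C$ is not the hard part; the hard part is the $-r_C$ term, without which the final exponent of $1/K$ in Lemma~\ref{lem:moments} loses the $r_\gamma$ contribution and the $p^{r}$ factor in the sum \eqref{eq:upper_corr_D:first} is no longer compensated, breaking Theorem~\ref{thm:lowdegreeclustering}.

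Your fallback --- the rank bound $|G|\leq m_\gamma - \mathrm{rank}_{\mathbb{F}_2}(\bar\gamma)$ from the linear independence of the parts' indicator vectors --- is correct but provably insufficient on its own: for a single column with all $m$ entries equal to $1$ the rank is $1$, giving $|G|\leq m-1$, while the lemma requires $|G|\leq m/2$; one cannot in general lower-bound $\mathrm{rank}_{\mathbb{F}_2}(\bar\gamma)$ by $m_\gamma+r_\gamma-CC_\gamma-|\gamma|/2$. The paper instead runs a double induction: the base case $r_\gamma=1$ is handled by induction on $|\gamma|$ (splitting on whether all entries of the column are odd), and the induction step peels off a column $r_\gamma$ that is a leaf of an auxiliary graph $\mathcal{V}_\gamma$ on the columns (so that connectivity is preserved), splits the rows into those still covered by $\gamma'$, those linked to them through $G$, and the rest, and then shows by a three-case analysis that the removed column carries at least two edge-multiplicities beyond $|\gamma'|+|\gamma''|$. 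Some argument of this kind --- isolating one column at a time and certifying two wasted edges per column --- is what your proposal needs and does not supply.
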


Applying this lemma together with the fact that there are at most $m_{\gamma}^{m_{\gamma}}$ partitions of $[1,m_{\gamma}]$ leads to $$\P\cro{\forall j\in [1,r_{\gamma}],\enspace \forall A\in \hat{G}^{\gamma},\enspace \sum_{i\in A}\gamma_{ij}\equiv 0\enspace [2]}\leq m_{\gamma}^{m_{\gamma}}\pa{\frac{1}{K}}^{m_{\gamma}+r_{\gamma}-CC_{\gamma}-\frac{|\gamma|}{2}}\enspace.$$ We plug this inequality in \eqref{eq:esper} and get, using $m_{\gamma}\leq |\gamma|$, $$\E[X^{\gamma}]\leq\eps^{|\gamma|}|\gamma|^{|\gamma|}\pa{\frac{1}{K}}^{m_{\gamma}+r_{\gamma}-CC_{\gamma}-\frac{|\gamma|}{2}}\enspace.$$
Moreover, since $\P\cro{\forall j\in [1,r_{\gamma}],\enspace \forall A\in \hat{G}^{\gamma},\enspace \sum_{i\in A}\gamma_{ij}\equiv 0\enspace [2]}\leq 1$, we get $$\E[X^{\gamma}]\leq\eps^{|\gamma|}\min\pa{1,\enspace|\gamma|^{|\gamma|}\pa{\frac{1}{K}}^{m_{\gamma}+r_{\gamma}-CC_{\gamma}-\frac{|\gamma|}{2}}}\enspace.$$
Now, let us upper bound $\E[xX^{\gamma}]$. Since the random variables $x$  and $|X^{\gamma}|/\epsilon^{|\gamma|}$ belong to $[0,1]$ almost surely, we obtain 
\begin{equation}\label{eq:prb}
 \E[xX^{\gamma}]\leq \min(\epsilon^{|\gamma|}\mathbb{E}[x], \mathbb{E}[X^{\gamma}])\ . 
\end{equation}
Then, since $\mathbb{E}[x]= 1/K$, we can deduce the desired bound from the previous case.

\subsection{Proof of Lemma \ref{lem:esp}}\label{prf:esp}

We suppose first that, for all $j\in[1,r_{\gamma}]$ and  for all groups $A\in \hat{G}^{\gamma}$, we have $\sum_{i\in A}\gamma_{ij}\equiv 0\enspace [2]$. Consider a specific $(i,j)$. If $k_{i}=k$, we have $X_{ij}=\mu_{k,j}$. This implies that  

$$X^{\gamma}=\prod_{k,j\in[1,K]\times [1,r_{\gamma}]}\mu_{k,j}^{\sum_{i, k_{i}=k}\gamma_{i,j}}\enspace.$$

Moreover, by hypothesis, conditionally on $\hat{G}^{\gamma}$ for $k\in[1,K]$ and $j\in[1,r_{\gamma}]$, $\sum_{i, k_{i}=k}\gamma_{i,j}\equiv 0\enspace[2]$ and $|\mu_{k,j}|=\eps$. Hence, we have 
$$X^{\gamma}=|\eps|^{\sum \gamma_{ij}}=|\eps|^{|\gamma|}\enspace.$$
This leads to the sought equality $$\E[X^{\gamma}|\hat{G}^{\gamma}]=|\eps|^{\sum \gamma_{ij}}=|\eps|^{|\gamma|}\enspace.$$

Now, let us suppose that there exists $j\in[1,r_{\gamma}]$, a group $A\in \hat{G}^{\gamma}$, such that $\sum_{i\in A}\gamma_{ij}\equiv 1\enspace [2]$. We have as before

$$X^{\gamma}=\prod_{k,j\in[1,K]\times [1,r_{\gamma}]}\mu_{k,j}^{\sum_{i, k_{i}=k}\gamma_{i,j}}\enspace.$$

By independence of the $\mu_{k,j}$'s, for $k\in[1,K]$ and $j\in[1,p]$, both between themselves and with the $k_{i}$'s, for $i\in[1,n]$, we deduce that, conditionally on the $k_{i}$'s, for such $k_i$'s that induce $\hat{G}^{\gamma}$, that

$$\E[X^{\gamma}|(k_{i})_{i\in[1,n]}]=\prod_{k,j\in[1,K]\times [1,r_{\gamma}]}\E[\mu_{k,j}^{\sum_{i, k_{i}=k}\gamma_{i,j}}|(k_{i})_{i\in[1,n]}]\enspace.$$

Let us denote $k'\in[1,K]$ and $j'\in[1,r_{\gamma}]$ that satisfies; $\sum_{i, k_{i}=k'}\gamma_{i,j'}\equiv 1\enspace [2]$. This, together with the fact that the probability distribution of $\mu_{k',j'}$ is symmetric,  leads to $\E\bigg[\mu_{k',j'}^{\sum_{i, k_{i}=k'}\gamma_{i,j'}}|(k_{i})_{i\in[1,n]}\bigg]=0$. This implies $\E\big[X^{\gamma}|(k_{i})_{i\in[1,n]}\big]=0$. Thus, we get the sought equality $$\E[X^{\gamma}|\hat{G}^{\gamma}]=0\enspace.$$

\subsection{Proof of Lemma \ref{lem:numbergroups}}\label{prf:numbergroups}

In this proof, given a partition $G$ of a subset of $[1,n]$, and given $\gamma\in\N^{n\times p}$, we say $\gamma$ is even with respect to $G$ if the following holds: $\sum_{i\in A}\gamma_{ij}\equiv 0\enspace [2]$ for all $A\in G$ and $j\in [1,p]$. 
We recall that $m_{\gamma}$, $r_{\gamma}$,  and $CC_{\gamma}$ respectively stand for the number of nodes in $U\cap \mathcal{G}^{-}_{\gamma}$, the number of nodes in $V\cap \mathcal{G}^{-}_{\gamma}$, and the number of connected components of the graph $\mathcal{G}^{-}_{\gamma}$.
We prove in this section the following claim, which rephrases Lemma \ref{lem:numbergroups}. For $\gamma\in \N^{n\times p}$ and a partition $G$ of $\{i\in[1,n],\enspace \exists j\in[1,p],\enspace \gamma_{ij}>0\}$, if $G$ is even with respect to $\gamma$, then 
\begin{equation}\label{eq:objective:lemma18}
|G|\leq \frac{|\gamma|}{2}-r_{\gamma}+CC_{\gamma}\enspace .
\end{equation}
 For $\beta\leq \gamma$ a restriction of $\gamma$ to one of the connected component of $\mathcal{G}^{-}_{\gamma}$, and for $j\in[1,p]$, the vector $\beta_{:j}$ is either null or equal to $\gamma_{:j}$. So, if each restriction of $\gamma$ to a connected component of $\mathcal{G}^{-}_{\gamma}$ is even with respect to  a partition $G$,
 then  $\gamma$ is also even with respect to $G$. Hence, it is sufficient to prove this lemma when $\mathcal{G}^{-}_{\gamma}$ is connected.  We proceed with by induction on $r_{\gamma}>0$.

 \medskip

\noindent 
\textbf{Initialization}. If $r_{\gamma}=1$; we suppose by symmetry that $\gamma$ is supported on $[1,m_{\gamma}]\times \{1\}$. Again, we proceed by induction. If $|\gamma|=1$, then there exists no partition that satisfy the conditions of Lemma \ref{lem:numbergroups} and so the proposition is true. We suppose that $|\gamma|>1$ and that the proposition is true for all $\beta \lneq \gamma$. We distinguish two cases. 

First, we consider the case where, for all $i\in [1,m_{\gamma}]$, $\gamma_{i1}\equiv 1\enspace [2]$. Let $G$ a partition of $[1,m_{\gamma}]$ for which $\gamma$ is even. Then, each group of $G$ must have an even number of elements. So, each group of $G$ is of cardinality at least $2$. Hence, there are at most $\frac{m_{\gamma}}{2}$ groups in the partition $G$. Moreover, $|\gamma|\geq m_{\gamma}$. Thus, $|G|\leq \frac{|\gamma|}{2}$. 

Now, let us consider the case where there exists $i_{0}\in [1,m_{\gamma}]$ such that $\gamma_{i_{0}1}\equiv 0\enspace [2]$. We define $\gamma'$ by $\gamma'_{ij}=\gamma_{ij}\1_{i\neq i_{0}}$. Let $G$ a partition of $[1,m_{\gamma}]$ for which $\gamma$ is even. We define $G'$ the partition induced on $[1,m_{\gamma}]\setminus \{i_{0}\}$. Then $|G|\leq |G'|+1$. The fact that $\gamma_{i_{0}1}\equiv 0\enspace [2]$ implies that $\gamma$ is also even with respect to $G'$. Since $\gamma\equiv \gamma'\enspace [2]$, we deduce that $\gamma'$ is even with respect to $G'$. Applying the induction hypothesis on $G'$ leads to $|G'|\leq \frac{|\gamma'|}{2}$. Thus, $|G|\leq \frac{|\gamma'|}{2}+1$. Since $\gamma_{i_{0}1}\geq 2$, we obtain $|G|\leq \frac{|\gamma|}{2}$, which concludes the proof of the initialization. 

\medskip

\noindent 
\textbf{Induction step}. Now, we suppose that the following holds. For all $r'<r_{\gamma}$, all $\beta$ satisfying $r_{\beta}=r'$ and $\mathcal{G}^{-}_{\beta}$ connected, for all partition $G$ of $\{i\in [1,n],\exists j\in [1,p],\enspace \beta_{ij}>0\}$ for which $\beta$ is even, we have the inequality $|G|\leq \frac{|\beta|}{2}-r_{\beta}+1$. Let us prove that, for $G$ a partition of $\{i\in [1,n],\exists j\in [1,p],\enspace \gamma_{ij}>0\}$, if $\gamma$ is even with respect to $G$, the inequality $|G|\leq \frac{|\gamma|}{2}-r_{\gamma}+1$ also holds.

We suppose by symmetry that $\gamma$ is supported on $[1,m_{\gamma}]\times [1,r_{\gamma}]$. We call $\mathcal{V}_{\gamma}$ the graph on $[1,r_{\gamma}]$ where we connect nodes $j$ and $j'$, if they have a common neighbour in $\mathcal{G}^{-}_{\gamma}$, see Figure \ref{fig:graphe1}.  More formally, for all $j,j'\in [1,p]$, there exists an edge  in $\mathcal{V}_{\gamma}$ between $j$ and $j'$,  if and only if, there exists $i\in [1,n]$ such that  $\gamma_{ij}\gamma_{ij'}>0$. 

\fbox{
\begin{minipage}{0.94\textwidth}
{\bf Example.}

Let us illustrate the construction of the graphs $\mathcal{V}_{\gamma}$ and $\mathcal{V}_{\gamma'}$  on an example. Let $\gamma$ be the matrix below, which is even with respect to the partition $\{1,2,3,4\},\{5\}$
$$\gamma=\begin{pmatrix}
    2&1&0&2&0&0&0&0\\
    0&1&2&0&0&1&0&0\\
    0&2&0&0&2&1&2&0\\
    0&0&0&0&0&0&2&0\\
    0&0&0&0&0&0&2&0
\end{pmatrix}.$$
Figure \ref{fig:graphe1} represents respectively $\mathcal{G}^{-}_{\gamma}$ and the corresponding graph $\mathcal{V}_{\gamma}$. We also represent the set $L:=\{i\in [1,m_{\gamma}],\enspace \exists i'\in U_{\gamma'},\enspace \exists A\in G,\enspace \{i,i'\}\subset A\}$. We do not represent here multi-edges.

%\begin{figure}
%    \centering
    \centerline{\includegraphics[height=6cm]{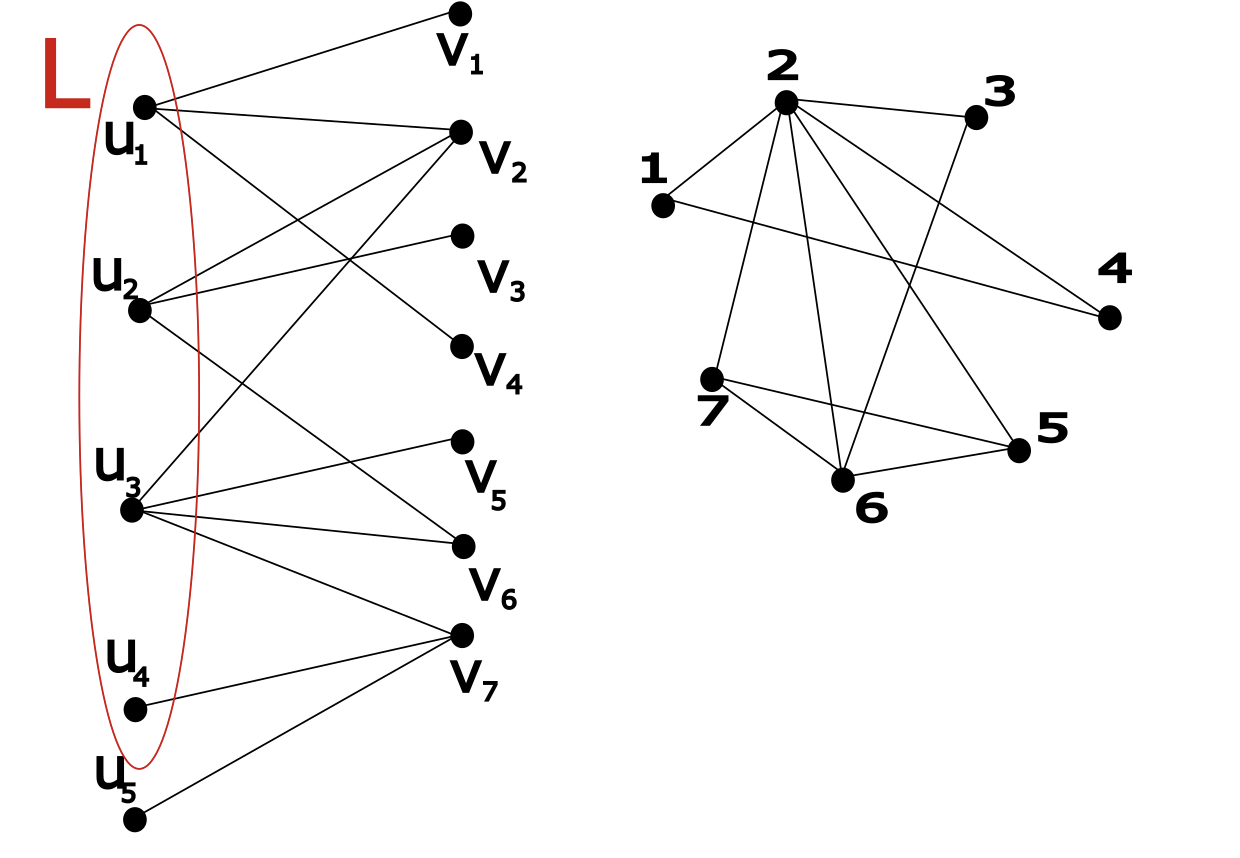}}
    \captionof{figure}{The graph $\mathcal{G}^{-}_{\gamma}$ (on the left), and the corresponding graph $\mathcal{V}_{\gamma}$ (on the right).}\label{fig:graphe1}
%\end{figure}
%The matrix $\gamma$ is even with respect to the partition $G=\{1,2,3,4\},\{5\}$. 

\end{minipage}}

For $j,j'\in[1,p]$, the nodes $j$ and $j'$ are in the same connected component for $\mathcal{V}_{\gamma}$, if and only if, $v_{j}$ and $v_{j'}$ are in the same connected component for $\mathcal{G}_{\gamma}$. Hence, the graph $\mathcal{V}_{\gamma}$ is connected. As a consequence, there exists a spanning tree of this connected component. By symmetry, we can suppose that the node $r_{\gamma}$ is a leaf of this tree. This implies that the graph induced by $\mathcal{V}_{\gamma}$ on $[1,r_{\gamma}-1]$ is also connected. We define $\gamma'$ by $\gamma'_{ij}=\gamma_{ij}\1_{j\neq r_{\gamma}}$. We write $\mathcal{V}_{\gamma'}$ for the graph induced by $\mathcal{V}_{\gamma}$ on $[1,r_{\gamma}-{1}]$. The graph $\mathcal{V}_{\gamma'}$ is 
obtained by removing the node $r_\gamma$ and the edges connected to it, as represented in Figure \ref{fig:graphe2}.
This implies that $\mathcal{V}_{\gamma'}$, and therefore also $\mathcal{G}^{-}_{\gamma'}$ are connected graphs. We denote $U_{\gamma'}=\{i\in [1,n], \exists j\in[1,p], \gamma'_{ij}>0\}$. 

\fbox{
\begin{minipage}{0.94\textwidth}
{\bf Example (continued).}
In our example, we have $r_\gamma=7$, and the node 7 is a leaf of a spanning tree of $\mathcal{V}_{\gamma}$. After setting to zero the column $j=7$, we obtain 
$$\gamma'=\begin{pmatrix}
    2&1&0&2&0&0&0&0\\
    0&1&2&0&0&1&0&0\\
    0&2&0&0&2&1&0&0\\
    0&0&0&0&0&0&0&0\\
    0&0&0&0&0&0&0&0
\end{pmatrix}.$$
The corresponding graphs $\mathcal{G}^{-}_{\gamma'}$ and $\mathcal{V}^-_{\gamma'}$, built by removing $v_7$ and the edges connecting it, are represented in Figure \ref{fig:graphe2}.

%\begin{figure}
%    \centering
  \centerline{\includegraphics[height=6cm]{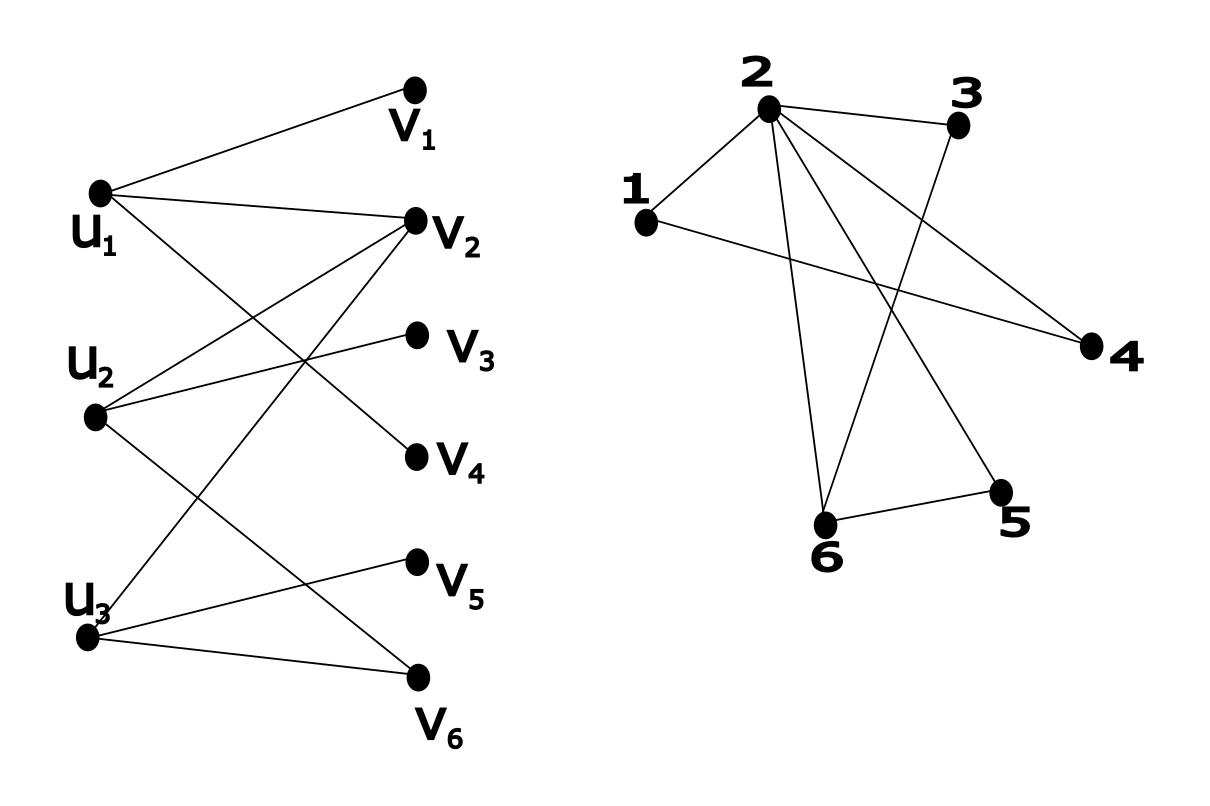}}
    \captionof{figure}{The graph $\mathcal{G}_{\gamma'}$ (on the left), and the corresponding graph $\mathcal{V}_{\gamma'}$ (on the right).}
    \label{fig:graphe2}
%\end{figure}
\end{minipage}}
\medskip

Let $G$ a partition of $[1,m_{\gamma}]$ such that $\gamma$ is even with respect to $G$. The $r_{\gamma}$-th column of $\gamma'$ being null, we have that, for all $A\in G$, $\sum_{i\in A}\gamma'_{ir_{\gamma}}=0$.

For $j\neq r_{\gamma}$, the $j$-th column of $\gamma$ is equal to that of  $\gamma'$. This implies, since $\gamma$ is even with respect to $G$, that $\sum_{i\in A}\gamma'_{ij}\equiv 0\enspace [2]$ for any $A$ in $G$.

Thus, for all groups $A\in G$, all $j\in [1,r_{\gamma}]$, we have $\sum_{i\in A}\gamma'_{ij}\equiv 0\enspace [2]$. This means that $\gamma'$ is even with respect to the partition $G$. Now, we distinguish in the set $U_{\gamma}$ different type of nodes. First, we have the set $U_{\gamma'}=\{i\in [1,n], \exists j\in[1,r_{\gamma}-1], \gamma'_{ij}>0\}$ of nodes of $U$ spanned by $\mathcal{G}^{-}_{\gamma'}$. Then, we define $L=\{i\in [1,m_{\gamma}],\enspace \exists i'\in U_{\gamma'},\enspace \exists A\in G,\enspace \{i,i'\}\subset A\}$, the set of nodes of $U$ which are linked to $U_{\gamma'}$ through the partition $G$.  Note that $U_{\gamma'}\subset L$. Finally, we consider the remaining nodes $[1,m_{\gamma}]\setminus L$.

We respectively define $G'$ the restriction of $G$ to $U_{\gamma'}$ and $G''$ the restriction of $G$ to $[1,m_{\gamma}]\setminus L$. By definition of $L$, we have $|G|=|G'|+|G''|$. 

Let us first use the induction hypothesis on $|G'|$. To do so, let us prove that $\gamma'$ is even with respect to $G'$. We know that $\gamma'$ is even with respect to $G$. Moreover, every group $A'$ of $G'$ is the restriction of a group $A$ of $G$ to $U_{\gamma'}$. This implies that, for all $j\in [1,r_{\gamma}-1]$, the equality $\sum_{i\in A'}\gamma'_{ij}=\sum_{i\in A}\gamma'_{ij}$ holds. Hence, $\gamma'$ is even with respect to $G'$. This allows us to apply the induction hypothesis and leads to $|G'|\leq \frac{|\gamma'|}{2}-r_{\gamma}+2$.  

Now, we upper bound $|G''|$. For all group $A$ of $G''$, it is clear by definition of $L$ that $A$ is also a group of $G$ which contains only elements from $[1,m_{\gamma}]\setminus L$. Hence, $\sum_{i\in A}\gamma_{ir_{\gamma}}\equiv 0 \enspace [2]$. Thus, we know from the initialization step of the induction that $|G''|\leq \frac{|\gamma''|}{2}$, where we define $\gamma''$ as the restriction of $\gamma$ to $(\cup_{A \in G''}A)\times \{r_{\gamma}\}$. This leads to $|G|= |G'|+|G''|\leq \frac{|\gamma''|}{2}+\frac{|\gamma'|}{2}-r_{\gamma}+2$. It remains to prove that $|\gamma'|+|\gamma''|\leq |\gamma|-2$. It is clear that $|\gamma|=|\gamma'|+|\gamma''|+\sum_{i\in L}\gamma_{ir_{\gamma}}$. We distinguish three cases. 

\medskip

\noindent 
\textbf{First case}: If $|\{i\in[1,m_{\gamma}],\enspace \gamma_{ir_{\gamma}}\equiv 1\enspace [2]\}\cap U_{\gamma'}|=1$. We call $i_{0}$ the only point in this set. We denote $A$ the group of $G$ such that $i_{0}\in A$. By hypothesis, $\sum_{i\in A}\gamma_{ir_{\gamma}}\equiv 0\enspace [2]$. Hence, there exists $i\neq i_{0}\in A$ which satisfies $\gamma_{ir_{\gamma}}\equiv 1\enspace [2]$. Thus, since $A\subset L$, we get $\sum_{i\in L}\gamma_{ir_{\gamma}}\geq 2$. This leads to $|\gamma|\geq |\gamma'|+|\gamma''|+2$. 

\medskip

\noindent 
\textbf{Second case}: If $|\{i\in[1,m_{\gamma}],\enspace \gamma_{ir_{\gamma}}\equiv 1\enspace [2]\}\cap U_{\gamma'}|=0$. Since $\mathcal{G}_{\gamma}$ is connected, there exists $i\in U_{\gamma'}$ such that $\gamma_{ir_{\gamma}}>0$. Since $|\{i\in[1,m_{\gamma}],\enspace \gamma_{ir_{\gamma}}\equiv 1\enspace [2]\}\cap U_{\gamma'}|=0$, we have $\gamma_{ir_{\gamma}}\geq 2.$ The fact that $U_{\gamma'}\subset L$ leads to $\sum_{i\in L}\gamma_{ir_{\gamma}}\geq 2$. Hence, $|\gamma|\geq |\gamma'|+|\gamma''|+2$.

\medskip

\noindent 
\textbf{Third case}: If $|\{i\in[1,m_{\gamma}],\enspace \gamma_{ir_{\gamma}}\equiv 1\enspace [2]\}\cap U_{\gamma'}|\geq 2$. In this case, $\sum_{i\in U_{\gamma'}}\gamma_{ir_{\gamma}}\geq 2$. This leads to $|\gamma|\geq |\gamma'|+|\gamma''|+2$.

\medskip

This concludes our induction and leads to the sought inequality $|G|\leq \frac{|\gamma|}{2}-r_{\gamma}+1$ for any $\gamma$ such that $\mathcal{G}^{-}_{\gamma}$ is connected and for any partition $G$ of $\{i\in[1,n],\enspace \exists j\in[1,p],\enspace \gamma_{ij}>0\}$ that is even with respect to $\gamma$. In the general case where $\mathcal{G}_{\alpha}$ has $CC_{\gamma}$ connected components, we readily get $|G|\leq \frac{|\gamma|}{2}-r_{\gamma}+CC_{\gamma}$.

\subsection{A computational barrier for $p\leq n$}\label{sec:p-smaller-than-n}
In this section, we adapt the proof of Theorem~\ref{thm:lowdegreeclustering} to  provide a computational lower bound when $p\leq n$. This lower-bound provides evidence for the existence of a computation-information gap for ${n\over K}\vee K \ \widetilde{\ll}\  p \leq n$, where $\widetilde{\ll}$ hides some polylog$(n)$ factors.
We believe yet, that our computational lower-bound is not tight in this regime.

\begin{prop}\label{prp:lowdegreeclustering}
    Let $D\in \N$. If $p\leq n$ and $\overline{\zeta}_{n}:=\frac{\bar\Delta^{4}D^{8}(1+D)^{4}n}{p^2}\max\pa{\frac{n}{K^{2}},1}<1$, then under the prior of Definition \ref{def:prior}, we have
    \begin{equation*}
        MMSE_{\leq D}\geq {1\over K} - \frac{1}{K^{2}}\pa{1 +  \frac{\overline{\zeta}_n}{(1-\sqrt{\overline{\zeta}_n})^3}}\enspace .
    \end{equation*}
    In particular, if $\bar \Delta^2 \ll D^{-6}\pa{\frac{pK}{n}\wedge\frac{p}{\sqrt{n}}}$, then $MMSE_{\leq D}=\frac{1}{K}-\frac{1+o(1)}{K^{2}}$.
\end{prop}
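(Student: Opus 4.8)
The plan is to replay the proof of Theorem~\ref{thm:lowdegreeclustering} given in Appendix~\ref{prf:lowdegreeclustering} almost verbatim, the only new ingredient being to track the exact powers of $p$ and $n$ in the final summation so as to read off the correct normalizing quantity $\overline\zeta_n$ when $p\leq n$. First, exactly as at the start of Appendix~\ref{prf:lowdegreeclustering}, one reduces $MMSE_{\leq D}$ to $\tfrac1K-corr_{\leq D}^2$ with $corr_{\leq D}$ the degree-$D$ maximum correlation for estimating $x=M^*_{12}$, and applies Proposition~\ref{thm:schrammwein} to get $corr_{\leq D}^2\leq\sum_{|\alpha|\leq D}\kappa_\alpha^2/\alpha!$. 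Crucially, none of the structural ingredients used afterwards invokes $p\geq n$: the pruning of null cumulants (Lemma~\ref{lem:reductioncumulant}), the topological constraints $m_\alpha\geq2$, $|\alpha|\geq2r_\alpha$, $|\alpha|\geq r_\alpha+m_\alpha-1$ (Lemma~\ref{lem:condition_topology}), the combinatorial count of at most $p^{r}n^{m-2}d^{2d}$ admissible matrices $\alpha$ (Lemma~\ref{lem:combinatorics_kappa}), and the moment and cumulant bounds (Lemmas~\ref{lem:moments} and~\ref{lem:boundcumulants}, themselves resting on the parity analysis of Lemma~\ref{lem:esp} and the group count of Lemma~\ref{lem:numbergroups}) are all dimension-agnostic. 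Reusing them as black boxes, one reaches exactly~\eqref{eq:upper_corr_D:first}, namely
\[
corr_{\leq D}^2-\tfrac1{K^2}\ \leq\ \sum_{d=1}^D\sum_{(r,m)\in\mathcal D_d}p^{r}n^{m-2}\bigl(\eps^2D^4(1+D)^2\bigr)^d\min\Bigl(\tfrac1{K^2},\bigl(\tfrac1K\bigr)^{2m+2r-d-2}\Bigr).
\]

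The only modification is in the last step. Substituting $\eps^2=\bar\Delta^2/p$ and the definition $\overline\zeta_n=\bar\Delta^4D^8(1+D)^4\,n\,p^{-2}\max(n/K^2,1)$, one checks that $\bigl(\eps^2D^4(1+D)^2\bigr)^d=\overline\zeta_n^{d/2}\,n^{-d/2}\max(n/K^2,1)^{-d/2}$, so each summand above equals $\overline\zeta_n^{d/2}\,p^{r}\,n^{m-2-d/2}\,\max(n/K^2,1)^{-d/2}\,\min(\cdot)$. Here is where the inequality $p\leq n$ enters: since $r\geq1$, one bounds $p^{r}\leq n^{r}$ (this replaces the bound $p^{r-d/2}\leq n^{r-d/2}$ used in the $p\geq n$ case, and accounts for $\overline\zeta_n$ carrying $p^2$ rather than $p$ in its denominator). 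This yields precisely~\eqref{eq:upper_corrD} with $\zeta_n$ replaced by $\overline\zeta_n$, and from that point the argument is identical to~\eqref{eq:upper_corrD}--\eqref{eq:upper_corrD_bis}: the case split $d/2\geq m+r-2$ versus $d/2<m+r-2$ shows every summand is at most $\overline\zeta_n^{d/2}$, and summing $\sum_{d\geq2}\tfrac{d(d-1)}2\overline\zeta_n^{d/2}=\tfrac{\overline\zeta_n}{(1-\sqrt{\overline\zeta_n})^3}$ (using $\overline\zeta_n<1$) gives $corr_{\leq D}^2\leq\tfrac1{K^2}\bigl(1+\tfrac{\overline\zeta_n}{(1-\sqrt{\overline\zeta_n})^3}\bigr)$, hence the stated lower bound on $MMSE_{\leq D}$. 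The ``in particular'' statement then follows by checking that $\bar\Delta^2\ll D^{-6}\bigl(\tfrac{pK}{n}\wedge\tfrac{p}{\sqrt n}\bigr)$ forces $\overline\zeta_n=o(1)$: when $n\geq K^2$ one has $\overline\zeta_n\asymp\bar\Delta^4D^{12}n^2/(p^2K^2)$, while when $n\leq K^2$ one has $\overline\zeta_n\asymp\bar\Delta^4D^{12}n/p^2$.

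I do not anticipate any genuine obstacle: the combinatorial heart of the proof — the parity characterization of Lemma~\ref{lem:esp} and the bound on the number of groups in Lemma~\ref{lem:numbergroups}, hence the moment bound of Lemma~\ref{lem:moments} — does not see the ambient dimension and is quoted verbatim. The only points requiring care are (i) to confine every appeal to ``$p\geq n$'' in Appendix~\ref{prf:lowdegreeclustering} to the final summation, where it is cleanly swapped for ``$p\leq n$'', thereby avoiding any re-derivation of the moment and cumulant bounds, and (ii) the elementary two-case asymptotics above that produce the explicit separation $D^{-6}\bigl(pK/n\wedge p/\sqrt n\bigr)$.
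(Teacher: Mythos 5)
Your proposal is correct and follows essentially the same route as the paper: the paper's own proof of Proposition~\ref{prp:lowdegreeclustering} likewise observes that $p\geq n$ enters the proof of Theorem~\ref{thm:lowdegreeclustering} only in passing from~\eqref{eq:upper_corr_D:first} to~\eqref{eq:upper_corrD}, restarts from~\eqref{eq:upper_corr_D:first} with $\overline\zeta_n$ in place of $\zeta_n$, and uses $p^r\leq n^r$ to land on the same summand $\overline\zeta_n^{d/2}n^{m+r-2-d/2}\max(1,n/K^2)^{-d/2}\min(\cdot)$ before concluding identically. Your algebraic bookkeeping and the two-case asymptotics for the ``in particular'' clause both check out.
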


The second statement of Proposition \ref{prp:lowdegreeclustering}
states that low-degree polynomials with degree $D\leq (\log(n))^{1+\eta}$ do not perform better than the trivial estimator when 
\[\bar \Delta^2\ \widetilde{\ll}\ \pa{\frac{pK}{n}\wedge\frac{p}{\sqrt{n}}},\]
where $\widetilde{\ll}$ hides polylog$(n)$ factors.
Since lower-bounds for low-degree polynomials with degree $D\leq (\log(n))^{1+\eta}$ are considered as evidence of the computational hardness of the problem, this suggests computational hardness of estimating $M^*$ when  $\bar \Delta^2 \ \widetilde{\ll}\ \pa{\frac{pK}{n}\wedge\frac{p}{\sqrt{n}}}$ and $p\leq n$. Since, as made explicit in Section \ref{sec:low-degree}, estimation of $M^*$ is possible in polynomial time  when clustering is possible in polynomial time,  this provides compelling evidence for the computational hardness of the clustering problem in this regime. 

Comparing the computational lower bound $\bar \Delta^2\ \widetilde{\gg}\ \pa{\frac{pK}{n}\wedge\frac{p}{\sqrt{n}}}$ for $p\leq n$, to the information barrier 
\[\Delta^2\gtrsim {\log(K)\vee \sqrt{\frac{pK\log(K)}{n}}},\]
we observe that there is a computation-information gap when 
$${1\vee \sqrt{\frac{pK}{n}}}\ \widetilde{\ll}\ \pa{\frac{pK}{n}\wedge\frac{p}{\sqrt{n}}},$$
which happens when
$${n\over K}\vee K\ \widetilde{\ll}\ p\leq n\,.$$
\medskip

\begin{proof}[Proof of Proposition~\ref{prp:lowdegreeclustering}.] \ 
We argue exactly as  in the proof of Theorem~\ref{thm:lowdegreeclustering}
by upper bounding $corr_{D}^{2}$. In the proof of Theorem~\ref{thm:lowdegreeclustering}, we only used the assumption that $n\leq p$ in~\eqref{eq:upper_corrD}. Hence, we start from~\eqref{eq:upper_corr_D:first} by plugging the definition of 
$\overline{\zeta}_n=\bar\Delta^4 D^{4}(1+D)^{2}\frac{n}{p^2}\max\pa{\frac{n}{K^{2}},1}$. This leads us to 
\begin{align}    \nonumber
\lefteqn{corr_{D}^{2}-\frac{1}{K^2}}  \\ \nonumber
    \leq & \frac{1}{K^2}\sum_{d=1}^{D}\sum_{(r,m)\in\mathcal{D}_d}\overline{\zeta}_n^{d/2}p^{r}n^{m-2-d/2}\pa{\frac{1}{\max(1,n/K^2)}}^{d/2}\min\pa{1,\pa{\frac{1}{K}}^{2m+2r-d-4}} \nonumber \\
        \leq & \frac{1}{K^2}\sum_{d=1}^{D}\sum_{(r,m)\in\mathcal{D}_d}\overline{\zeta}_n^{d/2}n^{m+r-2-d/2}\pa{\frac{1}{\max(1,n/K^2)}}^{d/2}\min\pa{1,\pa{\frac{1}{K}}^{2m+2r-d-4}}  , 
    \label{eq:upper_corrD_low_dim}
\end{align}
 where we used in the last line that $n\geq p$. Note that this upper bound is exactly the same as in~\eqref{eq:upper_corrD} except that $\zeta_n$ has been replaced by $\overline{\zeta}_n$. Hence, arguing as in  the proof of Theorem~\ref{thm:lowdegreeclustering},  we arrive at the similar conclusion 
\begin{align} \nonumber
    corr_{D}^{2}&\leq \frac{1}{K^2}\left[1 +  \frac{\overline{\zeta}_n}{(1-\sqrt{\overline{\zeta}_n})^3} \right] . 
\end{align}
\end{proof}

\section{Proof of Theorem \ref{thm:error_K_means:simple}}\label{prf:thm_error_K_means:simple}

Without loss of generality, we assume throughout this proof that $\sigma=1$. We will use the following notation. For $i\in[1,n]$, we decompose $Y_{i}=\E(Y_{i})+E_{i}=\mu_{k}+E_{i}$, if $i\in G^{*}_{k}$. Then, $(E_{i})_{i\in[1,n]}$ are independent vectors, with distribution $\mathcal{N}(0,I_{p})$.
We denote:
\begin{itemize}
    \item $Y\in\R^{n\times p}$ whose $i$-th row is the vector $Y_{i}$,
    \item $A\in\R^{n\times K}$ the membership matrix defined by $A_{ik}=\1_{i\in G^{*}_{k}}$,
    \item $E\in\R^{n\times p}$ the noise matrix whose $i$-th row is the Gaussian vector $E_{i}$,
    \item $\mu \in\R^{K\times p}$ whose $k$-th row is $\mu_{k}$.
\end{itemize}
We then have the relation $$Y=A\mu +E.$$
Let us also denote $m=\min_{k\in[1,K]}$ the minimal size of the clusters, and $m^{+}=\max_{k\in[1,K]}$ the maximal size. The hypothesis $G^*\in\mathcal{P}_{\alpha}$ is equivalent to $\frac{m^+}{m}\leq \alpha$. We define the signal-to-noise ratio $\tilde{s}^{2}=\Delta^{2}\wedge\frac{\Delta^{4}m}{p}$. We note that $\frac{1}{\alpha} s^{2}\leq \tilde{s}^{2}\leq s^{2}$, where $s^2$ is the signal-to-noise ratio defined in \eqref{eq:exponentialdecrease}.

We prove in this section a more general theorem, which induces directly Theorem \ref{thm:error_K_means:simple}. 

\begin{thm}\label{thm:error_K_means}
    There exist positive numerical constants $c$, $c'$, $c''$ such that the following holds. If 
    \begin{equation}\label{eq:condition_snr}
    \tilde{s}^{2}\geq c (\log(n/m)\vee \frac{m^{+}}{m})\enspace , 
    \end{equation}
    then, we have $err(\hat{G},G^{*})\leq e^{-c''\tilde{s}^{2}}$ with probability at least $1-c'/n^2$. 
\end{thm}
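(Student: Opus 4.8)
\section*{Proof proposal for Theorem~\ref{thm:error_K_means}}

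The plan is to combine a crude ``warm-start'' bound with a refined per-point analysis, in the spirit of the Lloyd-type analyses of \cite{LuZhou2016} and of the exact $K$-means analyses of \cite{giraud2019partial,fei2018hidden}. Throughout, write $Y=A\mu+E$ and, for a partition $G$, let $H_G\in\R^{n\times n}$ be the orthogonal projector onto the vectors that are constant on each block of $G$, so that $\mathrm{Crit}(G)=\langle Y,(I-H_G)Y\rangle$ and the exact $K$-means partition satisfies $\mathrm{Crit}(\hat G)\le\mathrm{Crit}(G^*)$. Since $(I-H_{G^*})A\mu=0$, expanding this inequality yields the basic inequality
\begin{equation*}
  \|(I-H_{\hat G})A\mu\|_F^{2}\ \le\ 2\,\langle (H_{\hat G}-I)A\mu,\ E\rangle\ +\ \langle E,(H_{\hat G}-H_{G^*})E\rangle .
\end{equation*}
In the first (crude) step I would bound the two right-hand terms uniformly over all partitions $G$ with a prescribed number $r$ of misclassified points (relative to the optimal matching with $G^*$), using that $(I-H_G)A\mu$ and $(H_G-H_{G^*})E$ live in subspaces of dimension $O(r)$ depending on $G$, together with $\chi^2$ and Gaussian concentration and a union bound over the at most $(nK)^{O(r)}$ such partitions; the left-hand side is lower bounded by $c\,\Delta^2 r$ once $G$ is close enough to $G^*$. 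Comparing the two, and using the SNR assumption $\tilde s^{2}\gtrsim\log(n/m)\vee(m^{+}/m)$, gives with probability $\ge 1-c'/n^{2}$ that $err(\hat G,G^*)$ is below a small absolute constant. After relabeling by the optimal permutation (WLOG the identity), each $\hat G_k$ then agrees with $G^*_k$ off a small fraction, $|\hat G_k|\asymp m$ for all $k$, and the centroids satisfy $\|\hat\mu_k-\mu_k\|\lesssim\sqrt{p/m}$ up to lower-order terms.

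In the second step I would sharpen this by a per-point optimality condition combined with a leave-one-out device. Fix $i\in G^*_k$ that $\hat G$ assigns to a cluster $l\ne k$; since moving $i$ from $\hat G_l$ to $\hat G_k$ cannot decrease $\mathrm{Crit}$,
\begin{equation*}
  \tfrac{|\hat G_l|-1}{|\hat G_l|}\,\|Y_i-\hat\mu_l^{(-i)}\|^{2}\ \le\ \tfrac{|\hat G_k|}{|\hat G_k|+1}\,\|Y_i-\hat\mu_k\|^{2},
\end{equation*}
where $\hat\mu_l^{(-i)}$ is the centroid of $\hat G_l\setminus\{i\}$. Substituting $Y_i=\mu_k+E_i$ and $\hat\mu=\mu+\delta$ and fully expanding, the terms of order $p/m$ cancel at leading order: the contribution $-\bigl(\tfrac1{|\hat G_l|}-\tfrac1{|\hat G_k|+1}\bigr)\|E_i\|^{2}\approx -\bigl(\tfrac1{|\hat G_l|}-\tfrac1{|\hat G_k|}\bigr)p$ coming from the leave-one-out weights cancels $\|\delta_l^{(-i)}\|^{2}-\tfrac{b}{a}\|\delta_k\|^{2}\approx\bigl(\tfrac1{|\hat G_l|}-\tfrac1{|\hat G_k|}\bigr)p$ coming from the centroid norms, leaving only a bias of order $p/m^{2}$. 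What remains is an inequality of the form
\begin{equation*}
  \langle E_i,u_{lk}\rangle\ \ge\ \tfrac12\|\mu_k-\mu_l\|\ -\ R_i,\qquad u_{lk}:=\frac{\mu_l-\mu_k}{\|\mu_l-\mu_k\|},
\end{equation*}
where $R_i$ is a sum of terms each either a mean-zero Gaussian of standard deviation $\lesssim \|\mu_k-\mu_l\|/\sqrt m+\sqrt{p/m}/\|\mu_k-\mu_l\|$ — using that $\delta_k$ and $\delta_l^{(-i)}$ are independent of $E_i$ because $i\notin\hat G_k$ — or a deterministic quantity of size $\lesssim p/m^{2}$. As usual, to cope with the fact that $\hat G$ itself depends on all the data one proves such inequalities \emph{uniformly} over the partitions of the good event of the first step (replacing the random $\hat\mu$'s by a supremum over admissible centroids), and absorbs the resulting $\log(n/m)$ into the SNR budget.

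In the third step I would bound the misclassification probability of a single point. Conditioning on the directions entering $R_i$, the displayed inequality gives $\P(i\text{ misclassified})\lesssim\E\bigl[\exp\bigl(-\tfrac c2(\|\mu_k-\mu_l\|-R_i)_+^{2}\bigr)\bigr]$, and splitting according to whether $R_i\le\tfrac14\|\mu_k-\mu_l\|$ produces two contributions: on the first event the exponent is $\le -c'\|\mu_k-\mu_l\|^{2}\le -c'\Delta^{2}$; the second event requires an interaction term such as $\langle E_i,\delta_l^{(-i)}\rangle$ — of variance $\asymp p/m$ — to exceed order $\|\mu_k-\mu_l\|^{2}\gtrsim\Delta^{2}$, which has probability $\le\exp(-c'\Delta^{4}m/p)$. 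Hence $\P(i\text{ misclassified})\lesssim e^{-c(\Delta^{2}\wedge\Delta^{4}m/p)}=e^{-c\tilde s^{2}}$, so $\E[\,n\,err(\hat G,G^*)\,]=\sum_i\P(i\text{ misclassified})\lesssim n\,e^{-c\tilde s^{2}}$. To upgrade this to the high-probability statement $err(\hat G,G^*)\le e^{-c''\tilde s^{2}}$, I would union-bound over the (at most $r=n e^{-c''\tilde s^{2}}$) candidate misclassified sets: the per-point inequalities applied jointly — the events being conditionally near-independent on the good event — show that a given set of $r$ points is the misclassified set with probability $\lesssim e^{-cr\tilde s^{2}}$, and $\binom{n}{r}e^{-cr\tilde s^{2}}$ is summable and $\le c'/n^{2}$ provided $c''<c$ and $\tilde s^{2}\gtrsim\log(n/m)$, which is exactly our assumption.

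The main obstacle is twofold. First, the crude step must be strong enough that the centroid errors $\|\hat\mu_k-\mu_k\|$ are genuinely of order $\sqrt{p/m}$ rather than $\sqrt{p/m}$ plus an uncontrolled contribution from wrongly-assigned points; this typically forces an iteration of the argument and is where the precise threshold $\log(n/m)\vee(m^{+}/m)$ is consumed. Second, the exact cancellation of the $p/m$-scale bias terms in the per-point inequality is delicate: this cancellation alone is what turns a bound that would be vacuous when $\Delta^{2}\lesssim p/m$ into the rate $e^{-c\Delta^{4}m/p}$, and carrying it out rigorously requires keeping careful track of the leave-one-out weights $\tfrac{|\hat G_l|-1}{|\hat G_l|}$, $\tfrac{|\hat G_k|}{|\hat G_k|+1}$, of the size of the residual deterministic bias, and of the independence structure making the interaction term $\langle E_i,\delta_l^{(-i)}\rangle$ a genuine $N(0,\asymp p/m)$ quantity.
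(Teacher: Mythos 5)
Your proposal takes a genuinely different route from the paper. The paper never performs a per-point analysis: it works with the normalized partnership matrices $B(G)$ via the Peng--Wei identity $\hat B\in\argmax_{B\in\mathcal{B}}\langle YY^T,B\rangle$, writes the global optimality condition $\langle YY^T,\hat B-B^*\rangle\ge 0$, splits it into a signal term (lower bounded by $\tfrac12\Delta^2\|B^*-B^*\hat B\|_1$), a cross term, and a quadratic noise term, and the one new ingredient relative to the SDP analysis of \cite{giraud2019partial} is a Hanson--Wright bound on the quadratic term combined with a counting lemma for the finite slices $\mathcal{B}_j=\{B\in\mathcal{B}:\delta_B\in[j-1,j]\}$ (of cardinality at most $\binom{n}{j\wedge n}K^{3j}2^{jm^+/m}$); it is this counting step that produces the threshold $\log(n/m)\vee(m^+/m)$ in place of the $n/m$ of the relaxation. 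Everything, including the exponential rate $e^{-c''\tilde s^2}$, then falls out of a single self-bounding inequality in $\delta_{\hat B}$, with no warm start, no leave-one-out centroids, and no per-point misclassification events.

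Your route, however, has a genuine gap at its central step. The assertion that $\delta_k$ and $\delta_l^{(-i)}$ are independent of $E_i$ ``because $i\notin\hat G_k$'' is false: $\hat G$ is the \emph{global} minimizer and every $\hat G_k$ is a function of all of $E$, including $E_i$. Your proposed repair --- replacing the random centroids by a supremum over admissible centroids on the good event --- destroys precisely the variance computation on which the rate rests: $\sup_{\|\delta\|\lesssim\sqrt{p/m}}\langle E_i,\delta\rangle\asymp\|E_i\|\sqrt{p/m}\asymp p/\sqrt m$, which overwhelms $\Delta^2$ throughout the regime $\Delta^2\asymp\sqrt{(p/m)\log K}$ that the theorem must cover; and a union bound over the discrete set of admissible partitions at constant error fraction costs $\exp(c\,\epsilon n\log(nK))$, vastly exceeding the per-point budget $\exp(-c\Delta^4m/p)$. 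Rescuing the argument needs a genuine decoupling device (e.g.\ the iterative refinement of \cite{LuZhou2016}, which is tailored to Lloyd iterations rather than the exact minimizer, or the two-cluster machinery of \cite{ndaoud2022sharp}), and nothing in the sketch supplies it; the same unproven ``conditional near-independence'' is then invoked again in your final union bound over candidate misclassified sets. As written, steps two and three are a plan rather than a proof, and the plan breaks exactly at the point you yourself flag as the main obstacle.
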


First, let us formulate the $K$-means criterion in an alternative way. Given $G=\ac{G_{1},...,G_{K}}$ a partition, let us define the normalized-partnership matrix $B(G)$ by:

$$B_{ij}=\sum_{k\leq K}\1_{i,j\in G_{k}}\frac{1}{|G_{k}|}\enspace.$$
The application $G\to B(G)$ is a bijection on 
$$\mathcal{B}=\{ B\in S_{n}(\mathbb{R})^{+}:\ B_{ij}\geq 0, \tr(B)=K, B1=1, B^{2}=B \}\enspace.$$ 
We refer to \cite{PengWei07} and to Chapter 12.4 of \cite{HDS2} for this last statement. It implies the following proposition.

\begin{prop}[\cite{PengWei07}]\label{prop:equivalent_K_means}
    Finding $\widehat{G}\in\argmin_{G} \mathrm{Crit}(G)$ is equivalent to finding $$\widehat{B}\in\argmax_{B\in \mathcal{B}} \<YY^{T}, B\>\enspace .$$ 
\end{prop}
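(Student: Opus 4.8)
The plan is to rewrite the $K$-means criterion as an affine function of the normalized-partnership matrix $B(G)$, and then to use the stated bijection between $K$-partitions and $\mathcal{B}$. First I would fix a partition $G=\ac{G_1,\ldots,G_K}$, denote by $\bar Y_k=\frac{1}{|G_k|}\sum_{b\in G_k}Y_b$ the $k$-th centroid, and expand the squared norm in the definition of $\mathrm{Crit}(G)$. Using the identity $\sum_{a\in G_k}\<Y_a,\bar Y_k\>=|G_k|\,\|\bar Y_k\|^2$, the within-cluster contribution simplifies to
$$\sum_{a\in G_k}\|Y_a-\bar Y_k\|^2=\sum_{a\in G_k}\|Y_a\|^2-|G_k|\,\|\bar Y_k\|^2=\sum_{a\in G_k}\|Y_a\|^2-\frac{1}{|G_k|}\sum_{a,b\in G_k}\<Y_a,Y_b\>.$$

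Summing over $k$ and recognizing that $\<Y_a,Y_b\>=(YY^T)_{ab}$ while $B(G)_{ab}=\sum_{k\leq K}\1_{a,b\in G_k}/|G_k|$, I would obtain the key identity
$$\mathrm{Crit}(G)=\sum_{i=1}^n\|Y_i\|^2-\sum_{k=1}^K\frac{1}{|G_k|}\sum_{a,b\in G_k}(YY^T)_{ab}=\tr(YY^T)-\<YY^T,B(G)\>.$$
Since $\tr(YY^T)$ does not depend on $G$, minimizing $\mathrm{Crit}(G)$ over partitions of $[1,n]$ into $K$ groups is equivalent to maximizing $\<YY^T,B(G)\>$. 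Invoking the characterization recalled from \cite{PengWei07} and Chapter~12.4 of \cite{HDS2}, namely that $G\mapsto B(G)$ is a bijection from the set of $K$-partitions onto $\mathcal{B}$, maximizing over $G$ is the same as maximizing $\<YY^T,B\>$ over $B\in\mathcal{B}$, which is exactly the claimed equivalence.

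This argument involves no genuine difficulty: the expansion is elementary, and the only delicate point — that every $B\in\mathcal{B}$, i.e. every symmetric positive semidefinite idempotent matrix with nonnegative entries, trace $K$ and all row sums equal to one, is of the form $B(G)$ for a unique $K$-partition $G$ — is precisely the cited fact, which I would appeal to rather than reprove.
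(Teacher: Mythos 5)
Your proof is correct and follows exactly the route the paper intends: the paper itself only states the bijection $G\mapsto B(G)$ onto $\mathcal{B}$ (citing \cite{PengWei07} and \cite{HDS2}) and asserts that the proposition follows, and your explicit expansion $\mathrm{Crit}(G)=\tr(YY^{T})-\<YY^{T},B(G)\>$ is precisely the standard computation that makes this implication work. Nothing to add.
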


We will denote in the following $\hat{B}$ a minimiser of $\<YY^{T}, B\>$ over the set $\mathcal{B}$. We note that the convex relaxation of this problem introduced in \cite{PengWei07} and studied in \cite{giraud2019partial} is the minimiser of $\<YY^{T}, B\>$ over the convex set 

$$\mathcal{C}=\ac{B\in S_{n}(\mathbb{R})^{+}: \ B_{ij}\geq 0,\  \tr(B)=K,\ B1=1}\enspace .$$
Since we have $\mathcal{B}\subset \mathcal{C}$, all the bounds obtained in \cite{giraud2019partial} for the matrices in $\mathcal{C}$ are {\it de facto} valid for $\mathcal{B}$.

The proof of Theorem \ref{thm:error_K_means} follows the same main steps as the analysis of relaxed $K$-means  in \cite{giraud2019partial}. The main difference is in the delicate proof of Lemma~\ref{lem:noise} below.
First, we use that, for any partition $G$, the proportion $err(G,G^*)$ of misclustered points is controlled as a function of the $l_1$ norm $\|B^{*}-B^{*}B\|_{1}$, where $B$ is the normalized-partnership matrix associated to $G$. More precisely, we show, in Section \ref{prooflem:error}, the following Lemma.

\begin{lem}\label{lem:erreur}
Consider two partitions $G^*$ and $G$. Write $B^*$ and $B$ for the corresponding normalized-partnership matrices. For some numerical constant $c>0$, we have 
 $$err(G,G^{*})\leq c \left(\frac{m^{+}}{m}\right)\frac{\|B^{*}-B^{*}B\|_{1}}{n}\enspace .$$
\end{lem}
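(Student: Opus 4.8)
The plan is to bound the misclustering error by comparing, for each cluster $G_k^*$ of the reference partition, how much of its "mass" is concentrated in a single cluster of $G$ versus spread out. First I would recall that $B^*$ is block-constant: $B^*_{ij} = \1_{i,j \in G^*_k}/|G^*_k|$, so $(B^* B)_{ij} = \sum_{k} \frac{\1_{i \in G^*_k}}{|G^*_k|} \sum_{\ell \in G^*_k} B_{\ell j}$. Thus the $i$-th row of $B^* - B^* B$, for $i \in G^*_k$, is $\frac{1}{|G^*_k|}\big(\sum_{\ell \in G^*_k} \text{(row $i$ of $B^*$)} - \sum_{\ell \in G^*_k} \text{(row $\ell$ of $B$)}\big)$; more cleanly, row $i$ of $B^* - B^*B$ equals $\frac{1}{|G^*_k|}\sum_{\ell\in G^*_k}(B^*_{i\cdot} - B_{\ell \cdot})$ where $B^*_{i\cdot}$ is the (common for $i \in G^*_k$) row vector of $B^*$. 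The key point is that $B^*_{i\cdot}$ is the uniform probability vector on $G^*_k$, while each row $B_{\ell\cdot}$ is a uniform probability vector on whichever $G$-cluster contains $\ell$.

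The second step is the combinatorial heart: for a fixed $k$, define for each $G$-cluster $G_r$ the overlap $n_{kr} = |G^*_k \cap G_r|$, and let $r(k) = \argmax_r n_{kr}$ be the "plurality" $G$-cluster for $G^*_k$. I would show that $\|B^* - B^*B\|_1$ restricted to rows in $G^*_k$ is, up to constants depending on the balancedness ratio $m^+/m$, bounded below by $|G^*_k| - n_{k,r(k)}$ divided by $|G^*_k|$, times $|G^*_k|$ — i.e. it controls the number of points of $G^*_k$ not lying in its plurality cluster. Concretely, $\sum_{i \in G^*_k} \|(B^* - B^*B)_{i\cdot}\|_1 = \sum_{i \in G^*_k} \big\| \frac{1}{|G^*_k|}\sum_{\ell \in G^*_k}(B^*_{i\cdot} - B_{\ell\cdot})\big\|_1 = \big\|\sum_{\ell \in G^*_k}(B^*_{i\cdot} - B_{\ell\cdot})\big\|_1$ since this is the same for every $i \in G^*_k$. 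Now $\sum_{\ell \in G^*_k} B_{\ell\cdot}$ is a nonnegative vector summing to $|G^*_k|$ whose mass on coordinates in $G_r$ is $n_{kr} \cdot \frac{1}{|G_r|} \cdot |G^*_k \cap G_r| = n_{kr}^2/|G_r|$ — wait, more carefully: $\sum_{\ell \in G^*_k} B_{\ell j} = \sum_{\ell \in G^*_k} \frac{\1_{\ell, j \text{ same } G\text{-cluster}}}{|G_{r(j)}|}$, so summing over $j \in G_r$ gives $n_{kr} \cdot n_{kr}/|G_r| \cdot$ ... this needs care, and comparing against $\sum_{\ell\in G^*_k} B^*_{i\cdot} = |G^*_k| B^*_{i\cdot}$ which has mass $1$ on each coordinate of $G^*_k$ and $0$ elsewhere. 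The $\ell_1$ distance between these two vectors is then bounded below by a constant (using $m^+/m \leq \alpha$ and the plurality structure) times $|G^*_k| - n_{k,r(k)}$.

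The third step is to relate $\sum_k (|G^*_k| - n_{k,r(k)})$ to $n \cdot err(G, G^*)$. If the map $k \mapsto r(k)$ were injective, this would be immediate from the definition \eqref{eq:error} of $err$ (pick $\pi$ with $\pi^{-1}(r(k)) = k$). The standard fix — which I'd expect to be the main technical obstacle — is to show that under a balancedness assumption $m^+/m \leq \alpha$ and provided the error is not already trivially large, the plurality map is injective, or else to absorb the non-injective case into the constant: if $r(k) = r(k')$ for $k \neq k'$ then at least one of $G^*_k, G^*_{k'}$ loses at least half its mass, contributing a large term to $\sum_k(|G^*_k| - n_{k,r(k)})$, so the bound $err \lesssim \frac{m^+}{m}\frac{\|B^* - B^*B\|_1}{n}$ holds in that regime too (possibly after noting $err \le 1/2$ always). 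Assembling: $n\cdot err(G,G^*) \lesssim \sum_k (|G^*_k| - n_{k,r(k)}) \lesssim \frac{m^+}{m}\sum_k \sum_{i \in G^*_k}\|(B^* - B^*B)_{i\cdot}\|_1 = \frac{m^+}{m}\|B^* - B^*B\|_1$, which is the claimed inequality. I would cross-check this against the analogous lemma in \cite{giraud2019partial}, since the argument there handles essentially the same quantity and the constants and balancedness dependence should match.
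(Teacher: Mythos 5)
Your overall strategy (lower-bound the $\ell_1$ mass of $B^*-B^*B$ by counting misplaced points, then match clusters) is the right one, but the pivot quantity you chose breaks the argument at both ends. You define the plurality cluster by $r(k)=\argmax_r n_{kr}$ with $n_{kr}=|G^*_k\cap G_r|$, i.e.\ by the fraction \emph{of $G^*_k$} captured by $G_r$, and you want to sandwich $\sum_k(|G^*_k|-n_{k,r(k)})$ between $n\cdot err(G,G^*)$ and $\frac{m^+}{m}\|B^*-B^*B\|_1$. Neither inequality holds. For the first: take $K=2$, $|G^*_1|=|G^*_2|=n/2$, $G_1=[1,n]\setminus\{x\}$ for some $x\in G^*_2$, and $G_2=\{x\}$ (the lemma puts no balancedness assumption on $G$, and in its application $G=\hat G$ need not be balanced). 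Then $r(1)=r(2)=1$ and $\sum_k(|G^*_k|-n_{k,r(k)})=1$, while $n\cdot err(G,G^*)\approx n/2$. Your fallback for the non-injective case --- ``one of the two clusters loses at least half its mass'' --- is exactly what fails here, since both $G^*_1$ and $G^*_2$ keep essentially all of their mass inside the shared plurality cluster $G_1$. For the second: the per-cluster lower bound $\sum_{i\in G^*_k}\|(B^*-B^*B)_{i\cdot}\|_1\gtrsim |G^*_k|-n_{k,r(k)}$ is also false. If $G^*_k$ is split into two $G$-clusters each consisting \emph{only} of points of $G^*_k$, then the averaged row $\frac{1}{|G^*_k|}\sum_{\ell\in G^*_k}B_{\ell\cdot}$ is exactly the uniform distribution on $G^*_k$, so the corresponding rows of $B^*-B^*B$ vanish identically while $|G^*_k|-n_{k,r(k)}=|G^*_k|/2$; the missing $\ell_1$ mass is generated by \emph{other} blocks, so no per-$k$ inequality of this form can hold.

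The missing idea is that ``majority'' must be measured as a fraction of the $G$-cluster, not of the $G^*$-cluster: a point $i\in G^*_r\cap G_l$ contributes at least $1$ to $\|B^*-B^*B\|_1$ (through $2\sum_{k\neq r}|B_{iG^*_k}|=2|G_l\setminus G^*_r|/|G_l|$) precisely when $|G_l\cap G^*_r|/|G_l|\leq 1/2$. The paper therefore splits the misplaced points into those landing in a $G$-cluster where their true cluster is a minority (charged directly to $\|B^*-B^*B\|_1$) and, for each pair $(l,r)$ with $|G^*_r\cap G_l|/|G_l|>1/2$, charges the whole cluster $G^*_l$ instead --- this is where the factor $m^+/m$ enters. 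Making that charging scheme consistent requires the assignment Lemma \ref{lem:denomb2}, which produces a bijection $\phi$ such that whenever some $G^*_r$ with $r\neq l$ dominates $G_{\phi(l)}$, the cluster $G^*_l$ dominates no $G$-cluster at all, so that all of its points can be charged. This combinatorial step is absent from your proposal and cannot be replaced by the plurality map you use.
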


As a consequence, we only have to control the $l_1$ error $\|B^{*}-B^{*}\hat{B}\|_{1}$. Again, as in~\cite{giraud2019partial}, we start from the optimality condition that defines $\hat{B}$, that is 
\[
\<YY^{T}, \hat{B}-B^{*}\>\geq 0\enspace . 
\]
By definition, we have $Y= A\mu+E$. Plugging this expression in the above inequality leads to 
\begin{equation}\label{eq:optimalite}
 \<A\mu(A\mu)^{T},B^{*}-\hat{B}\>\leq \<A\mu E^{T}+E(A\mu)^{T},\hat{B}-B^{*}\>+\<EE^{T},\hat{B}-B^{*}\> \enspace .
\end{equation}
We call $\<A\mu(A\mu)^{T},\hat{B}-B^{*}\>$ the signal term, $\<EE^{T},\hat{B}-B^{*}\>$ the quadratic noise term, and $\<A\mu E^{T}+E(A\mu)^{T},\hat{B}-B^{*}\>$ the cross term. The three following lemmas control each of these three terms. 
For any $B\in\mathcal{B}$, we denote $\delta_{B}=\|B^{*}-B^{*}B\|_{1}$.

\begin{lem}\label{lem:signal}
    For all $B\in\mathcal{B}$, we have 
    \[
    \<A\mu(A\mu)^{T},B^{*}-B\>=\<S,B^{*}-B\>\geq \frac{1}{2}\Delta^{2}\delta_{B}\enspace ,
    \]
    where $S_{ab}=-\frac{1}{2}\|\mu_{a}-\mu_{b}\|^{2}$ for $a$ and $b$ in $[1,n]$.
\end{lem}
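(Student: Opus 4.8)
\textbf{Proof plan for Lemma~\ref{lem:signal}.}
The plan is to first strip off the contribution of the squared norms of the centers, which is annihilated by $B^{*}-B$, and then to reduce the remaining inequality to an exact combinatorial identity between $\delta_B$ and the mass that $B$ places on pairs of points lying in distinct true clusters. Throughout I abbreviate, for $a\in[1,n]$, the center of the true cluster of $a$ by $\mu_a:=\mu_{k^{*}_a}$, so that the $a$-th row of $A\mu$ equals $\mu_a$ and $\pa{A\mu(A\mu)^{T}}_{ab}=\<\mu_a,\mu_b\>$. First I would introduce $d\in\R^{n}$ with $d_a=\|\mu_a\|^{2}$ and note, by polarization, the decomposition $S=A\mu(A\mu)^{T}-\tfrac12\pa{d\1^{T}+\1 d^{T}}$. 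Since $B,B^{*}\in\mathcal B$ are symmetric with $B\1=B^{*}\1=\1$, one has $(B^{*}-B)\1=0$, hence $\<d\1^{T},B^{*}-B\>=\<\1 d^{T},B^{*}-B\>=0$; this gives the first asserted equality $\<A\mu(A\mu)^{T},B^{*}-B\>=\<S,B^{*}-B\>$.

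Next I would exploit that $S$ is blind to pairs inside a true cluster: $S_{ab}=0$ whenever $k^{*}_a=k^{*}_b$, while $B^{*}_{ab}=0$ whenever $k^{*}_a\neq k^{*}_b$, so $\<S,B^{*}\>=0$ and therefore $\<S,B^{*}-B\>=-\<S,B\>=\sum_{a,b}(-S_{ab})B_{ab}$. For $k^{*}_a\neq k^{*}_b$ the definition~\eqref{eq:Delta} of $\Delta$ (with $\sigma=1$) gives $-S_{ab}=\tfrac12\|\mu_{k^{*}_a}-\mu_{k^{*}_b}\|^{2}\geq\Delta^{2}$, whereas $-S_{ab}=0$ when $k^{*}_a=k^{*}_b$; since $B_{ab}\geq0$, this yields the one-sided bound
\[
\<S,B^{*}-B\>\ \geq\ \Delta^{2}\sum_{a,b:\,k^{*}_a\neq k^{*}_b}B_{ab}\enspace.
\]

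It then remains to prove the matching identity $\sum_{a,b:\,k^{*}_a\neq k^{*}_b}B_{ab}=\tfrac12\delta_B$. Fix a true cluster $G^{*}_k$ and $a\in G^{*}_k$; then $(B^{*}B)_{ac}=\tfrac1{|G^{*}_k|}\sum_{b\in G^{*}_k}B_{bc}$, and using $B_{bc}\geq0$, $B=B^{T}$ and $B\1=\1$ we get $(B^{*}B)_{ac}\leq\tfrac1{|G^{*}_k|}\sum_{b=1}^{n}B_{bc}=\tfrac1{|G^{*}_k|}$. Consequently $(B^{*}B)_{ac}\leq B^{*}_{ac}$ for $c\in G^{*}_k$ and $(B^{*}B)_{ac}\geq0=B^{*}_{ac}$ for $c\notin G^{*}_k$; combined with $\sum_c(B^{*}-B^{*}B)_{ac}=(B^{*}\1)_a-(B^{*}B\1)_a=0$, this gives for each such $a$
\[
\sum_c\bigl|(B^{*}-B^{*}B)_{ac}\bigr|=\sum_{c\in G^{*}_k}\bigl(B^{*}_{ac}-(B^{*}B)_{ac}\bigr)+\sum_{c\notin G^{*}_k}(B^{*}B)_{ac}=2\sum_{c\notin G^{*}_k}(B^{*}B)_{ac}\enspace.
\]
Summing over $a$ and over the clusters, and replacing $(B^{*}B)_{ac}$ by $\tfrac1{|G^{*}_k|}\sum_{b\in G^{*}_k}B_{bc}$, yields $\delta_B=\|B^{*}-B^{*}B\|_{1}=2\sum_k\sum_{b\in G^{*}_k}\sum_{c\notin G^{*}_k}B_{bc}=2\sum_{a,b:\,k^{*}_a\neq k^{*}_b}B_{ab}$. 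Plugging this into the one-sided bound gives $\<S,B^{*}-B\>\geq\tfrac12\Delta^{2}\delta_B$, and together with the first step this is exactly the claim.

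The main obstacle is the third step, and specifically the inequality $(B^{*}B)_{ac}\leq B^{*}_{ac}$ for $a,c$ in the same true cluster: this is what turns the one-sided estimate of the second step into an exact identity, ensuring that no loss beyond the announced factor $1/2$ is incurred. It rests only on the mild properties $B\geq0$, $B=B^{T}$, $B\1=\1$ (so it in fact holds for every $B$ in the convex set $\mathcal C$); everything else is bookkeeping with $B^{*}\1=\1$ and the symmetry of $B$ and $B^{*}$.
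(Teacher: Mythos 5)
Your proof is correct and follows essentially the same route as the paper: the polarization decomposition of $S$ together with $(B^{*}-B)\1=0$ for the equality, then bounding $-S_{ab}\geq\Delta^{2}$ on cross-cluster pairs and invoking the identity $\delta_B=2\sum_{a,b:\,k^{*}_a\neq k^{*}_b}B_{ab}$. The only difference is that the paper cites this last identity from Lemma 1 of \cite{giraud2019partial}, whereas you prove it directly from $B\geq0$, $B=B^{T}$, $B\1=\1$ — your derivation of it is correct and makes the argument self-contained.
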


\begin{lem}\label{lem:noise}
    There exists numerical constants $c_{1}$ and $c_{2}$ such that the following holds with probability at least $1-\frac{c_{1}}{n^{2}}$. Simultaneously for all $B\in\mathcal{B}$, we have:
    \begin{align*}
        \<EE^{T},B-B^{*}\>\leq& c_2\delta_{B}\left(\log(n/m)\vee \frac{m^{+}}{m}+\sqrt{\frac{p}{m}\pa{\log(n/m)\vee \frac{m^{+}}{m}}}\right)\\
        &+c_2\delta_{B}\left(\sqrt{\frac{p}{m}\log(n\frac{K^{3}}{\delta})}+\log(n\frac{K^{3}}{\delta})\right)\enspace.
    \end{align*}
\end{lem}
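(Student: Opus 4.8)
Throughout the plan, $\sigma=1$.

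\textbf{Step 1: centering and splitting.} Since $\tr(B)=\tr(B^*)=K$ for every $B,B^*\in\mathcal{B}$, I would first replace the Gram matrix by its centered version: writing $W:=EE^T-pI_n$ (so that $W_{ab}=\langle E_a,E_b\rangle$ for $a\neq b$ and $W_{aa}=\|E_a\|^2-p$, all centered), one has $\langle EE^T,B-B^*\rangle=\langle W,B-B^*\rangle$. Following the template of \cite{giraud2019partial}, I would then use the identity $B-B^*=(B^*B-B^*)+(I-B^*)B$ to write
\[
\langle W,B-B^*\rangle=\underbrace{\langle W,B^*B-B^*\rangle}_{\mathrm{(I)}}+\underbrace{\langle W,(I-B^*)B\rangle}_{\mathrm{(II)}},
\]
and bound the two terms separately; only the second will require a scale-by-scale argument, and it will produce the $\log(nK^3/\delta_B)$-dependent part of the bound.

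\textbf{Step 2: the averaged term (I).} The point is that $B^*B-B^*$ is constant along the $G^*$-cluster blocks of its rows (the entry $(B^*B-B^*)_{ab}$ depends on $a$ only through $k^*_a$) and has $\ell_1$-norm exactly $\delta_B$. Summing first over $a$ inside each cluster $G^*_k$ thus gives
\[
\mathrm{(I)}=\sum_{k,b}|G^*_k|\,(B^*B-B^*)_{k,b}\,(B^*W)_{k,b}\ \leq\ \delta_B\,\max_{k,b}\bigl|(B^*W)_{k,b}\bigr|,
\]
where $(B^*W)_{k,b}=|G^*_k|^{-1}\sum_{a\in G^*_k}W_{ab}$ is the common value of the within-cluster average, so the averaging gains a factor $|G^*_k|^{-1/2}\leq m^{-1/2}$. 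Indeed, with $S_k=\sum_{a\in G^*_k}E_a\sim\mathcal N(0,|G^*_k|I_p)$, the inner sum equals $\langle S_k,E_b\rangle$ up to an explicit $\chi^2$ correction when $b\in G^*_k$, and conditionally on $(E_a)_{a\in G^*_k}$ it is $\mathcal N(0,\|S_k\|^2)$ with $\|S_k\|^2\lesssim|G^*_k|\,p$. A Gaussian tail bound, the Laurent--Massart inequality for the $\chi^2$ corrections, and a union bound over the $\lesssim nK$ pairs $(k,b)$ (recall $K\leq n/m$) then yield $\max_{k,b}|(B^*W)_{k,b}|\lesssim\sqrt{(p/m)L}+L$ with $L:=\log(n/m)\vee\tfrac{m^+}{m}$, on an event of probability at least $1-c/n^2$. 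Since $\delta_B$ has been factored out, this holds simultaneously for all $B\in\mathcal{B}$ and reproduces the first line of the claimed bound.

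\textbf{Step 3: the quadratic term (II), and the main obstacle.} Using $(I-B^*)B^*=0$, write $\mathrm{(II)}=\langle W,(I-B^*)\Delta\rangle$ with $\Delta:=B-B^*$, and split $(I-B^*)\Delta=(I-B^*)\Delta(I-B^*)+(I-B^*)\Delta B^*$; the second summand is again constant on $G^*$-cluster blocks, with $\ell_1$-norm $\lesssim\tfrac{m^+}{m}\delta_B$ (a structural estimate for $B\in\mathcal{B}$, obtained from the elementary comparison $\|B-B^*\|_1\lesssim\tfrac{m^+}{m}\delta_B$ as in \cite{giraud2019partial}), so it is handled exactly as term (I). The remaining piece equals, since $\langle(I-B^*)W(I-B^*),B^*\rangle=0$,
\[
\bigl\langle(I-B^*)W(I-B^*),\Delta\bigr\rangle=\bigl\langle(I-B^*)W(I-B^*),B\bigr\rangle=\tr\bigl(N W N^T\bigr),\qquad N:=B(I-B^*),
\]
a \emph{quadratic} functional of $B$ in the doubly cluster-centered Gram matrix, with $\|N\|_1\lesssim\tfrac{m^+}{m}\delta_B$. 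A crude $\|\cdot\|_{\max}$--$\|\cdot\|_1$ bound here costs a factor $\delta_B^2$ and is useless for large $\delta_B$, so to obtain a bound linear in $\delta_B$ that holds simultaneously over $\mathcal{B}$ I would peel over dyadic scales $\delta_B\in(\delta/2,\delta]$: on each slice $\{B\in\mathcal{B}:\delta_B\leq\delta\}$ one controls $\tr(NWN^T)$ by combining an operator-norm estimate $\|(I-B^*)W(I-B^*)\|_{op}\lesssim\sqrt{pn}+n$ with a Hanson--Wright / Gaussian-chaos deviation inequality for the centered part, and one controls the supremum over the slice by a covering argument, the size of the slice contributing the $\log(nK^3/\delta)$ factor; a final union bound over the $O(\log n)$ dyadic scales then gives the second line of the claimed bound. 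The hard part is precisely this uniform control: one must (a) establish the $\ell_1$-comparisons between $\delta_B$, $\|B-B^*\|_1$ and $\|N\|_1$ for $B\in\mathcal{B}$; (b) bound, with the correct exponent, the number of partitions $G$ whose normalized partnership matrix satisfies $\delta_B\leq\delta$; and (c) match this combinatorial complexity against the sub-exponential tail of the doubly cluster-centered Gaussian quadratic form, so that at complexity level $t$ the deviation is $O(\sqrt{(p/m)t}+t)$ rather than the naive $O(\sqrt{pt}+t)$ --- it is the within-cluster averaging encoded by the centering $I-B^*$ that is responsible for the $1/\sqrt m$ gain and has to be tracked carefully throughout. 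Collecting the two terms on the intersection event (of probability at least $1-c_1/n^2$) yields the lemma.
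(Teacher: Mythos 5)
Your decomposition and overall architecture (center by $pI_n$, isolate a doubly cluster-centered quadratic piece to be handled by Hanson--Wright plus a scale-by-scale counting argument over $\delta_B$) match the paper's proof, and your Step 3 correctly identifies the combinatorial counting of partitions at each scale as the crux. The genuine gap is in Step 2. The H\"older-type bound $\mathrm{(I)}\le \delta_B\max_{k,b}|(B^*W)_{k,b}|$ followed by a union bound over the $\asymp nK$ pairs $(k,b)$ cannot yield $\max_{k,b}|(B^*W)_{k,b}|\lesssim \sqrt{(p/m)L}+L$ with $L=\log(n/m)\vee(m^+/m)$: to close the union bound over at least $n$ events at level $1-c/n^2$ you must take the individual deviation parameter $x\gtrsim\log n$, which produces $\sqrt{(p/m)\log n}+\log(n)/\sqrt m$, and $\log n$ is \emph{not} dominated by $L$ (take $K$ and $\alpha$ bounded, so $m\asymp n$ and $L=O(1)$ while $\log n\to\infty$). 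This loss is not cosmetic: the whole point of Lemma~\ref{lem:noise}, compared with the bound already available from the SDP analysis of \cite{giraud2019partial}, is to get $\log(n/m)\asymp\log K$ rather than $\log n$ in the first line; that is what makes the separation condition \eqref{eq:condition_snr:K} scale as $\log(K)$ and opens the computation-information gap. With your Step 2 bound, the absorption step in the proof of Theorem~\ref{thm:error_K_means} would force $\Delta^2\gtrsim \log n$.

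The max-entry bound is structurally too weak because it pays the full union-bound cost $\log(nK)$ at every scale of $\delta_B$, whereas the correct complexity per unit of $\delta_B$ is only $L$ once $\delta_B\gtrsim m$. The paper instead applies Hanson--Wright to the whole bilinear form $\<W,B^{*}(B-B^{*})\>$ (and to the $B^{*}(B-B^{*})B^{*}$ piece) on each slice $\mathcal{B}_j=\{B\in\mathcal{B}:\delta_B\in[j-1,j]\}$, using $\|B^{*}(B-B^{*})\|_F\lesssim\delta_B/\sqrt{mj}$ and $\|B^{*}(B-B^{*})\|_{op}\lesssim\delta_B/j$, and matches the deviation parameter to $\log|\mathcal{B}_j|\lesssim (j\vee m)\, L$ from the counting Lemma~\ref{denomb}; this yields the first line of the bound for $j\ge m$, while for $j\le m$ it falls back on Lemma~7 of \cite{giraud2019partial}, which is exactly what produces the $\log(nK^{3}/\delta)$ terms of the second line. (Your allocation is inverted relative to the paper: there, the $\log(nK^{3}/\delta)$ terms come from the $B^{*}(B-B^{*})$-type pieces at small $\delta_B$, while the doubly centered piece is bounded by the first line at all scales via Lemma~\ref{lem:quad1}.) If you replace your Step 2 by this slice-by-slice argument and carry out the counting estimate (b) of your Step 3 --- which is Lemma~\ref{denomb}, itself resting on the structural Lemma~\ref{lem:denomb2} --- your plan becomes the paper's proof.
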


\begin{lem}\label{lem:cross}
    There exist constants $c_{3}$ and $c_{4}$ such that the following holds with probability at least $1-\frac{c_{3}}{n^{2}}$. Simultaneously for all $B\in\mathcal{P}$, we have 
    $$\langle A\mu E^T +E(A\mu)^T  ,B-B^{*}\rangle \leq c_{4} \sqrt{\langle S,B^{*}-B\rangle}\sqrt{\delta_{B} \log(\frac{nK^{3}}{\delta})}\enspace , $$ where $S_{ab}=-\frac{1}{2}\|\mu_{a}-\mu_{b}\|^{2}$.
\end{lem}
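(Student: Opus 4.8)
I would first reduce the cross term to a single Gaussian bilinear form in $E$. Since $B,B^{*}$ are symmetric, $\langle A\mu E^{T},B-B^{*}\rangle=\langle E(A\mu)^{T},B-B^{*}\rangle$, so the cross term equals $2\langle A\mu E^{T},B-B^{*}\rangle$. Each row of $A\mu$ is a cluster mean $\mu_{k^{*}_{i}}$, which is fixed by the cluster-averaging operator $B^{*}$; hence $B^{*}A\mu=A\mu$, and using $(B^{*})^{2}=B^{*}$ this gives $\langle A\mu E^{T},B-B^{*}\rangle=\langle A\mu E^{T},B^{*}B-B^{*}\rangle=-\langle A\mu E^{T},N\rangle$ with $N:=B^{*}-B^{*}B$. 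By definition $\|N\|_{1}=\delta_{B}$, and since $B\1=B^{*}\1=\1$ one has $N\1=0$ and $N^{T}\1=0$. Expanding the trace, $\langle A\mu E^{T},N\rangle=\sum_{i,j}N_{ij}\langle\mu_{k^{*}_{i}},E_{j}\rangle=\sum_{k=1}^{K}\sum_{j=1}^{p}\nu_{kj}\langle\mu_{k},E_{j}\rangle$, where $\nu_{kj}:=\sum_{i\in G^{*}_{k}}N_{ij}$ satisfies $\sum_{k,j}|\nu_{kj}|\le\|N\|_{1}=\delta_{B}$ and $\sum_{k}\nu_{kj}=\sum_{j}\nu_{kj}=0$.

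Next I would establish the pointwise bound. Conditioning on $(A,\mu)$, so that only $E$ (i.i.d.\ $\mathcal N(0,1)$ entries) is random, $\langle A\mu E^{T},N\rangle$ is a centered Gaussian with variance $\sum_{j}\|\sum_{k}\nu_{kj}\mu_{k}\|^{2}=\|N^{T}A\mu\|_{F}^{2}=\|(I-B)A\mu\|_{F}^{2}$. When $B\in\mathcal B$ we have $B^{2}=B$, and then $\|(I-B)A\mu\|_{F}^{2}=\langle A\mu(A\mu)^{T},B^{*}-B\rangle=\langle S,B^{*}-B\rangle$ by Lemma~\ref{lem:signal} together with $B^{*}A\mu=A\mu$ (for the convex relaxation $B\in\mathcal C$ one instead uses that doubly-stochastic PSD matrices satisfy $B^{2}\preceq B$, so the variance is at most $\langle S,B^{*}-B\rangle$). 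Hence, pointwise, $|\langle A\mu E^{T},B-B^{*}\rangle|\lesssim\sqrt{\langle S,B^{*}-B\rangle\,\log(1/\eta)}$ with probability $1-\eta$; and recall $\langle S,B^{*}-B\rangle\ge\tfrac12\Delta^{2}\delta_{B}$ (Lemma~\ref{lem:signal}), so the variance scale and the targeted $\delta_{B}\log$-complexity are comparable.

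The crux is the uniform control over all $B$. I would peel over dyadic ranges $\delta_{B}\in[2^{\ell-1},2^{\ell})$: it suffices to bound $\sup\{\langle A\mu E^{T},B-B^{*}\rangle:\delta_{B}\le\delta\}$ for each fixed $\delta$, the union over the $O(\log n)$ relevant scales costing only a constant in the logarithm. For fixed $\delta$ this is a supremum of the Gaussian process $\nu\mapsto\sum_{k,j}\nu_{kj}\langle\mu_{k},E_{j}\rangle$ over the convex set of arrays $\nu\in\R^{K\times p}$ with $\|\nu\|_{1}\le\delta$, $\sum_{k}\nu_{kj}=0$ for all $j$, and variance budget $\sum_{j}\|\sum_{k}\nu_{kj}\mu_{k}\|^{2}\le V$. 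The $\ell_{1}$-constraint confines $\nu$ to a set of metric entropy comparable to that of $\delta$-sparse vectors in dimension $Kp$ (of order $\delta\log(nK^{3}/\delta)$), and a covering/chaining estimate over this set — splitting each $\nu$ into its few large coordinates, bounded via $\sum_{k}\nu_{kj}=0$ so that $\mu_{k}$ may be replaced by centered differences $\mu_{k}-\mu_{k'}$, plus a residual controlled by Cauchy--Schwarz against the variance budget — yields $\sup\lesssim\sqrt{V\,\delta\,\log(nK^{3}/\delta)}$ on an event of probability $1-c_{3}/n^{2}$, the $K^{3}$ tracking which pairs of clusters are mixed and at which coordinate. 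A final linearization $2\sqrt{ab}\le a/t+tb$ converts the two scales $V=\langle S,B^{*}-B\rangle$ and $\delta=\delta_{B}$ into the stated bound, simultaneously for all $B$. Since $\mathcal B\subset\mathcal C$, this is exactly the cross-term step of the relaxed-$K$-means analysis in \cite{giraud2019partial}, which may be quoted; the genuinely new difficulty of the present proof lies in Lemma~\ref{lem:noise}, not here. The obstacle to watch is precisely this uniformization: a naive $\varepsilon$-net over $\mathcal B$ is far too costly, and one must exploit that $N$ has small $\ell_{1}$-norm, lies in the $K$-dimensional column span of $B^{*}$, and is coupled to the variance budget $\langle S,B^{*}-B\rangle$.
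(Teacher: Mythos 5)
Your proposal is correct and matches the paper's treatment: the paper does not reprove this lemma but imports it directly from \cite{giraud2019partial}, using exactly the inclusion $\mathcal{B}\subset\mathcal{C}$ that you invoke in your closing paragraph, and your identification of the variance $\|(I-B)A\mu\|_F^2=\langle S,B^{*}-B\rangle$ and of the $\ell_1$/peeling structure of the uniform bound is consistent with that reference. (Only a cosmetic slip: in the expansion $\sum_{k}\sum_{j}\nu_{kj}\langle\mu_k,E_j\rangle$ the index $j$ runs over the $n$ data points, not over $[1,p]$.)
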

\medskip

Lemmas~\ref{lem:signal} and~\ref{lem:cross}, taken from~\cite{giraud2019partial}, are in fact valid for the larger class of matrices $B\in \mathcal{C}$. The difference with \cite{giraud2019partial} lies in the quadratic noise term controlled by Lemma \ref{lem:noise}. For this quadratic term, we can get an upper-bound over the class $\mathcal{B}$, that is significantly smaller than over the class $\mathcal{C}$.
Indeed, here, we can leverage the fact that the class $\mathcal{B}$ of matrices  is finite. Instead of having an upper-bound  of the order of $n/m$, we get a smaller upper-bound of the order of $\log(n/m)\vee \frac{m^{+}}{m}$. 
This is the main reason for which the  Condition~\eqref{eq:condition_snr} for exact Kmeans is less stringent than the condition $\tilde s \geq n/m$ for the SDP relaxation of Kmeans. We refer to Section \ref{sec:noise} for a proof of Lemma \ref{lem:noise}, which is the main hurdle for proving Theorem \ref{thm:error_K_means}. We note that when the constant c of \eqref{eq:condition_snr} is large enough, the term $c_{2}\delta_{B}\pa{\log(n/m)\vee \frac{m^{+}}{m}+\sqrt{\frac{p}{m}\pa{\log(n/m)\vee \frac{m^{+}}{m}}}}$ is bounded by $\frac{\Delta^{2}\delta_{B}}{8}$. That allows us to neglect it in (\ref{eq:optimalite}) up to a multiplicative constant.

Combining~\eqref{eq:optimalite} with Lemmas~\ref{lem:signal}--\ref{lem:cross}, we deduce that, for some constants $c'$ and $c''$ the following holds with probability at least $1-\frac{c'}{n^{2}}$
$$\<S,B^{*}-\hat{B}\>\leq c''\left[ \sqrt{\langle S,B^{*}-\hat{B}\rangle}\sqrt{\delta_{\hat{B}} \log(\frac{nK^{3}}{\delta_{\hat{B}}})}+\delta_{\hat{B}}\pa{\sqrt{\frac{p}{m}\log(n\frac{K^{3}}{\delta_{\hat{B}}})}+\log(n\frac{K^{3}}{\delta_{\hat{B}}})}\right]\ . 
$$
Below, we write for convenience $a\lesssim b$ for $a\leq c\, b$, with $c$ a positive numerical constant that may vary from line to line. 
The above bound implies
$$\<S,B^{*}-\hat{B}\>\lesssim \delta_{\hat{B}}\log{(\frac{nK^{3}}{\delta_{\hat{B}}})}\vee \delta_{\hat{B}} \sqrt{\frac{p}{m}\log{(\frac{nK^{3}}{\delta_{\hat{B}}})}}\enspace .$$
Moreover, by Lemma~\ref{lem:signal}, we have  $\frac{1}{2}\Delta^{2}\delta_{\hat{B}}\leq \<S,B^{*}-\hat{B}\>$. This leads us to $$\Delta^{2}\delta_{\hat{B}}\lesssim \delta_{\hat{B}}\log{(\frac{nK^{3}}{\delta_{\hat{B}}})}\vee \delta_{\hat{B}} \sqrt{\frac{p}{m}\log{(\frac{nK^{3}}{\delta_{\hat{B}}})}} \enspace, $$
which, in turn, implies that
$$\log(\frac{nK^{3}}{\delta_{\hat{B}}})\gtrsim \Delta^{2}\wedge \frac{\Delta^{4}m}{p}=\tilde{s}^{2}\enspace . $$
Thus, for some numerical constant $c_0$, we have $$\delta_{\hat{B}}\leq nK^{3}e^{-c_0 \tilde{s}^{2}}\enspace .$$
Coming back to Lemma \ref{lem:erreur}, we conclude that the proportion of misclassified points satisfies $err(\hat{G}, G^*)\leq (\frac{m^{+}}{m})K^{3}e^{-c_0 \tilde{s}^{2}}\leq (\frac{n}{m})^{4}e^{-c_0 \tilde{s}^{2}}$. Therefore, provided that the constant $c$ in Condition~\eqref{eq:condition_snr} is large enough, there exists a constant $c''$ such that $$\mathrm{err}(\hat{G},G^{*})\leq e^{-c'' \tilde{s}^{2}}\enspace .$$ This concludes the proof.

\subsection{The quadratic noise term (Proof of Lemma~\ref{lem:noise})}\label{sec:noise}
In this section, we will upper-bound uniformly over all $\mathcal{B}$ the noise term $\<EE^{T},\hat{B}-B^{*}\>$.

Let us decompose, for $B\in\mathcal{B}$, the term $B-B^{*}$. We get: $B-B^{*}=B^{*}(B-B^{*})+(B-B^{*})B^{*}+(I_{n}-B^{*})(B-B^{*})(I_{n}-B^{*})-B^{*}(B-B^{*})B^{*}$. We also remark that, since for all $B\in\mathcal{B}$, $\tr(B)=K$, we have $\<I_{n},B-B^{*}\>=0$. We thus get \begin{align*}
    \<EE^{T},B-B^{*}\>=&\,2\<EE^{T}-pI_{n},B^{*}(B-B^{*})\>\\
    &+\<EE^{T}-pI_{n}, (I_{n}-B^{*})(B-B^{*})(I_{n}-B^{*})\>\\
    &-\<EE^{T}-pI_{n}, B^{*}(B-B^{*})B^{*}\>.
\end{align*} We will control each of these terms.

First, let us deal with the term in $(I_{n}-B^{*})(B-B^{*})(I_{n}-B^{*})$. We show, using Hanson-Wright lemma and union bounds over the sets $$\mathcal{B}_{j}:=\{B\in\mathcal{B},\delta_{B}\in[j-1,j]\},\enspace j\leq 2n\enspace,$$ the following lemma (see section \ref{prooflem:quad1} for the proof of this lemma).

\begin{lem}\label{lem:quad1}
    There exists $c_{5}$, $c_{6}$ two positive numerical constants such that the following holds with probability at least $1-\frac{c_{5}}{n^{2}}$. Simultaneously for all $B\in\mathcal{B}$, we have $$\langle EE^{T}-pI_{n},(I-B^{*})(B-B^{*})(I-B^{*})\rangle\leq c_{6}\delta_{B}\pa{\log(n/m)\vee \frac{m^{+}}{m}+\sqrt{\frac{p}{m}\pa{\log(n/m)\vee \frac{m^{+}}{m}}}}.$$
\end{lem}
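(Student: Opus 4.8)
The statement controls the quadratic noise term uniformly over the \emph{finite} set $\mathcal{B}$, and this finiteness is precisely what lets one replace the $n/m$ factor of the convex relaxation by $\log(n/m)\vee\tfrac{m^+}{m}$. The plan is: a Hanson--Wright tail bound for each fixed $B$, followed by a union bound over the slices $\mathcal{B}_j=\{B\in\mathcal{B}:\delta_B\in[j-1,j]\}$, $1\le j\le 2n$. First I would simplify the matrix. Since $B^*$ is the orthogonal projector onto $\mathrm{span}\{\1_{G^*_k}\}_k$, we have $(I_n-B^*)B^*=B^*(I_n-B^*)=0$, hence $(I_n-B^*)(B-B^*)(I_n-B^*)=(I_n-B^*)B(I_n-B^*)=:M_B$, which is positive semidefinite. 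Using $\tr(M_B)=\tr(B(I_n-B^*))=K-\langle B,B^*\rangle=\tfrac12\|B-B^*\|_F^2$ together with $\|B^*B-B^*\|_\infty\le 1/m$ and $\|B^*B-B^*\|_1=\delta_B$, one gets the elementary bounds $\tr(M_B)\le\delta_B/m$, $\|M_B\|_{op}\le\min(1,\delta_B/m)$ (the latter just because $M_B$ is PSD), and therefore $\|M_B\|_F^2\le\|M_B\|_{op}\tr(M_B)\le\min(\delta_B/m,\ \delta_B^2/m^2)$.

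For a fixed $B$, write $\langle EE^\top-pI_n,M_B\rangle=\mathrm{vec}(E)^\top(M_B\otimes I_p)\mathrm{vec}(E)-\E[\cdot]$, i.e. a centered quadratic form in the standard Gaussian vector $\mathrm{vec}(E)$ with matrix $M_B\otimes I_p$ of Frobenius norm $\sqrt{p}\,\|M_B\|_F$ and operator norm $\|M_B\|_{op}$. Hanson--Wright then gives $\P(\langle EE^\top-pI_n,M_B\rangle>t)\le\exp(-c\min(t^2/(p\|M_B\|_F^2),\ t/\|M_B\|_{op}))$. Taking $t=c_6\,\delta_B(L+\sqrt{pL/m})$ with $L=\log(n/m)\vee\tfrac{m^+}{m}$ and plugging in the spectral bounds, the exponent is $\gtrsim c_6\,Lm$ when $\delta_B\le m$ (the $\delta_B$'s cancel in the sub-Gaussian term, which is exactly what produces the $\sqrt{pL/m}$ summand) and $\gtrsim c_6\,\delta_B L$ when $\delta_B>m$. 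It then remains, for each slice, to check that this exponent dominates $\log|\mathcal{B}_j|+3\log n$; a union bound over $\mathcal{B}_j$ and over $j\le 2n$ closes the proof with failure probability $\le c_5/n^2$.

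The core step is thus the combinatorial estimate on $|\mathcal{B}_j|$. Here I would invoke Lemma~\ref{lem:erreur}: if $\delta_B\le j$ then $err(G,G^*)\le c\,\tfrac{m^+}{m}\tfrac{\delta_B}{n}$, so $G$ agrees with $G^*$, up to its optimal relabeling, on all but $N\lesssim\tfrac{m^+}{m}\delta_B$ "unstable" points. The observation that keeps the count small is that \emph{one must not pay a $K!$ factor}: once the set $S$ of stable points is fixed, the partition it induces is $\{G^*_a\cap S\}_a$, which already identifies the $K$ groups of $G$ with the $G^*_a$'s, so $G$ is determined by $S$ (at most $\binom{n}{N/2}$ choices for the unstable set) together with the assignment of the unstable points to the $K$ groups (at most $K^{N/2}$). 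Hence $\log|\mathcal{B}_j|\lesssim\tfrac{m^+}{m}\delta_B(\log(n/\delta_B)+\log K)$ whenever $\delta_B\lesssim n$, and, using $K\le n/m$ together with the elementary inequalities $\delta_B\log(n/\delta_B)\lesssim m\log(n/m)$ for $\delta_B\le m$ and $\log n\lesssim m\log(n/m)$, one gets $\log|\mathcal{B}_j|\lesssim Lm$ for $\delta_B\le m$ and $\log|\mathcal{B}_j|\lesssim\delta_B L$ for $\delta_B>m$; for the very large slices, where Lemma~\ref{lem:erreur} is vacuous, the crude $\log|\mathcal{B}_j|\le\log|\mathcal{B}|\le n\log K\le n\log(n/m)$ suffices. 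In every regime the slice entropy is then absorbed by the Hanson--Wright exponent upon choosing $c_6$ (numerical, allowed to depend only on $\tfrac{m^+}{m}$) large enough.

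The main obstacle is exactly this last alignment: matching, slice by slice, the Hanson--Wright concentration scale against the entropy of $\mathcal{B}_j$, while keeping the final bound proportional to $\delta_B(L+\sqrt{pL/m})$ and not to $\delta_B\cdot\tfrac{n}{m}$. This requires simultaneously (i) exploiting that $\tr(M_B)$, hence $\|M_B\|_F^2$, is of order $\delta_B^2/m^2$ for small $\delta_B$, so that the $\delta_B$ dependence cancels in the sub-Gaussian term, and (ii) counting the partitions near $G^*$ without the spurious $K!$ factor, which is precisely the gain offered by the finiteness of $\mathcal{B}$ (as opposed to the convex body $\mathcal{C}$ of \cite{giraud2019partial}, which forces the $n/m$ factor). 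Carrying out (ii) — i.e. establishing the combinatorial estimate on $|\mathcal{B}_j|$ cleanly in all regimes of $\delta_B$ relative to $m$ and to $n$ — is the delicate part of the argument.
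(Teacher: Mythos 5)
Your overall architecture is the paper's: Hanson--Wright for each fixed $B$, slicing $\mathcal{B}$ into $\mathcal{B}_j=\{B:\delta_B\in[j-1,j]\}$, an entropy bound on each slice, and a union bound over $j\le 2n$. Your norm estimates are correct and in fact cleaner than the paper's: since $B^*$ is an orthogonal projector, $M_B=(I-B^*)B(I-B^*)$ is PSD, $\|M_B\|_{op}\le\min(1,\tr(M_B))$ and $\tr(M_B)=\langle B^*,B^*-B^*B\rangle\le\delta_B/m$, which recovers exactly the bounds the paper imports from Lemma~1 of \cite{giraud2019partial}. The resulting Hanson--Wright exponent $\gtrsim c_6 L(m\vee\delta_B)$ is also right.

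The gap is in the cardinality bound for $\mathcal{B}_j$. Converting $\delta_B$ into at most $N\lesssim\frac{m^+}{m}\delta_B$ misclassified points via Lemma~\ref{lem:erreur} and then encoding each such point with $\log(n/N)+\log K$ bits gives
$\log|\mathcal{B}_j|\lesssim\frac{m^+}{m}\,\delta_B\bigl(\log(n/\delta_B)+\log K\bigr)$, in which $\frac{m^+}{m}$ \emph{multiplies} the logarithmic factors. Your claimed conclusion $\log|\mathcal{B}_j|\lesssim L(m\vee j)$ with $L=\log(n/m)\vee\frac{m^+}{m}$ does not follow: dividing by $m\vee j$ leaves $\frac{m^+}{m}\log(n/m)$, which exceeds $L$ by a factor $\min\bigl(\frac{m^+}{m},\log(n/m)\bigr)$, unbounded in general and not absorbable into a numerical constant $c_6$. (The same product form also breaks your treatment of the very large slices, where the crude bound $n\log K$ needs $\log(n/m)\lesssim L\,m/m^+$.) The paper's Lemma~\ref{denomb} avoids this by splitting the misclassified mass into two types: individual stray points lying in a minority intersection $G^*_l\cap G_r$ with $|G^*_l\cap G_r|/|G_r|\le\frac12$ (there are at most $j$ of them, each encoded at cost $\log n+\log K$), versus wholesale migrations of pairs $(l,r)$ with $|G^*_l\cap G_r|/|G_r|>\frac12$ (there are at most $j/m$ of them, each encoded by a subset of $G^*_l$ at cost only $m^+$ bits, not $m^+(\log n+\log K)$). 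This yields $|\mathcal{B}_j|\le\binom{n}{j\wedge n}K^{3j}2^{j m^+/m}$, where $\frac{m^+}{m}$ enters additively, and hence the stated $\log(n/m)\vee\frac{m^+}{m}$. In the balanced regime $\frac{m^+}{m}\le\alpha=O(1)$ of Theorem~\ref{thm:error_K_means:simple} your count suffices, but it does not prove the lemma as stated, nor Theorem~\ref{thm:error_K_means} under its condition $\tilde s^2\gtrsim\log(n/m)\vee\frac{m^+}{m}$.
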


\medskip

Now we will upper bound the other quadratic terms (the ones corresponding to $B^{*}(B-B^{*})$ and to $B^{*}(B-B^{*})B^{*}$). To do so, we will distinguish two cases:\begin{enumerate}
    \item We will use the bound obtained in \cite{giraud2019partial} for the matrices $B$ such that $\delta_{B}\leq m$;
    \item We will then show a similar result than Lemma \ref{lem:quad1} for the matrices $B$ that fulfill $\delta_{B}\geq m$.
\end{enumerate}
For the first point, we will use Lemma 7 of \cite{giraud2019partial} which states:

\begin{lem}
    There exist positive numerical constants $c_{7}$ and $c_{8}$ such that the following holds with probability at least $1-\frac{c_{7}}{n^{2}}$. Simultaneously for all $B\in\mathcal{B}$, we have $$\langle EE^{T}-pI_{n},B^{*}(B-B^{*})\rangle \leq \frac{c_{8}\delta_{B}}{\sqrt{m}}\pa{\sqrt{p\log(n\frac{K^{3}}{\delta_{B}})}+\sqrt{\delta_{B}+1}\log(n\frac{K^{3}}{\delta_{B}})}\enspace.$$ The same result holds for the term in $B^{*}(B-B^{*})B^{*}$.
\end{lem}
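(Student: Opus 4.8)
The statement is exactly Lemma~7 of \cite{giraud2019partial}, so the shortest route is to invoke that reference; for completeness I sketch the argument, which is the same template used throughout Section~\ref{prf:thm_error_K_means:simple}: rewrite the bilinear form as a sum of i.i.d.\ Gaussian quadratic forms, bound the Frobenius and operator norms of the relevant matrix by $\delta_B$ and $m$, apply Hanson--Wright, and union bound over a slicing of the finite set $\mathcal{B}$.

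First I would set $R=B-B^*$ and exploit the structure $B^*=\sum_{k}\frac{1}{|G^*_k|}\1_{G^*_k}\1_{G^*_k}^T$, so that $B^*R=\sum_k\frac{1}{|G^*_k|}\1_{G^*_k}r_k^T$ with $r_k:=R^T\1_{G^*_k}\in\R^n$. Since the $\1_{G^*_k}$ have disjoint supports, $\|B^*R\|_F^2=\sum_k\|r_k\|_2^2/|G^*_k|$ and $\|B^*R\|_1=\sum_k\|r_k\|_1=\delta_B$. As $B$ and $B^*$ have nonnegative entries and columns summing to one, each coordinate of $r_k$ lies in $[-1,1]$, hence $\|r_k\|_2^2\le\|r_k\|_1$, and therefore $\|B^*R\|_F^2\le m^{-1}\sum_k\|r_k\|_1=\delta_B/m$, with also $\|B^*R\|_{op}\le\|B^*R\|_F\le\sqrt{\delta_B/m}$. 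Because $\|B^*\|_{op}=1$, the same bounds hold for $B^*RB^*$, and the ``left'' term $\langle EE^T-pI_n,(B-B^*)B^*\rangle$ equals $\langle EE^T-pI_n,B^*(B-B^*)\rangle$ by symmetry of $EE^T-pI_n$, so it suffices to treat $B^*R$ and $B^*RB^*$.

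Writing $\widetilde M_B=\tfrac12(B^*R+R^TB^*)$ (same norm bounds), and using that the columns $E_{:c}\sim\mathcal N(0,I_n)$ are i.i.d., one gets $\langle EE^T-pI_n,B^*R\rangle=\sum_{c=1}^p\big(E_{:c}^T\widetilde M_B E_{:c}-\tr\widetilde M_B\big)=\mathrm{vec}(E)^T(I_p\otimes\widetilde M_B)\,\mathrm{vec}(E)-\tr(I_p\otimes\widetilde M_B)$. Hanson--Wright then gives
\[
\P\big(\langle EE^T-pI_n,B^*R\rangle>t\big)\le 2\exp\!\Big(-c\min\Big(\tfrac{t^2}{p\,\delta_B/m},\ \tfrac{t}{\sqrt{\delta_B/m}}\Big)\Big).
\]
Finally I would slice $\mathcal B=\bigcup_{j\ge1}\mathcal B_j$ with $\mathcal B_j=\{B:\delta_B\in(j-1,j]\}$, $1\le j\le 2n$, and use the combinatorial cardinality bound $\log|\mathcal B_j|\al j\log(nK^3/j)$ from \cite{giraud2019partial} (which follows from the fact that $\delta_B\le j$ forces $G$ to differ from $G^*$ at only $O(j)$ points, cf.\ Lemma~\ref{lem:erreur}, each relabelled among $K$ classes). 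Taking $t_j=\frac{c_8 j}{\sqrt m}\big(\sqrt{p\log(nK^3/j)}+\sqrt{j}\,\log(nK^3/j)\big)$, the $\sqrt{p\log}$ piece forces the sub-Gaussian exponent $t_j^2/(p\delta_B/m)\ag j\log(nK^3/j)$ and the $\sqrt j\,\log$ piece forces the sub-exponential exponent $t_j/\sqrt{\delta_B/m}\ag j\log(nK^3/j)$, once $c_8$ is a large enough absolute constant; both dominate $\log|\mathcal B_j|+2\log(2n)$, so union bounding over $\mathcal B_j$ and then over $j\le 2n$ yields the claim with probability $1-c_7/n^2$ (replacing $j$ by $\delta_B+1$ in $t_j$ costs only a constant). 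The term in $B^*RB^*$ is identical given the norm bounds already established.

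The main obstacle is the combinatorial cardinality estimate $\log|\mathcal B_j|\al j\log(nK^3/j)$, i.e.\ making precise that an $\ell_1$-perturbation of size $\delta_B$ of $B^*$ corresponds to a partition that differs from $G^*$ at $O(\delta_B)$ points, so that the number of candidates is only exponential in $\delta_B$ rather than in $n$; once this is available, balancing the two Hanson--Wright regimes against the cardinality is routine. (This is precisely why the present paper can quote the result from \cite{giraud2019partial} rather than reprove it; only the $(I-B^*)(B-B^*)(I-B^*)$ term, handled in Lemma~\ref{lem:quad1}, requires the new argument exploiting finiteness of $\mathcal B$.)
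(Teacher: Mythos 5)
Your proposal is correct, and its first move — invoking Lemma~7 of \cite{giraud2019partial} — is exactly what the paper does: this statement is imported verbatim and never reproved here. Your reconstruction sketch is also sound (the norm bounds $\|B^*R\|_F^2\leq \delta_B/m$ and $\|B^*R\|_{op}\leq\sqrt{\delta_B/m}$ via $\|r_k\|_2^2\leq\|r_k\|_1$, the vectorized Hanson--Wright application, and the two tail regimes matching the two terms of the bound all check out), but it is worth noting that it follows a genuinely different route from the cited source. The original Lemma~7 is established over the \emph{convex} class $\mathcal{C}\supset\mathcal{B}$, so its proof cannot exploit finiteness and instead proceeds by a peeling argument over the convex body; your argument instead mirrors the paper's own new Lemmas~\ref{lem:quad1} and~\ref{quad2}, slicing the finite set $\mathcal{B}$ into $\mathcal{B}_j$ and union bounding against $\log|\mathcal{B}_j|$. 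That buys simplicity but only yields the result over $\mathcal{B}$ (sufficient here), and it quietly changes the constants: the cardinality bound of Lemma~\ref{denomb} carries the extra factor $2^{j m^{+}/m}$, so the exponent to beat is $j\log(nK^3/j)+j\,m^{+}/m$ rather than $j\log(nK^3/j)$ alone, and your $c_7,c_8$ become $\alpha$-dependent (harmless in this paper, where all constants are allowed to depend on $\alpha$, but not literally the ``numerical constants'' of the stated lemma). With that caveat acknowledged, the balancing of the sub-Gaussian and sub-exponential Hanson--Wright regimes against $t_j$ is exactly right and reproduces both terms of the claimed bound.
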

This lemma implies that, with probability at least $1-\frac{c_{7}}{n^{2}}$, for all matrices $B$ fulfilling $\delta_{B}\leq m$, $$\langle EE^{T}-pI_{n},2B^{*}(B-B^{*})-B^{*}(B-B^{*})B^{*}\rangle \leq 6c_{8}\delta\pa{\sqrt{\frac{p}{m}\log(n\frac{K^{3}}{\delta})}+\log(n\frac{K^{3}}{\delta})}.$$

For the second point, we will do as for Lemma \ref{lem:quad1} and use Hanson-Wright Lemma and union bounds over the sets $\mathcal{B}_{j}$ to get the following Lemma (see Section \ref{proofquad2} for a proof of this lemma).
\begin{lem}\label{quad2}
    There exists $c_{9}$ and $c_{10}$ two positive numerical constants such that the following holds. With probability higher than $1-\frac{c_{9}}{n^{2}}$, we have simultaneously for all $B\in\mathcal{B}$ such that $\|B^{*}-B^{*}B\|_{1}\geq m$, $$\langle EE^{T}-pI_{n},B^{*}(B-B^{*})\rangle \leq c_{10}\delta_{B}\left(\log(n/m)\vee \frac{m^{+}}{m}+\sqrt{\frac{p}{m}\pa{\log(n/m)\vee \frac{m^{+}}{m}}}\right).$$ The same result holds for the term in $- B^{*}(B-B^{*})B^{*}$.
\end{lem}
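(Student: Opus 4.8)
The plan is to reduce the quadratic noise term to a Gaussian chaos and combine the Hanson--Wright inequality with a union bound over the \emph{finitely many} matrices of $\mathcal{B}$, sliced into the shells $\mathcal{B}_{j}=\{B\in\mathcal{B}:\ \delta_{B}\in[j-1,j]\}$, $1\le j\le 2n$; this is exactly the scheme used for Lemma~\ref{lem:quad1}, and only the relevant matrix changes. Write $M=B^{*}(B-B^{*})=B^{*}B-B^{*}$; since $M$ is not symmetric, work with $\widetilde{M}=(M+M^{\top})/2$, noting $\|\widetilde M\|_{F}\le\|M\|_{F}$, $\|\widetilde M\|_{op}\le\|M\|_{op}$ and $\tr(\widetilde M)=\tr(M)$. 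The columns $E_{\cdot 1},\dots,E_{\cdot p}$ of $E$ are i.i.d.\ $\mathcal{N}(0,I_{n})$ and $\langle EE^{\top}-pI_{n},M\rangle=\langle EE^{\top}-pI_{n},\widetilde M\rangle=\sum_{j=1}^{p}\big(E_{\cdot j}^{\top}\widetilde M E_{\cdot j}-\tr(\widetilde M)\big)$, a centered quadratic form in the $np$ i.i.d.\ standard Gaussian entries of $E$, with associated matrix $I_{p}\otimes\widetilde M$ of Frobenius norm $\sqrt{p}\,\|\widetilde M\|_{F}$ and operator norm $\|\widetilde M\|_{op}$. (The restriction $\delta_{B}\ge m$ is only because for $\delta_{B}<m$ the sharper bound of Lemma~7 of~\cite{giraud2019partial}, already invoked just above, is used instead.)

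The two deterministic inputs are norm bounds on $M$ in terms of $\delta_{B}$. First, since $B$ and $B^{*}$ are orthogonal projections, $\|M\|_{op}=\|B^{*}(B-I)\|_{op}\le\|B^{*}\|_{op}\|B-I\|_{op}\le 1$. Second, a short computation gives $\|M\|_{F}^{2}=\tr\big((B-B^{*})B^{*}(B-B^{*})\big)=\tr(B^{*}-B^{*}B)$; writing $s_{k}=|G^{*}_{k}|$ and $q_{k\ell}=\sum_{i\in G^{*}_{k}}B_{\ell i}\in[0,1]$ for $\ell\in G^{*}_{k}$, one has $\tr(B^{*}-B^{*}B)=\sum_{k}s_{k}^{-1}\sum_{\ell\in G^{*}_{k}}(1-q_{k\ell})\le \tfrac1m\sum_{\ell}(1-q_{k^{*}_{\ell},\ell})=\tfrac1m\sum_{k^{*}_{\ell}\neq k^{*}_{i}}B_{\ell i}$, and the last sum equals $\sum_{k^{*}_{\ell}\neq k^{*}_{i}}(B^{*}B)_{\ell i}\le\delta_{B}$ because $(B^{*}B)_{\ell i}=|B^{*}_{\ell i}-(B^{*}B)_{\ell i}|$ whenever $\ell,i$ lie in different clusters of $G^{*}$. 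Hence $\|\widetilde M\|_{F}^{2}\le\delta_{B}/m$. The same two bounds hold for $B^{*}(B-B^{*})B^{*}=M B^{*}$ (in fact they can only improve, since right-multiplication by the contraction $B^{*}$ decreases both norms), so that term is treated identically.

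With these bounds, Hanson--Wright gives, for every fixed $B\in\mathcal{B}_{j}$ and $t>0$,
\[
\P\Big(\big|\langle EE^{\top}-pI_{n},B^{*}(B-B^{*})\rangle\big|>t\Big)\le 2\exp\!\Big(-c\,\min\big(\tfrac{m t^{2}}{pj},\,t\big)\Big).
\]
Taking a union bound over $\mathcal{B}_{j}$ requires an estimate on $|\mathcal{B}_{j}|$: by Lemma~\ref{lem:erreur}, any $B\in\mathcal{B}_{j}$ is the normalized-partnership matrix of a partition obtained from $G^{*}$ by displacing at most $N_{j}\lesssim (m^{+}/m)\,j$ points, hence $\log|\mathcal{B}_{j}|\lesssim N_{j}\log(nK)$. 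One then chooses $t_{j}$ so that $|\mathcal{B}_{j}|\exp(-c\min(m t_{j}^{2}/(pj),t_{j}))\le n^{-3}$, i.e.\ $t_{j}\asymp\sqrt{p j\,\tau_{j}/m}+\tau_{j}$ with $\tau_{j}\asymp\log|\mathcal{B}_{j}|+\log n$, and sums the failure probabilities over the at most $2n$ shells. This yields, with probability at least $1-c_{9}/n^{2}$, simultaneously over all $B\in\mathcal{B}$ with $\delta_{B}\ge m$,
\[
\langle EE^{\top}-pI_{n},B^{*}(B-B^{*})\rangle\ \le\ c_{10}\,\delta_{B}\Big(\log(n/m)\vee\tfrac{m^{+}}{m}+\sqrt{\tfrac pm\big(\log(n/m)\vee\tfrac{m^{+}}{m}\big)}\Big),
\]
and the identical argument for $-B^{*}(B-B^{*})B^{*}$ completes the proof.

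I expect the main obstacle to be the bookkeeping in the union bound: one must count the partitions in each shell $\mathcal{B}_{j}$ sharply enough (exploiting that the displaced points and their relocations are heavily constrained when $B$ is close to the balanced partition $G^{*}$) that, after trading $\log|\mathcal{B}_{j}|$ against the Hanson--Wright exponent through the choice of $t_{j}$, the leftover logarithmic factor collapses to the single quantity $\log(n/m)\vee m^{+}/m$ rather than a product of a polylogarithmic term with $m^{+}/m$. Keeping the $\|\widetilde M\|_{F}^{2}\le\delta_{B}/m$ gain alive through this optimization (instead of the crude $\|\widetilde M\|_{F}^{2}\le\delta_{B}$) is precisely what produces the $p/m$ inside the square root and is the reason the exact $K$-means bound of Lemma~\ref{quad2} is smaller than the bound over the larger set $\mathcal{C}$ used for the SDP relaxation in~\cite{giraud2019partial}. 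The remaining steps -- the norm identities of the second paragraph and the termwise application of Hanson--Wright to the $p$ independent Gaussian columns of $E$ -- are routine.
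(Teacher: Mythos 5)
Your overall scheme is the same as the paper's: apply Hanson--Wright to each fixed $B$, slice $\mathcal{B}$ into the shells $\mathcal{B}_j$, and union bound. Your deterministic inputs are correct: the operator-norm bound is immediate, and your Frobenius bound $\|B^{*}(B-B^{*})\|_F^2=\tr(B^{*}-B^{*}B)\leq \delta_B/m$ is a valid (and arguably cleaner) alternative to the paper's route, which instead bounds the entries of $B^{*}(B-B^{*})$ by $2/m$ and uses $\|M\|_F^2\leq \|M\|_\infty\|M\|_1$; both give $\|M\|_F\lesssim \delta_B/\sqrt{mj}$ on $\mathcal{B}_j$. Your treatment of the $B^{*}(B-B^{*})B^{*}$ term by contractivity of $B^{*}$ is also fine.

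The genuine gap is exactly the step you flag as "the main obstacle": the cardinality bound on $\mathcal{B}_j$. Your proposed count — at most $N_j\lesssim (m^{+}/m)\,j$ displaced points, each free to land in any of $K$ groups, hence $\log|\mathcal{B}_j|\lesssim (m^{+}/m)\,j\log(nK)$ — is too lossy. Feeding it through the Hanson--Wright optimization gives $\tau_j\asymp (m^{+}/m)\,j\log(nK)$ and hence a final bound of order $\delta_B\bigl(\tfrac{m^{+}}{m}\log(nK)+\sqrt{\tfrac{p}{m}\,\tfrac{m^{+}}{m}\log(nK)}\bigr)$, i.e.\ a \emph{product} of $m^{+}/m$ with a $\log(nK)\geq \log n$ factor, rather than the \emph{maximum} $\log(n/m)\vee \tfrac{m^{+}}{m}$ claimed in the lemma. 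This is not a cosmetic loss: when clusters are large (say $m\asymp n/K$ with $K$ bounded) the claimed rate is $O(j)$ while yours is $O(j\log n)$, and the extra $\log n$ would propagate into Condition~\eqref{eq:condition_snr} and destroy the $\log(K)$-versus-$\log(n)$ distinction that makes exact Kmeans information-optimal in Theorem~\ref{thm:error_K_means:simple}. The paper closes this gap with Lemma~\ref{denomb}, whose proof splits the displaced mass into two regimes via Lemma~\ref{lem:denomb2}: the "minority" misplaced points, of which there are only $O(j)$ (each contributes at least $1/2$ to $\delta_B$) and which are counted with the expensive factor $\binom{n}{j\wedge n}K^{j}$, versus the "majority-swallowed" blocks $G^{*}_l\cap G_r$ with $|G^{*}_l\cap G_r|>|G_r|/2$, of which there are only $j/m$ and whose contents are counted cheaply as subsets, costing $2^{j m^{+}/m}K^{2j/m}$ with no $\log n$ attached. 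This yields $\log|\mathcal{B}_j|\lesssim (m\vee j)\log(n/m)+j\log K+j\,\tfrac{m^{+}}{m}\lesssim j\bigl(\log(n/m)\vee \tfrac{m^{+}}{m}\bigr)$ for $j\geq m$, which is what the stated bound requires. Without an argument of this type your proof establishes only a strictly weaker inequality.
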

Combining the results from Lemma \ref{lem:quad1}-\ref{quad2}, we get the bound of Lemma \ref{lem:noise}.

\subsubsection{Proof of Lemma \ref{lem:quad1}}\label{prooflem:quad1}

Let us use Hanson-Wright Lemma, that we recall here (see e.g. \cite{HDS2})
\begin{lem}\label{HW} {\bf (Hanson-Wright inequality)}
    Let $\eps\sim\mathcal{N}(0,I_{d})$ for some d. Let S be a real symmetric $p\times p$ matrix. Then, for all $x>0$, $$\P\left[\eps^{T}S\eps -\tr(S)>\sqrt{8\|S\|_{F}^{2}x}\vee(8\|S\|_{op}x)\right]\leq e^{-x}.$$ 
\end{lem}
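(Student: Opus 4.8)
The plan is to establish the Gaussian Hanson--Wright inequality by the classical diagonalization-plus-Chernoff route; here $\eps$ is necessarily $p$-dimensional for the quadratic form $\eps^{T}S\eps$ to make sense (i.e.\ $d=p$). First I would diagonalize the real symmetric matrix as $S=U^{T}DU$ with $U$ orthogonal and $D=\mathrm{diag}(\lambda_{1},\dots,\lambda_{p})$ the eigenvalues of $S$. Then $\eps^{T}S\eps=(U\eps)^{T}D(U\eps)$, and by rotational invariance of the standard Gaussian the vector $g:=U\eps$ is again $\mathcal{N}(0,I_{p})$, so
\[
Z:=\eps^{T}S\eps-\tr(S)=\sum_{i=1}^{p}\lambda_{i}(g_{i}^{2}-1)
\]
is a sum of independent centered random variables. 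I would also record the standard identities $\sum_{i}\lambda_{i}^{2}=\tr(S^{2})=\|S\|_{F}^{2}$ and $\max_{i}|\lambda_{i}|=\|S\|_{op}$.

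Next I would control the moment generating function of $Z$. Using $\E e^{sg_{i}^{2}}=(1-2s)^{-1/2}$ for $s<1/2$, for every $0<t<(2\|S\|_{op})^{-1}$ we have $|2t\lambda_{i}|\le 2t\|S\|_{op}<1$, hence all factors are finite and
\[
\log\E e^{tZ}=\sum_{i=1}^{p}\Big(-t\lambda_{i}-\tfrac12\log(1-2t\lambda_{i})\Big).
\]
The key elementary bound is: for $|u|\le\rho<1$, one has $-u-\log(1-u)\le \frac{u^{2}}{2(1-\rho)}$. For $u\ge0$ this follows from $-u-\log(1-u)=\sum_{k\ge2}u^{k}/k\le \tfrac12\sum_{k\ge2}u^{k}=\frac{u^{2}}{2(1-u)}$; for $u<0$, writing $u=-v$, the series $v-\log(1+v)=\sum_{k\ge2}(-1)^{k}v^{k}/k$ is alternating with decreasing terms (since $v<1$), so it is at most its first term $v^{2}/2=u^{2}/2$. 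Applying this with $u=2t\lambda_{i}$ and $\rho=2t\|S\|_{op}$, dividing by $2$, and summing over $i$ yields the sub-exponential estimate
\[
\log\E e^{tZ}\le \frac{t^{2}\|S\|_{F}^{2}}{1-2t\|S\|_{op}},\qquad 0<t<\frac{1}{2\|S\|_{op}}.
\]

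Finally I would apply the Chernoff bound $\P[Z\ge a]\le e^{-ta}\,\E e^{tZ}$ with a choice of $t$ matching the stated threshold $a:=\sqrt{8\|S\|_{F}^{2}x}\vee(8\|S\|_{op}x)$. Write $F=\|S\|_{F}$ and $L=\|S\|_{op}$. If $\sqrt{8F^{2}x}\ge 8Lx$ (equivalently $x\le F^{2}/(8L^{2})$), take $t=\sqrt{2x}/(2F)$; then $2tL\le \tfrac12$, so $\log\E e^{tZ}\le 2t^{2}F^{2}$, and a short computation gives $ta-2t^{2}F^{2}=2x-x=x$. If instead $8Lx>\sqrt{8F^{2}x}$, take $t=1/(4L)$; then again $2tL=\tfrac12$, so $\log\E e^{tZ}\le 2t^{2}F^{2}=F^{2}/(8L^{2})$, and $ta-2t^{2}F^{2}=2x-F^{2}/(8L^{2})>x$ since $x>F^{2}/(8L^{2})$ in this regime. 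In both cases $\P[Z\ge a]\le e^{-x}$, which is exactly the claim. The only mildly delicate point of the argument is the elementary inequality for negative $u$: the naive bound $\frac{u^{2}}{2(1-u)}$ actually fails there (it becomes too small), so one must instead exploit the alternating structure of the logarithm series; this is what keeps the MGF bound valid for indefinite $S$. Everything else is routine bookkeeping.
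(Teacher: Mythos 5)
Your proof is correct. Note that the paper does not actually prove this lemma: it is stated as a classical fact with a pointer to the reference \cite{HDS2}, so there is no internal proof to compare against. Your derivation is the standard one for the Gaussian case --- diagonalize $S$, use rotational invariance to reduce to $Z=\sum_i\lambda_i(g_i^2-1)$, bound the log-moment generating function by $t^2\|S\|_F^2/(1-2t\|S\|_{op})$ via the elementary inequality $-u-\log(1-u)\le u^2/(2(1-\rho))$ for $|u|\le\rho<1$, and optimize the Chernoff bound separately in the two regimes $x\le \|S\|_F^2/(8\|S\|_{op}^2)$ and $x>\|S\|_F^2/(8\|S\|_{op}^2)$. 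I checked the arithmetic in both regimes (with $t=\sqrt{2x}/(2\|S\|_F)$ and $t=1/(4\|S\|_{op})$ respectively, one gets exponent at most $-x$ in each case), and your handling of the negative-eigenvalue branch of the logarithm inequality is sound --- indeed $v-\log(1+v)\le v^2/2$ holds for all $v\ge 0$, so the indefiniteness of $S$ causes no trouble. The constants $\sqrt{8\|S\|_F^2x}\vee 8\|S\|_{op}x$ in the paper's statement come out exactly as you computed.
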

That inequality implies that, for a fixed matrix $B\in\mathcal{B}$, with probability at least $1-e^{-x}$,\begin{align*}
    \<EE^{T}-pI_{n},(I-B^{*})(B-B^{*})(I-B^{*})\>\leq&\sqrt{8px\|(I-B^{*})(B-B^{*})(I-B^{*})\|_{F}^{2}}\\
    &\vee (8\|(I-B^{*})(B-B^{*})(I-B^{*})\|_{op}x)\enspace. 
\end{align*}
Our task will be to bound both $\|(I-B^{*})(B-B^{*})(I-B^{*})\|_{op}$ and $\|(I-B^{*})(B-B^{*})(I-B^{*})\|_{F}$ with respect to $\delta_{B}$. We will then do an union bound on all $B\in\mathcal{B}_{j}:=\{B\in\mathcal{B}, \delta_{B}\in [j-1,j]\}$, and finally a union bound on all $j$ in $[1,2n]$. From Lemma 1 of \cite{giraud2019partial}, we have $\delta_{B}=2\sum_{l\neq k}|B_{G_{k}^{*}G_{l}^{*}}|_{1}$. For all $i\in G^{*}_{k}$, $\sum_{l\neq k}|B_{iG_{l}^{*}}|\leq 1$. This implies $\delta_{B}\leq 2n.$ Hence, it is sufficient to consider $j\leq 2n$.
\newline
We have, for $j\in[1,2n]$ and $B\in\mathcal{B}_{j}$, \begin{enumerate}
    \item $\|(I-B^{*})(B-B^{*})(I-B^{*})\|_{op}\leq \|(I-B^{*})(B-B^{*})(I-B^{*})\|_{*}$ where $\|\cdot\|_{*}$ denotes the nuclear norm. And, Lemma 1 of \cite{giraud2019partial} shows that $\|(I-B^{*})(B-B^{*})(I-B^{*})\|_{*}\lesssim \frac{1}{m}\delta_{B}$. Hence: $\|(I-B^{*})(B-B^{*})(I-B^{*})\|_{op}\lesssim \frac{1}{m}\delta_{B}$;
    \item Since $(I-B^{*})(B-B^{*})(I-B^{*})$ is a difference of product of projections, all its eigen-values are bounded by 2 in absolute value. Hence: $\|(I-B^{*})(B-B^{*})(I-B^{*})\|_{op}\lesssim \frac{1}{j}\delta_{B}$;
    \item $\|(I-B^{*})(B-B^{*})(I-B^{*})\|_{F}\lesssim \|(I-B^{*})(B-B^{*})(I-B^{*})\|_{*}$ and therefore, we also have $\|(I-B^{*})(B-B^{*})(I-B^{*})\|_{F}\lesssim \frac{1}{m}\delta_{B}$;
    \item Since all the eigen-values of $(I-B^{*})(B-B^{*})(I-B^{*})$ are bounded by 2 in absolute value, we have $\|(I-B^{*})(B-B^{*})(I-B^{*})\|_{F}\lesssim \sqrt{\|(I-B^{*})(B-B^{*})(I-B^{*})\|_{*}}$. Thus, $\|(I-B^{*})(B-B^{*})(I-B^{*})\|_{F}\lesssim \frac{1}{\sqrt{mj}}\delta_{B}$.
\end{enumerate}
These inequalities on the norms imply that, for $B\in\mathcal{B}_{j}$ and $x>0$, with probability at least $1-e^{-x}$, $$\<EE^{T}-I_{n},(I-B^{*})(B-B^{*})(I-B^{*})\>\lesssim \delta_{B}\left(\sqrt{\frac{p}{m}\frac{x}{m\vee j}}+\frac{x}{m\vee j}\right).$$
Doing an union bound on $\mathcal{B}_{j}$, it appears that for $x>0$, this inequality remains true with probability at least $1-|\mathcal{B}_{j}|e^{-x}$, simultaneously on all $B\in\mathcal{B}_{j}$. 
This implies that for a positive constant $c_{1}$, with probability at least $1-\frac{c_{1}}{2n^{3}}$, simultaneously on all $B\in\mathcal{B}_{j}$:$$\<EE^{T}-I_{n},(I-B^{*})(B-B^{*})(I-B^{*})\>\lesssim \delta_{B}\left(\sqrt{\frac{p}{m}\frac{\log(|\mathcal{B}_{j}|\vee n)}{j\vee m}}\vee \frac{\log(|\mathcal{B}_{j}|\vee n)}{j\vee m}\right)\enspace.$$
The next lemma, proved in Section \ref{proofdenomb}, bounds the cardinality of $|\mathcal{B}_{j}|$.
\begin{lem}\label{denomb}
    For any $j\in[1,2n]$, $|\mathcal{B}_{j}|\leq \binom{n}{j\wedge n}K^{3j}2^{j\frac{m^{+}}{m}}$.
\end{lem}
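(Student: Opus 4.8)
The plan is to count partitions rather than matrices. Since $G\mapsto B(G)$ is a bijection onto $\mathcal{B}$ and $\mathcal{B}_j\subseteq\{B(G):\ \delta_{B(G)}\le j\}$, it is enough to bound the number of partitions $G$ of $[1,n]$ into exactly $K$ nonempty groups with $\delta_{B(G)}\le j$. First I would translate the analytic bound $\delta_B\le j$ into combinatorial structure. Writing $N_{rk}=|G_r\cap G^*_k|$ for the confusion counts and using Lemma~1 of \cite{giraud2019partial}, one has $\delta_B=2\sum_{i=1}^n(1-B_{i,G^*_{k^*_i}})$, and for $i$ lying in the group $G_r$ of $G$ this term equals $|G_r\setminus G^*_{k^*_i}|/|G_r|$. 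Fixing a tie-breaking rule, let $k_r$ denote a true label of maximal multiplicity in $G_r$ (its \emph{plurality label}), call $i\in G_r$ an \emph{impurity} when $k^*_i\neq k_r$, and set $b_r=|G_r|-N_{r,k_r}$, the number of impurities in $G_r$. Since $|G_r\setminus G^*_{k^*_i}|=|G_r|-N_{r,k^*_i}\ge|G_r|-N_{r,k_r}=b_r$ for \emph{every} $i\in G_r$, we obtain $\delta_B\ge 2\sum_r b_r$, so the set $I$ of all impurities satisfies $|I|\le j/2$.

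Next I would extract the shape of $G$ relative to $G^*$. For $k\in[1,K]$, let $t_k$ be the number of groups of $G$ whose plurality label is $k$, so that $\sum_k t_k=K$. If $t_k=0$, then every point of $G^*_k$ has true label $k$ differing from the plurality label of its own group, hence $G^*_k\subseteq I$; since $|G^*_k|\ge m$, this forces $\#\{k:t_k=0\}\le|I|/m\le j/(2m)$, and therefore $\sum_k(t_k-1)_+=\#\{k:t_k=0\}\le j/(2m)$, i.e.\ $\prod_k\max(t_k,1)\le 2^{j/(2m)}$. A short check also shows that the \emph{cores} $P_r:=G_r\cap G^*_{k_r}$ are nonempty, pairwise disjoint and disjoint from $I$, and that for each $k$ the cores of the $t_k$ groups with plurality label $k$ partition $G^*_k\setminus I$.

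The second main step is an encoding argument. I claim that any admissible $G$ is determined by the data: (i) the impurity set $I$, for which there are at most $\sum_{i\le j/2}\binom ni\le 2^{O(j)}\binom{n}{j\wedge n}$ choices; (ii) the multiplicity vector $(t_k)_k$, for which there are at most $K^{O(j)}$ choices by crude counting, since $t_k\neq1$ for at most $j/m$ indices and $0\le t_k\le K$; (iii) for each $k$ with $t_k\ge2$, a splitting of the $\le m^+$ points of $G^*_k\setminus I$ among the $t_k$ cores with plurality label $k$, for which there are at most $\prod_k t_k^{m^+}\le 2^{jm^+/(2m)}$ choices; and (iv) for each $i\in I$, which of the $\sum_k t_k=K$ cores its group contains $i$, for which there are at most $K^{|I|}\le K^{j/2}$ choices. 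Conversely, given such data one reconstructs $G$ by taking each group to be a core together with the impurities assigned to it, and every admissible $G$ arises in this way (take $I$ to be its impurity set, $t_k$ the number of its groups with plurality $k$, and so on). Multiplying the four bounds and absorbing the $2^{O(j)}$ and $K^{O(j)}$ losses into the generous exponents $K^{3j}$ and $2^{jm^+/m}$ --- using $K\ge2$, the case $K=1$ being trivial since then $|\mathcal{B}|=1$ --- gives $|\mathcal{B}_j|\le\binom{n}{j\wedge n}K^{3j}2^{jm^+/m}$.

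The main obstacle is not any individual estimate but the bookkeeping of the encoding: one must verify that $(I,(t_k),\text{core splittings},\text{impurity assignment})$ genuinely pins down $G$ --- in particular handling the degenerate cases $t_k\in\{0,1\}$ and $G^*_k\setminus I=\emptyset$ correctly --- and that the crude counts in (i)--(iv) really do fit under the claimed bound over the full range $j\le 2n$, including $j$ close to $n$ or $j>n$, where $\binom{n}{j\wedge n}$ must be compared against $\sum_{i\le j/2}\binom{n}{i}$ by hand.
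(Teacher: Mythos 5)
Your argument is correct in its main structure but takes a genuinely different route from the paper's. The paper first fixes a matching between the groups of $G$ and those of $G^*$ via a \emph{majority} criterion (Lemma~\ref{lem:denomb2}: at most one true cluster can occupy more than half of a given $G_l$), then splits the displaced mass into at most $j\wedge n$ individually misplaced points (each costing $\geq 1/2$ in $\delta_B$) and at most $j/m$ ``bulk-moved'' pairs $(l,r)$ (each costing $\geq m/2$), yielding the encoding $\binom{n}{j\wedge n}K^{j\wedge n}\cdot K^{2j/m}2^{jm^+/m}$. You instead work with the \emph{plurality} label of each $G_r$; your one-line inequality $\delta_B=2\sum_i\bigl(1-B_{i,G^*_{k^*_i}}\bigr)\geq 2\sum_r b_r=2|I|$ is a clean substitute for the paper's case analysis and immediately gives $|I|\leq j/2$, while the identity $\sum_k(t_k-1)_+=\#\{k:t_k=0\}\leq j/(2m)$ correctly replaces the paper's control on the number of bulk-moved pairs; the encoding by (impurity set, multiplicity vector, ordered core splittings, impurity-to-core assignment) is indeed injective. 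What your route buys is that it avoids the matching lemma entirely; what it costs is heavier bookkeeping in the final arithmetic, and this is the one real caveat: with the counts as literally stated, the product of $\sum_{s\leq j/2}\binom{n}{s}\leq (2e)^{j/2}\binom{n}{j\wedge n}$, the $K^{O(j)}$ for the multiplicity vector, and $K^{j/2}$ for the impurity assignment overshoots $\binom{n}{j\wedge n}K^{3j}$ by an exponential factor that, in the worst case $K=2$, $m^+=m$, is not visibly absorbed by the remaining slack $K^{j/2}2^{jm^+/(2m)}=2^{3j/2}$. Your method therefore certainly delivers $|\mathcal{B}_j|\leq\binom{n}{j\wedge n}(CK)^{Cj}2^{Cjm^+/m}$ for a universal $C$ --- which is all that the application in Lemma~\ref{lem:quad1} uses, since only $\log|\mathcal{B}_j|\lesssim (m\vee j)\bigl(\log(n/m)\vee m^+/m\bigr)$ enters downstream --- but hitting the exact constants of the lemma as stated would require the sharper accounting you yourself defer to ``by hand'' in your closing paragraph.
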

Hence, with probability at least $1-\frac{c_{1}}{2n^{3}}$, simultaneously on all $B\in\mathcal{B}_{j}$, \begin{align*}
    \<EE^{T}-I_{n},(I-B^{*})(B-B^{*})(I-B^{*})\>\lesssim&\delta_{B}(\sqrt{\frac{p}{m}\frac{\log(n)+\log(\binom{n}{j\wedge n})+j\log(K)+j\frac{m^{+}}{m}}{j\vee m}}\\
    &+\frac{\log(n)+\log(\binom{n}{j\wedge n})+j\log(K)+j\frac{m^{+}}{m}}{j\vee m})\enspace.
\end{align*}
And,\begin{itemize}
    \item We have $\log\binom{n}{j\wedge n}\lesssim j\log(\frac{n}{j\wedge n})$. The function $x\rightarrow x\log(\frac{n}{x})$ being increasing on $]0,\frac{n}{e}]$, we get that, when $j\leq m$ and $m\leq \frac{n}{e}$, $j\log(\frac{n}{j\wedge n})\lesssim m\ln(\frac{n}{m})$.  When $j\geq m$, we have $j\log(\frac{n}{j\wedge n})\leq j\log(\frac{n}{m})$. Moreover, if $m\geq \frac{n}{e}$ and $j\leq n$, $j\log(\frac{n}{j\wedge n})\leq \frac{n}{e}\log(e)\lesssim m\log(\frac{n}{m})$. If $m\geq \frac{n}{e}$ and $j\geq n$, then $j\log(\frac{n}{j\wedge n})=0\leq m\log(n/m)$. Hence, we get that $\log\binom{n}{j\wedge n}\lesssim (m\vee j)\log(\frac{n}{m})$;
    \item $j\log(K)\leq j\log(\frac{n}{m})$ since $K\leq \frac{n}{m}$;
    \item The fact that the function $x\rightarrow x\log(\frac{n}{x})$ is increasing on $]0,\frac{n}{e}]$ also implies that, if $m\leq \frac{n}{e}$, $\log(n)\lesssim m\log(\frac{n}{m})$. If $m\geq \frac{n}{e}$, we have $\log(n)\lesssim n\lesssim \frac{n}{e}\lesssim m\log(n/m)$. 
\end{itemize}
Thus, with the same probability $1-\frac{c_{1}}{2n^{3}}$, simultaneously on all $B\in\mathcal{B}_{j}$, $$\<EE^{T}-I_{n},(I-B^{*})(B-B^{*})(I-B^{*})\>\lesssim\delta_{B}\pa{\log(n/m)\vee \frac{m^{+}}{m}+\sqrt{\frac{p}{m}\pa{\log(n/m)\vee \frac{m^{+}}{m}}}}.$$
An union bound on $j\in[1,2n]$ concludes the proof of the lemma.

\subsubsection{Proof of Lemma \ref{denomb}}\label{proofdenomb}

In this section, we consider the partitions $G$ such that $B(G)\in\mathcal{B}_{j}$. For $B\in\mathcal{B}_{j}$, denote $B_{G^{*}_{k}G^{*}_{l}}$ the restriction of $B$ where we keep the rows belonging to $G^{*}_{k}$ and the columns belonging to $G^{*}_{l}$. From Lemma 1 of \cite{giraud2019partial}, we get that, $\delta_{B}=2\sum_{l\neq k}|B_{G^{*}_{k}G^{*}_{l}}|$, for $B\in\mathcal{C}$. This equality tells us that if a point $i\in G^{*}_{r}$, for some $r\in[1,K]$, is linked in $G$ to a majority of points not belonging to the same $G^{*}_{r}$, then the contribution of $2\sum_{k:k\neq r}|B_{iG_{r}^{*}}|$ in $\delta_{B}$ is at least of one. Indeed, denoting by $l$ the index such that $i\in G_{l}$, we have \begin{align*}
    2\sum_{k:k\neq r}|B_{iG_{r}^{*}}|&=2\sum_{j\in G_{l}\setminus G^{*}_{r}} \frac{1}{|G_{l}|}\\
    &=2\frac{|G_{l}\setminus G^{*}_{r}|}{|G_{l}|}\ \geq 1\enspace.
\end{align*}

In order to bound $|\mathcal{B}_{j}|$, we will find, for a partition $G$ whose normalized-partnership matrix is in $\mathcal{B}_{j}$, a labelling $G_{1},...,G_{K}$ for which we can upper-bound the possibilities for choosing points that are in some $G_{r}^{*}\cap G_{l}$ for $l\neq r$. 

The equality that fulfills $\delta_{B}$ will help us bound the possibilities of choosing the points that are in some $G_{r}^{*}\cap G_{l}$, with $l\neq r$ and $\frac{|G_{l}\cap G^{*}_{r}|}{|G_{l}|}\leq \frac{1}{2}$. On the other hand, the following lemma allows us to control the number of cotuple $(l,r)$, with $l\neq r$, satisfying $\frac{|G_{l}\cap G^{*}_{r}|}{|G_{l}|}>\frac{1}{2}$, by stating that for any of these cotuple, we can consider that all the points belonging to $G_{l}^{*}$ add a contribution of at least one to $\delta_{B}$.
\begin{lem}\label{lem:denomb2}
    Let $G=\{G_{1},...,G_{K}\}$ be a partition of $[1,n]$. There exists $\phi:[1,K]\to [1,K]$ a bijection such that the following holds.  For all $l\neq r$ such that  $\frac{|G_{\phi(l)}\cap G^{*}_{r}|}{|G_{\phi(l)}|}>\frac{1}{2}$, we have  $\max_{l'}\frac{|G^{*}_{l}\cap G_{\phi(l')}|}{|G_{\phi(l')}|}\leq \frac{1}{2}$.
\end{lem}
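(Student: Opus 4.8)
The plan is to build the bijection $\phi$ by first matching each ``dominated'' group of $G$ to the true cluster that dominates it, and then extending arbitrarily. First I would record the elementary fact that, for a fixed index $j\in[1,K]$, the nonnegative numbers $|G_j\cap G^*_r|/|G_j|$, $r\in[1,K]$, sum to $1$ (since $G^*$ partitions $[1,n]\supseteq G_j$), hence at most one of them is strictly larger than $1/2$; when such an $r$ exists it is unique, and I write $\mathrm{maj}(j):=r$, leaving $\mathrm{maj}(j)$ undefined otherwise. Let $M:=\{\mathrm{maj}(j):\mathrm{maj}(j)\text{ defined}\}\subseteq[1,K]$ be the set of true labels that are the strict majority label of at least one group of $G$, and for $r\in M$ put $S_r:=\{j:\mathrm{maj}(j)=r\}$. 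By the uniqueness just noted, the family $(S_r)_{r\in M}$ is pairwise disjoint, and each $S_r$ is nonempty by definition of $M$.

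Next I would construct $\phi$ in two stages. For each $r\in M$, set $\phi(r)$ to be an arbitrary element of $S_r$; since the $S_r$ are disjoint and nonempty, this assigns $|M|$ pairwise distinct $G$-indices to the $|M|$ true-label indices lying in $M$. I would then extend $\phi$ to a bijection of $[1,K]$ by putting the $K-|M|$ true-label indices outside $M$ in arbitrary bijection with the $K-|M|$ still-unused $G$-indices; the cardinalities match, so this is legitimate.

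It then remains to verify the stated property, which is where I would be careful. Fix $l\ne r$ with $|G_{\phi(l)}\cap G^*_r|/|G_{\phi(l)}|>1/2$; by definition this means $\mathrm{maj}(\phi(l))=r$. I claim $l\notin M$: were $l\in M$, then by the first stage of the construction $\phi(l)\in S_l$, i.e.\ $\mathrm{maj}(\phi(l))=l$, contradicting $\mathrm{maj}(\phi(l))=r\ne l$. So $l\notin M$, and unwinding the definition of $M$ while using that $\phi$ is onto the set of all $G$-indices, this says precisely that $|G_{\phi(l')}\cap G^*_l|/|G_{\phi(l')}|\le 1/2$ for every $l'$, i.e.\ $\max_{l'}|G^*_l\cap G_{\phi(l')}|/|G_{\phi(l')}|\le 1/2$, as required. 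I do not anticipate a genuine obstacle: the argument is purely combinatorial, and the only subtlety is the observation that a group of $G$ has at most one strict-majority true label, which is exactly what makes the $S_r$ disjoint and lets one sidestep any appeal to Hall's theorem.
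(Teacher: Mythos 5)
Your proof is correct and follows essentially the same route as the paper's: in both cases $\phi$ sends each true label that is the strict majority label of some group of $G$ to one such group (injectivity/disjointness coming from the fact that a group has at most one strict-majority label), and the verification amounts to the same observation that if $G^*_r$ dominates $G_{\phi(l)}$ with $r\neq l$ then $l$ cannot dominate any group. The only difference is organizational — you build $\phi$ by inverting a majority map defined on the groups of $G$, while the paper defines $\phi$ directly on the true labels — but the resulting bijection and argument coincide.
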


This lemma is proved in Section \ref{proofdenomb2}. Let $G$ such that $B(G)\in\mathcal{B}_{j}$. Without loss of generality, we can suppose that the bijection $\phi$ considered in Lemma \ref{lem:denomb2} is the identity. Fix any $l\neq r$ in $[1, K]$. We consider two cases
\begin{enumerate}
    \item $\frac{|G^{*}_{l}\cap G_{r}|}{|G_{r}|}\leq\frac{1}{2}$. For any  $i\in G^{*}_{l}\cap G_{r}$, we have 
    \[
    \sum_{k:k\neq l}|B_{iG^{*}_{k}}|= \sum_{t\in [1,n]\setminus G^{*}_{l}}B_{it}\geq \frac{1}{|G_{r}|}|G_{r}\setminus G^{*}_{l}|\geq \frac{1}{2}\ .
    \] 
    \item $\frac{|G^{*}_{l}\cap G_{r}|}{|G_{r}|}>\frac{1}{2}$. Consider any  $i\in G^{*}_{r}$.  Then, denoting $l'$ the index such that $i\in G_{l'}$, we have from Lemma \ref{lem:denomb2} that  $\frac{|G^{*}_{r}\cap G_{l'}|}{|G_{l'}|}\leq \frac{1}{2}$. Arguing  as above, this implies $\sum_{k\neq r}|B_{iG^{*}_{k}}|\geq \frac{1}{2}$. Hence, $\sum_{k:k\neq r}|B_{G^{*}_{r}G^{*}_{k}}|\geq \frac{m}{2}$. 
\end{enumerate}
Since $B\in\mathcal{B}_{j}$, we deduce from the above discussion that (i) there are at most $j\wedge n$ points that belong to $G^{*}_{l}\cap G_{r}$ for some $l\neq r$ such that $\frac{|G^{*}_{l}\cap G_{r}|}{|G_{r}|}\leq\frac{1}{2}$ and (ii) that  $|\{(l,r), l\neq r, \frac{|G^{*}_{l}\cap G_{r}|}{|G_{r}|}>\frac{1}{2}\}|\leq \frac{j}{m}$.

Hence, $|\mathcal{B}_{j}|$ is upper-bounded by the number of partitions satisfying these two conditions. Such partitions are fully defined by the points $i$ that are in some $G^{*}_{l}\cap G_{r}$, for $l\neq r$. So, it is sufficient to count the possibilities of choosing these points:\begin{itemize}
    \item We choose $j\wedge m$ points, and for each of these points we decide a group $G_{r}$ where it is sent ($r$ is not necessarily different than the index of the group of $G^{*}$ that contains $i$); we have $\binom{n}{j\wedge n}K^{j\wedge n}$ such possibilities,
    \item We choose $\frac{j}{m}$ couples $(l,r)$ and, for each of these couples, we choose a subset of $G^{*}_{l}$ that will be a subset of $G_{r}$. That gives less than $2^{j\frac{m^{+}}{m}}K^{2\frac{j}{m}}$ possibilities. 
\end{itemize} 
Hence, we conclude that $|\mathcal{B}_{j}|\leq \binom{n}{j\wedge n}K^{3j}2^{j\frac{m^{+}}{m}}$.

\subsubsection{Proof of Lemma \ref{lem:denomb2}}\label{proofdenomb2}

Let $G=\{G_{1},...,G_{K}\}$ a partition of $[1,n]$. For $l\in[1,K]$ such that there exists $l'\in[1,K]$ which satisfies $\frac{|G^{*}_{l}\cap G_{l'}|}{|G_{l'}|}>\frac{1}{2}$, we define $\phi(l)=l'$. The mapping $\phi$ is an injection since for all $l'\in[1,K]$, there is at most one index $l$ such that $\frac{|G^{*}_{l}\cap G_{l'}|}{|G_{l'}|}>\frac{1}{2}$. Then, we can expand $\phi$ to $[1,K]$ in order to have a permutation of $[1,K]$.

The permutation $\phi$ verifies the following assertion. For $l\in[1,K]$, if there exists $l'$ such that $\frac{|G^{*}_{l}\cap G_{\phi(l')}|}{|G_{\phi(l')}|}>\frac{1}{2}$, then, we also have $\frac{|G^{*}_{l}\cap G_{\phi(l)}|}{|G_{\phi(l)}|}>\frac{1}{2}$. So, for $l\neq r$ such that $\frac{|G_{\phi(l)}\cap G^{*}_{r}|}{|G_{\phi(l)}|}>\frac{1}{2}$, there exists no $l'$ such that $\frac{|G^{*}_{l}\cap G_{\phi(l')}|}{|G_{\phi(l')}|}>\frac{1}{2}$. Indeed, otherwise, we would also have $\frac{|G^{*}_{l}\cap G_{\phi(l)}|}{|G_{\phi(l)}|}>\frac{1}{2}$. That would contradict the hypothesis $\frac{|G^{*}_{r}\cap G_{\phi(l)}|}{|G_{\phi(l)}|}>\frac{1}{2}$. 

This concludes the proof of the lemma.

\subsubsection{Proof of Lemma \ref{quad2}}\label{proofquad2}
For any $B$ in $\mathcal{B}$, Lemma \ref{HW} implies that, for $L\geq 0$, with probability at least $1-e^{-L}$,
\begin{align*}
    \<EE^{T}-I_{n},B^{*}(B-B^{*})\>\leq&\sqrt{8pL\|B^{*}(B-B^{*})\|_{F}^{2}}\\
    &\vee (8\|B^{*}(B-B^{*})\|_{op}L)
\end{align*}
Following the same arguments as in Section \ref{prooflem:quad1}, it is sufficient to show that, for $j\geq m$ and $B\in\mathcal{B}_{j}$, $\|B^{*}(B-B^{*})\|_{F}\lesssim \frac{1}{\sqrt{mj}}\delta_{B}$ and $\|B^{*}(B-B^{*})\|_{op}\lesssim \frac{1}{j}\delta_{B}$.
\begin{enumerate}
    \item $B^{*}(B-B^{*})$ is a product of a projection and a difference of projections. That implies that all its eigen-values are bounded by 2 in absolute value. Hence: $\|B^{*}(B-B^{*})\|_{op}\lesssim \frac{1}{j}\delta_{B}$.
    \item All the coefficients of $B^{*}$ are non-negative and bounded by $\frac{1}{m}$. Since the coefficients of $B^{*}B$ and of $B^{*}B^{*}$ are convex combinations of coefficients of $B^{*}$, it comes forward that all its coefficients are also bounded by $\frac{1}{m}$ in absolute value. That implies that the coefficients of $B^{*}(B-B^{*})$ are bounded by $\frac{2}{m}$ in absolute value. Hence: $\|B^{*}(B-B^{*})\|_{F}\lesssim \frac{1}{\sqrt{m}}\sqrt{\|B^{*}(B-B^{*})\|_{1}}\lesssim\frac{1}{\sqrt{jm}}\delta_{B}$.
\end{enumerate}
That concludes the proof for the term in $B^{*}(B-B^{*})$.

For the term in $B^{*}(B-B^{*})B^{*}$,we use the same arguments and add the fact that $\|B^{*}(B-B^{*})B^{*}\|_{1}=\|B^{*}(B-B^{*})\|_{1}$ (see Lemma 1 of \cite{giraud2019partial}).

\subsection{Proof of Lemma \ref{lem:erreur}}\label{prooflem:error}

In this section, we bound the proportion of misclassified points $$err(G,G^{*})=\frac{1}{2n}\min_{\pi\in\mathcal{S}_{K}}\sum_{k=1}^{K}|G_{k}^{*}\Delta G_{\pi(k)}|\enspace ,$$
with respect to $\|B^{*}-B^{*}B\|_{1}$, for a given partition $G$ and its normalized-partnership matrix $B$.

We consider $G$ a partition. By Lemma \ref{lem:denomb2}, there exists a bijection $\phi: [1,K]\to [1,K]$ such that the following holds. For all $l\neq r$, if $\frac{|G_{\phi(l)}\cap G^{*}_{r}|}{|G_{\phi(l)}|}>\frac{1}{2}$, then for all $l'$, $\frac{|G_{\phi(l')}\cap G^{*}_{l}|}{|G_{\phi(l')}|}\leq \frac{1}{2}$. Without loss of generality, we suppose that this bijection is the identity. 

Then, by definition, $$err(G,G^{*})\leq \frac{1}{2n}\sum_{k=1}^{K}|G_{k}^{*}\Delta G_{k}|\enspace .$$
So, we have $$err(G,G^{*})\leq \frac{1}{n}\sum_{l\neq r}\sum_{i\in [1,n]}\1_{i\in G^{*}_{r}\cap G_{l}}\enspace.$$
Let $l\neq r$. If $\frac{|G^{*}_{r}\cap G_{l}|}{|G_{l}|}\leq \frac{1}{2}$, each $i\in G^{*}_{r}\cap G_{l}$ adds a contribution of at least 1 in $\|B^{*}-B^{*}B\|_{1}$. Moreover, if $\frac{|G^{*}_{r}\cap G_{l}|}{|G_{l}|}> \frac{1}{2}$, each point of $G^{*}_{l}$ adds a contribution of at least 1 in $\|B^{*}-B^{*}B\|_{1}$. So, we can match any set $G^{*}_{r}\cap G_{l}$ such that $\frac{|G^{*}_{r}\cap G_{l}|}{|G_{l}|}> \frac{1}{2}$, to a group $G_{l}^{*}$ which contains points all adding a contribution of at least $1$ to $\|B^{*}-B^{*}B\|_{1}$. Since $|G^{*}_{l}|\geq m$ and $|G^{*}_{r}\cap G_{l}|\leq m^{+}$, $|G_{l}^{*}|\geq \frac{m}{m^{+}}|(G^{*}_{r}\cap G_{l}|$. Let us denote $A_{0}$ the set of points that are in some $G^{*}_{r}\cap G_{l}$, with $l\neq r$, satisfying $\frac{|G^{*}_{r}\cap G_{l}|}{|G_{l}|}> \frac{1}{2}$. We also denote $A_{1}$ the set of points that are in some $G^{*}_{r}\cap G_{l}$, satisfying $\frac{|G^{*}_{r}\cap G_{l}|}{|G_{l}|}\leq \frac{1}{2}$. We then have
 \begin{align*}
    |A_{0}|&=\sum_{l\neq r: \frac{|G^{*}_{r}\cap G_{l}|}{|G_{l}|}> \frac{1}{2}} |G^{*}_{r}\cap G_{l}|\\
    &\leq  \frac{m^{+}}{m} \sum_{l\neq r: \frac{|G^{*}_{r}\cap G_{l}|}{|G_{l}|}> \frac{1}{2}} |G_{l}^{*}|\enspace.
\end{align*}
Given $l\neq r$ such that $\frac{|G^{*}_{r}\cap G_{l}|}{|G_{l}|}> \frac{1}{2}$, the following assertion is satisfied: For all $l'\in[1,K]$, 
$\frac{|G^{*}_{l}\cap G_{l'}|}{|G_{l'}|}\leq \frac{1}{2}$. Hence, $|G_{l}^{*}|=\sum_{l': \frac{|G^{*}_{l}\cap G_{l'}|}{|G_{l'}|}\leq \frac{1}{2}}|G_{l}^{*}\cap G_{l'}|$. Moreover, for all $l\in[1,K]$, there exists at most one index $r$ such that $\frac{|G^{*}_{r}\cap G_{l}|}{|G_{l}|}> \frac{1}{2}$. Hence,
\begin{align*}
    |A_{0}|\leq& \frac{m^{+}}{m}\sum_{l', l: \frac{|G^{*}_{l}\cap G_{l'}|}{|G_{l'}|}\leq \frac{1}{2}} |G^{*}_{l}\cap G_{l'}|\\
    \leq& \frac{m^{+}}{m}\|B^{*}-B^{*}B\|_{1}\enspace, 
\end{align*}
where the last inequality comes from the fact that, for $l,l'\in[1,K]$, if $\frac{|G^{*}_{l}\cap G_{l'}|}{|G_{l'}|}\leq \frac{1}{2}$, each point of $|G^{*}_{l}\cap G_{l'}|$ adds a contribution of at least 1 to $\|B^{*}-B^{*}B\|$.

Since all $i\in A_{1}$ adds a contribution of at least $1$ to $\|B^{*}-B^{*}B\|_{1}$, we have $|A_{1}|\leq \|B^{*}-B^{*}B\|_{1}$.

This leads to $\sum_{l\neq r}\sum_{i\in [1,n]}\1_{i\in G^{*}_{r}\cap G_{l}}=|A_{0}|+|A_{1}|\leq \|B^{*}-B^{*}B\|_{1}(1+\frac{m^{+}}{m})\leq 2\|B^{*}-B^{*}B\|_{1}\frac{m^{+}}{m}$.

Therefore, the proportion of misclustered points is upper-bounded by
$$err(G,G^{*})\leq 2 \frac{m^{+}}{m} \|B^{*}-B^{*}B\|_{1}\enspace .$$
This concludes the proof of Lemma \ref{lem:erreur}.

\subsection{The signal term (Proof of Lemma~\ref{lem:signal})}

The term $\<A\mu(A\mu)^{T},B^{*}-B\>$ is already dealt in \cite{giraud2019partial}. We re-derive Lemma~\ref{lem:signal} for the sake of completeness. We denote $S\in\R^{n\times n}$ the matrix defined by $S_{ab}=-0.5\|\mu_{k}-\mu_{l}\|^{2}$, if $a\in G^{*}_{k}$ and $b\in G^{*}_{l}$. For $B\in\mathcal{C}$, we get
\begin{align*}
    \<A\mu(A\mu)^{T},B^{*}-B\>&=\sum_{a,b\in [1,n]}\<\mu_{a},\mu_{b}\>\pa{B^{*}_{ab}-B_{ab}}\\
    &=\sum_{a,b\in [1,n]}\pa{-0.5 \|\mu_{a}-\mu_{b}\|^{2}+0.5\|\mu_{a}\|^{2}+0.5\|\mu_{b}\|^{2}}\pa{B^{*}_{ab}-B_{ab}}\\
    &=\sum_{a,b\in [1,n]}-0.5 \|\mu_{a}-\mu_{b}\|^{2}\pa{B^{*}_{ab}-B_{ab}}\\
    &= \<S,B^{*}-B\>\enspace,
\end{align*}
where the third equality comes for the fact that, for $a\in[1,n]$, $$\sum_{b\in[1,n]}\|\mu_{a}\|^{2}\pa{B_{ab}-B^{*}_{ab}}=0\enspace .$$
The term $S_{ab}$ being null when $a$ and $b$ are in the same group, we have 
\begin{align*}
    \<S,B-B^{*}\>=& 0.5\sum_{i\neq k}\sum_{a\in G^{*}_{k}}\sum_{b\in G^{*}_{i}}\|\mu_{k}-\mu_{i}\|^{2}B_{ab}\\
    \geq& \Delta^{2}\sum_{i\neq k}\sum_{a\in G^{*}_{k}}\sum_{b\in G^{*}_{i}}B_{ab}\\
    \geq& \Delta^{2}\frac{1}{2}\|B^{*}-B^{*}B\|_{1}\enspace ,
\end{align*}
where the last inequality comes from Lemma 1 of \cite{giraud2019partial}, which states that for any matrix $B$ belonging to the larger class of matrix $\mathcal{C}$, we have the equality $\|B^{*}-B^{*}B\|_{1}=2\sum_{i\neq k}\sum_{a\in G^{*}_{k}}\sum_{b\in G^{*}_{i}}B_{ab}.$ 

This concludes the proof of Lemma \ref{lem:signal}.

\section{Proof of Theorem \ref{thm:lowerboundpartial}}\label{prf:lowerboundpartial}

In this section, we prove Theorem \ref{thm:lowerboundpartial}. We suppose that $p\geq c\log(K)$, for $c$ a numerical constant that we will choose large enough later. Without loss of generality, we suppose throughout this proof that $\sigma=1$. We suppose $n\geq 2K$, $K\geq K_0$, for $K_0$ a numerical constant that we will choose large enough, and $\alpha\geq \frac{3}{2}$.

Given $\rho$ a probability distribution on $(\R^{p})^{K}$ and a partition $G$ of $[1,n]$ in $K$ groups, we define the probability distribution on $(\R^{p})^{n}$ by $$\P_{\rho,G}(B)=\int \P_{\mu,G}(B)d\rho(\mu)\enspace.$$
To prove Theorem \ref{thm:lowerboundpartial}, we will use three lemmas. The first one, proved in Section \ref{prf:Fano2}, is a consequence of Fano's lemma that we recall in Section \ref{prf:Fano2}.

\begin{lem}\label{lem:Fano2}
    Let $\rho$ be a probability measure on $\R^{p\times K}$. For any finite set $A\subset \mathcal{P}_{\alpha}$, any partition $G^{(0)}\in\mathcal{P}_{\alpha}$, we have the following inequality $$\inf_{\hat{G}}\frac{2}{|A|}\sum_{G\in A}\E_{\rho, G}[err(G,\hat{G})]\geq \min_{G\neq G'\in \mathcal{P}_{\alpha}}err(G,G')\pa{1-\frac{1+\frac{1}{|A|}\sum_{G\in A}KL\pa{\P_{\rho,G},\P_{\rho,G^{(0)}}}}{\log|A|}}.$$
\end{lem}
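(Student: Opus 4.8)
The plan is to reduce the estimation problem to a multiple‑hypothesis testing problem and apply Fano's inequality with $\P_{\rho,G^{(0)}}$ as the reference measure. Write $\beta:=\min_{G\neq G'\in\mathcal{P}_{\alpha}}err(G,G')$ for the minimal separation appearing on the right‑hand side. The first thing I would record is that $err$ is a pseudometric on partitions: indeed, writing $k_i^G$ for the label of $i$ in $G$, a short computation gives $\sum_{k}|G^*_k\Delta G_{\pi(k)}|=2\,\#\{i:k_i^G\neq\pi(k_i^{G^*})\}$, so $err(G,G')=\tfrac1n\min_{\pi\in\mathcal{S}_K}\#\{i:k_i^G\neq\pi(k_i^{G'})\}$, which is symmetric and satisfies the triangle inequality. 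In particular any two distinct elements of $\mathcal{P}_{\alpha}$ are at $err$‑distance at least $\beta$.

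Next, the reduction to testing. Fix any estimator $\hat G$ and set $\psi(\hat G)\in\argmin_{G\in A}err(\hat G,G)$, a measurable $A$‑valued statistic. If $G\in A$ is the true partition, then by minimality and the triangle inequality $err(\psi(\hat G),G)\leq err(\psi(\hat G),\hat G)+err(\hat G,G)\leq 2\,err(\hat G,G)$; since $\psi(\hat G),G\in A\subset\mathcal{P}_{\alpha}$, whenever $\psi(\hat G)\neq G$ we get $err(\hat G,G)\geq \beta/2$. Markov's inequality then yields
\[
\P_{\rho,G}[\psi(\hat G)\neq G]\leq \P_{\rho,G}\bigl[err(\hat G,G)\geq \beta/2\bigr]\leq \frac{2}{\beta}\,\E_{\rho,G}[err(\hat G,G)],
\]
and averaging over $G\in A$ gives $\frac{2}{|A|}\sum_{G\in A}\E_{\rho,G}[err(\hat G,G)]\geq \frac{\beta}{|A|}\sum_{G\in A}\P_{\rho,G}[\psi(\hat G)\neq G]$.

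Finally I would apply Fano's inequality. Put the uniform prior on $A$, let $\theta$ be the random index and $X$ the observation; the standard Fano bound for the average error under a uniform prior gives, for the $A$‑valued estimator $\psi(\hat G)$, $\frac{1}{|A|}\sum_{G\in A}\P_{\rho,G}[\psi(\hat G)\neq G]\geq 1-\frac{I(\theta;X)+\log 2}{\log|A|}$. Using $I(\theta;X)=\frac{1}{|A|}\sum_{G\in A}KL(\P_{\rho,G},\bar\P)$ with $\bar\P$ the mixture, together with the fact that the mixture minimises the average KL, gives $I(\theta;X)\leq \frac{1}{|A|}\sum_{G\in A}KL(\P_{\rho,G},\P_{\rho,G^{(0)}})$. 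Combining these with $\log 2\leq 1$ and the display from the reduction step, then taking the infimum over $\hat G$, yields the claimed inequality.

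The content here is essentially bookkeeping around two classical facts (Fano with a uniform prior and the bound $I\leq$ average KL to an arbitrary reference), so the only point needing genuine care is the pseudometric/triangle‑inequality property of $err$ — it is what lets us absorb the factor $2$ correctly and, crucially, invoke the \emph{global} separation $\beta$ over all of $\mathcal{P}_{\alpha}$ rather than a separation internal to $A$.
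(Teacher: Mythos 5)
Your proof is correct and follows essentially the same route as the paper's: reduce to testing over $A$ via the nearest element $\psi(\hat G)=\argmin_{G\in A}err(\hat G,G)$, use the triangle inequality for $err$ to get $\1_{\psi(\hat G)\neq G}\cdot\beta\leq 2\,err(\hat G,G)$, take expectations, and apply Fano with reference measure $\P_{\rho,G^{(0)}}$. The only cosmetic differences are that you derive the reference-measure form of Fano from the mutual-information version (with $\log 2\leq 1$) whereas the paper cites it directly, and that you spell out why $err$ is a pseudometric, which the paper asserts without proof.
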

The second lemma is a reduction lemma, which plays the same role as Lemma \ref{lem:reduction} in the proof of Theorem \ref{thm:lowerboundexact}. We prove it in Section \ref{prf:reduction2}. 
\begin{lem}\label{lem:reduction2}
    Suppose that there exists a probability distribution $\rho$ on $\R^{p\times K}$ and a numerical constant $C>0$ satisfying $$\inf_{\hat{G}}\sup_{G\in\mathcal{P}_{\alpha}}\E_{\rho,G}\cro{err(\hat{G},G)}-\rho(\R^{p\times K}\setminus \Theta_{\Bar{\Delta}})\geq C\enspace.$$ Then, we have $$\inf_{\hat{G}}\sup_{\mu\in\Theta_{\Bar{\Delta}}}\sup_{G\in\mathcal{P}_{\alpha}}\E_{\mu,G}\cro{err(\hat{G},G)}\geq C\enspace.$$
\end{lem}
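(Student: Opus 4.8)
The plan is the standard reduction from a Bayes-risk lower bound — which, through Fano's inequality as packaged in Lemma~\ref{lem:Fano2}, is the convenient object to control — to a minimax lower bound over the deterministic class $\Theta_{\bar{\Delta}}$. First I would fix an arbitrary estimator $\hat G$ and an arbitrary partition $G\in\mathcal{P}_{\alpha}$, write $\P_{\rho,G}=\int\P_{\mu,G}\,d\rho(\mu)$, and split the resulting integral of $\E_{\mu,G}[err(\hat G,G)]$ over $\mu$ according to whether $\mu\in\Theta_{\bar{\Delta}}$ or not. On $\{\mu\in\Theta_{\bar{\Delta}}\}$ I would bound the integrand by $\sup_{\mu\in\Theta_{\bar{\Delta}}}\sup_{G'\in\mathcal{P}_{\alpha}}\E_{\mu,G'}[err(\hat G,G')]$; on the complement I would use that $err$ is $[0,1]$-valued, so that contribution is at most $\rho(\R^{p\times K}\setminus\Theta_{\bar{\Delta}})$. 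This gives, for every $\hat G$ and every $G\in\mathcal{P}_{\alpha}$,
$$\E_{\rho,G}\cro{err(\hat G,G)}\ \leq\ \sup_{\mu\in\Theta_{\bar{\Delta}}}\ \sup_{G'\in\mathcal{P}_{\alpha}}\E_{\mu,G'}\cro{err(\hat G,G')}\ +\ \rho\pa{\R^{p\times K}\setminus\Theta_{\bar{\Delta}}}\,.$$

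The second step is to optimize in the right order: since the displayed inequality holds for each fixed $G$, take $\sup_{G\in\mathcal{P}_{\alpha}}$ on the left; since it then holds for each fixed $\hat G$, take $\inf_{\hat G}$ on both sides, noting that the additive term $\rho(\R^{p\times K}\setminus\Theta_{\bar{\Delta}})$ is independent of $\hat G$ and hence passes through the infimum. This yields
$$\inf_{\hat G}\sup_{G\in\mathcal{P}_{\alpha}}\E_{\rho,G}\cro{err(\hat G,G)}\ -\ \rho\pa{\R^{p\times K}\setminus\Theta_{\bar{\Delta}}}\ \leq\ \inf_{\hat G}\sup_{\mu\in\Theta_{\bar{\Delta}}}\sup_{G'\in\mathcal{P}_{\alpha}}\E_{\mu,G'}\cro{err(\hat G,G')}\,,$$
and the left-hand side is $\geq C$ by the hypothesis of the lemma, which is precisely the asserted bound.

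There is no genuine obstacle here. The only points deserving a line of care are the order in which $\sup_{G}$ and $\inf_{\hat G}$ are applied — legitimate only because the pointwise inequality above is established \emph{before} any optimization — and the harmless remark that $err\in[0,1]$, which is what guarantees that the prior mass $\rho$ places outside $\Theta_{\bar{\Delta}}$ costs no more than $\rho(\R^{p\times K}\setminus\Theta_{\bar{\Delta}})$. In the eventual application of Theorem~\ref{thm:lowerboundpartial} one then combines this lemma with Lemma~\ref{lem:Fano2}: pick a Gaussian-type prior $\rho$ on the means that is concentrated (up to a negligible mass) on $\Theta_{\bar{\Delta}}$, choose a well-separated family $A\subset\mathcal{P}_{\alpha}$ of partitions with $\log|A|$ large, and control the pairwise divergences $KL(\P_{\rho,G},\P_{\rho,G^{(0)}})$ to make the Fano bound non-degenerate.
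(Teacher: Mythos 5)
Your proposal is correct and follows essentially the same route as the paper: both decompose $\P_{\rho,G}$ as a mixture, split the integral over $\Theta_{\bar{\Delta}}$ versus its complement, use that $err\in[0,1]$ to bound the mass outside $\Theta_{\bar{\Delta}}$ by $\rho(\R^{p\times K}\setminus\Theta_{\bar{\Delta}})$, and only then optimize over $G$ and $\hat G$ in the legitimate order. The paper phrases it as an extraction argument (for each $\hat G$ pick a bad $G$, then a bad $\mu\in\Theta_{\bar{\Delta}}$) while you state the clean pointwise inequality first, but the content is identical.
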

Finally, the third lemma helps us choose the set $A$ of partitions to whom we will apply Lemma \ref{lem:Fano2}. We prove it in Section \ref{prf:numberpartitions}. We define $\overline{G}$ the partition of $[1,n]$ defined by; $i\in \overline{G}_{k}$ if and only if $i\equiv k\enspace [K]$. It is clear that $\overline{G}\in\mathcal{P}_{\frac{3}{2}}$.

\begin{lem}\label{lem:numberpartitions}
    We suppose that the constant $K_0$ such that $K\geq K_{0}$ is large enough and that $n\geq 2K$. There exists $S\subset\mathcal{P}_{\frac{3}{2}}$ which satisfies:
    \begin{itemize}
        \item There exists a numerical constant $c'>0$ such that $\log|S|\geq c''n\log(K)$,
        \item There exists a numerical constant $a>0$ such that, for $G\neq G'\in S$, $err(G,G')\geq a$,
        \item For all $k\in [1,K]$, for all $G\in S$, $|G_k|=|\overline{G}_{k}|$.
    \end{itemize}
\end{lem}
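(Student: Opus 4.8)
The plan is a packing (Gilbert--Varshamov-type) argument inside the finite family $\mathcal{P}_0$ of \emph{labelled} partitions of $[1,n]$ whose $k$-th block has size $n_k:=|\overline{G}_k|$ for every $k\in[1,K]$. Since $n\geq 2K$ we have $n_k\in\{\lfloor n/K\rfloor,\lceil n/K\rceil\}$ with $\lfloor n/K\rfloor\geq2$, so $\max_k n_k/\min_k n_k\leq3/2$; hence $\mathcal{P}_0\subseteq\mathcal{P}_{3/2}$ and the third bullet point holds automatically for every element of $\mathcal{P}_0$. Let $S$ be a maximal subset of $\mathcal{P}_0$ with $err(G,G')\geq a$ for all distinct $G,G'\in S$, where $a>0$ is a small numerical constant to be fixed at the end. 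The second bullet holds by construction, and by maximality every $H\in\mathcal{P}_0$ satisfies $err(H,G)<a$ for some $G\in S$; writing $B(G,a):=\{H\in\mathcal{P}_0:\ err(H,G)<a\}$ this gives $\mathcal{P}_0=\bigcup_{G\in S}B(G,a)$, and therefore
\[
|S|\ \geq\ \frac{|\mathcal{P}_0|}{\max_{G\in\mathcal{P}_0}|B(G,a)|}\enspace.
\]
It remains to lower bound the numerator and upper bound the denominator.

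For the numerator, $|\mathcal{P}_0|=\binom{n}{n_1,\ldots,n_K}$ is the largest multinomial coefficient over all compositions of $n$ into $K$ parts, hence at least $K^n$ divided by the number $\binom{n+K-1}{K-1}$ of such compositions; bounding $\binom{n+K-1}{K-1}\leq(2en/(K-1))^{K-1}$ and using $x^{-1}\log x\leq e^{-1}$ (with $x=n/K$) shows this count is $e^{O(n)}$, so $\log|\mathcal{P}_0|\geq n\log K-O(n)$ (a direct Stirling expansion of $\binom{n}{n_1,\ldots,n_K}$ gives the same, the error being $O(n)$, not $O(n\log K)$, again because $n\geq 2K$). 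In particular $\log|\mathcal{P}_0|\geq\tfrac34\,n\log K$ once $K\geq K_0$ with $K_0$ a large enough numerical constant.

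For the denominator, fix $G\in\mathcal{P}_0$ and $H\in B(G,a)$. A direct computation gives $\sum_k|G_k\Delta H_{\pi(k)}|=2M_\pi$, where $M_\pi$ is the number of points $i$ whose $H$-label differs from $\pi$ applied to its $G$-label; hence $err(H,G)<a$ produces a permutation $\pi\in\mathcal{S}_K$ with fewer than $na$ ``misplaced'' points. Since $G$ is fixed, the data $\big(\pi,\ \text{the misplaced set},\ \text{the $H$-labels of the misplaced points}\big)$ determine $H$, so
\[
|B(G,a)|\ \leq\ K!\cdot\Big(\textstyle\sum_{j\leq na}\binom nj\Big)\cdot K^{na}\ \leq\ K!\,(e/a)^{na+1}K^{na}\enspace,
\]
whence $\log|B(G,a)|\leq K\log K+na\log K+na\log(e/a)+\log(e/a)$. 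Using $K\log K\leq\tfrac12 n\log K$ (from $n\geq 2K$) and choosing $a$ small enough that $a\log K+a\log(e/a)\leq\tfrac1{10}\log K$ for all $K\geq K_0$ (possible since $\log K\geq\log K_0$), we get $\log|B(G,a)|\leq\big(\tfrac12+\tfrac1{10}\big)n\log K+O(1)$. Combining with the previous paragraph, $\log|S|\geq\log|\mathcal{P}_0|-\log|B(G,a)|\geq\tfrac{3}{20}n\log K-O(1)\geq c''\,n\log K$ for a numerical constant $c''>0$ (and all $n\geq 2K\geq 2K_0$), which is the first bullet.

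I expect the only genuine difficulty to be the tension in the last step: the factor $K!$, which encodes the unknown label-matching permutation between $H$ and $G$, contributes up to $\tfrac12 n\log K$, a constant fraction of $\log|\mathcal{P}_0|\sim n\log K$, so the argument closes only because $n\geq 2K$ supplies exactly the slack $K\log K\leq\tfrac12 n\log K$. Relatedly, one must be careful that the ``deficit'' $\log(K^n/\binom{n}{n_1,\ldots,n_K})$ of the balanced multinomial coefficient is of order $O(n)$ rather than $\Theta(n\log K)$ — a point where the naive bound $\binom{n+K-1}{K-1}\le(n+1)^{K-1}$ is too lossy and must be replaced, as above.
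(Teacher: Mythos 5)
Your proof is correct, and it reaches the conclusion by a genuinely different route in the one step that actually requires an idea. Both arguments share the same skeleton (take a maximal $a$-separated family $S$ inside an ambient set of balanced partitions and use the covering bound $|S|\geq |\text{ambient}|/\max_G|B(G,a)|$), but they diverge on how the unknown label permutation inside $err$ is tamed when counting the ball $B(G,a)$. The paper shrinks the ambient family to the set $V$ of partitions that agree with $\overline{G}$ on half of each block: this pinning forces any non-identity $\pi$ to pay a cost of order $\lfloor m/2\rfloor$ per mislabelled block, so $err(G,G')$ is lower bounded by the identity-permutation disagreement and the ball count carries no permutation factor at all — at the price of reducing $\log|\text{ambient}|$ from $\approx n\log K$ to $\approx \tfrac{n}{2}\log K$. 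You instead keep the full family of balanced labelled partitions ($\log|\mathcal{P}_0|\geq n\log K-O(n)$, where your replacement of the naive $(n+1)^{K-1}$ count of compositions by $\binom{n+K-1}{K-1}\leq (2en/(K-1))^{K-1}$ is exactly the point that must not be fumbled) and pay the permutation explicitly via a union bound over $\mathcal{S}_K$, absorbing $\log K!\leq K\log K\leq \tfrac{n}{2}\log K$ thanks to $n\geq 2K$. Numerically the two ledgers balance the same way — each sacrifices about half of the $n\log K$ budget, once to the restricted ambient set and once to the $K!$ factor — and your identity $\sum_k|G_k\Delta H_{\pi(k)}|=2M_\pi$ together with the encoding $(\pi,\text{misplaced set},\text{new labels})\mapsto H$ is a clean and somewhat more elementary substitute for the pinning construction. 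Since your $S$ still has all block sizes equal to those of $\overline{G}$, it plugs into the downstream KL computations exactly as the paper's $S$ does.
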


We distinguish two cases. In the first one, we suppose that there exists a numerical constant $c_{1}$, that we will choose small enough, such that $\Bar{\Delta}^{2}\leq c_{1}\log(K)$. In the second one, we will suppose that $c_{1}\log(K)\leq \Bar{\Delta}^{2}\leq c_{2}\sqrt{\frac{p}{n}K\log(K)}$, for $c_{2}$ a numerical constant that we will also choose small enough.

\subsection{Case $\Bar{\Delta}^{2}\leq c_{1}\log(K)$}

In this section, we suppose that $\Bar{\Delta}^{2}\leq c_{1}\log(K)$. We suppose that the constant $c$ such that $p\geq c\log(K)$ is larger than $4/\log(2)$, for having $\frac{p}{4}\log(2)\geq \log(K)$. Our choice of the centers $\mu_{1},\ldots, \mu_{K}$ relies on the following lemma, proved in Section \ref{prf:choicepoints}.

\begin{lem}\label{lem:choicepoints}
    If $\frac{p}{4}\log(2)\geq \log(K)$, there exists $\mu_{1},\ldots ,\mu_{K}$ in $\R^{p}$ such that $\frac{1}{2}\min_{l\neq r}\|\mu_{l}-\mu_{r}\|^2\geq \Bar{\Delta}^{2}$ and $\frac{1}{2}\max_{l\neq r}\|\mu_{l}-\mu_{r}\|^{2}\leq 4\Bar{\Delta}^2$.
\end{lem}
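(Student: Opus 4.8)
The plan is to take the centers on a rescaled binary hypercube, which turns the statement into a coding-theoretic one: it suffices to find $K$ binary vectors $c_1,\dots,c_K\in\{0,1\}^p$ whose pairwise Hamming distances $d_H(c_l,c_r)$ all lie in a window $[\beta p,(1-\beta)p]$ with $(1-\beta)/\beta\leq 4$, e.g. $\beta=1/5$. Indeed, given such a code, set $\mu_{k,i}=+\eps_0$ if $c_{k,i}=1$ and $\mu_{k,i}=-\eps_0$ otherwise, with $\eps_0^2=\bar\Delta^2/(2\beta p)$. Then $\tfrac12\|\mu_l-\mu_r\|^2=2\eps_0^2\,d_H(c_l,c_r)=\tfrac{\bar\Delta^2}{\beta p}\,d_H(c_l,c_r)$, which lies in $[\bar\Delta^2,\tfrac{1-\beta}{\beta}\bar\Delta^2]\subseteq[\bar\Delta^2,4\bar\Delta^2]$. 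So what remains is to build such a code of size $K$ and length $p$, under the sole hypothesis $\log_2 K\leq p/4$, i.e. $\tfrac{p}{4}\log 2\geq\log K$.

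When $K\leq p+1$ there is nothing to do: take the $K$ vertices of a regular simplex in $\R^{K-1}\subseteq\R^p$, all of whose pairwise distances are equal, and rescale so that $\tfrac12\|\mu_l-\mu_r\|^2$ equals $2\bar\Delta^2$ (ratio one). The substantive case is $K>p+1$, where the code has to be compressed into dimension $\Theta(\log K)$; here I would use the probabilistic method with a random linear code. Pick a uniform generator matrix $G\in\mathbb{F}_2^{k\times p}$ with $k=\lceil\log_2 K\rceil$, so that its row span contains at least $K$ codewords. For any fixed nonzero message $v$, the codeword $v^{\top}G$ is uniformly distributed on $\{0,1\}^p$ (its columns being i.i.d. uniform and $v$ nonzero), so its Hamming weight is a $\mathrm{Bin}(p,1/2)$ random variable; and every pairwise distance between codewords is the weight of a nonzero codeword. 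A union bound over the at most $2K$ nonzero codewords then shows that all of their weights lie in $[p/5,4p/5]$ with probability at least $1-2K\,\bigl(\P[\mathrm{Bin}(p,1/2)<p/5]+\P[\mathrm{Bin}(p,1/2)>4p/5]\bigr)$; on that event any $K$ of the codewords form the desired code.

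To close the union bound I would use the sharp binomial tail bound $\P[\mathrm{Bin}(p,1/2)\leq\beta p]\leq 2^{-(1-H(\beta))p}$ (and its symmetric counterpart), where $H$ is the binary entropy; combined with $2K\leq 2^{p/4+1}$ this gives a failure probability at most $2^{\,2-(1-H(1/5)-1/4)p}$. \textbf{This is the delicate point}: since $1-H(1/5)\approx 0.278$ only barely exceeds $1/4$, one cannot afford the looser Hoeffding bound, and the estimate is $<1$ only once $p$ exceeds an absolute constant $p_0$. The finitely many remaining pairs $(p,K)$ with $p\leq p_0$ (for which $K\leq 2^{p/4}\leq 2^{p_0/4}$ is also bounded, and which are not already covered by the simplex construction) have to be handled separately, e.g. by a direct verification. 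Keeping track of these constants — or, alternatively, widening the window at the price of a larger ratio and hence a larger constant in Theorem~\ref{thm:lowerboundpartial} — is the only real work; the rest is the routine rescaling and union bound above.
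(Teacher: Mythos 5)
Your reduction to a binary code with all pairwise Hamming distances in $[p/5,4p/5]$, the rescaling $\eps_0^2=\bar\Delta^2/(2\beta p)$, the simplex construction for $K\leq p+1$, and the random-linear-code computation are all correct. The genuine gap is exactly the one you flag, and it is larger than ``the only real work'' suggests: your union bound closes only for $p\geq p_0$ with $p_0\approx 72$ (even the sharper binomial tail with the $1/\sqrt{p}$ prefactor still leaves $p_0\approx 35$), and the exceptional set $\{(p,K):\,p+1<K\leq 2^{p/4},\ p<p_0\}$ is nonempty and large --- e.g.\ $(p,K)=(20,32)$ or $(24,64)$, and for $p$ near $70$ it contains on the order of $10^5$ values of $K$. ``Direct verification'' for these pairs is not a finite check but a construction problem: for each one you must exhibit $K$ vectors in $\{0,1\}^p$ with all pairwise distances within a factor $4$, and no procedure (explicit codes, a computer search, or a soft argument) is given; monotonicity in $p$ does not help since the target dimension is $p$ itself, and the natural explicit fixes (e.g.\ punctured simplex codes) only reach $K\lesssim 1.6\,p$. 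Your fallback of widening the window is legitimate for the downstream use in Theorem~\ref{thm:lowerboundpartial} (it only degrades the constant $c_1$ there), but it proves a statement with a constant larger than the $4$ in the lemma. So as written the argument establishes the lemma only for $p$ above an unspecified absolute constant.

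For comparison, the paper proves the lemma by taking a maximal subset of $\{\pm\eps\}^p$ with pairwise Hamming distance at least $p/4$: the factor $4$ then comes for free from $d_H\in[p/4,p]$, and the size of the subset is lower-bounded by a covering count, with no randomness and no two-sided concentration. Your route is genuinely different, and in one respect it is the more robust one: a one-sided packing at relative distance $1/4$ is only guaranteed (by Gilbert--Varshamov) to contain $2^{(1-H(1/4))p}\approx 2^{0.19p}$ points, which falls short of the required $2^{p/4}$ --- the paper's own count leans on the inequality $\binom{p}{\lfloor p/2\rfloor}\leq 2^{p/2}$, which does not hold --- whereas your symmetric window around $p/2$ achieves exponent $1-H(1/5)\approx 0.278>1/4$ and therefore does close asymptotically. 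The price you pay for the two-sided window is precisely the small-$p$ regime discussed above; closing it would require either tracking constants much more carefully or supplying explicit algebraic codes for the finitely many remaining $(p,K)$.
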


We consider $\mu=(\mu_{1},\ldots ,\mu_{K})$ in $(\R^{p})^{K}$ given by this lemma. In particular, we have $\mu\in \Theta_{\Bar{\Delta}}$.

We recall that $\overline{G}$ is the partition of $[1,n]$ defined by; $i\in \overline{G}_{k}$ if and only if $i\equiv k\enspace [K]$. Given $G$ taken in the set $S$ defined in Lemma \ref{lem:numberpartitions}, let us compute $KL(\P_{\mu,G},\P_{\mu,\overline{G}})$. We denote $\P_{\mu,G}(Y_i)$ the marginal law of $Y_i$ under the joint law $\P_{\mu,G}$. By independence of all the $Y_i$, we have that 
$$KL(\P_{\mu,G},\P_{\mu,\overline{G}})=\sum_{i=1}^{n}KL(\P_{\mu,G}(Y_i),\P_{\mu,\overline{G}}(Y_i))\enspace.$$
Given $i\in[1,n]$, with $k$ and $l$ such that $i\in G_k\cap\overline{G}_{l}$, we have $KL(\P_{\mu,G}(Y_i),\P_{\mu,\overline{G}}(Y_i))=\frac{\|\mu_{k}-\mu_{l}\|^{2}}{2}$. A fortiori, using Lemma \ref{lem:choicepoints},
$$KL(\P_{\mu,G},\P_{\mu,\overline{G}})\leq 4n \Bar{\Delta}^{2}\leq 4c_{1} n\log(K)\enspace.$$ Applying this, together with $\log|S|\geq c''n\log(K)$ and Lemma \ref{lem:Fano2} with $\rho=\delta_{(\mu_1,\ldots,\mu_k)}$ leads to 
$$\inf_{\hat{G}}\frac{2}{|S|}\sum_{G\in S}\E_{\mu,G}\cro{err(\hat{G},G)}\geq a\pa{1-\frac{1+4c_{1}^{2}n\log(K)}{\log|S|}}\geq a\pa{1-\frac{1+4c_{1}^{2}n\log(K)}{c''n\log(K)}}\enspace.$$
The quantity $a\pa{1-\frac{1+4c_{1}^{2}n\log(K)}{c''n\log(K)}}$ being larger than $\frac{a}{2}$, supposing $c_{1}$ is small enough and $K_{0}$ large enough, there exists a constant $C$ such that $$\inf_{\hat{G}}\frac{2}{|S|}\sum_{G\in S}\E_{\mu,G}\cro{err(\hat{G},G)}\geq C\enspace.$$ A fortiori, since $\mu\in\Theta_{\Bar{\Delta}}$ and $S\subset\mathcal{P}_{\alpha}$, 
$$\inf_{\hat{G}}\sup_{G\in\mathcal{P}_{\alpha}}\sup_{\mu\in\Theta_{\Bar{\Delta}}}\E_{\mu,G}\cro{err(\hat{G},G)}\geq C\enspace.$$
This concludes the proof of the theorem in this case.

\subsection{Case $c_{1}\log(K)\leq \Bar{\Delta}^{2}\leq c_{2}\sqrt{\frac{p}{n}K\log(K)}$}

In this section, we suppose that $c_{1}\log(K)\leq \Bar{\Delta}^{2}\leq c_{2}\sqrt{\frac{p}{n}K\log(K)}$, with $c_{2}$ a numerical constant that we will choose small enough. We still suppose that $K\geq K_0$.

Let us define $\rho$ the uniform distribution on the hypercube $\mathcal{E}=\{-\eps, +\eps\}^{pK}$, where $\eps=\sqrt{\frac{2}{p}}\Bar{\Delta}$. We show in Section \ref{prf:ineqKLpartial} the following lemma, which controls $KL(\P_{\rho, G},\P_{\rho,\overline{G}})$, for $G\in S$.

\begin{lem}\label{lem:ineqKLpartial}
    If $c_{2}$ is small enough with respect to $c_1$ and $c_{1}\log(K)\leq \Bar{\Delta}^{2}\leq c_{2}\sqrt{\frac{p}{n}K\log(K)}$, there exists a numerical constant $c>0$ such that, for all $G\in S$, $KL(\P_{\rho, G},\P_{\rho,\overline{G}})\leq cc_{2}^{2}n\log(K)$.
\end{lem}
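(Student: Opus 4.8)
The goal is to prove Lemma~\ref{lem:ineqKLpartial}: for $G\in S$ and $\rho$ the uniform distribution on $\mathcal{E}=\{-\eps,+\eps\}^{pK}$ with $\eps=\sqrt{2/p}\,\bar\Delta$, one has $KL(\P_{\rho,G},\P_{\rho,\overline{G}})\leq cc_2^2 n\log(K)$ when $c_1\log(K)\leq \bar\Delta^2\leq c_2\sqrt{pK\log(K)/n}$. Here $\P_{\rho,G}$ is a mixture over the random centers, so unlike the previous case we cannot simply split the KL coordinatewise over the $Y_i$; the mixing destroys independence. The plan is to reduce to $\chi^2$-divergence, using the standard bound $KL(P,Q)\leq \log(1+\chi^2(P,Q))\leq \chi^2(P,Q)$, and then exploit the product structure of the noise and the independence of the coordinates of the centers.

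The key computation is that for a Gaussian-noise mixture model $Y=\mu_{(\cdot)}+E$ with centers drawn i.i.d.\ uniform on $\{\pm\eps\}^p$ (independently across the $K$ groups and across the $p$ coordinates), the $\chi^2$-divergence between two planted partitions $G$ and $\overline G$ factorizes over the $p$ coordinates, and within each coordinate factorizes over the groups of the common refinement of $G$ and $\overline G$. Concretely, $1+\chi^2(\P_{\rho,G},\P_{\rho,\overline{G}}) = \prod_{j=1}^p \E_{\mu,\mu'}\big[\exp(\langle \text{stuff}\rangle)\big]$ where $\mu,\mu'$ are two independent copies of the center configuration; because the centers are Rademacher-type and the noise is standard Gaussian, one gets, after integrating out $E_j\sim\mathcal N(0,I_n)$, a quantity of the form $\E_{\mu,\mu'}\exp(\sum_i (\mu_{k_i,j}-\mu'_{k_i,j})(\mu_{\ell_i,j}-\mu'_{\ell_i,j}))$ where $k_i=k^G_i$, $\ell_i=k^{\overline G}_i$. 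Since $|G_k|=|\overline G_k|$ for all $k$ (third bullet of Lemma~\ref{lem:numberpartitions}) and both partitions are balanced, the overlap structure is controlled: the number of $i$ with $k_i=k$, $\ell_i=\ell$ is at most $m^+\approx n/K$. Using $\E\exp(X)\leq \exp(\E X + C\eps^4(\text{count}))$ for bounded increments, or more directly computing the Rademacher MGF, one arrives at $\chi^2 \lesssim \exp(Cn^2\eps^4/(p\cdot(\text{something}))) - 1$ times combinatorial factors, and plugging $\eps^2 = 2\bar\Delta^2/p$ with $\bar\Delta^2\leq c_2\sqrt{pK\log K/n}$ gives $n^2\eps^4/p \asymp n^2\bar\Delta^4/p^3 \cdot p = n^2\bar\Delta^4/p^2 \leq c_2^2 \cdot n\log(K)\cdot K/p\cdot(\dots)$; careful bookkeeping of the $1/K$ factors from balancedness turns this into $\lesssim c_2^2 n\log(K)$.

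More precisely, I would: (i) write $\P_{\rho,G}(y)=\E_\mu\prod_{i,j}\phi(y_{ij}-\mu_{k^G_i,j})$ with $\phi$ the standard normal density, and similarly for $\overline G$; (ii) compute $1+\chi^2 = \int \P_{\rho,G}^2/\P_{\rho,\overline G}$ — but since $\P_{\rho,\overline G}$ in the denominator is itself a mixture, it is cleaner to instead bound $KL(\P_{\rho,G},\P_{\rho,\overline G})$ by first conditioning: use convexity of KL in the second argument is wrong direction, so instead use $KL(\P_{\rho,G},\P_{\rho,\overline G}) \leq \E_{\mu\sim\rho} KL(\P_{\mu,G}, \P_{\rho,\overline G})$ by joint convexity, and then bound $KL(\P_{\mu,G},\P_{\rho,\overline G})\leq \log \E_{\mu'\sim\rho}\exp(-KL\text{-type integrand})$... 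Actually the cleanest route: $\chi^2(\P_{\rho,G}\|\P_{\rho,\overline G})+1 = \E_{\mu,\mu'}\big[ \exp\big(\sum_j \langle X_j(\mu)-\text{ref}, X_j(\mu')-\text{ref}\rangle_{\text{after Gaussian integral}}\big)\big]$ where I expand the square of the mixture in the numerator and the single mixture in the denominator, using that for Gaussians $\int \phi(y-a)\phi(y-b)/\phi(y-c)\,dy = \exp(\langle a-c, b-c\rangle)$; then choosing the reference $c$ to be a center of $\overline G$ and mixing over $\mu'$; (iii) reduce the exponent to a sum over coordinates $j$ and over the cells of the common refinement, each term being a Rademacher chaos that I bound via $\E e^{X}\le e^{X^2 \text{bound}}$ or exact MGF $\cosh$; (iv) plug in the numerology $\eps^2=2\bar\Delta^2/p$, $|G_k|\le m^+\le \alpha n/K$, $\bar\Delta^2\le c_2\sqrt{pK\log K/n}$, and collect. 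The main obstacle is step (ii)–(iii): correctly handling the denominator (a mixture, not a single Gaussian) in the $\chi^2$ computation and getting a clean factorized exponent, together with the combinatorial bookkeeping of how the overlaps between $G$ and $\overline G$ enter — this is where the condition $|G_k|=|\overline G_k|$ and the balancedness are essential to avoid losing extra $K$ factors. Once the exponent is shown to be $O(c_2^2 n\log K)$ and $c_2$ is small, $\log(1+\chi^2)\le \chi^2$ and $\chi^2 = e^{O(c_2^2 n\log K)}-1 \lesssim c_2^2 n\log K$ follows, completing the proof.
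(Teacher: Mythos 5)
Your route through the $\chi^2$-divergence has a genuine gap at exactly the step you flag as ``the main obstacle'': the denominator $\P_{\rho,\overline{G}}$ is itself a mixture over the random centers, and none of the manipulations you sketch actually produce the factorized exponent you want. The standard second-moment identity $1+\chi^2=\E_{\mu,\mu'}\exp(\sum_j\langle\cdot,\cdot\rangle)$ requires a \emph{fixed} Gaussian in the denominator; the natural ways to reduce to that situation are quantitatively fatal here. Replacing the denominator by the pure-noise measure (or lower-bounding the mixture by Jensen, $\E_{\mu''}e^{-\frac12\sum_i\|y_i-\mu''_{\ell_i}\|^2}\geq e^{-\frac12\|y\|^2-np\eps^2/2}$) costs a factor $e^{np\eps^2/2}=e^{n\bar\Delta^2}$, and $n\bar\Delta^2\asymp c_2\sqrt{npK\log K}\gg n\log K$ when $p\geq n$ and $K$ is large, so the resulting bound on $\log(1+\chi^2)$ is useless. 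Likewise the joint-convexity bound $KL(\P_{\rho,G},\P_{\rho,\overline{G}})\leq\E_{\mu}KL(\P_{\mu,G},\P_{\mu,\overline{G}})\asymp n\bar\Delta^2$ is too large for the same reason: the whole point of the Bayesian construction is that the KL between the mixtures is of order $\eps^4$, not $\eps^2$, and your sketch never establishes where that second-order cancellation comes from. (Separately, your closing claim $\chi^2=e^{O(c_2^2n\log K)}-1\lesssim c_2^2n\log K$ is false when the exponent grows with $n$; you would need to go through $KL\leq\log(1+\chi^2)$ instead, which is fine but only once the exponent is actually controlled.)

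The paper sidesteps $\chi^2$ entirely by exploiting a factorization you note is unavailable over the points $Y_i$ but which \emph{does} hold over the $p$ coordinates and the $K$ groups: because the prior on the centers is a product of symmetric two-point laws, the mixture density has the explicit closed form $\frac{d\P_{\rho,G}}{d\P_{0}}(Y)=\prod_{d,k}e^{-m_k\eps^2/2}\cosh\pa{\eps\sum_{i\in G_k}Y_{i,d}}$, so the likelihood ratio between the two planted partitions is an explicit product of ratios of $\cosh$'s. The KL is then a sum of terms $\E\cro{\log\cosh(\cdot)}$, and the two-sided Taylor bound $\frac{x^2}{2}-\frac{x^4}{12}\leq\log\cosh(x)\leq\frac{x^2}{2}$ makes the quadratic contributions of the $G$-term and the $\overline{G}$-term cancel (this is where $|G_k|=|\overline{G}_k|$ enters), leaving a leading term $\asymp p\,\frac{n^2}{K}\eps^4=\frac{4n^2\bar\Delta^4}{pK}\leq 4c_2^2\,n\log(K)$. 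If you want to salvage your approach you would need to reproduce this cancellation inside the $\chi^2$ computation with the mixture denominator intact, which I do not see how to do; the direct $\log\cosh$ computation is the intended (and essentially forced) argument.
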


Combining this lemma with Lemma \ref{lem:Fano2} implies that $$\inf_{\hat{G}}\frac{2}{|S|}\sum_{G\in S}\E_{\rho,G}\cro{err(\hat{G},G)}\geq \min_{G\neq G'\in S}err(G,G')\pa{1-\frac{1+cc_{2}^{2}n\log(K)}{c''n\log(K)}}\enspace.$$ 
By definition of $S$ in Lemma \ref{lem:numberpartitions}, given $G\neq G'\in S$, we have $err(G,G')\geq a$. Hence, $$\inf_{\hat{G}}\frac{2}{|S|}\sum_{G\in S}\E_{\rho,G}\cro{err(\hat{G},G)}\geq a\pa{1-\frac{1+cc_{2}^{2}n\log(K)}{c''n\log(K)}}\enspace.$$
This last quantity is larger than $\frac{a}{2}$ provided $c_{2}$ is small enough and $K_0$ large enough. Hence, since $S\subset \mathcal{P}_{\alpha}$, this implies 
$$\inf_{\hat{G}}\sup_{G\in \mathcal{P}_{\alpha}}\E_{\rho,G}\cro{err(\hat{G},G)}\geq \frac{a}{2}\enspace.$$
In order to apply Lemma \ref{lem:reduction2}, it remains to upper-bound the quantity $\rho(\R^{p\times K}\setminus \theta_{\Bar{\Delta}})$. The next lemma, proved in Section \ref{prf:lowerboundseparation}, provides such an upper-bound.

\begin{lem}\label{lem:lowerboundseparation}
    For all $K\geq 2$, $p>1$ and $\Bar{\Delta}>0$, the probability distribution $\rho$ satisifes $$\rho(\R^{p\times K}\setminus \Theta_{\Bar{\Delta}})\leq \frac{K(K-1)}{2}e^{{-p}/{8}}\enspace.$$
\end{lem}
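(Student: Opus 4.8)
The plan is to reduce the event $\{\mu\notin\Theta_{\bar\Delta}\}$ to a binomial tail estimate by a union bound over pairs of centers. By the definition of $\Theta_{\bar\Delta}$ (and recalling that $\sigma=1$ throughout this proof), a draw $\mu=(\mu_1,\ldots,\mu_K)\sim\rho$ fails to lie in $\Theta_{\bar\Delta}$ exactly when $\Delta<\bar\Delta$, that is, when there exists a pair $l\neq r$ with $\|\mu_l-\mu_r\|^2<2\bar\Delta^2$. Since the $\mu_k$ are i.i.d.\ under $\rho$, a union bound over the $\binom{K}{2}$ pairs gives
$$\rho(\R^{p\times K}\setminus\Theta_{\bar\Delta})\leq\binom{K}{2}\,\P\cro{\|\mu_1-\mu_2\|^2<2\bar\Delta^2},$$
so it suffices to bound the right-hand probability by $e^{-p/8}$.

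Next I would identify the law of $\|\mu_1-\mu_2\|^2$. For each coordinate $j\in[1,p]$ the entries $(\mu_1)_j$ and $(\mu_2)_j$ are independent and uniform on $\{-\eps,+\eps\}$, so $\big((\mu_1)_j-(\mu_2)_j\big)^2$ equals $4\eps^2$ when the two entries disagree (probability $1/2$) and $0$ otherwise. Hence $\|\mu_1-\mu_2\|^2=4\eps^2 N$, where $N\sim\mathrm{Binomial}(p,1/2)$ counts the coordinates on which $\mu_1$ and $\mu_2$ disagree. Since $\eps^2=\frac{2}{p}\bar\Delta^2$, the event $\{\|\mu_1-\mu_2\|^2<2\bar\Delta^2\}$ is exactly $\{N<p/4\}$.

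Finally, $\E[N]=p/2$, so a standard Hoeffding (or Chernoff) bound for a sum of $p$ independent $\{0,1\}$-valued variables gives $\P[N\leq p/4]=\P[N\leq\E N-p/4]\leq\exp\!\big(-2(p/4)^2/p\big)=e^{-p/8}$, hence $\P[N<p/4]\leq e^{-p/8}$. Plugging this into the union bound yields $\rho(\R^{p\times K}\setminus\Theta_{\bar\Delta})\leq\frac{K(K-1)}{2}e^{-p/8}$, as claimed. There is no genuine obstacle here; the only points needing a little care are the translation between the concentration estimate and the definition of the separation $\Delta$, and noting that the argument holds for every $p>1$ with no largeness assumption on $p$.
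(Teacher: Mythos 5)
Your proof is correct and follows essentially the same route as the paper's: identify $\|\mu_k-\mu_l\|^2/(4\eps^2)$ as a Binomial$(p,1/2)$ count of disagreeing coordinates, apply Hoeffding's inequality to get the $e^{-p/8}$ tail at $p/4$, and conclude with a union bound over the $K(K-1)/2$ pairs. No issues.
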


In particular, if the constant $c$ such that $p\geq c\log(K)$ is large enough, the quantity $\frac{K(K-1)}{2}e^{{-p}/{8}}$ is smaller than ${a}/{4}$. This, together with Lemma \ref{lem:reduction2}, leads to the existence of a numerical constant $C>0$ such that 
$$\inf_{\hat{G}}\sup_{\mu\in\Theta_{\Bar{\Delta}}}\sup_{G\in\mathcal{P}_{\alpha}}\E_{\mu,G}\cro{err(\hat{G},G)}>C\enspace.$$
This concludes the proof of the Theorem \ref{thm:lowerboundpartial}.

\subsection{Proof of Lemma \ref{lem:Fano2}}\label{prf:Fano2}
Lemma \ref{lem:Fano2} is a simple derivation from Fano's Lemma that we recall here (see e.g \cite{HDS2}).
\begin{lem}[Fano's Lemma]\label{lem:fano}
    Let $(\P_{j})_{j\in[1,M]}$ be a set of probability distributions on some set $\mathcal{Y}$. For any probability distribution $\mathbb{Q}$ such that for all $j\in[1,M]$, $\P_{j}<<\mathbb{Q}$, $$\inf_{\hat{J}:\mathcal{Y}\to [1,M]}\frac{1}{M}\sum_{j=1}^{M}\P_{j}\pa{\hat{J}(Y)\neq j}\geq 1-\frac{1+\frac{1}{M}\sum_{j=1}^{M}KL(\P_{j},\mathbb{Q})}{\log(M)}\enspace , $$ where we recall that $KL(\P,\mathbb{Q})=\int \log\pa{\frac{d\P}{d\mathbb{Q}}}d\P$ stands for the Kullback-Leibler divergence between $\P$ and $\mathbb{Q}$. 
\end{lem}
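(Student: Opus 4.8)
The plan is to prove this statement as a standard instance of Fano's lemma, in the sharp form where the mixture of the $\P_{j}$ has been replaced by an arbitrary dominating reference measure $\mathbb{Q}$; it follows by combining the textbook Fano inequality with the variational characterization of the average Kullback--Leibler divergence. First I would introduce a random index: let $J$ be uniform on $[1,M]$ and, conditionally on $\{J=j\}$, let $Y\sim\P_{j}$. For any measurable $\hat{J}\colon\mathcal{Y}\to[1,M]$, set $P_{e}:=\P(\hat{J}(Y)\neq J)=\frac{1}{M}\sum_{j=1}^{M}\P_{j}(\hat{J}(Y)\neq j)$, which is exactly the quantity appearing on the left-hand side of the claimed inequality; it thus suffices to lower-bound $P_{e}$.

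Next I would invoke the classical Fano inequality. Writing $E=\1_{\hat{J}(Y)\neq J}$ and applying the chain rule for entropy to $(E,J)$ conditionally on $Y$, together with the fact that $E$ is a deterministic function of $(J,Y)$, gives $H(J\mid Y)\leq H(E)+H(J\mid E,Y)$. Here $H(E)=h(P_{e})$ with $h(t)=-t\log t-(1-t)\log(1-t)\leq\log 2\leq 1$ the natural-log binary entropy, and $H(J\mid E,Y)\leq P_{e}\log(M-1)$, since on $\{E=0\}$ the value of $J$ equals the $Y$-measurable quantity $\hat{J}(Y)$, while on $\{E=1\}$ the variable $J$ ranges over at most $M-1$ values. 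Using $\log(M-1)\leq\log M$ and $H(J\mid Y)=H(J)-I(J;Y)=\log M-I(J;Y)$, this rearranges to $\log M-I(J;Y)\leq 1+P_{e}\log M$, i.e.
\[
P_{e}\ \geq\ 1-\frac{1+I(J;Y)}{\log M}\enspace .
\]

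It remains to bound the mutual information. One has $I(J;Y)=\frac{1}{M}\sum_{j=1}^{M}KL(\P_{j},\overline{\P})$, where $\overline{\P}=\frac{1}{M}\sum_{j}\P_{j}$ is the mixture. I would then use the elementary identity, valid for any probability measure $\mathbb{Q}$ dominating every $\P_{j}$: splitting $\log\frac{d\P_{j}}{d\mathbb{Q}}=\log\frac{d\P_{j}}{d\overline{\P}}+\log\frac{d\overline{\P}}{d\mathbb{Q}}$ and summing over $j$ yields $\sum_{j}KL(\P_{j},\mathbb{Q})=\sum_{j}KL(\P_{j},\overline{\P})+M\,KL(\overline{\P},\mathbb{Q})\geq\sum_{j}KL(\P_{j},\overline{\P})$, hence $I(J;Y)\leq\frac{1}{M}\sum_{j}KL(\P_{j},\mathbb{Q})$. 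Substituting this into the displayed lower bound on $P_{e}$ gives precisely the inequality of the lemma.

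There is no genuine obstacle here, the statement being classical; the only point requiring any care is tracking the absolute constant so that a plain $1$ (rather than, say, $\log 2+1/e$) appears in the numerator, which is why I would route through the sharp form of Fano's inequality above rather than through the cruder --- but fully self-contained --- argument that applies the data-processing inequality to the indicator maps $y\mapsto\1_{\hat{J}(y)=j}$, the log-sum inequality, and $\sum_{j}\mathbb{Q}(\hat{J}(Y)=j)\leq 1$.
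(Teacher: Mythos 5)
Your proof is correct. Note that the paper does not actually prove this lemma: it is recalled as a classical result with a pointer to \cite{HDS2}, so there is no in-paper argument to compare against. Your derivation is the standard information-theoretic one --- conditional Fano via the chain rule for entropy applied to $(E,J)$ given $Y$, with $H(E)\leq \log 2\leq 1$ and $H(J\mid E,Y)\leq P_{e}\log(M-1)$ --- and the one ingredient beyond the most basic textbook statement, namely replacing the mixture $\overline{\P}=\frac{1}{M}\sum_{j}\P_{j}$ by an arbitrary dominating measure $\mathbb{Q}$, is handled correctly through the compensation identity $\sum_{j}KL(\P_{j},\mathbb{Q})=\sum_{j}KL(\P_{j},\overline{\P})+M\,KL(\overline{\P},\mathbb{Q})\geq M\,I(J;Y)$ (with the degenerate case where some $KL(\P_{j},\mathbb{Q})=+\infty$ being trivial). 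This is precisely the form the paper needs downstream, in the proofs of Lemma~\ref{lem:Fano2} and Theorem~\ref{thm:lowerboundexact}, where the reference measure $\mathbb{Q}=\P_{\rho,G^{(0)}}$ is a fixed hypothesis rather than the uniform mixture.
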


Let $A$ be a subset of $\mathcal{P}_{\alpha}$. Denote $G^{(1)},\ldots ,G^{(|A|)}$ the elements of $A$. Given any estimator $\hat{G}$, we denote $\hat{j}$ an index that minimises $err(\hat{G},G^{(j)})$.

For any $j\in[1,|A|]$, using the definition of $\hat{j}$ together with the fact that the function $err$ satisfies the triangular inequality (for the second inequality), we have 
\begin{align*}
    \min_{i\neq i'}err(G^{(i)},G^{(i')})\1_{\hat{j}\neq j}&\leq err(G^{(\hat{j})},G^{(j)})\\
    &\leq err(\hat{G},G^{(\hat{j})})+err(\hat{G},G^{(j)})\\
    &\leq 2err(\hat{G},G^{(j)})\enspace.
\end{align*}
Applying the expectation to this last inequality, we have, for $j\in[1,|A|]$, 
$$2\E_{\rho,G^{(j)}}\cro{err(\hat{G},G^{(j)})}\geq \min_{i\neq i'}err(G^{(i)},G^{(i')}) \P_{\rho,G^{(j)}}\cro{\hat{j}\neq j}\enspace. $$ 
Summing over $G\in A$ leads to 
$$\frac{2}{|A|}\sum_{G\in A}\E_{\rho,G}\cro{err(\hat{G},G)}\geq \min_{G,G'\in A}err(G,G')\min_{\hat{G}}\frac{1}{|A|}\sum_{G^{(j)}\in A} \P_{\rho,G^{(j)}}\cro{\hat{j}\neq j}.$$
Applying Fano's  Lemma \ref{lem:fano}, we get the sought inequality 
$$\inf_{\hat{G}}\frac{2}{|A|}\sum_{G\in A}\E_{\rho,G}\cro{err(\hat{G},G)}\geq \min_{G,G'\in A}err(G,G')\pa{1-\frac{1+\frac{1}{|A|}\sum_{G\in A}KL(\P_{\rho,G},\P_{\rho,G^{(0)}})}{\log(|A|)}}.$$
This concludes the proof of the Lemma  \ref{lem:Fano2} .

\subsection{Proof of Lemma \ref{lem:ineqKLpartial}}\label{prf:ineqKLpartial}

For $G\in S$, let us compute $KL(\P_{\rho, G},\P_{\rho,\overline{G}})$. We recall that $i\in\overline{G}_k$ if and only if $i\equiv k\enspace[K]$. From Lemma \ref{lem:numberpartitions}, we have that, for $k\in[1,K]$, $|G_{k}|=|\overline{G}_{k}|$. We define $m_k$ this quantity. Given $k,l\in[1,K]$, we write $m_{kl}=|\overline{G}_{k}\cap G_{l}|$. 

We recall the definition $KL(\P_{\rho, G},\P_{\rho,\overline{G}})=\int \log\pa{\frac{d\P_{\rho,G}}{d\P_{\rho,\overline{G}}}}d\P_{\rho,G}$. We write $\P_{0,G}$, or equivalentely $\P_{0,\overline{G}}$, the distribution of $(Y_i)_{i\in[1,n]}$ when $\mu_{1}=\ldots,\mu_{K}=0$ almost surely. Under this distribution, the $Y_i$'s are drawn independently according $\N\pa{0,I_p}$. First, we will compute the quantity 
\begin{equation}\label{eq:vraisemblancepartial}
    \frac{d\P_{\rho,G}}{d\P_{\rho,\overline{G}}}=\frac{\frac{d\P_{\rho,G}}{d\P_{0,G}}}{\frac{d\P_{\rho,\overline{G}}}{d\P_{0,G}}}=\frac{\frac{d\P_{\rho,G}}{d\P_{0,G}}}{\frac{d\P_{\rho,\overline{G}}}{d\P_{0,\overline{G}}}}\enspace,
\end{equation} 
where the second equality comes from the fact that $\P_{0,G}=\P_{0,\overline{G}}$. 
Given a probability distribution  $\P$ on some Euclidean space, which is absolutely continuous with respect to the Lebesgue measure, we write $d\P$ for the density of this distribution with respect to the Lebesgue measure.
  For the numerator in \eqref{eq:vraisemblancepartial}, we have
\begin{align*}
    \frac{d\P_{\rho,G}}{d\P_{0,G}}(Y)=&\frac{\E_{\rho}\cro{d\P_{\mu,G}(Y)}}{\E_{0}\cro{d\P_{\mu,G}(Y)}}\\
    =&\frac{\E_{\rho}\cro{\prod_{k\in[1,K]}\prod_{i\in G_{k}}\exp\pa{-\frac{1}{2}\|Y_{i}-\mu_{k}\|^{2}}}}{\E_{0}\cro{\prod_{k\in[1,K]}\prod_{i\in G_{k}}\exp\pa{-\frac{1}{2}\|Y_{i}-\mu_{k}\|^{2}}}}\\
    =&\frac{\E_{\rho}\cro{\prod_{d\in[1,p]}\prod_{k\in[1,K]}\prod_{i\in G_{k}}\exp\pa{-\frac{1}{2}\pa{Y_{i,d}-\mu_{k,d}}^{2}}}}{\prod_{d\in[1,p]}\prod_{k\in[1,K]}\prod_{i\in G_{k}}\exp\pa{-\frac{1}{2}Y_{i,d}^2}}\enspace.
\end{align*}
Using the independence of the $\mu_{k,d}$'s under the law $\rho$, we get that 
\begin{align*}
    \frac{d\P_{\rho,G}}{d\P_{0,G}}(Y)=&\prod_{d\in[1,p]}\prod_{k\in[1,K]}\frac{\E_{\rho}\cro{\prod_{i\in G_{k}}\exp\pa{-\frac{1}{2}(Y_{i,d}-\mu_{k,d})^{2}}}}{\prod_{i\in G_{k}}\exp\pa{-\frac{1}{2}Y_{i,d}^{2}}}\\
    =&\prod_{d\in[1,p]}\prod_{k\in[1,2]}\E_{\rho}\cro{\prod_{i\in G_{k}}\exp{\pa{-\frac{1}{2}\pa{(Y_{i,d}-\mu_{k,d})^{2}-(Y_{i,d})^{2}}}}}\\
    =&\prod_{d\in[1,p]}\prod_{k\in[1,K]}\E_{\rho}\cro{\prod_{i\in G_{k}}\exp{\pa{Y_{i,d}\mu_{k,d}-\frac{\eps^{2}}{2}}}}\\
    =&\prod_{d\in[1,p]}\prod_{k\in[1,K]}e^{\frac{-m_{k}\eps^{2}}{2}}\cosh{\pa{\sum_{i\in G_{k}}\eps Y_{i,d}}}\enspace.
\end{align*}
Similarly, we have
\begin{equation*}
    \frac{d\P_{\rho,\overline{G}}}{d\P_{0,\overline{G}}}(Y)=\prod_{d\in[1,p]}\prod_{k\in[1,K]}e^{\frac{-m_{k}\eps^{2}}{2}}\cosh{\pa{\sum_{i\in \overline{G}_{k}}\eps Y_{i,d}}}\enspace.
\end{equation*}
Combining these two equalities in \eqref{eq:vraisemblancepartial}, we end up with
\begin{equation}\label{eq:vraisemblance2partial}
    \frac{d\P_{\rho,G}}{d\P_{\rho,\overline{G}}}(Y)=\prod_{d=1}^{p}\prod_{k\in[1,K]}\frac{\cosh{\pa{\sum_{i\in G_{k}}\eps Y_{i,d}}}}{\cosh{\pa{\sum_{i\in \overline{G}_{k}}\eps Y_{i,d}}}}\enspace.
\end{equation}
We denote $\phi$ the standard Gaussian density $\phi(x)=\frac{1}{\sqrt{2\pi}}e^{\frac{-x^{2}}{2}}$. We recall that $m_{k}$ is the size of $G_{k}$ (and also of $|\overline{G}_{k}|$) and $m_{kl}=|\overline{G}_{k}\cap G_{l}|$. Under the law $\P_{\rho, G}$, conditionally on $\mu_{1},\ldots \mu_{K}\sim\rho$, we have that: 
\begin{itemize}
    \item $\sum_{i\in G_{k}}Y_{i,d}\sim \mathcal{N}(m_{k}\mu_{k,d},m_{k})$,
    \item $\sum_{i\in \overline{G}_{k}}Y_{i,d}\sim \mathcal{N}(\sum_{l\in[1,K]}m_{kl}\mu_{l},m_{k})$.
\end{itemize}
Plugging these two points, together with equality \eqref{eq:vraisemblance2partial}, in the definition of the Kullback-Leibler divergence leads to
\begin{align}\nonumber
    \frac{1}{p}KL(\P_{\rho, G},\P_{\rho,\overline{G}})=&\sum_{k\in[1,K]}\E_{\rho}\cro{\int\log\cosh(\eps(m_k \mu_{k,1}+\sqrt{m_k}x))\phi(x)dx}\\
    &-\sum_{k\in[1,K]}\E_{\rho}\cro{\int\log\cosh(\eps(\sum_{l\in[1,K]} m_{kl}\mu_{l,1}+\sqrt{m_k}x))\phi(x)dx}\enspace.\label{eq:KLpartial}
\end{align}
By symmetry, it is sufficient to upper-bound the term corresponding to the first group $\overline{G}_1$ in the sum above which is equal to %$$\E_{\rho}\cro{\int\log\cosh(\eps(m_1 \mu_{1,1}+\sqrt{m_1}x))\phi(x)dx}-\E_{\rho}\cro{\int\log\cosh(\eps(\sum_{l\in[1,K]} m_{1l}\mu_{l,1}+\sqrt{m_1}x))\phi(x)dx}\enspace.$$ We write 
$S_1=S_{11}-S_{12}$, with 
\begin{align*}
    S_{11}&=\E_{\rho}\cro{\int\log\cosh(\eps(m_1 \mu_{1,1}+\sqrt{m_1}x))\phi(x)dx},\\
     \text{and}\quad S_{12}&=\E_{\rho}\cro{\int\log\cosh(\eps(\sum_{l\in[1,K]} m_{1l}\mu_{l,1}+\sqrt{m_1}x))\phi(x)dx}.
\end{align*}
First, we upper-bound the term $S_{11}$. We will use the following inequality 
\begin{equation}\label{eq:Taylor2}
    \frac{x^{2}}{2}-\frac{x^4}{12}\leq  \log\cosh(x)\leq \frac{x^2}{2},\enspace \forall x\in\R\enspace.
\end{equation}
\begin{proof} [Proof of inequality \eqref{eq:Taylor2}]
    Let us first prove the upper-bound $\log\cosh(x)\leq \frac{x^{2}}{2}$. For $x\in\R$, $\cosh(x)=\sum_{t\in\N}\frac{x^{2t}}{(2t)!}\leq \sum_{t\in\N}\frac{x^{2t}}{t!2^{t}}=\exp\pa{\frac{t^{2}}{2}}$. Applying the logarithmic function leads to $\log\cosh(x)\leq \frac{x^2}{2}$.

    Let us now prove the lower-bound $\log\cosh(x)\geq \frac{x^{2}}{2}-\frac{x^{4}}{12}$. To do so, we write, for $x\geq 0$, $f(x)=\tanh(x)$ and $g(x)=x-\frac{x^{3}}{3}$. We have $f'(x)=1-(f(x))^{2}$. Besides, $g'(x)=1-x^{2}\leq 1-(g(x))^{2}$. Together with the fact that $f(0)=g(0)=0$, this implies 
    $$f(x)\geq g(x),\enspace \forall x\geq 0\enspace.$$ 
    Integrating these function leads to $$\log\cosh(x)\geq \frac{x^{2}}{2}-\frac{x^{4}}{12},\enspace \forall x\geq 0\enspace.$$ By parity of these functions, this last inequality is satisfied for all $x\in\R$. 
\end{proof}
Inequality \eqref{eq:Taylor2}, together with the independence of $x$ and $\mu_{1,1}$, imply that 
\begin{equation}
    S_{11}\leq\frac{1}{2} \E_{\rho}\cro{\int(\eps(m_1 \mu_{1,1}+\sqrt{m_1}x))^{2}\phi(x)dx}\leq \frac{1}{2}\eps^{2}\pa{m_1+m_{1}^{2}\eps^{2}}\enspace.
\end{equation}
We arrive at 
\begin{equation}\label{eq:S11}
    S_{11}\leq \frac{1}{2}m_1\eps^{2}(1+m_1 \eps^{2})\enspace.
\end{equation}
Let us now lower-bound $S_{12}$. Inequality \eqref{eq:Taylor2} induces
\begin{align*}
    S_{12}\geq&\frac{1}{2}\E_{\rho}\cro{\int\eps^2\bigg(\sum_{l\in[1,K]} m_{1l}\mu_{l,1}+\sqrt{m_1}x\bigg)^{2}\phi(x)dx}\\
    &-\frac{1}{12}\E_{\rho}\cro{\int\eps^4\bigg(\sum_{l\in[1,K]} m_{1l}\mu_{l,1}+\sqrt{m_1}x\bigg)^{4}\phi(x)dx}\\
    \geq & \frac{1}{2}\eps^{2}\pa{\sum_{l\in [1,K]}m_{1l}^{2}\eps^{2}+m_{1}}
    -\frac{1}{12}\eps^{4}\E_{\rho}\left[\bigg(\sum_{l\in[1,K]}m_{1l}\mu_{l,1}\bigg)^4\right]-\frac{1}{12}\eps^{4}3m_1^{2}\\
    &-\frac{1}{2}\eps^{4}m_1\E_{\rho}[(\sum_{l\in[1,K]}m_{1l}\mu_{l,1})^2]\\
    \geq & \frac{1}{2}\eps^{2}\pa{\sum_{l\in [1,K]}m_{1l}^{2}\eps^{2}+m_{1}}
    -\frac{1}{12}\eps^{8}\pa{\sum_{l\in[1,K]}m_{1l}^{4}+6\pa{\sum_{l\in[1,K]}m_{1l}^{2}}^{2}}-\frac{1}{4}\eps^{4}m_1^{2}\\
    &-\frac{1}{2}\eps^{6}m_1\sum_{l\in[1,K]}m_{1l}^2\enspace.
\end{align*}
We end up with
\begin{equation}\label{eq:S12}
    S_{12}\geq \frac{1}{2}m_1\eps^{2}-\frac{1}{4}m_{1}^{2}\eps^{4}-\frac{1}{2}\eps^{6}m_{1}\sum_{l\in [1,K]}m_{1,l}^{2}-\frac{1}{12}\eps^{8}\pa{\sum_{l\in[1,K]}m_{1l}^{4}+6\pa{\sum_{l\in[1,K]}m_{1l}^{2}}^{2}}\enspace.
\end{equation}
Combining inequalities \eqref{eq:S11} and \eqref{eq:S12}, together with the equality $m_1=\sum_{l\in[1,K]}m_{1l}$, leads to 
\begin{align*}
    S_{1}\leq &\frac{1}{2}m_1\eps^{2}(1+m_1 \eps^{2})-\frac{1}{2}m_1\eps^{2}
    +\frac{1}{4}m_{1}^{2}\eps^{4}+\frac{1}{2}\eps^{6}m_{1}\sum_{l\in [1,K]}m_{1,l}^{2}\\
    &+\frac{1}{12}\eps^{8}\pa{\sum_{l\in[1,K]}m_{1l}^{4}+6\pa{\sum_{l\in[1,K]}m_{1l}^{2}}^{2}}\\
    \leq &\frac{3}{4}m_1^{2}\eps^{4}+\frac{7}{12}\eps^{8}m_{1}^{4}+\frac{1}{2}\eps^{6}m_{1}^{3}\\
    \leq &c_{4}\frac{n^{2}}{K^{2}}\eps^{4}\pa{1+\eps^{2}\frac{n}{K}+\eps^{4}\frac{n^{2}}{K^{2}}}\enspace,
\end{align*}
where $c_4$ is a numerical constant, obtained using the fact that $m_{1}\leq \frac{3}{2}\frac{n}{K}$. Summing over $k\in[1,K]$ in \eqref{eq:KLpartial} leads to 
$$KL(\P_{\rho, G},\P_{\rho,\overline{G}})\leq c_{4}p\frac{n^{2}}{K}\eps^{4}\pa{1+\eps^{2}\frac{n}{K}+\eps^{4}\frac{n^{2}}{K^{2}}}\enspace.$$
From the definition of $\eps$, we have $p\frac{n^{2}}{K}\eps^{4}=4\frac{n^{2}}{Kp}\Bar{\Delta}^{4}$. The hypothesis $\Bar{\Delta}^{2}\leq c_2\sqrt{\frac{p}{n}K\log(K)}$ leads to $p\frac{n}{K}\eps^{4}\leq 4c_{2}^{2}n\log(K)$. Moreover, the hypothesis $c_{1}\log(K)\leq\Bar{\Delta}^{2}\leq c_2\sqrt{\frac{p}{n}K\log(K)}$ implies $\eps^{2}\frac{n}{K}=\frac{2n}{pK}\frac{\Bar{\Delta}^{4}}{\Bar{\Delta}^{2}}\leq 2\frac{c_{2}^{2}}{c_{1}}\leq 1$, if $c_{2}$ is small enough with respect to $c_1$. Thus, there exists a numerical constant $c>0$ such that, when $c_2$ is small enough, 
$$KL(\P_{\rho, G},\P_{\rho,\overline{G}})\leq cc_{2}^2n\log(K)\enspace.$$
This concludes the proof of the lemma.

\subsection{Proof of Lemma \ref{lem:reduction2}}\label{prf:reduction2}
We suppose that there exists a probability distribution $\rho$ on $\R^{P\times K}$ and $C>0$ satisfying 
$$\inf_{\hat{G}}\sup_{G\in\mathcal{P}_{\alpha}}\E_{\rho,G}\cro{err(\hat{G},G)}-\rho(\R^{p\times K}\setminus \Theta_{\Bar{\Delta}})\geq C\enspace.$$
Given an estimator $\hat{G}$, there exists a partition $G\in\mathcal{P}_{\alpha}$ such that $\E_{\rho,G}\cro{err(\hat{G}, G)}-\rho(\R^{p\times K}\setminus \Theta_{\Bar{\Delta}})\geq C$. Since $\E_{\rho,G}\cro{err(\hat{G}, G)}=\int_{\R^{p\times K}}\E_{\mu,G}\cro{err(\hat{G}, G)}d\rho(\mu)$ and $\E_{\mu,G}\cro{err(\hat{G}, G)}$ is upper bounded by $1$, we end up with $\int_{\Theta_{\Bar{\Delta}}}\E_{\mu,G}\cro{err(\hat{G}, G)}d\rho(\mu)\geq C$.

This implies the existence of $\mu\in\Theta_{\Bar{\Delta}}$ such that $\E_{\mu,G}\cro{err(\hat{G}, G)}\geq C$. A fortiori, $$\sup_{\mu\in\Theta_{\Bar{\Delta}}}\sup_{G\in\mathcal{P}_{\alpha}}\E_{\mu,G}\cro{err(\hat{G}, G)}\geq C\enspace .$$ 

This last inequality being true for all estimator $\hat{G}$, we get the sought inequality $$\inf_{\hat{G}}\sup_{\mu\in\Theta_{\Bar{\Delta}}}\sup_{G\in\mathcal{P}_{\alpha}}\E_{\mu,G}\cro{err(\hat{G},G)}\geq C\enspace.$$

\subsection{Proof of Lemma \ref{lem:numberpartitions}}\label{prf:numberpartitions}
We recall that $\overline{G}$ is the partition of $[1,n]$ which is defined by $i\in \overline{G}_{k}$ if and only if $i\equiv k\enspace [K]$. Throughout the proof of this lemma, we denote $m=\lfloor \frac{n}{K} \rfloor$. We define $V\subset \mathcal{P}_{\frac{3}{2}}$, the set of partitions $G$ which satisfy \begin{itemize}
    \item the restriction of $G$ to $[1,K\lfloor \frac{m}{2}\rfloor]\cup [Km+1,n]$ is equal to the restriction of $\overline{G}$ on the same set,
    \item $|G_{k}|=|\overline{G}_{k}|$ for all $k\in[1,K]$.
\end{itemize}
The number of partitions in $V$ is equal to the number of partitions $[K\lfloor \frac{m}{2}\rfloor+1, Km]$ in $K$ groups of size $m-\lfloor \frac{m}{2}\rfloor$. For the $\lceil \frac{K}{2}\rceil$ first groups, we choose $m-\lfloor \frac{m}{2}\rfloor$ elements amongst at least $\lfloor\frac{K}{2}\rfloor(m-\lfloor \frac{m}{2}\rfloor)$ elements. Hence, the number of such partitions is lower-bounded by $\binom{\lfloor\frac{K}{2}\rfloor(m-\lfloor \frac{m}{2}\rfloor)}{m-\lfloor \frac{m}{2}\rfloor}^{\lceil\frac{K}{2}\rceil}$. We arrive at $\log|V|\geq \frac{K}{2}\log\binom{\lfloor\frac{K}{2}\rfloor(m-\lfloor \frac{m}{2}\rfloor)}{m-\lfloor \frac{m}{2}\rfloor}$. Besides, if the constant $K_{0}$ is large enough, we get that, when $K\geq K_0$, $\binom{\lfloor\frac{K}{2}\rfloor(m-\lfloor \frac{m}{2}\rfloor)}{m-\lfloor \frac{m}{2}\rfloor}\geq \pa{\frac{Km}{8}}^{m-\lfloor\frac{m}{2}\rfloor}\frac{1}{\pa{m-\lfloor\frac{m}{2}\rfloor}!}$. Besides, $\log\pa{m-\lfloor\frac{m}{2}\rfloor}!\leq \pa{m-\lfloor\frac{m}{2}\rfloor}\log\pa{m-\lfloor\frac{m}{2}\rfloor}$. We arrive at $\log\pa{\binom{\lfloor\frac{K}{2}\rfloor(m-\lfloor \frac{m}{2}\rfloor)}{m-\lfloor \frac{m}{2}\rfloor}}\geq \frac{m}{2}\pa{\log\pa{\frac{Km}{8}}-\log\pa{m-\lfloor\frac{m}{2}\rfloor}}$. This implies the existence of a numerical constant $c_3>0$ such that, if $K\geq K_0$ with $K_0$ large enough,
\begin{equation}\label{eq:logV}
    \log|V|\geq c_3 n\log(K)\enspace.
\end{equation}
Let $S$ be a maximal subset of $V$ satisfying; for all $G,G'\in S$, the proportion of misclassified points between these two partitions $err(G,G')$ is lower-bounded by $a$, with $a>0$ a numerical constant that we will choose small enough. As a consequence, for all $G\in V$, there exists $G'\in S$ such that $err(G,G')\leq a$. For $G\in S$, we denote $B_{G}\subset V$ the set of partitions $G'\in V$ that satisfy $err(G,G')\leq a$. Then, 
\begin{equation}\label{eq:maximalfamily}
    |S|\geq \frac{|V|}{\max_{G\in S}|B_{G}|}\enspace.
\end{equation}
Given $G\in S$, let us upper-bound $|B_{G}|$. For $G'\in V$, we define $E(G')=\{i\in [1,n],\enspace \exists k\neq l\in[1,K],\enspace i\in G_{k}\cap G'_{l}\}\subset [K\lfloor \frac{m}{2}\rfloor+1, Km]$. We recall the definition 
$$err(G,G')=\min_{\pi\in\mathcal{S}_{K}}\frac{1}{2n}\sum_{k=1}^{K}|G_{k}\Delta G'_{\pi(k)}|\enspace.$$    
For $k\in [1,K]$, if $\pi(k)\neq k$, then $|G_{k}\Delta G'_{\pi(k)}|\geq 2|\lfloor \frac{m}{2}\rfloor|\geq|G'_{k}\cap E(G')|$. If $\pi(k)= k$, then $|G_{k}\Delta G'_{\pi\pa{k}}|\geq |G'_{k}\cap E(G')|$. Plugging this in the definition of $err(G,G')$ leads to 
$$err(G,G')\geq \frac{1}{2n}E(G')\enspace.$$
Hence, if $G'\in B_{G}$, then $E(G')\leq 2an$. For choosing a partition $G'$ such that $E(G')\leq 2an$, we choose $2an$ points amongst $[K\lfloor \frac{m}{2}\rfloor+1,Km]$ points and, for each of these points, we choose the group $G'_{k}$ which will contain it. We arrive at $|\{G'\in V,\enspace E(G')\leq 2an\}|\leq \binom{K(m-\lfloor \frac{m}{2}\rfloor)}{2an}K^{2an}$. We end up with $\log|B_{G}|\leq 2an\log(K)+\log\binom{K(m-\lfloor \frac{m}{2}\rfloor)}{2an}\leq 2an\log(K)+2an\log\pa{\frac{1}{2a}}$.
Combining this last inequality with inequalities \eqref{eq:logV} and \eqref{eq:maximalfamily} leads to 
\begin{equation*}
    \log|S|\geq c_3 n\log(K)-2an\log(K)-2an\log(\frac{1}{2a})\enspace. 
\end{equation*}
If the constant $a$ is small enough and the constant $K_{0}$ large enough, we have, when $K\geq K_{0}$, and a fortiori $n\geq 2K_{0}$, $\frac{c_3}{2}n\log(K)-2an\log(\frac{1}{2a})\geq \frac{c_3 n}{4}\log(K)$. 

Besides, if $a$ is small enough, we also have $\frac{c_3}{2}n\log(K)-2an\log(K)\geq \frac{c_3 n}{4}\log(K)$. This leads to the sought inequality
$$\log|S|\geq \frac{c_{3} n}{2}\log(K)\enspace.$$
For $G\in S$ and $k\in[1,K]$, we have $|G_{k}|=|\overline{G}_{k}|\in [\lfloor \frac{n}{K}\rfloor, \lfloor \frac{n}{K}\rfloor+1]$. Hence, $S\subset \mathcal{P}_{\frac{3}{2}}$. 
By definition of $S$, for $G\neq G'\in S$, we have the lower-bound $err(G,G')\geq a$.
Hence, the set $S$ satisifies all the conditions of Lemma \ref{lem:numberpartitions}. This concludes the proof of the lemma.

\subsection{Proof of Lemma \ref{lem:choicepoints}}\label{prf:choicepoints}
We suppose that $\frac{p}{4}\log(2)\geq \log(K)$ and we want to find vectors $\mu_{1},\ldots ,\mu_{K}$ in $\R^{p}$ such that $\frac{1}{2}\min_{l\neq r}\|\mu_{r}-\mu_{l}\|^{2}\geq \Bar{\Delta}^2$ and $\frac{1}{2}\max_{l\neq r}\|\mu_{l}-\mu_{r}\|^{2}\leq 4\Bar{\Delta}^{2}.$

Denote $\mathcal{H}=\Bar{\Delta} \sqrt{\frac{2}{p}}\{-1,1\}^{p}$. There are $2^{p}$ elements in $\mathcal{H}$. Consider $H$ a maximal subset of $\mathcal{H}$ such that the following holds. For $\theta$ and $\theta'$ two distinct elements of $H$, there exists at least $\frac{p}{4}$ index such that $\theta_{i}\neq \theta'_{i}$.

If $\theta$ and $\theta'$ are two distinct elements of $H$, we have from the definitions of $\mathcal{H}$ and $H$ that $$\Bar{\Delta}^2\leq \frac{1}{2}\|\theta-\theta'\|^{2}\leq 4\Bar{\Delta}^2\enspace.$$
It remains to lower-bound the cardinality of $H$. Given $\theta\in H$, denote $B_{\theta}$ the subset of $\mathcal{H}$ made of the elements $\theta'$ that have at least $\lfloor\frac{3p}{4}\rfloor$ index such that $\theta_{i}=\theta'_{i}$. From the definition of $H$, we deduce that $\mathcal{H}\subset \bigcup_{\theta\in H}B_{\theta}$. Since the $B_{\theta}$'s are all of the same cardinality, we get that, for $\theta\in H$, $|\mathcal{H}|\leq |H||B_{\theta}|$.  

Let us upper-bound $|B_{\theta}|$. Given a fixed set of index of cardinal $\lfloor\frac{3p}{4}\rfloor$, there are at most $2^{\frac{p}{4}}$ points in $\mathcal{H}$ that are equal to $\theta$ on these index. Hence, the cardinal of $B_{\theta}$ is upper-bounded by $\binom{p}{\lfloor \frac{3p}{4}\rfloor}2^{\frac{p}{4}}$. 
Plugging this inequality in $|\mathcal{H}|\leq |H||B_{\theta}|$ leads to $\log(|H|)\geq p\log(2)-\log\binom{p}{\lfloor \frac{3p}{4}\rfloor}-\frac{p}{4}\log(2)$. Using the inequality $\log\binom{p}{\lfloor \frac{3p}{4}\rfloor}\leq \log\binom{p}{\lfloor \frac{p}{2}\rfloor}\leq \frac{p}{2}\log(2)$, we end up with $\log(|H|)\geq \frac{p}{4}\log(2)$. This implies from our hypothesis on $p$ that $|H|\geq K$. Any $K$-tuple of vectors $\mu_{1},\ldots ,\mu_{K}$ taken from $|H|$ satisfies the sought conditions. This concludes the proof of the lemma.

\subsection{Proof of Lemma \ref{lem:lowerboundseparation}}\label{prf:lowerboundseparation}
Let $p>1$ and $K\geq 2$. We lower bound in this section $\min_{k\neq l}\|\mu_{k}-\mu_{l}\|$ when $\mu\sim \rho$. Let $k\neq l\in [1,K]$. Then, $\|\mu_{k}-\mu_{l}\|^{2}=4\eps^{2}\sum_{d=1}^{p}\1_{\mu_{k,d}\neq \mu_{l,d}}$. Hence, $\frac{1}{4\eps^{2}}\|\mu_{k}-\mu_{l}\|^{2}$ is a sum of $p$ independent Bernoulli random variables of parameter $\frac{1}{2}$. In particular, $\frac{1}{4\eps^{2}}\|\mu_{k}-\mu_{l}\|^{2}$ is the sum of $p$ independent random variables of mean $\frac{1}{2}$ and bounded in absolute value by $1$. Using Hoeffding's inequality leads to 
\begin{align*}
    \P\left[\frac{1}{4\eps^{2}}\|\mu_{k}-\mu_{l}\|^{2}\leq \frac{p}{4}\right]&=\P\left[\frac{1}{4\eps^{2}}\|\mu_{k}-\mu_{l}\|^{2}-\E\left[\frac{1}{4\eps^{2}}\|\mu_{k}-\mu_{l}\|^{2}\right]\leq -\frac{p}{4}\right]\\
    &\leq \exp\pa{\frac{-2\pa{\frac{p}{4}}^{2}}{p}}\\
    &\leq \exp\pa{-\frac{p}{8}}\enspace.
\end{align*}
Besides, $\frac{1}{4\eps^{2}}\|\mu_{k}-\mu_{l}\|^{2}=\frac{p}{8\Bar{\Delta}^{2}}\|\mu_{k}-\mu_{l}\|^{2}$. Hence, 
$$\P\left[\frac{1}{2}\|\mu_{k}-\mu_{l}\|^{2}\leq \Bar{\Delta}^{2}\right]\leq \exp\pa{-\frac{p}{8}}\enspace.$$
Using an union bound on the set of pairs $k\neq l\in[1,K]$ leads to 
$$\P[\exists k\neq l\in[1,K],\enspace \frac{1}{2}\|\mu_{k}-\mu_{l}\|^{2}\leq \Bar{\Delta}^{2}]\leq \frac{K(K-1)}{2}\exp\pa{-\frac{p}{8}}\enspace.$$
This concludes the proof of the lemma.

\section{Proof of Theorem \ref{thm:lowerboundexact}}\label{prf:lowerboundexact}
Without loss of generality, we suppose throughout this proof that $\sigma=1$. We suppose that $n\geq n_{0}$, with $n_{0}$ a constant that we will choose large enough. We also suppose that $\alpha\geq \frac{3}{2}$ and $n\geq 9K/2$.

As in Section \ref{prf:lowerboundpartial},  we will distinguish two cases. 
In a first time, we will prove that there exists numerical constants $c_{1}$ and $C$ such that when $\Bar{\Delta}^{2}\leq c_{1}\log(n)$, $$\inf_{\hat{G}}\sup_{\mu\in\Theta_{\Bar{\Delta}}}\sup_{G\in\mathcal{P}_{\alpha}}\P_{\mu,G}(\hat{G}\neq G)>C\enspace.$$
Then, in a second case, we will show that there exists a numerical constant $c_{2}$, such that this also holds when $c_{1}\log(n)\leq \Bar{\Delta}^{2}\leq c_{2}\sqrt{\frac{pK}{n}\log(n)}.$

In the following of this proof, we consider a partition $G^{(0)}\in \mathcal{P}_{\frac{3}{2}}$. In particular, $G^{(0)}\in \mathcal{P}_{\alpha}$. The existence of such a partition is ensured by the hypothesis $n\geq 2K$ (for example, we can take the partition $\overline{G}$ defined in the proof of Theorem \ref{thm:lowerboundpartial} by $i\in \overline{G}_{k}$ if and only if $i\equiv k\enspace [K]$).

\subsection{$\Bar{\Delta}^{2}\leq c_{1}\log(n)$.}
Let us suppose that $\Bar{\Delta}^{2}\leq c_{1}\log(n)$ for $c_{1}$ a positive numerical constant, that we will choose small enough later.

Let $e$ be a unit vector of $\R^{p}$ and define $\mu_{k}=\sqrt{2}k\Bar{\Delta} e$ for $k\in[1,K]$. It is clear that $\mu=\mu_1,\ldots ,\mu_{K}\in\Theta_{\Bar{\Delta}}$.
We will prove  our statement  using Fano's Lemma that is recalled page \pageref{lem:fano}. 
Our goal will be to find different partitions $G^{(1)},...,G^{(M)}\in\mathcal{P}_{\alpha}$, with $M$ as large as possible, such that $KL(\P_{\mu, G^{(r)}},\P_{\mu,G^{(0)}})$ remains small for all $r\in[1,M]$.

Given $k\in[1,K-1]$, $i\in G^{(0)}_{k}$ and $j\in G^{(0)}_{k+1}$, we define the partition $G^{(i,j)}$ as follows. For $l\in[1,K]$ distinct both from $k$ and $k+1$, we take $G^{(i,j)}_{l}=G^{(0)}_{l'}$. Besides, we take $G^{(i,j)}_{k}=\pa{G^{(0)}_{k}\setminus \{i\}}\cup \{j\}$ and $G^{(0)}_{k+1}=\pa{G^{(0)}_{k+1}\setminus \{i\}}\cup \{j\}$. This partition corresponds to the partition $G^{(0)}$ after shifting the points $i$ and $j$. We denote $Sh(G^{(0)})$ the set of all these partitions 
\begin{equation}\label{eq:setpartitions}
    Sh(G^{(0)})=\{G^{(i,j)},\enspace i\in G^{(0)}_{k}, j\in G^{(0)}_{k+1}, \enspace k\in[1,K-1] \}\enspace.
\end{equation}
For $G^{(i,j)}\in Sh(G^{(0)})$, the groups of $G^{(i,j)}$ are of the same size than the groups of $G^{(0)}$. Thus, $G^{(i,j)}\in\mathcal{P}_{\alpha}$. And, all the groups of $G^{(0)}$ are of size at least $3$. This implies that the $G^{(i,j)}$'s are all distinct. 

For $G^{(i,j)}\in Sh(G^{(0)})$, let us compute $KL(\P_{\mu, G^{(i,j)}}, \P_{\mu,G^{(0)}})$. Given $i'\in[1,n]$, we denote $\P_{\mu, G^{(i,j)}}(Y_{i'})$ the marginal law of the vector $Y_{i'}$ under the joint law $\P_{\mu, G^{(i,j)}}$. Using the independence of the $Y_{i'}$'s for $i'\in[1,n]$, we get
\begin{align*}
    KL(\P_{\mu, G^{(i,j)}}, \P_{\mu,G})=&\sum_{i'=1}^{n}KL(\P_{\mu, G^{(i,j)}}(Y_{i'}), \P_{\mu,G^{(0)}}(Y_{i'}))\\
    =& KL\pa{\P_{\mu, G^{(i,j)}}(Y_{i}),\P_{\mu, G^{(0)}}(Y_{i})}+KL\pa{\P_{\mu, G^{(i,j)}}(Y_{j}),\P_{\mu, G^{(0)}}(Y_{j})}\\
    =&KL\pa{\mathcal{N}(\mu_{k+1},I_p),\mathcal{N}(\mu_{k},I_{p})}+KL\pa{\mathcal{N}(\mu_{k},I_p),\mathcal{N}(\mu_{k+1},I_{p})}\\
    =& 2\Bar{\Delta}^2\ 
    \leq\ 2c_{1}^{2} \log(n)\enspace ,
\end{align*}
where we used the property $KL\pa{\mathcal{N}(f,I_p),\mathcal{N}(g,I_{p})}=\frac{\|f-g\|^{2}}{2}$, for all $f,g\in\R^{p}$.

For any estimator $\hat{G}$, we associate $\hat{J}$ the estimator that gives $(i,j)$ if $\hat{G}=G^{(i,j)}$ and $(1,2)$ elsewhere (we can suppose that $G^{(1,2)}\in Sh(G^{(0)})$).  Lemma \ref{lem:fano} implies that, for all estimator $\hat{G}$, the corresponding estimator $\hat{J}$ satisfies 
$$\frac{1}{|Sh(G^{(0)})|}\sum_{G^{(i,j)}\in Sh(G^{(0)})}\P_{\mu,G^{(i,j)}}(\hat{J}\neq (i,j))\geq 1-\frac{1+2c_{1}^{2} \log(n)}{\log(|Sh(G^{(0)})|)}\enspace.$$ 
For all $i\in G_k$, if $j\in G_{k+1}$, then $G^{(i,j)}\in Sh(G^{(0)})$. The groups of $G^{(0)}$ are of size at least $\frac{2n}{3K}$. Hence, since $K\geq 2$ and $n\geq 9K/2$, 
\begin{equation}\label{eq:sizeSh}
    |Sh(G^{(0)})|\geq (K-1)\pa{\frac{2n}{3K}}^{2}\geq \frac{4}{18}\frac{n^{2}}{K}\geq n\enspace.
\end{equation}
We arrive at 
$$\frac{1}{|Sh(G^{(0)})|}\sum_{G^{(i,j)}\in Sh(G^{(0)})}\P_{\mu,G^{(i,j)}}(\hat{J}\neq (i,j))\geq 1-\frac{1+2c_{1}^{2} \log(n)}{\log(n)}\enspace.$$
If $c_{1}$ is small enough and $n_{0}$ large enough, there exists a constant $C>0$ such that, for all integers $n\geq n_{0}$, we have $1-\frac{1+2c_{1}^{2} \log(n)}{\log(n)}\geq C.$ This implies
$$\frac{1}{|Sh(G^{(0)})|}\sum_{G^{(i,j)}\in Sh(G^{(0)})}\P_{\mu,G^{(i,j)}}(\hat{J}\neq (i,j))\geq C\enspace.$$
For any estimator $\hat{G}$ and its corresponding estimator $\hat{J}$, for any $G^{(i,j)}\in Sh(G^{(0)})$, we have $\P_{\mu,G^{(i,j)}}(\hat{G}=G^{(i,j)})\leq  \P_{\mu,G^{(i,j)}}(\hat{J}=(i,j))$. Thus, we arrive at 
$$\frac{1}{|Sh(G^{(0)})|}\sum_{G^{(i,j)}\in Sh(G^{(0)})}\P_{\mu,G^{(i,j)}}(\hat{G}\neq G^{(i,j)})\geq C\enspace .$$
This, with the fact that, for all estimator $\hat{G}$, $$\sup_{\mu\in\theta_{\Bar{\Delta}}}\sup_{G\in\mathcal{P}_{\alpha}}\P_{\mu,G}(\hat{G}\neq G)\geq \frac{1}{|Sh(G^{(0)})|}\sum_{G^{(i,j)}\in Sh(G^{(0)})}\P_{\mu,G^{(i,j)}}(\hat{G}\neq G^{(i,j)})\geq C\enspace.$$ This concludes the proof of the theorem in the regime where $\Bar{\Delta}^{2}\geq c_{1}\log(n)$.

\subsection{$c_{1}\log(n)\leq \Bar{\Delta}^{2}\leq c_{2}\sqrt{\frac{p}{n}K\log(n)}$.}

We suppose that $c_{1}\log(n)\leq \Bar{\Delta}^{2}\leq c_{2}\sqrt{\frac{p}{n}K\log(n)}$, with $c_{1}$ the numerical constant chosen just above and $c_{2}$ another numerical constant that we will choose small enough. Given $\rho$ a probability distribution on $(\R^{p})^{K}$ and a partition $G$ of $[1,n]$ in $K$ groups, we define the probability distribution on $(\R^{p})^{n}$ by $$\P_{\rho,G}(B)=\int \P_{\mu,G}(B)d\rho(\mu)\enspace.$$ The proof of Theorem \ref{thm:lowerboundexact} in this regime uses the following lemma. It is a reduction lemma which plays the same role as Lemma \ref{lem:reduction2} in the proof of Theorem \ref{thm:lowerboundpartial}.

\begin{lem}\label{lem:reduction}
    We suppose that there exists a probability distribution $\rho$ on $(\R^{p})^{K}$ and $a>0$ such that $$\inf_{\hat{G}}\sup_{G\in\mathcal{P}_{\alpha}}\P_{\rho,G}(\hat{G}\neq G)-\rho((\R^{p})^{K}\setminus\Theta_{\Bar{\Delta}})>a\enspace.$$
    Then, we have $$\inf_{\hat{G}}\sup_{\mu\in\Theta_{\Bar{\Delta}}}\sup_{G\in\mathcal{P}_{\alpha}}\P_{\mu,G}(\hat{G}\neq G)>a\enspace.$$
\end{lem}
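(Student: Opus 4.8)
The plan is to reproduce, in the language of the zero--one loss, the Bayes-to-minimax reduction already carried out for Lemma~\ref{lem:reduction2}: a lower bound on a Bayes-averaged risk forces a lower bound on the worst-case risk, once one discounts the (small) portion of the prior $\rho$ sitting outside the admissible set $\Theta_{\Bar{\Delta}}$. The argument is a soft disintegration and proceeds one estimator at a time, by contraposition on $\hat G$.

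First I would fix an arbitrary estimator $\hat G$. Since the constant $\rho((\R^{p})^{K}\setminus\Theta_{\Bar{\Delta}})$ depends neither on $G$ nor on the estimator, the hypothesis can be rewritten as $L:=\inf_{\hat G'}\cro{\sup_{G\in\mathcal{P}_{\alpha}}\P_{\rho,G}(\hat G'\neq G)-\rho((\R^{p})^{K}\setminus\Theta_{\Bar{\Delta}})}>a$, hence in particular $\sup_{G\in\mathcal{P}_{\alpha}}\P_{\rho,G}(\hat G\neq G)-\rho((\R^{p})^{K}\setminus\Theta_{\Bar{\Delta}})\geq L$. Setting $a'':=(L+a)/2\in(a,L)$, there exists a partition $G=G_{\hat G}\in\mathcal{P}_{\alpha}$ with $\P_{\rho,G}(\hat G\neq G)-\rho((\R^{p})^{K}\setminus\Theta_{\Bar{\Delta}})>a''$.

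Next I would disintegrate the Bayes probability along $\rho$:
\[
\P_{\rho,G}(\hat G\neq G)=\int_{(\R^{p})^{K}}\P_{\mu,G}(\hat G\neq G)\,d\rho(\mu)\leq\int_{\Theta_{\Bar{\Delta}}}\P_{\mu,G}(\hat G\neq G)\,d\rho(\mu)+\rho\pa{(\R^{p})^{K}\setminus\Theta_{\Bar{\Delta}}},
\]
where I used $0\leq\P_{\mu,G}(\hat G\neq G)\leq 1$ on the complement of $\Theta_{\Bar{\Delta}}$. Combining the last two displays gives $\int_{\Theta_{\Bar{\Delta}}}\P_{\mu,G}(\hat G\neq G)\,d\rho(\mu)>a''$. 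Because the integrand is nonnegative and $\rho$ is a probability measure, so $\rho(\Theta_{\Bar{\Delta}})\leq1$, this average is bounded above by $\sup_{\mu\in\Theta_{\Bar{\Delta}}}\P_{\mu,G}(\hat G\neq G)$, which is therefore $>a''$. Since $G\in\mathcal{P}_{\alpha}$, this yields $\sup_{\mu\in\Theta_{\Bar{\Delta}}}\sup_{G'\in\mathcal{P}_{\alpha}}\P_{\mu,G'}(\hat G\neq G')>a''$. As the bound $a''>a$ is independent of $\hat G$, taking the infimum over $\hat G$ gives the claimed inequality.

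There is no genuine technical obstacle here: this is a one-page measure-theoretic reduction, exactly parallel to the proof of Lemma~\ref{lem:reduction2}. The only points requiring a little care are (i) pulling the estimator-independent constant $\rho((\R^{p})^{K}\setminus\Theta_{\Bar{\Delta}})$ out of the supremum over $G$, (ii) invoking that $\rho$ has total mass one so that the $\rho$-average of the error over $\Theta_{\Bar{\Delta}}$ is dominated by its supremum there, and (iii) introducing the uniform slack $a''\in(a,L)$ so that the strict inequality survives the final infimum over estimators.
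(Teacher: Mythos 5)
Your proof is correct and follows essentially the same route as the paper's: fix an estimator, pick a near-worst-case partition $G$, disintegrate $\P_{\rho,G}$ over the prior, discard the mass of $\rho$ outside $\Theta_{\Bar{\Delta}}$ using $\P_{\mu,G}(\hat G\neq G)\leq 1$, and bound the remaining average by the supremum over $\mu\in\Theta_{\Bar{\Delta}}$. The only (harmless) difference is your extra slack $a''\in(a,L)$ to track the strict inequalities, which the paper handles more casually.
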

We refer to Section \ref{prf:reduction} for a proof of this lemma. We will consider the same distribution on $(\R^{p})^{K}$ as in Section \ref{prf:lowerboundpartial}. We take $\eps=\sqrt{\frac{2}{p}}\Bar{\Delta}$ and $\rho$ the uniform distribution on the hypercube $\mathcal{E}=\{-\eps,\eps\}^{p\times K}$. 

We will use Fano's Lemma to lower bound $\inf_{\hat{G}}\sup_{G\in\mathcal{P}_{\alpha}}\P_{\rho,G}(\hat{G}\neq G)$. To do so, we need to find many partitions $G^{(1)},\ldots,G^{(M)}\in\mathcal{P}_{\alpha}$, with $M$ large, such that $KL(\P_{\rho,G^{(l)}},\P_{\rho,G^{(0)}})$ remains small. Again,  we use the set of partitions $Sh(G^{(0)})$, defined by \eqref{eq:setpartitions},  for $G^{(0)}\in\mathcal{P}_{\frac{3}{2}}$. 
For such a partition $G\in Sh(G^{(0)})$, the next lemma controls the quantity $$KL(\P_{\rho,G},\P_{\rho,G^{(0)}})=\int\log (\frac{d\P_{\rho,G}}{d\P_{\rho,G^{(0)}}})d\P_{\rho,G}\enspace.$$ We refer to Section \ref{prf:ineqKLexact} for a proof of this lemma. 

\begin{lem}\label{lem:ineqKLexact}
    We suppose that $c_2$ is small enough with respect to $c_1$. Then, there exists a numerical constant $c>0$ such that, for all $G\in Sh(G^{(0)})$, we have the inequality $$KL(\P_{\rho,G},\P_{\rho,G^{(0)}})\leq cc_{2}^{2}\log(n)\enspace.$$
\end{lem}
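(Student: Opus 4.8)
The plan is to mimic the proof of Lemma~\ref{lem:ineqKLpartial}, exploiting that $G=G^{(i,j)}$ and $G^{(0)}$ differ only by swapping the two points $i\in G^{(0)}_k$ and $j\in G^{(0)}_{k+1}$, so that only the groups $k$ and $k+1$ are modified and all group sizes are preserved. Reusing the explicit likelihood-ratio computation carried out in the proof of Lemma~\ref{lem:ineqKLpartial}, all the prefactors $e^{-|G_l|\eps^{2}/2}$ cancel (sizes are unchanged), as do the coordinates of every group $l\notin\{k,k+1\}$, which leaves
\[
\frac{d\P_{\rho,G}}{d\P_{\rho,G^{(0)}}}(Y)=\prod_{d=1}^{p}\frac{\cosh\pa{\eps\sum_{i'\in G_k}Y_{i',d}}\,\cosh\pa{\eps\sum_{i'\in G_{k+1}}Y_{i',d}}}{\cosh\pa{\eps\sum_{i'\in G^{(0)}_k}Y_{i',d}}\,\cosh\pa{\eps\sum_{i'\in G^{(0)}_{k+1}}Y_{i',d}}}\enspace.
\]
Since the coordinates are i.i.d. under $\P_{\rho,G}$, taking $\log$ and then $\E_{\P_{\rho,G}}$ gives $KL(\P_{\rho,G},\P_{\rho,G^{(0)}})=p\,(S_k+S_{k+1})$, where, with $g=\log\cosh$, $S_k=\E_{\P_{\rho,G}}\cro{g(\eps T_{k,1})-g(\eps T_{k,2})}$, $T_{k,1}=\sum_{i'\in G_k}Y_{i',1}$, $T_{k,2}=\sum_{i'\in G^{(0)}_k}Y_{i',1}$ (and symmetrically for $k+1$).

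The crucial structural point is that $T_{k,1}$ and $T_{k,2}$ share a large common summand and differ by one term only: $T_{k,1}=C+Y_{j,1}$ and $T_{k,2}=C+Y_{i,1}$ with $C=\sum_{i'\in G_k\setminus\{j\}}Y_{i',1}$, and, conditionally on $\mu$, $C$ is independent of $(Y_{j,1},Y_{i,1})$, with $C\sim\mathcal{N}\big((m_k-1)\mu_{k,1},m_k-1\big)$, $Y_{j,1}\sim\mathcal{N}(\mu_{k,1},1)$, $Y_{i,1}\sim\mathcal{N}(\mu_{k+1,1},1)$, writing $m_k=|G_k|=|G^{(0)}_k|$. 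I would bound $S_k$ by Taylor expanding $y\mapsto g(\eps(C+y))$ to fourth order at $y=0$ and taking the conditional expectation given $\mu$. Its linear and quadratic coefficients are $\eps\tanh(\eps C)$ and $\eps^{2}\,\mathrm{sech}^{2}(\eps C)$; since $\E[Y_{j,1}\,|\,\mu]=\mu_{k,1}$ and $\E[Y_{i,1}\,|\,\mu]=\mu_{k+1,1}$ differ by at most $2\eps$, while $\E[Y_{j,1}^{2}\,|\,\mu]=\E[Y_{i,1}^{2}\,|\,\mu]=1+\eps^{2}$ \emph{coincide} (because $\mu_{k,1},\mu_{k+1,1}\in\{\pm\eps\}$ have the same square), the quadratic contribution vanishes identically and the linear one equals $\eps(\mu_{k,1}-\mu_{k+1,1})\,\E[\tanh(\eps C)\,|\,\mu]$. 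Here I would invoke the elementary fact that for an odd function $\psi$ with bounded derivative and $Z\sim\mathcal{N}(0,1)$ one has $|\E[\psi(a+bZ)]|\le\|\psi'\|_\infty\,|a|$ — the Gaussian average of an odd function is controlled by its \emph{shift} $|a|$, not by $|a|+b\,\E|Z|$ — so that, the shift of $\eps C$ being $\eps(m_k-1)\mu_{k,1}$ of magnitude $(m_k-1)\eps^{2}$, one gets $|\E[\tanh(\eps C)\,|\,\mu]|\le(m_k-1)\eps^{2}$ and a linear term of order $m_k\eps^{4}$. The cubic coefficient $\eps^{3}\,\mathrm{sech}^{2}(\eps C)\tanh(\eps C)$ is again a Gaussian average of an odd function, bounded in the same way by $O(m_k\eps^{5})$, while the fourth-order Taylor remainder is $O(\eps^{4})$ times a bounded fourth moment of $Y_{j,1},Y_{i,1}$ (recall $|\mu_{k,1}|=\eps\le1$). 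Summing, $|S_k|\lesssim m_k\eps^{4}$.

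Plugging in $m_k\le m^{+}\le\frac{3n}{2K}$ (valid since $G^{(0)}\in\mathcal{P}_{\frac{3}{2}}$) and $\eps^{2}=\frac{2\bar\Delta^{2}}{p}$, one obtains $KL(\P_{\rho,G},\P_{\rho,G^{(0)}})=p\,(S_k+S_{k+1})\lesssim p\,\tfrac{n}{K}\,\eps^{4}\lesssim\frac{n\bar\Delta^{4}}{pK}$, and the hypothesis $\bar\Delta^{2}\le c_2\sqrt{pK\log(n)/n}$ bounds this by $c\,c_2^{2}\log(n)$, which is the claim. I expect the main obstacle to be precisely this gain of a factor $\sim n/K$ over the naive approach: using the bounds $\tfrac{x^{2}}{2}-\tfrac{x^{4}}{12}\le\log\cosh x\le\tfrac{x^{2}}{2}$ exactly as in Lemma~\ref{lem:ineqKLpartial} only yields $|S_k|\lesssim m_k^{2}\eps^{4}$ (the quartic term $\eps^{4}\E[T_{k,2}^{4}]\asymp m_k^{2}\eps^{4}$ being too lossy), hence the weaker bound $\frac{n}{K}\log(n)$; recovering the missing factor needs both the exact cancellation of the quadratic term — specific to the two swapped means having coordinates of equal magnitude $\eps$ — and the sharper odd-function bound on the Gaussian averages arising in the Taylor expansion around the shared part $C$.
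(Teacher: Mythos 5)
Your proof is correct and reaches the paper's bound by the same overall route: the product-of-$\cosh$ likelihood ratio (which the paper derives via the auxiliary prior setting $\mu_k=\mu_{k+1}=0$ so that the unchanged groups cancel --- worth making this step explicit rather than importing it wholesale from Lemma~\ref{lem:ineqKLpartial}, whose normalization is slightly different), reduction to a single coordinate, and a Taylor expansion of $\log\cosh$ exploiting that $G$ and $G^{(0)}$ differ by one swapped point. Where you differ is in the centering of the expansion. The paper expands in the \emph{mean increment} of the swapped point around $u=\eps((m_1-1)\mu_{1,1}+\sqrt{m_1}x)$, kills the linear term for the $G^{(0)}$ group because $\eps\mu_{2,1}$ is independent of $u$ and centered, and for the $G$ group bounds $\E[\tanh(u)h]$ via $|\tanh(y)-y|\le 2y^2$; this yields the leading $(m_1-1)\eps^4$ plus a spurious $m_1^2\eps^6$ term that is absorbed using $\bar\Delta^2\ge c_1\log(n)$ --- which is exactly where the hypothesis ``$c_2$ small with respect to $c_1$'' enters. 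You expand in the \emph{whole swapped observation} around the common part $C$, obtain an exact cancellation of the quadratic term because $\mu_{k,1}^2=\mu_{k+1,1}^2=\eps^2$, and control the linear term with the odd-function Gaussian-average bound $|\E[\tanh(a+bZ)]|\le |a|$ applied with $a=\eps(m_k-1)\mu_{k,1}$. Both mechanisms extract the same $m\eps^4$; yours avoids the $m^2\eps^6$ remainder, so this step of your argument does not actually need the lower bound on $\bar\Delta^2$. Your closing diagnosis --- that the crude bounds $\tfrac{x^2}{2}-\tfrac{x^4}{12}\le\log\cosh(x)\le\tfrac{x^2}{2}$ of Lemma~\ref{lem:ineqKLpartial} would only give $m^2\eps^4$ because of the quartic moment of the full group sum --- is precisely the reason the paper switches to an increment expansion in this lemma.
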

Together with Lemma \ref{lem:fano} applied to the set $Sh(G^{(0)})$, this lemma induces
$$\inf_{\hat{G}}\frac{1}{|Sh(G^{(0)})|}\sum_{G\in Sh(G^{(0)})}\P_{\rho,G^{(0)}}[\hat{G}\neq G^{(0)}]\geq 1-\frac{1+cc_{2}^{2}\log(n)}{\log(|Sh(G^{(0)})|)}\enspace.$$
Since for all estimator $\hat{G}$, the inequality 
$$\frac{1}{|Sh(G^{(0)})|}\sum_{G\in Sh(G^{(0)})}\P_{\rho,G}[\hat{G}\neq G]\leq \sup_{G\in\mathcal{P}_{\alpha}}\P_{\rho,G}[\hat{G}\neq G]$$ is satisfied, we have the following inequality 
$$\inf_{\hat{G}}\sup_{G\in\mathcal{P}_{\alpha}}\P_{\rho,G}[\hat{G}\neq G]\geq 1-\frac{1+cc_{2}^{2}\log(n)}{\log(|Sh(G^{(0)})|)}\enspace.$$
Finally, referring to equation \eqref{eq:sizeSh}, we have $\log(|Sh(G^{(0)})|)\geq \log\pa{n}$. Thus, if we choose $c_{2}$ small enough and if $n_{0}$ large enough, there exists a numerical constant $b>0$ satisfying 
$$\inf_{\hat{G}}\sup_{G\in\mathcal{P}_{\alpha}}\P_{\rho,G}[\hat{G}\neq G]\geq b\enspace.$$
In order to apply Lemma \ref{lem:reduction}, it remains to control $\rho(\R^{p\times K}\setminus \Theta_{\Bar{\Delta}})$. Lemma \ref{lem:lowerboundseparation} states that $\rho(\R^{p\times K}\setminus \Theta_{\Bar{\Delta}})\leq K(K-1)\exp({-p}/{8})$. Moreover, since $c_1 \log(n)\leq c_{2}\sqrt{\frac{p}{n}K\log(n)}$, we have $p\geq \frac{c_{1}^{2}}{c_{2}^{2}}\frac{n}{K}\log(n)$. This implies that $\rho(\R^{p}\setminus \Theta_{\Bar{\Delta}})\leq \frac{b}{2}$, provided $c_{2}$ is small enough with respect to $c_{1}$. Combining this inequality with Lemma \ref{lem:reduction} leads to 
$$\inf_{\hat{G}}\sup_{G\in\mathcal{P}_{\alpha}}\sup_{\mu\in\Theta_{\Bar{\Delta}}}\P_{\rho,\mu}[\hat{G}\neq G]\geq \frac{b}{2}\enspace.$$
This concludes the proof of Theorem \ref{thm:lowerboundexact}.

\subsection{Proof of Lemma \ref{lem:ineqKLexact}}\label{prf:ineqKLexact}

By symmetry, we can suppose that $1\in G^{(0)}_{1}$ and $2\in G^{(0)}_{2}$ and we compute $KL(\P_{\rho,G},\P_{\rho,G^{(0)}})$ for $G=G^{(1,2)}$. We proceed similarly as for the proof of Lemma \ref{lem:ineqKLpartial}, except that here we will use an upper-bound and a lower-bound of increments of the function $\log\cosh$, instead of bounding the function $\log\cosh$ itself. That will allow us to have a sharper bound on $KL(\P_{\rho,G^{(1,2)}},\P_{\rho,G^{(0)}})$. 

In the following, in order to ease the computations, we denote by $\rho'$ the probability distribution on $(\R)^{p\times K}$ that satisfies; if $(\mu_{1},\ldots,\mu_{K})\sim \rho'$, all the $\mu_{i}$'s are independent, $\mu_{1}=\mu_{2}=0$ and all the other $\mu_{i}$'s are drawn uniformly on the set $\{-\eps,+\eps\}^{p}$. We recall that $\eps^{2}=\frac{2}{p}\Bar{\Delta}$ and that $c_1 \log(n)\leq \Bar{\Delta}^2\leq c_2 \sqrt{\frac{p}{n}K\log(n)}$, with $c_2$ that we will choose small enough.

First, we compute the quantity 
\begin{equation}\label{eq:vraisemblance}
    \frac{d\P_{\rho,G^{(1,2)}}}{d\P_{\rho,G^{(0)}}}=\frac{\frac{d\P_{\rho,G^{(1,2)}}}{d\P_{\rho',G^{(1,2)}}}}{\frac{d\P_{\rho,G^{(0)}}}{d\P_{\rho',G^{(1,2)}}}}=\frac{\frac{d\P_{\rho,G^{(1,2)}}}{d\P_{\rho',G^{(1,2)}}}}{\frac{d\P_{\rho,G^{(0)}}}{d\P_{\rho',G^{(0)}}}}\enspace,
\end{equation} 
where the second equality comes from the fact that $\P_{\rho',G^{(0)}}=\P_{\rho',G^{(1,2)}}$. Given a probability distribution  $\P$ on some Euclidean space, which is absolutely continuous with respect to the Lebesgue measure, we write $d\P$ for the density of this distribution with respect to the Lebesgue measure.  For the numerator in \eqref{eq:vraisemblance}, we have
\begin{align*}
    \frac{d\P_{\rho,G^{(1,2)}}}{d\P_{\rho',G^{(1,2)}}}(Y)=&\frac{\E_{\rho}\cro{d\P_{\mu,G^{(1,2)}}(Y)}}{\E_{\rho'}\cro{d\P_{\mu,G^{(1,2)}}(Y)}}\\
    =&\frac{\E_{\rho}\cro{\prod_{k\in[1,K]}\prod_{i\in G^{(1,2)}_{k}}\exp\pa{-\frac{1}{2}\|Y_{i}-\mu_{k}\|^{2}}}}{\E_{\rho'}\cro{\prod_{k\in[1,K]}\prod_{i\in G^{(1,2)}_{k}}\exp\pa{-\frac{1}{2}\|Y_{i}-\mu_{k}\|^{2}}}}\\
    =&\frac{\E_{\rho}\cro{\prod_{d\in[1,p]}\prod_{k\in[1,K]}\prod_{i\in G^{(1,2)}_{k}}\exp\pa{-\frac{1}{2}\pa{Y_{i,d}-\mu_{k,d}}^{2}}}}{\E_{\rho'}\cro{\prod_{d\in[1,p]}\prod_{k\in[1,K]}\prod_{i\in G^{(1,2)}_{k}}\exp\pa{-\frac{1}{2}\pa{Y_{i,d}-\mu_{k,d}}^{2}}}}\enspace.
\end{align*}
Using the independence of the $\mu_{k,d}$'s both for the law $\rho$ and $\rho'$ together with the fact that, when $k>3$, $\mu_{k}$ has the same distribution under $\rho$ and $\rho'$, we get that 
\begin{align*}
    \frac{d\P_{\rho,G^{(1,2)}}}{d\P_{\rho',G^{(1,2)}}}(Y)=&\prod_{d\in[1,p]}\prod_{k\in[1,K]}\frac{\E_{\rho}\cro{\prod_{i\in G^{(1,2)}_{k}}\exp\pa{-\frac{1}{2}(Y_{i,d}-\mu_{k,d})^{2}}}}{\E_{\rho'}\cro{\prod_{i\in G^{(1,2)}_{k}}\exp\pa{-\frac{1}{2}(Y_{i,d}-\mu_{k,d})^{2}}}}\\
    =&\prod_{d\in[1,p]}\prod_{k\in\{1,2\}}\frac{\E_{\rho}\cro{\prod_{i\in G^{(1,2)}_{k}}\exp\pa{-\frac{1}{2}(Y_{i,d}-\mu_{k,d})^{2}}}}{\E_{\rho'}\cro{\prod_{i\in G^{(1,2)}_{k}}\exp\pa{-\frac{1}{2}(Y_{i,d}-\mu_{k,d})^{2}}}}\\
    =&\prod_{d\in[1,p]}\prod_{k\in\{1,2\}}\E_{\rho}\cro{\prod_{i\in G^{(1,2)}_{k}}\exp{\pa{-\frac{1}{2}\pa{(Y_{i,d}-\mu_{k,d})^{2}-(Y_{i,d})^{2}}}}}\\
    =&\prod_{d\in[1,p]}\prod_{k\in\{1,2\}}\E_{\rho}\cro{\prod_{i\in G^{(1,2)}_{k}}\exp{\pa{Y_{i,d}\mu_{k,d}-\frac{\eps^{2}}{2}}}}\\
    =&\prod_{d\in[1,p]}e^{\frac{-|G^{(1,2)}_{1}|\eps^{2}}{2}}\cosh{\pa{\sum_{i\in G^{(1,2)}_{1}}\eps Y_{i,d}}}e^{\frac{-|G^{(1,2)}_{2}|\eps^{2}}{2}}\cosh{\pa{\sum_{i\in G^{(1,2)}_{2}}\eps Y_{i,d}}},
%    &\times \prod_{d\in[1,p]}e^{\frac{-|G^{(1,2)}_{2}|\eps^{2}}{2}}\cosh{\pa{\sum_{i\in G^{(1,2)}_{2}}\eps Y_{i,d}}}\enspace,
\end{align*}
where the third equality comes from the fact that, for all $d\in[1,p]$ and when $i\leq2$, $\mu_{i,d}=0$ almost surely under the law $\rho'$. Similarly, we get that
\begin{align*}
    \frac{d\P_{\rho,G^{(0)}}}{d\P_{\rho',G^{(0)}}}(Y)=&\prod_{d\in[1,p]}e^{\frac{-|G^{(0)}_{1}|\eps^{2}}{2}}\cosh{\pa{\sum_{i\in G^{(0)}_{1}}\eps Y_{i,d}}}e^{\frac{-|G^{(0)}_{2}|\eps^{2}}{2}}\cosh{\pa{\sum_{i\in G^{(0)}_{2}}\eps Y_{i,d}}}.
%    &\times \prod_{d\in[1,p]}e^{\frac{-|G^{(0)}_{2}|\eps^{2}}{2}}\cosh{\pa{\sum_{i\in G^{(0)}_{2}}\eps Y_{i,d}}}\enspace.\\
\end{align*}
Combining these two equalities in \eqref{eq:vraisemblance}, and using the fact that the groups of $G^{(0)}$ and $G^{(1,2)}$ are of the same size, we get
\begin{equation}\label{eq:vraisemblance2}
    \frac{d\P_{\rho,G^{(1,2)}}}{d\P_{\rho,G^{(0)}}}(Y)=\prod_{d=1}^{p}\frac{\cosh{\pa{\sum_{i\in G^{(1,2)}_{1}}\eps Y_{i,d}}}\cosh{\pa{\sum_{i\in G^{(1,2)}_{2}}\eps Y_{i,d}}}}{\cosh{\pa{\sum_{i\in G^{(1,2)}_{1}}\eps Y_{i,d}}}\cosh{\pa{\sum_{i\in G^{(0)}_{2}}\eps Y_{i,d}}}}\enspace.
\end{equation}
Plugging equality \eqref{eq:vraisemblance2} in the definition of the Kullback-Leibler divergence leads to
\begin{align*}
    KL(\P_{\rho, G^{(1,2)}}, \P_{\rho,G^{(0)}})=&\E_{\rho,G^{(1,2)}}\cro{\log\pa{\frac{d\P_{\rho,G^{(1,2)}}}{d\P_{\rho,G^{(0)}}}}}\\
    =&\sum_{d\in[1,p]}\E_{\rho,G^{(1,2)}}\cro{\log\cosh\pa{\eps \sum_{i\in G_1^{(1,2)}}Y_{i,d}}+\log\cosh\pa{\eps \sum_{i\in G^{(1,2)}_{2}}Y_{i,d}}}\\
%    &+\sum_{d\in[1,p]}\E_{\rho,G^{(1,2)}}\cro{\log\cosh\pa{\eps \sum_{i\in G^{(1,2)}_{2}}Y_{i,d}}}\\
    &-\sum_{d\in[1,p]}\E_{\rho,G^{(1,2)}}\cro{\log\cosh\pa{\eps \sum_{i\in G^{(0)}_{1}}Y_{i,d}}+\log\cosh\pa{\eps \sum_{i\in G^{(0)}_{2}}Y_{i,d}}}.%\\
%    &-\sum_{d\in[1,p]}\E_{\rho,G^{(1,2)}}\cro{\log\cosh\pa{\eps \sum_{i\in G^{(0)}_{2}}Y_{i,d}}}\enspace.\\
\end{align*}
We recall that $\phi$ is the standard Gaussian density $\phi(x)=\frac{1}{\sqrt{2\pi}}e^{\frac{-x^{2}}{2}}$. We denote by $m_{1}$ the size of $G^{(0)}_{1}$ and $m_{2}$ the size of $G^{(0)}_{2}$. Under the law $\P_{\rho, G^{(1,2)}}$, conditionally on $\mu_{1},\ldots \mu_{K}\sim\rho$, we have  
\begin{itemize}
    \item $\sum_{i\in G^{(1,2)}_{1}}Y_{i,d}\sim \mathcal{N}(m_{1}\mu_{1,d},m_{1})$,
    \item $\sum_{i\in G^{(1,2)}_{2}}Y_{i,d}\sim \mathcal{N}(m_{2}\mu_{2,d},m_{2})$,
    \item $\sum_{i\in G^{(0)}_{1}}Y_{i,d}\sim \mathcal{N}((m_{1}-1)\mu_{1,d}+\mu_{2,d},m_{1})$,
    \item $\sum_{i\in G^{(0)}_{2}}Y_{i,d}\sim \mathcal{N}((m_{2}-1)\mu_{2,d}+\mu_{1,d},m_{2})$.
\end{itemize}
These four points, together with the fact that the $\mu_{k,d}$'s are identically distributed, lead to
\begin{align}\nonumber
    KL(\P_{\rho, G^{(1,2)}}, \P_{\rho,G^{(0)}})=& p \E_{\rho}\cro{\int \log\cosh(\eps(m_{1}\mu_{1,1}+\sqrt{m_{1}}x))\phi(x)dx}\\\nonumber
    &+ p \E_{\rho}\cro{\int \log\cosh(\eps(m_{2}\mu_{2,1}+\sqrt{m_{2}}x))\phi(x)dx}\\\nonumber
    &-p \E_{\rho}\cro{\int \log\cosh(\eps((m_{1}-1) \mu_{1,1}+\mu_{2,1}+\sqrt{m_{1}}x))\phi(x)dx}\\
    &-p \E_{\rho}\cro{\int \log\cosh(\eps((m_{2}-1) \mu_{2,1}+\mu_{1,1}+\sqrt{m_{2}}x))\phi(x)dx}\enspace.\label{eq:bigKL}
\end{align}
First, let us upper-bound the term $\E_{\rho}\cro{\int \log\cosh(\eps(m_{1}\mu_{1,1}+\sqrt{m_{1}}x))\phi(x)dx}\enspace.$
We denote $u=\eps ((m_{1}-1) \mu_{1,1}+\sqrt{m_{1}}x)$ and $h=\eps \mu_{1,1}$. Then, we have 
$$E_{\rho}\cro{\int \log\cosh(\eps(m_{1}\mu_{1,1}+\sqrt{m_{1}}x))\phi(x)dx}=\E_{\rho}\cro{\int \log\cosh(u+h)\phi(x)dx}\enspace.$$
We will use the Taylor expansion of the function $\log\cosh$ around $u$. We compute the following derivatives: 
\begin{itemize}
    \item For all $x\in \R$, $\log\cosh'(x)=\tanh(x)$,
    \item For all $x\in\R$, $\log\cosh ''(x)=1-\tanh^{2}(x)$ which is bounded by $2$ in absolute value.
\end{itemize}
Hence, Taylor-Lagrange inequality implies 
\begin{equation}\label{eq:Taylor}
    |\log\cosh(x+y)- \log\cosh(x)- \tanh(x)y|\leq y^{2}, \enspace \forall (x,y)\in\R^{2}\enspace.
\end{equation}
Plugging this inequality leads to 
\begin{equation}\label{eq:Taylor_1}
    \E_{\rho}\cro{\int \log\cosh(u+h)\phi(x)dx}\leq \E_{\rho}\cro{\log\cosh(u)}+\E_{\rho}\cro{\int\tanh{(u)}h\phi(x)dx}+ \E_{\rho}(h^{2})\enspace.
\end{equation}
First, since $h^{2}=\eps^{4}$ almost surely, we have $\E_{\rho}(h^{2})=\eps^{4}$. Now, we need to upper bound $\E_{\rho}\cro{\int\tanh{(u)}h\phi(x)dx}$. For any $y\in\R$, we have $\tanh'(y)=1-\tanh^{2}(y)$ and $\tanh''(y)=-2\tanh(y)(1-\tanh(y)^{2})$, which is bounded by $4$ in absolute value. Hence, Taylor-Lagrange inequality taken at $0$ leads to $$ |\tanh(y)-y|\leq 2y^{2}\enspace, \forall y\in\R\enspace.$$
This leads to $\E_{\rho}\cro{\int \tanh{(u)}h\phi(x)dx}\leq \E_{\rho}\cro{\int uh\phi(x)dx}+2\E_{\rho}\cro{\int u^{2}|h|\phi(x)dx}$. On the one hand, 
\begin{align*}
    \E_{\rho}\cro{\int uh\phi(x)dx}=&\eps^{2}\E_{\rho}\cro{\int((m_{1}-1)\mu_{1,1}+\sqrt{m_1}x)\mu_{1,1}\phi(x)dx}\\
    =&\eps^{2}\E_{\rho}\cro{(m_1-1)\mu_{1,1}^{2}}\\
    =&(m_1-1)\eps^{4}\enspace.    
\end{align*}
On the other hand, 
\begin{align*}
    \E_{\rho}\cro{\int u^{2}|h|\phi(x)dx}=&\eps^{4}\E_{\rho}\cro{\int((m_1-1)\mu_{1,1}+\sqrt{m_1}x)^{2}\phi(x)dx}\\
    =&\eps^{4}\pa{(m_1-1)^{2}\eps^{2}+m_1}\enspace.
\end{align*}
Plugging these inequalities in \eqref{eq:Taylor_1} leads to 
\begin{equation}\label{eq:KL_2}
    \E_{\rho}\cro{\int \log\cosh(u+h)\phi(x)dx}\leq \E_{\rho}\cro{\log\cosh(u)}+(m_1-1)\eps^{4}+2\eps^{4}\pa{(m_1-1)^{2}\eps^{2}+m_1}+ \eps^{4}.
\end{equation}
Now, let us lower-bound the term 
$$\E_{\rho}\cro{\int \log\cosh(\eps((m_1-1)\mu_{1,1}+\mu_{2,1}+\sqrt{m_1}x))\phi(x)dx}=\E_{\rho}\cro{\int \log\cosh(u+h')\phi(x)dx},$$
where we define $h'=\eps \mu_{2,1}$, which is independent of $u$. Using inequality \eqref{eq:Taylor} together with the independence of $u$ and $h'$ leads to 
$$\E_{\rho}\cro{\int \log\cosh(u+h')\phi(x)dx}\geq \E_{\rho}\cro{\log\cosh{u}}+\E_{\rho}\cro{\int \tanh{(u)}\phi(x)dx}\E_{\rho}\cro{h'}- \E_{\rho}\cro{h'^{2}}.$$
Since $\E_{\rho}\cro{h'}=0$ and $\E_{\rho}\cro{h'^{2}}=\eps^{4}$, we have 
\begin{equation}\label{eq:KL_1}
    \E_{\rho}\cro{\int \log\cosh(u+h')\phi(x)dx}\geq \E_{\rho}\cro{\log\cosh{u}}-\eps^{4}\enspace.
\end{equation}
Similarly, for the other terms in the equality \eqref{eq:bigKL}, we have, denoting $u_{2}=\eps((m_2-1)\mu_{2,1}+\sqrt{m_2}x)$, $h_{2}=\eps \mu_{2,1}$ and $h_{2}'=\eps \mu_{1,1}$:
\begin{align*}
    \E_{\rho}\cro{\int \log\cosh(u_{2}+h_{2})\phi(x)dx}&\leq \E_{\rho}\cro{\log\cosh(u_{2})}+(m_2-1)\eps^{4}+2\eps^{4}((m_2-1)^{2}\eps^{2}+m_2)+\eps^{4}\\
    \E_{\rho}\cro{\int \log\cosh(u_{2}+h_{2}')\phi(x)dx}&\geq \E_{\rho}\cro{\log\cosh(u_{2})}-\eps^{4}\enspace.
\end{align*}
We denote $m=m_1+m_2$. Plugging these two inequalities, together with inequalities \eqref{eq:KL_1} and \eqref{eq:KL_2}, in equality \eqref{eq:bigKL} leads to 
\begin{align*}
    KL(\P_{\rho, G^{(1,2)}}, \P_{\rho,G^{(0)}})\leq & p\pa{\eps^{4}(3m+2)+\eps^{6}\pa{(m_{1}-1)^{2}+(m_2-1)^{2}}}\\
    \leq & 4pm\eps^{4}(1+m\eps^{2})\enspace.
\end{align*}
Since $\eps^{2}=\frac{2}{p}\Bar{\Delta}^{2}$, we have 
$$KL(\P_{\rho, G^{(1,2)}}, \P_{\rho,G^{(0)}})\leq 16\Bar{\Delta}^{4}\frac{m}{p}(1+\frac{2m}{p}\Bar{\Delta}^{2})\enspace.$$
Besides, $G^{(0)}\in\mathcal{P}_{\frac{3}{2}}$. Thus, all the groups of $G^{(0)}$ are of size at most $\frac{3}{2}\frac{n}{K}$. Hence, $m\leq 3\frac{n}{K}$. We arrive at 
$$KL(\P_{\rho, G^{(1,2)}}, \P_{\rho,G^{(0)}})\leq48\Bar{\Delta}^{4}\frac{n}{Kp}(1+6\frac{n}{Kp}\Bar{\Delta}^{2})\enspace.$$
The hypothesis $c_{1}\log(n)\leq\Bar{\Delta}^{2}\leq c_{2}\sqrt{\frac{p}{n}K\log(n)}$ leads to 
$$\Bar{\Delta}^{4}\frac{n}{Kp}\leq c_{2}^{2}\log(n)\enspace ,$$ and to 
$$\frac{n}{Kp}\Bar{\Delta}^{2}=\frac{n}{Kp}\Bar{\Delta}^{4}\frac{1}{\Bar{\Delta}^{2}}\leq \frac{c_{2}^{2}\log(n)}{c_{1}\log(n)}\leq 1\enspace ,$$ when $c_{2}$ is chosen small enough with respect to $c_{1}$. Thus, there exists a numerical constant $c$ such that 
$$KL(\P_{\rho, G^{(1,2)}}, \P_{\rho,G^{(0)}})\leq c c_{2}^{2}\log(n)\enspace.$$ 
By symmetry, this inequality is satisfied by all partition $G\in Sh(G^{(0)})$. This concludes the proof of the lemma.

\subsection{Proof of Lemma \ref{lem:reduction}}\label{prf:reduction}
Let us suppose that there exists a probability distribution $\rho$ on $(\R^{p})^{K}$ and $a>0$ such that $$\inf_{\hat{G}}\sup_{G\in\mathcal{P}_{\alpha}}\P_{\rho,G}(\hat{G}\neq G)-\rho(\R^{p\times K}\setminus\Theta_{\Bar{\Theta}})\geq a\enspace.$$
Given an estimator $\hat{G}$, the previous hypothesis directly implies that there exists $G\in\mathcal{P}_{\alpha}$ such that $\P_{\rho,G}(\hat{G}\neq G)-\rho(\R^{p\times K}\setminus\Theta_{\Bar{\Theta}})\geq a.$ 

By definition, $\P_{\rho,G}(\hat{G}\neq G)=\int \P_{\mu, G}(\hat{G}\neq G)d\rho(\mu).$ The quantity $\P_{\mu, G}(\hat{G}\neq G)$ being bounded by $1$, we have $\P_{\rho,G}(\hat{G}\neq G)\leq \int_{\Theta_{\Bar{\Theta}}} \P_{\mu, G}(\hat{G}\neq G)d\rho(\mu)+\rho(\R^{p\times K}\setminus \Theta_{\Bar{\Theta}}).$ Therefore, $\int_{\Theta_{\Bar{\Theta}}} \P_{\mu, G}(\hat{G}\neq G)d\rho(\mu)\geq a.$ This implies the existence of $\mu\in\Theta_{\Delta}$ such that $\P_{\mu, G}(\hat{G}\neq G)\geq a$.

This being true for all estimator $\hat{G}$, we get the following inequality that concludes the proof of the lemma $$\inf_{\hat{G}}\sup_{\mu\in\Theta_{\Bar{\Theta}}}\sup_{G\in\mathcal{P}_{\alpha}}\P_{\mu,G}(\hat{G}\neq G)\geq a\enspace.$$

\section{Proof of Corollary \ref{cor: squareKmeans}}\label{prf:squareKmeans}
We suppose $n\geq cK^{2}\log(n)$, with $c>0$ a numerical constant that we will choose large enough, and $p\geq n$. Let $M^{\hat{G}}$ be the estimator of $M^*$ induced by the exact $K$-means estimator $\hat{G}$. Again, we suppose without loss of generality that $\sigma=1$.

We will show that, if $\Bar{\Delta}^{4}\geq c'\frac{pK}{n}\log(n)$, for $c'$ a numerical constant chosen large enough, the conditions of Theorem \ref{thm:error_K_means} will be satisfied. These conditions are a condition of balanceness of the partition $G^*$ and a condition on the separation of the $\mu_k$'s. 

First, Lemma \ref{lem:sizegroups} states that the partition induced by the $k_{i}$'s balanced. We refer to Section \ref{prf:sizegroups} for a proof of this lemma.

\begin{lem}\label{lem:sizegroups}
    We consider the partition $G^{*}$ induced by the $k_{i}$'s by the relation $G_{k}^{*}=\{i\in[1,n], \enspace k_{i}=k\}$. Then, there exists a numerical constant $c>0$, such that if $n\geq c K^{2}\log(n)$, the following holds with probability at least $1-\frac{2}{n^{2}}$. For all $k\in [1,K]$, the size of $G_{k}^{*}$ satisfies $\frac{n}{2K}\leq |G_{k}^{*}|\leq \frac{3n}{2K}$.
\end{lem}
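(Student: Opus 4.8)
The plan is to observe that each group size is a sum of independent indicator variables and to apply a standard Binomial concentration inequality, followed by a union bound over the $K$ groups.

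First I would note that, under the prior of Definition~\ref{def:prior}, the labels $k_1,\ldots,k_n$ are i.i.d.\ uniform on $[1,K]$, so for each fixed $k\in[1,K]$ the size $|G^*_k|=\sum_{i=1}^n \1_{k_i=k}$ is a Binomial$(n,1/K)$ random variable with mean $n/K$. A multiplicative Chernoff bound (applied separately to the upper and lower deviations, with relative deviation $\delta=1/2$) then yields, for some numerical constant $c_0>0$,
\[
\P\!\left[\,\big|\,|G^*_k|-\tfrac{n}{K}\,\big|\geq \tfrac{n}{2K}\,\right]\leq 2\exp\!\left(-c_0\,\tfrac{n}{K}\right)\enspace.
\]

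Next I would take a union bound over $k\in[1,K]$, which gives
\[
\P\!\left[\exists k\in[1,K]:\ |G^*_k|\notin\big[\tfrac{n}{2K},\tfrac{3n}{2K}\big]\right]\leq 2K\exp\!\left(-c_0\,\tfrac{n}{K}\right)\enspace.
\]
Finally, under the hypothesis $n\geq cK^2\log(n)$ we have $n/K\geq cK\log(n)\geq c\log(n)$, and since $\log K\leq \log n$ this forces $2K\exp(-c_0 n/K)\leq 2\exp(\log n - c_0 c\log n)\leq 2/n^2$ provided $c$ is chosen large enough depending only on $c_0$. (The same hypothesis also guarantees $n\geq 2K$, so the interval $[n/(2K),3n/(2K)]$ is non-degenerate.)

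There is no genuine obstacle here; the only point requiring a little care is the constant bookkeeping in the last step, namely ensuring that the $\log K$ loss from the union bound is absorbed by the $n/K$ term in the exponent, which is exactly what the assumption $n\geq cK^2\log n$ buys us once $c$ is taken sufficiently large.
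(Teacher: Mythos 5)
Your proof is correct and follows essentially the same route as the paper's: the group sizes are Binomial$(n,1/K)$, one applies a concentration inequality and then a union bound over the $K$ groups, with the hypothesis $n\geq cK^2\log n$ absorbing the union-bound loss. The only immaterial difference is that you invoke a multiplicative Chernoff bound (exponent of order $n/K$) where the paper uses Hoeffding's inequality (exponent $n/(2K^2)$); both suffice under the stated assumption.
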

Hence, conditionally on the $\mu_{k}$'s, Theorem \ref{thm:error_K_means} implies the existence of a constant $c_{1}>0$ such that, if $\min_{k\neq l} \frac{1}{4}\|\mu_{k}-\mu_{l}\|^{4}\geq c_{1}\frac{pK}{n}\log(n)$, the partition $\hat{G}$ recovers exactly the partition $G^{*}$, with probability larger than $1-\frac{c_{2}}{n^{2}}$, with $c_{2}>0$ a numerical constant. The next lemma shows that this condition on the separation of the $\mu_{k}$'s is satisfied with high probability. We refer to Section \ref{prf:separation} for a proof of this lemma.

\begin{lem}\label{lem:separation}
    We suppose $p\geq n$. There exists numerical constants $c_{3}>0$ and $c_{4}>0$ such that, if $\Bar{\Delta}^{4}\geq c_{3}\frac{pK}{n}\log(n)$, the following holds. With probability at least $1-\frac{c_{4}}{n^{2}}$, the separation between the clusters satisfies $\Delta^{4}=\min_{k\neq l}\frac{1}{4}\|\mu_{k}-\mu_{l}\|^{4}\geq c_{1}\frac{pK}{n}\log(n)$.
\end{lem}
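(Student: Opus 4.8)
The plan is to run a direct Hoeffding-plus-union-bound argument, essentially the one already used for Lemma~\ref{lem:lowerboundseparation}, adapted to the normalization of Definition~\ref{def:prior}. Recall that with $\sigma^{2}=1$ the centers $\mu_{1},\ldots,\mu_{K}$ are i.i.d.\ uniform on $\{-\eps,+\eps\}^{p}$ with $\eps^{2}=\bar\Delta^{2}/p$. Fix $k\neq l$ and set $N_{kl}=\#\{d\in[1,p]:\ \mu_{k,d}\neq\mu_{l,d}\}$, so that $\|\mu_{k}-\mu_{l}\|^{2}=4\eps^{2}N_{kl}$. Since $\mu_{k}$ and $\mu_{l}$ are independent and uniform on the hypercube, $N_{kl}\sim\mathrm{Bin}(p,1/2)$, and the task reduces to lower bounding $\min_{k\neq l}N_{kl}$.

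First I would apply Hoeffding's inequality to this sum of $p$ i.i.d.\ Bernoulli$(1/2)$ variables: $\P[N_{kl}\leq p/4]=\P[N_{kl}-p/2\leq -p/4]\leq e^{-p/8}$. A union bound over the at most $K(K-1)/2$ unordered pairs then gives
\[
\P\cro{\exists\, k\neq l:\ N_{kl}<p/4}\ \leq\ \frac{K(K-1)}{2}\,e^{-p/8}\ .
\]
Using the standing assumptions $p\geq n$ and $n\geq cK^{2}\log(n)$ (hence in particular $K^{2}\leq n$), the right-hand side is at most $\tfrac{n}{2}\,e^{-n/8}$, which is bounded by $c_{4}/n^{2}$ for a suitable numerical constant $c_{4}$, since $n^{3}e^{-n/8}$ is uniformly bounded over $n\geq 1$.

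On the complement of that event, every pair satisfies $\|\mu_{k}-\mu_{l}\|^{2}\geq 4\eps^{2}\cdot p/4=\eps^{2}p=\bar\Delta^{2}$, so that $\Delta^{2}=\min_{k\neq l}\tfrac12\|\mu_{k}-\mu_{l}\|^{2}\geq \bar\Delta^{2}/2$, and therefore $\Delta^{4}=\min_{k\neq l}\tfrac14\|\mu_{k}-\mu_{l}\|^{4}\geq \bar\Delta^{4}/4$. Choosing $c_{3}=4c_{1}$, the hypothesis $\bar\Delta^{4}\geq c_{3}\,\frac{pK}{n}\log(n)$ then yields $\Delta^{4}\geq c_{1}\,\frac{pK}{n}\log(n)$ with probability at least $1-c_{4}/n^{2}$, as claimed.

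I do not expect a genuine obstacle here: the statement is a concentration fact about the Hamming distance between two uniform points of the hypercube, essentially a restatement of Lemma~\ref{lem:lowerboundseparation} in the present normalization. The only points that require care are the factor-of-two discrepancy between the definition of $\eps$ in Definition~\ref{def:prior} and in Lemma~\ref{lem:lowerboundseparation}, and verifying that the union-bound failure probability genuinely decays like $1/n^{2}$ using $p\geq n$ together with $K\lesssim\sqrt{n}$.
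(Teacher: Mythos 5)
Your proof is correct and follows essentially the same route as the paper's: write $\|\mu_k-\mu_l\|^2=4\eps^2 N_{kl}$ with $N_{kl}\sim\mathrm{Bin}(p,1/2)$, apply Hoeffding to get $\P[N_{kl}\leq p/4]\leq e^{-p/8}$, union-bound over the $K(K-1)/2$ pairs, and use $p\geq n$ (with $K\lesssim\sqrt n$ or even just $K\leq n$) to make the failure probability $O(n^{-2})$. Your handling of the normalization $\eps^2=\bar\Delta^2/p$ and the resulting choice $c_3=4c_1$ is in fact slightly more careful than the paper's own write-up, which silently absorbs a factor of two at the same spot.
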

Combining Lemma \ref{lem:sizegroups}, Lemma \ref{lem:separation} together with Theorem \ref{thm:error_K_means} leads to the following statement. If $n\geq cK^{2}\log(n)$, $p\geq n$ and $\Bar{\Delta}^{4}\geq c_{3}\frac{pK}{n}\log(n)$, the partition $\hat{G}$ recovers exactly the partition $G^{*}$ with probability at least $\frac{C}{n^{2}}$, with $C$ a numerical constant. This induces $\P[M^{\hat{G}}\neq M^*]\leq \frac{C}{n^{2}}$, and thus 
$$\E[\|M^{\hat{G}}\neq M^*\|_F^2]\leq \frac{C}{n^{2}}\enspace.$$ 
This concludes the proof of the corollary.

\subsection{Proof of Lemma \ref{lem:sizegroups}}\label{prf:sizegroups}

Let us denote $N_{k}=|\{i\in [1,n],\enspace k_{i}=k\}|$, for $k\in[1,K]$. We prove in this section that, if $n\geq cK^{2}\log(n)$, for $c>0$ a numerical constant that we will choose large enough, then, with probability at least $\frac{2}{n^{2}}$, simultaneously on all $k\in[1,K]$, $|N_{k}-\frac{n}{K}|\leq \frac{n}{2K}$.

Let $k\in[1,K]$. Then, $N_{k}=\sum_{i\in[1,n]}\1_{k_{i}=k}$ is a sum of $n$ independent Bernoulli random variables of parameter $\frac{1}{K}$. Hence, $\E[N_{k}]=\frac{n}{K}$ and Hoeffding's inequality implies
\begin{equation*}
    \P[|N_{k}-\frac{n}{K}|\geq \frac{n}{2K}]\leq 2\exp\pa{-\frac{2\frac{n^{2}}{4K^{2}}}{n}}\leq 2\exp\pa{-\frac{n}{2K^{2}}}\enspace.
\end{equation*}
Moreover, if the numerical constant $c$ such that $n\geq cK^{2}\log(n)$ is large enough, we have $\exp\pa{-\frac{n}{2K^{2}}}\leq \frac{1}{n^{2}K}$. An union bound on $k\in[1,K]$ induces that 
$$\P\cro{\exists k\in[1,K],\enspace |N_{k}-\frac{n}{K}|\geq \frac{n}{2K}}\leq \frac{2}{n^{2}}\enspace.$$
This concludes the proof of the lemma.

\subsection{Proof of Lemma \ref{lem:separation}}\label{prf:separation}
We proceed as for the proof of Lemma \ref{lem:lowerboundseparation} in Section \ref{prf:lowerboundseparation}. Let $k\neq l\in [1,K]$. Then, $\|\mu_{k}-\mu_{l}\|^{2}=4\eps^{2}\sum_{d\in[1,p]}\1_{\mu_{k,d}\neq \mu_{l,d}}$. Hence, $\frac{1}{4\eps^{2}}\|\mu_{k}-\mu_{l}\|^{2}$ is a sum of $p$ independent Bernoulli random variable of parameter $\frac{1}{2}$. Using Hoeffding's inequality leads to 
\begin{equation*}
    \P\cro{\frac{1}{4\eps^{2}}\|\mu_{k}-\mu_{l}\|^{2}\leq \frac{p}{4}}\leq \exp\pa{-\frac{p}{8}}\enspace.
\end{equation*}
Since $p\geq n$, there exists a numerical constant $c_{4}>0$ such that $\exp\pa{-\frac{p}{2}}\leq \frac{c_{4}}{K^{2}n^{2}}$. Using an union bound on the different pairs $k\neq l\in[1,K]$ implies that the following holds with probability at least $1-\frac{c_{4}}{n^{2}}$. For all $k\neq l\in[1,K]$, $\frac{1}{2}\|\mu_{k}-\mu_{l}\|^{2}\geq p\eps^{2}=\Bar{\Delta}^{2}$. This concludes the proof of the lemma.

\section{Hierarichical Clustering with single linkage}\label{sec:hierarchical}

For sake of completeness, we provide in this section an analysis of hierarchical clustering with single linkage in the isotropic Gaussian setup. We recall our setup: for a hidden partition $G^*$ and hidden vectors $\mu_{1},\ldots,\mu_{K}\in\R^p$, the $Y_i$'s are drawn independently and, if $i\in G^*_k$, $Y_i\sim\mathcal{N}\pa{\mu_{k},\sigma^2 I_p}$. 

Let us describe the algorithm considered. We build recursively a sequence of partitions as follows. Initially, we take the partition $G^{0}=\ac{\{1\},\ldots ,\{n\}}$. Then, as long as $G^{(t)}$ has more than $K$ groups, we construct the partition $G^{(t+1)}$  by merging two groups of $G^{(t)}$ with the two closest points. The algorithm stops when the number of groups of the partition $G^{(t)}$ is $K$, which occurs when $t=n-K$. Let us write more precisely this algorithm. We define the linkage function between two subsets $A,B\subset [1,n]$ as $l(A,B)=\min_{(i,j)\in A\times B}\|Y_i-Y_j\|^2$. Here is the hierarchical clustering algorithm considered. 

\medskip
\RestyleAlgo{ruled}
\begin{algorithm}
\caption{Hierarchical Clustering algorithm with single linkage}\label{alg:hier}
\KwData{$Y_{1},\ldots, Y_{n}$}
$t\gets 0$;\\
$G^{(0)} \gets \ac{\{1\},\ldots,\{n\}}$;\\
\While{$t<n-K$}{
    Find $\hat{a},\hat{b}$ minimizing $l\pa{G^{(t)}_{\hat{a}},G^{(t)}_{\hat{b}}}$;\\
    Build $G^{(t+1)}$ by merging the groups $G^{(t)}_{\hat{a}}$ and $G^{(t)}_{\hat{b}}$, the other groups remaining unchanged;\\$t\gets t+1$;\\
    }
\KwResult{The partition $G^{(n-K)}$.}
\end{algorithm}

\medskip

%We formulate more precisely the recursion rule from $G^{(t)}$ to $G^{(t+1)}$, for $t<n-K$. For $A\subset [1,n]$, we denote $Y_A=\frac{1}{A}\sum_{i\in A}Y_i$. We write $G^{(t)}=G^{(t)}_1,\ldots,G^{(t)}_{n-t}$. Let $l_0< k_0\in[1,n-t]$ minimizing $\|Y_{G^{(t)}_l}-Y_{G^{(t)}_k}\|^{2}$. Then, we set $G^{(t+1)}=G^{(t)}_1,\ldots,G^{(t)}_{l_0-1},G^{(t)}_{l_0+1},\ldots,G^{(t)}_{k_{0}-1},G^{(t)}_{k_{0}+1},\\ \ldots, G^{(t)}_{n-t}, G^{(t)}_{k_0}\cup G^{(t)}_{l_0}$.

The next result gives a sufficient condition on the separation $\Delta$ for recovering exactly the partition $G^*$ with high probability using Algorithm \ref{alg:hier}.

\begin{prop}\label{prop:hierarchical}
    There exists numerical constants $c_1$ and $c_2$ such that the following holds. If $\Delta^2\geq c_1\pa{\log(n)+\sqrt{p\log(n)}}$, hierarchical clustering recovers exactly the partition $G^*$ with probability at least $1-\frac{c_2}{n^2}$.
\end{prop}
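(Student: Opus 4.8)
The plan is to isolate a clean deterministic condition under which single-linkage succeeds and then verify it with routine Gaussian and $\chi^2$ concentration. Throughout one may normalize $\sigma=1$, and note that $Y_i-Y_j\sim\mathcal N(\mu_{k^*_i}-\mu_{k^*_j},\,2I_p)$. Set $D:=\max_k\max_{i,j\in G^*_k}\|Y_i-Y_j\|^2$ (the largest within-cluster squared distance) and $s^\star:=\min_{k\neq l}\min_{i\in G^*_k,\,j\in G^*_l}\|Y_i-Y_j\|^2$ (the smallest between-cluster squared distance). The first claim is purely deterministic: if $D<s^\star$, then Algorithm~\ref{alg:hier} outputs exactly $G^*$.

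To prove this, I would argue by induction on the iteration counter $t$ that every merge performed is ``within-cluster'', i.e.\ the partition $G^{(t)}$ always refines $G^*$. Initially $G^{(0)}$ consists of singletons and refines $G^*$. Suppose $G^{(t)}$ refines $G^*$ and $t<n-K$; then $G^{(t)}$ has more than $K$ groups, so by pigeonhole two of its groups are contained in a common true cluster $G^*_k$, and consequently the minimal linkage value $l(\cdot,\cdot)$ over pairs of groups of $G^{(t)}$ is at most $D$. Since any pair of groups meeting two distinct true clusters has linkage at least $s^\star>D$, the pair merged at step $t$ must lie inside a single $G^*_k$, so $G^{(t+1)}$ still refines $G^*$. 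After the $n-K$ merges performed by the algorithm, $G^{(n-K)}$ refines $G^*$ and has exactly $K$ groups, hence equals $G^*$.

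It then remains to show $\P(D<s^\star)\ge 1-c_2/n^2$ under the hypothesis $\Delta^2\ge c_1(\log n+\sqrt{p\log n})$. For the upper bound on $D$: each within-cluster $\|Y_i-Y_j\|^2/2$ is a $\chi^2_p$ variable, so the standard right-tail bound for $\chi^2_p$ together with a union bound over the at most $n^2$ within-cluster pairs yields $D\le 2p+c(\sqrt{p\log n}+\log n)$ with probability at least $1-n^{-2}$. For the lower bound on $s^\star$: given $i\in G^*_k$, $j\in G^*_l$ with $k\neq l$, write $v=\mu_k-\mu_l$ (so $\|v\|^2\ge 2\Delta^2$) and $Y_i-Y_j=v+W$ with $W\sim\mathcal N(0,2I_p)$, whence $\|Y_i-Y_j\|^2=\|v\|^2+2\langle v,W\rangle+\|W\|^2$; a Gaussian tail bound on $\langle v,W\rangle$, the $\chi^2_p$ left-tail bound on $\|W\|^2$, and a union bound over the between-cluster pairs give $s^\star\ge \|v\|^2-c\|v\|\sqrt{\log n}+2p-c\sqrt{p\log n}\ge 2p+\Delta^2-c'(\log n+\sqrt{p\log n})$ with probability at least $1-n^{-2}$, using $2\|v\|\sqrt{\log n}\le \tfrac12\|v\|^2+2\log n$ and $\|v\|^2\ge 2\Delta^2$. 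On the intersection of these two events, $s^\star-D\ge \Delta^2-C(\log n+\sqrt{p\log n})$, which is positive once $c_1>C$; the deterministic step then gives exact recovery, which completes the proof.

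I do not anticipate a serious obstacle here: the only point needing genuine care is the deterministic step, namely choosing the right invariant (``$G^{(t)}$ refines $G^*$'') and using that the algorithm performs exactly $n-K$ merges, stopping precisely when $K$ groups remain; the tail bounds in the probabilistic step are entirely standard, so the constants $c_1,c_2$ can be read off directly from Laurent–Massart-type $\chi^2$ inequalities and a Gaussian tail bound.
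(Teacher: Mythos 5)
Your proof is correct and follows essentially the same route as the paper's: the same decomposition $\|Y_i-Y_j\|^2=\|v\|^2+2\langle v,W\rangle+\|W\|^2$, the same $\chi^2$/Hanson--Wright and Gaussian tail bounds with union bounds to separate within-cluster from between-cluster distances, and the same pigeonhole-plus-induction argument showing each $G^{(t)}$ refines $G^*$. Your packaging of the combinatorial step as a clean deterministic condition $D<s^\star$ is a tidy (but not substantively different) way of organizing the paper's argument.
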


\begin{proof}[Proof of Proposition \ref{prop:hierarchical}]
    Without loss of generality, we suppose that $\sigma=1$. We recall that, by definition, for $i\in G^*_k$, we have $k^*_i=k$. Let $i\neq j\in [1,n]$. Then

    \begin{equation*}
        \|Y_i-Y_j\|^2=\|E_i-E_j\|^2+2\<E_i-E_j,\mu_{k^*_i}-\mu_{k^*_j}\>+\|\mu_{k^*_i}-\mu_{k_j^*}\|^2\enspace. 
    \end{equation*}
    In order to prove Proposition \ref{prop:hierarchical}, we shall prove that, with high probability, the above quantity is uniformly smaller when $k_i^*=k_j^*$ than when $k^*_i\neq k_j^*$. For $i\neq j\in[1,n]$, using Lemma \ref{HW}, we get that for some numerical constants $c>0$ and for all $x>0$,

    $$\P[|\|E_i-E_j\|^2-2p|>c\pa{\sqrt{px}+x}]\leq 2e^{-x}\enspace.$$ Setting $e^{-x}=\frac{1}{n^4}$ and doing an union bound on all possible couples $i\neq j$, we get

    $$\P\cro{\forall i\neq j\in[1,n],\enspace |\|E_i-E_j\|^2-2p|>4c\pa{\sqrt{p\log(n)}+\log(n)}}\leq \frac{1}{n^2}\enspace.$$

    Let us know control the cross term $\<E_i-E_j,\mu_{k^*_i}-\mu_{k^*_j}\>$ uniformly on all $i\neq j\in[1,n]$. For such $i\neq j\in[1,n]$, $\<E_i-E_j,\mu_{k^*_i}-\mu_{k^*_j}\>\sim \sqrt{2}\|\mu_{k^*_i}-\mu_{k^*_j}\|\mathcal{N}\pa{0,I_p}$. Hence, for some numerical constant $c'>0$, with probability at least $1-e^{-x^2}$, we have $\<E_i-E_j,\mu_{k^*_i}-\mu_{k^*_j}\>\geq -c'x\|\mu_{k^*_i}-\mu_{k^*_j}\| $. Setting $e^{-x^2}=\frac{1}{n^4}$ and doing an union bound on all $i\neq j$, we end up with $$\P\cro{\forall i\neq j,\enspace \<E_i-E_j,\mu_{k^*_i}-\mu_{k^*_j}\>\geq -2c'\sqrt{\log(n)}\|\mu_{k^*_i}-\mu_{k^*_j}\|}\geq 1-\frac{1}{n^2}\enspace.$$

    Hence, with probability at least $1-\frac{2}{n^2}$, simultaneously on all $i\neq j\in [1,n]$, we have the two inequalities \begin{align*}
        \<E_i-E_j,\mu_{k^*_i}-\mu_{k^*_j}\>&\geq -2c'\sqrt{\log(n)}\|\mu_{k^*_i}-\mu_{k^*_j}\|\enspace;\\
        |\|E_i-E_j\|^2-2p|&\leq 4c\pa{\sqrt{p\log(n)}+\log(n)}\enspace.
    \end{align*}
    Let us restrict ourselves to this event of probability at least $1-\frac{2}{n^2}$ on which these two inequalities are satisfied. For $i\neq j\in [1,n]$, \begin{itemize}
        \item If $k_i^*=k_j^*$, then $\|Y_i-Y_j\|^2=\|E_i-E_j\|^2\leq 2p+4c\pa{\sqrt{p\log(n)}+\log(n)}$,
        \item If $k_i^*\neq k_j^*$, then $\|Y_i-Y_j\|^2\geq 2p-4c\pa{\sqrt{p\log(n)}+\log(n)}-4c'\sqrt{\log(n)}\|\mu_{k^*_i}-\mu_{k^*_j}\|+\|\mu_{k^*_i}-\mu_{k^*_j}\|^2$.
        
    \end{itemize}

    Thus, if $\Delta^2\geq c_1\pa{\log(n)+\sqrt{p\log(n)}}$, for $c_1$ a numerical constant chosen large enough, we get, for all $i\neq j$, \begin{itemize}
        \item If $k_i^*=k_j^*$, then $\|Y_i-Y_j\|^2\leq 2p+\frac{\Delta^2}{3}$,
        \item If $k_i^*\neq k_j^*$, then $\|Y_i-Y_j\|^2\geq 2p+\frac{2\Delta^2}{3}$.
    \end{itemize}
    Therefore, for all $i\neq j$ such that $k_i^*=k_j^*$ and $i'\neq j'$ such that $k_{i'}^*\neq k_{j'}^*$, we have with probability at least $1-\frac{2}{n^2}$
    \begin{equation}\label{eq:hier}
        \|Y_i-Y_j\|^2<\|Y_{i'}-Y_{j'}\|^2\enspace.
    \end{equation} In other words, Algorithm \ref{alg:hier} will always choose, when it is possible, to merge groups that both intersect a same cluster of the partition $G^*$. By induction on $t\in[0,n-K]$, we deduce from this that $G^{(t)}$ is a subpartition of $G^*$, ie that each group of $G^{(t)}$ is a subset of a group of $G^*$ .

    \textbf{Initialization:} The partition $G^{(0)}=\{1\},\ldots,\{n\}$ is indeed a subpartition of $G^*$.

    \textbf{Induction step:} Let $t\in[0, n-K-1]$ and let us suppose that $G^{(t)}$ is a subpartition of $G^*$ and let us prove that so is $G^{(t+1)}$. Since $t\leq n-K-1$, $|G^{(t)}|\geq K+1$. Hence, there exists at least two groups of $G^{(t)}$ that are subsets of the same group of $G^*$. Equation \eqref{eq:hier} ensures that Algorithm \ref{alg:hier} will choose to merge such a pair of groups. Hence, $G^{(t+1)}$ is also a subpartition of $G^*$. This concludes the induction.

    \medskip
    In particular, the output partition $G^{(n-K)}$ is a subpartition of $G^*$. Combining this with $|G^{(n-K)}|=K$ leads to $G^{(n-K)}=G^*$. This concludes the proof of the proposition.
\end{proof}

\end{document}